\newtheorem*{rep@theorem}{\rep@title}
\newcommand{\newreptheorem}[2]{%
\newenvironment{rep#1}[1]{%
 \def\rep@title{#2 \ref{##1}}%
 \begin{rep@theorem}}%
 {\end{rep@theorem}}}
\newcommand{\MLQ}[1][]{\text{MLQ}\ifx&#1&\else(#1)\fi}
\newcommand{\NMLQ}[1][]{\text{NMLQ}\ifx&#1&\else(#1)\fi}
\newcommand{\SMLQ}[1][]{\text{MLQ}^+\ifx&#1&\else(#1)\fi}
\newcommand{\SNMLQ}[1][]{\text{NMLQ}^+\ifx&#1&\else(#1)\fi}
\numberwithin{equation}{section}
\newcommand{\sbf}{\mathbf{u}}
\theoremstyle{plain}
\newtheorem{theorem}{Theorem}[section]
\newtheorem{prop}[theorem]{Proposition}
\newtheorem{lemma}[theorem]{Lemma}
\newtheorem{cor}[theorem]{Corollary}
\theoremstyle{definition}
\newtheorem{definition}[theorem]{Definition}
\newtheorem{example}[theorem]{Example}
\newtheorem{question}[theorem]{Question}
\newtheorem{remark}[theorem]{Remark}
\crefname{definition}{Definition}{Definitions}
\crefname{theorem}{Theorem}{Theorems}
\crefname{lemma}{Lemma}{Lemmas}
\crefname{example}{Example}{Examples}
\newcommand\id{\text{id}}
\newcommand{\type}{\textup{\texttt{type}}}
\newcommand{\strtype}{\textup{\texttt{strtype}}}
\newcommand\act{\textup{\texttt{act}}}
\newcommand\sort{\texttt{sort}}
\newcommand\rev{\texttt{rev}}
\newcommand{\maj}{\textup{\texttt{maj}}}
\newcommand{\inc}{\texttt{inc}}
\newcommand{\dg}{\texttt{dg}}
\newcommand{\charge}{\textup{\texttt{charge}}}
\newcommand\QS{\text{QS}}
\newcommand\Sym{\text{Sym}}
\newcommand\QSym{\text{QSym}}
\newcommand\SSYT{\text{SSYT}}
\newcommand\SSAF{\text{SSAF}}
\newcommand\SSCT{\text{SSCT}}
\newcommand{\SSQT}{SSQT}
\newcommand{\leg}{leg}
\newcommand{\South}{South}
\newcommand{\rw}{rw}
\newcommand{\cw}{cw}
\newcommand\cA{\mathcal{A}}
\newcommand\ZZ{\mathbb{Z}}
\newcommand\QQ{\mathbb{Q}}
\newcommand\NN{\mathbb{N}}
\newcommand\Wc{\mathcal{W}}
\newcommand{\eL}{e^\leftarrow}
\newcommand{\eLs}{e^{\leftarrow\star}}
\newcommand{\eD}{e^\downarrow}
\newcommand{\eDs}{e^{\downarrow\star}}
\newcommand{\fRs}{f^{\uparrow\star}}
\newcommand{\fR}{f^\rightarrow}
\newcommand{\fU}{f^\uparrow}
\newcommand{\highlight}{Goldenrod!93!black}
\newlength\cellsize \setlength\cellsize{12\unitlength}
\newcommand\cellify[1]{\def\thearg{#1}\def\nothing{}%
\ifx\thearg\nothing
\vrule width0pt height\cellsize depth0pt\else
\hbox to 0pt{\usebox2\hss}\fi%
\vbox to 12\unitlength{
\vss
\hbox to 12\unitlength{\hss$#1$\hss}
\vss}}
\newcommand\tableau[1]{\vtop{\let\\=\cr
\setlength\baselineskip{-16000pt}
\setlength\lineskiplimit{16000pt}
\setlength\lineskip{0pt}
\halign{&\cellify{##}\cr#1\crcr}}}
\newcommand\expath[1]{%
\hbox to 0pt{\usebox3\hss}%
\vbox to 12\unitlength{
\vss
\hbox to 12\unitlength{\hss$#1$\hss}
\vss}}
\newcommand\cell[3]{
\def\i{#1} \def\j{#2} \def\entry{#3}
\draw (\j-1,-\i)--(\j,-\i)--(\j,-\i+1)--(\j-1,-\i+1)--(\j-1,-\i);
\node at (\j-.5,-\i+.5) {\entry};
}
\newcommand{\tcr}[1]{\textcolor{red}{#1}}
\title{Positive expansions of permuted basement and quasisymmetric Macdonald polynomials at $t=0$}
\author{Olya Mandelshtam, Harper Niergarth, and Kartik Singh}
\begin{document} 

\maketitle

\abstract{ It is well known that the $q$-Whittaker polynomials, which are $t=0$ specializations of the Macdonald polynomials $P_\lambda(X;q,t)$, expand positively as the sum of Schur polynomials. Macdonald polynomials have a quasisymmetric refinement: the quasisymmetric Macdonald polynomials $G_\gamma(X;q,t)$, and a nonsymmetric refinement: the ASEP polynomials $f_\alpha(X;q,t)$. We study the $t=0$ specializations of both these families of polynomials and show analogous properties: the quasisymmetric Macdonald polynomials expand positively as a sum of quasisymmetric Schur functions, $\QS_\gamma(X)$, and the ASEP polynomials expand positively as a sum of Demazure atoms, $\cA_\alpha(X)$. As a corollary of the latter, we prove more generally that any permuted basement Macdonald polynomial has a positive expansion in the Demazure atoms at $t=0$. We give a description of the structure coefficients of $G_\gamma(X;q,0)$ and $f_\alpha(X;q,0)$ in both cases in terms of the charge statistic on a restricted set of semistandard tableaux.}

\setcounter{tocdepth}{1}
\tableofcontents

\section{Introduction}

Macdonald \cite{Mac88} defined the \textbf{symmetric Macdonald polynomials} $P_\lambda(X;q,t)$  in the ring of symmetric functions $\Sym$ in the commuting variables $X=x_1,x_2,\ldots$ with coefficients in $\QQ(q,t)$, as the unique family of functions indexed by partitions $\lambda$, that satisfy certain triangularity conditions with respect to the monomial basis, and orthogonality conditions with respect to a certain $(q,t)$-deformation of the Hall inner product. At $q=t=0$, these conditions reduce to those which characterize the Schur polynomials $s_\lambda(X)$. Other specializations of the parameters recover various well-studied families of symmetric functions such as the Hall--Littlewood polynomials ($q=0$), the Jack polynomials $(q=t^\alpha, t\to 1)$, and the $q$-Whittaker polynomials ($t=0$), which are the main objects of study in this paper.

In general, the coefficient of a given monomial in $P_\lambda(X;q,t)$ is a rational function in the parameters $q,t$. However, for certain specializations of $q$ and $t$, the coefficients simplify greatly, in particular for the case $t=0$. In their celebrated paper \cite{LS78}, Lascoux and Sch\"utzenberger gave the explicit Schur expansion of $P_\lambda(X;q,0)$ in terms of a $\charge$ statistic on the set of semistandard Young tableaux of certain shape and content:
\begin{equation}\label{eq:P charge}
P_\lambda(X;q,0)=\sum_{\mu\leq \lambda} s_{\mu} \sum_{T\in\SSYT(\mu',\lambda')}q^{\charge(T)}.
\end{equation}

There is a similar rich story for the entire polynomial ring $\QQ(q,t)[X]$, which admits an analogous $(q,t)$-inner product, with the orthogonal basis given by the family of \textbf{nonsymmetric Macdonald polynomials} $E_\alpha(X;q,t)$ indexed by compositions $\alpha$. These were constructed by Macdonald and Cherednik \cite{Mac96,Che95} and further studied by Opdam \cite{Opd95} to help understand the symmetric Macdonald polynomials: $P_\lambda(X;q,t)$ is the unique monic symmetric function that can be obtained as a linear sum of the $E_\alpha$'s over all permutations $\alpha$ of $\lambda$. At $q=t=0$, the nonsymmetric analogues of the Schur polynomials are the \textbf{key polynomials} $\kappa_\alpha$(X), which are obtained as the \textbf{Demazure characters} in type $A$. A further decomposition of the key polynomials into their smallest non-intersecting pieces yields the \textbf{Demazure atoms} $\cA_\alpha(X)$, which were first studied by Lascoux and Sch\"utzenberger \cite{LS90}.

Mason \cite{Mas09} showed that the Demazure atoms correspond to the $q=t=\infty$ specialization of $E_\alpha(X;q,t)$ and gave the description of the combinatorial objects generating them. Notably, at $t=0$, there is an analogous positivity result for the structure coefficients of the key- and atom- decompositions in this setting. Assaf proved the positive expansion of $E_\alpha$ in the key polynomial basis \cite{Ass17}, and later with Gonz\'alez gave the following explicit formula: 
\[E_\alpha(X;q,0)=\sum_{\beta\leq\alpha} \kappa_{\beta}(X)\sum_{T} q^{\maj(T)},
\]
where the inner sum is over a certain set of tableaux determined by $\alpha$ \cite{AG19}. The above implies positivity of $E_\alpha(X;q,0)$ in the Demazure atom expansion, as well.

Between the rings of symmetric functions $\Sym$ and ordinary polynomials $\mathbb{F}[X]$ (over a fixed coefficient ring $\mathbb{F}$, which will be $\QQ$, $\QQ(q,t)$, or $\QQ[q]$ in our case) lies the ring of quasisymmetric functions $\QSym$. The bases for $\QSym$ are indexed by \textbf{strong compositions}, which are compositions with only positive parts. We can relate bases for the three rings $\Sym \subset \QSym \subset \mathbb{F}[X]$ using refinements. Let $\{A_\lambda\}$, $\{B_\gamma\}$, and $\{C_\alpha\}$ be bases for $\Sym$, $\QSym$, and $\mathbb{F}[X]$, respectively, where $\lambda$, $\gamma$, and $\alpha$ vary over partitions, strong compositions and (weak) compositions. We say $\{B_\gamma\}$ is a \textbf{quasisymmetric refinement} of $\{A_\lambda\}$ if the following is true for all $\lambda$:
\begin{equation*}
    A_\lambda = \sum_{\gamma: \sort(\gamma)= \lambda}B_\gamma.
\end{equation*}
where $\sort(\gamma)$ is the weakly decreasing rearrangement of $\gamma$. 
Similarly, we say that $\{C_\alpha\}$ is a \textbf{nonsymmetric refinement} of $\{A_\lambda\}$ (or equivalently of $\{B_\gamma\}$) if the following equations hold true for all $\lambda$ (or $\gamma$), respectively:
\[
    A_\lambda = \sum_{\alpha:\sort(\alpha)=\lambda}C_\alpha\qquad \text{and}\qquad 
    B_\gamma = \sum_{\alpha:\alpha^+=\gamma}C_\alpha,
\]
where $\alpha^+$ is the strong composition obtained as the \textbf{compression} of $\alpha$ by removing all zero-parts of $\alpha$. We call the triple of bases $(A_\alpha,B_\gamma,C_\alpha)$ a \emph{triple of refinements}. 

For example, a triple of refinements for $\Sym$, $\QSym$, and $\mathbb{Q}[X]$ is given by the standard monomial basis $\{m_\lambda(X)\}$ indexed by partitions $\lambda$, the quasisymmetric monomial basis $\{M_\gamma(X)\}$ indexed by strong compositions $\gamma$, and the set of monomials $\{x^\alpha\}$ indexed by compositions $\alpha$. Indeed, by definition, the basis $\{x^\alpha\}$ is the nonsymmetric refinement of $\{M_\gamma(X)\}$, which in turn is the quasisymmetric refinement of $\{m_\lambda(X)\}$. For another example, another triple of refinements for the same triple of rings is given by the Schur functions $\{s_\lambda(X)\}$, the basis of \textbf{quasisymmetric Schur polynomials} $\{\QS_\gamma(X)\}$ introduced by Haglund, Luoto, Mason and van Willigenburg in \cite{HLMvW11}, and the  basis of Demazure atoms $\{\cA_\alpha(X)\}$.

The Macdonald polynomial $P_\lambda(X;q,t)$ decomposes as a sum of nonsymmetric Macdonald polynomials indexed by the set $\{\alpha:\sort(\alpha)=\lambda\}$; however, the nonsymmetric Macdonald polynomials $\{E_\alpha\}$ are \emph{not} the nonsymmetric refinement of $P_\lambda$ according to our definition. Ferreira \cite{Fer11} and Alexandersson \cite{Ale19} introduced the combinatorial study of the \textbf{permuted basement nonsymmetric Macdonald polynomials} $E_\alpha^\sigma(X;q,t)$, a generalization of $E_\alpha(X;q,t)$, additionally indexed by a permutation $\sigma$ of the parts of $\alpha$.  In terms of these polynomials, the correct refinement of $P_\lambda(X;q,t)$ is given by the set of polynomials $\{E_{\inc(\lambda)}^\sigma(X;q,t)\}$ ranging over permutations $\sigma$, where $\inc(\lambda)$ is the weakly increasing rearrangement of the parts of $\lambda$ \cite{CMW22}. This family of permuted basement polynomials was named the \textbf{ASEP polynomials} due to their connection to the stationary distribution of the asymmetric simple exclusion process (ASEP) \cite{CdGW15}. For a composition $\alpha$, define $f_\alpha(X;q,t)\coloneq E_{\inc(\alpha)}^\sigma(X;q,t)$, where $\sigma$ is a permutation that rearranges $\alpha$ into $\inc(\alpha)$. Then, one obtains a nonsymmetric refinement of $P_\lambda(X;q,t)$ in terms of the $\alpha$'s:  
\begin{equation}\label{eq:P in F}
P_\lambda(X_n;q,t)=\sum_{\sort(\alpha)=\lambda} f_\alpha(X_n;q,t).
\end{equation}
Note that $P_\lambda(X;q,t)$ can be obtained other sums of permuted basement polynomials, but this is the only such sum in which all components appear with the coefficient 1.

The ASEP polynomials satisfy $f_\alpha(X;0,0)=\cA_\alpha(X)$. The connection with the ASEP and the parallel with the $q=t=0$ setting led the first author, Corteel, Haglund, Mason, and Williams to define the \textbf{quasisymmetric Macdonald polynomial}, $G_\gamma(X;q,t)$ as a sum over the set of ASEP polynomials that compress to $\gamma$:
\begin{definition}[{\cite{CHMMW22}}]
For a strong composition $\gamma$, define    \[
G_{\gamma}(X;q,t)=\sum_{\alpha:\alpha^+=\gamma}f_\alpha(X;q,t).
\]
\end{definition}
By construction, the $G_\gamma$'s are quasisymmetric and refine $P_\lambda$. Thus we obtain $(P_\lambda, G_\gamma, f_\alpha)$ as a triple of refinements, which specializes at $q=t=0$ to the triple $(s_\lambda,\QS_\gamma,\cA_\alpha)$. We complete the following picture, where the rightward arrows represent quasisymmetric and nonsymmetric refinements, respectively, and the downward arrows represent the $q=t=0$ specialization: 

\begin{center}
\begin{tikzcd}[sep=normal,ampersand replacement=\&]
\Sym\&\&\&\QSym\&\&\& \mathbb{Q}(q,t)[X]\\
P_{\lambda}(X;q,t)  \arrow[rrr,"",,] 
\arrow[d,"" left] \&\&\&G_\gamma(X;q,t) \arrow[rrr,"",,] 
\arrow[d,"" left,] \& \& \& f_\alpha(X;q,t) \arrow[d,"",,] \\
s_{\lambda}(X) \arrow[rrr,"",,]  
\&\&\&\QS_\gamma(X) \arrow[rrr,"",,] 
\&\&\& \mathcal{A}_{\alpha}(X)
\end{tikzcd}
\end{center}

As mentioned earlier, the $t=0$ specializations of both the symmetric and nonsymmetric Macdonald polynomials (henceforth referred to as $q$-Whittaker polynomials) decompose positively as sums of their $q=t=0$ specializations, with the coefficients given by the corresponding \textbf{Kostka--Foulkes polynomials}. In both cases, formulas for the Kostka--Foulkes polynomials can be obtained as generating functions over certain restricted sets,  with a $\charge$ statistic for $P_\lambda(X;q,0)$  due to \cite{LS78} and with a $\maj$ statistic for $E_\alpha(X;q,0)$ due to \cite{AG19}. 

In this paper, we first extend the result of \cite{AG19} to prove that the ASEP polynomials $f_\alpha(X;q,0)$ expand positively in the Demazure atoms. To be precise, we show the following: 

\begin{reptheorem}{theorem:weakmain}
For a composition $\alpha$, the ASEP polynomial is given by
  \begin{equation}\label{eq:nsym}
f_{\alpha}(X;q,0) = \sum_{\beta:\sort(\beta)\leq \sort(\alpha)} K_{\alpha,\beta}(q) \cA_{\beta},
\end{equation}
where
\begin{equation}\label{eq:K}
K_{\alpha,\beta}(q) = \sum_{\substack{Q\in\SSYT(\sort(\beta)',\sort(\alpha)')\\\type(\rho^{-1}(M_{\beta},Q))=\alpha}} q^{\charge(Q)}.
\end{equation}
where $M_{\beta}\in\NMLQ(\beta)$ is the unique multiline queue with 
type and content $\beta$, and $\rho$ is the collapsing map (see \cref{sec:crystalops}).
\end{reptheorem}

Using this, we also establish the quasisymmetric analogue: we show that the coefficients in the decomposition of a quasisymmetric $q$-Whittaker polynomial into quasisymmetric Schur functions are polynomials in $q$ with nonnegative integer coefficients.

\begin{reptheorem}{theorem:main}
For a strong composition $\gamma$, the quasisymmetric Macdonald polynomial is given by
  \begin{equation}\label{eq:quasi}
G_{\gamma}(X;q,0) = \sum_{\tau:\sort(\tau)\leq \sort(\gamma)} K_{\gamma,\tau}(q) \QS_{\tau},
\end{equation}
where
\begin{equation}\label{eq:K}
K_{\gamma,\tau}(q) = \sum_{\substack{Q\in\SSYT(\sort(\tau)',\sort(\gamma)')\\\strtype(\rho^{-1}(M_{\tau},Q))=\gamma}} q^{\charge(Q)}.
\end{equation}
where $\strtype$ is the strong type of a multiline queue (see \cref{sec:MLQ}).
\end{reptheorem}
We refer to the coefficients $K_{\alpha,\beta}(q)$ and $K_{\gamma,\tau}(q)$ in the nonsymmetric and quasisymmetric results as the \textbf{nonsymmetric} and \textbf{quasisymmetric Kostka--Foulkes polynomials}, respectively, and give descriptions of them as generating functions over restricted sets of semistandard tableaux with a $\charge$ statistic.

In summary, we obtain the following parallel expansions:
\begin{align}
    P_\lambda(X;q,0)&=\sum_{\mu\leq\lambda} K_{\lambda\mu}(q)s_\mu(X)\,,&K_{\lambda\mu}(q)=\sum_{T\in \mathcal{A}_{\lambda\mu}}q^{\charge(T)}\label{eq:P lambda}\\
    G_\gamma(X;q,0)&=\sum_{\tau\leq\gamma} K_{\gamma\tau}(q)\QS_\tau(X)\,,&K_{\gamma\tau}(q)=\sum_{T\in \mathcal{B}_{\gamma\tau}}q^{\charge(T)}\label{eq:G gamma}\\
    f_\alpha(X;q,0)&=\sum_{\beta\leq \alpha} K_{\alpha\beta}(q)\cA_\beta(X)\,,&K_{\alpha\beta}(q)=\sum_{T\in \mathcal{C_{\alpha\beta}}}q^{\charge(T)}\label{eq:f alpha}
\end{align}
where $\mathcal{A}_{\lambda\mu}$, $\mathcal{B}_{\gamma\tau}$, and $\mathcal{C}_{\alpha\beta}$ are sets of semistandard tableaux, and sums are over partitions, strong compositions, and weak compositions, respectively, with $\leq$ representing the associated dominance orders.

Finally, as a corollary of \eqref{eq:f alpha}, we prove that any permuted basement Macdonald polynomial $E_\alpha^\sigma(X;q,0)$ expands positively into Demazure atoms.

\subsection{Proof sketch}
We prove our results by studying the \emph{graded crystal} structure on \emph{multiline queues}. One can consider multiline queues as binary matrices endowed with additional data, that associates to each multiline queue a composition, called its \emph{(weak) type}; the corresponding strong composition is called its \textbf{strong type}. 

Multiline queues arise from probability theory, where they were introduced by Ferrari and Martin to compute the stationary distribution of the ASEP at $t=0$ \cite{FM07}. In \cite{CMW22}, multiline queues were enhanced with additional statistics, and the set of multiline queues of size $\lambda$ was shown to give a combinatorial formula for the $q$-Whittaker polynomial $P_\lambda(X;q,0)$.\footnote{In the $t=0$ case, an equivalent formula  has long been known: when the multiline queues are interpreted as Kirillov--Reshetikhin crystals, the formula for $P_\lambda(X;q,0)$ can be expressed in terms of the classical \emph{charge} or \emph{energy} statistics \cite{NY97}.} When refined by (weak) type or strong type, these weighted multiline queues also yield formulas for $f_\alpha(X;q,0)$ and $G_\gamma(X;q,0)$, respectively. 

Our approach is motivated by the work of the first author with Valencia-Porras \cite{MV24}, where the Lascoux--Sch\"utzenberger charge formula was proved by defining a \emph{collapsing operator} on multiline queues. From a crystal-theoretic perspective, we can view multiline queues as type $A^{(1)}$ Kirillov--Reshetikhin crystals equipped with the energy grading, recorded by a power of $q$. In this framework, Schur functions are generated by multiline queues in the component with energy 0, referred to as \emph{nonwrapping multiline queues}. These are isolated from the full crystal by specializing $q=0$, and we therefore refer to them as the $q=0$ component. In \cite{MV24}, a second family of crystal operators was introduced, commuting with the original ones, and the collapsing operator was expressed as a composition of these. The collapsing operator gives a bijection between multiline queues of arbitrary energy and pairs consisting of a nonwrapping multiline queue and a recording tableau. Moreover, for a fixed recording tableau, the preimage of a Schur function forms a subgraph of a connected component in the original crystal at fixed energy. 

A similar crystal-theoretic argument was made by Assaf and Gonz\'alez \cite{AG19} to establish positivity of the expansion of $E_\alpha(X;q,0)$ in key polynomials, discussed in \cref{sec:comparison}. In the present work, where we prove Demazure atom and quasisymmetric Schur positivity results, we refine this argument to keep track of the type corresponding to each multiline queue. We consider the subgraph of the multiline queue crystal consisting of all multiline queues of a fixed type $\alpha$. Then, we restrict the collapsing operator to this set, sending each multiline queue in that component to a pair consisting of a representative of a Demazure atom and a recording tableau.

The crux of our proof lies in showing that, for a fixed recording tableau, the preimage of the subcrystal corresponding to a Demazure atom $\cA_\alpha(X)$ is entirely contained within a single nonsymmetric subcrystal in the original crystal at fixed energy. A crucial difference between this and the Schur and key polynomial cases is that in our setting, the $q=0$ subcrystal associated with a single Demazure atom, i.e.~for a fixed type, may be disconnected (see \cref{fig:big figure}). This makes the proof of \cref{theorem:weakmain} significantly more delicate. Nevertheless, we show that the recursive structure of multiline queues allows us to move between disconnected components of the subcrystal corresponding to a fixed type in a controlled way. This ultimately allows us to prove our results, and furthermore suggests a general strategy for proving positivity in Demazure atom and quasisymmetric Schur expansions in other settings.

This article is organized as follows. \cref{sec:background} defines the relevant symmetric, quasisymmetric, and nonsymmetric functions and introduces their combinatorics in terms of multiline queues. In \cref{sec:crystals} we discuss the action of crystal operators on multiline queues and their type. We use results from that section to prove our main results \cref{theorem:weakmain} and \cref{theorem:main} in \cref{sec:induction}, and we prove Demazure positivity of any permuted basement Macdonald polynomials as a corollary in \cref{sec:permuted}. We discuss connections to tableau formulas in \cref{sec:comparison}.

\subsection*{Acknowledgements}
We would like to thank Jer\'onimo Valencia Porras, Zeus Dantas e Moura, Oliver Pechenik, Sylvie Corteel, Sarah Mason, and Per Alexandersson for helpful discussions. All three authors were partially supported by NSERC grant RGPIN-2021-02568.

\section{Preliminaries}\label{sec:background}

\subsection{Symmetric and quasisymmetric functions}

Let $\Sym$ be the ring of symmetric functions in the variables $X = x_1,x_2,\ldots$. Bases of $\Sym$ are indexed by \textbf{partitions} $\lambda = (\lambda_1,\ldots,\lambda_\ell)$ which are finite weakly decreasing sequences of positive integers. Each $\lambda_i$ is called a \textbf{part} of $\lambda$, the number of parts $\ell = \ell(\lambda)$ is called the \textbf{length} of $\lambda$, the sum of parts is called the \textbf{size} of $\lambda$. For two partitions $\lambda,\mu$, we say $\lambda$ \textbf{dominates} $\mu$ and write $\lambda \geq \mu$ if $\lambda_1 + \cdots + \lambda_i \geq \mu_1 + \cdots + \mu_i$ for all $i$.

A \textbf{(weak) composition} $\alpha = (\alpha_1,\ldots,\alpha_\ell)$ is a finite sequence of nonnegative integers, and the \textbf{rearrangement} $\sort(\alpha)$ of $\alpha$ is the partition obtained by sorting the positive parts of $\alpha$ in weakly decreasing order. A \textbf{strong composition} $\gamma$ is a finite sequence of positive integers. For a weak composition $\alpha$, its \textbf{compression} $\alpha^+$ is the strong composition obtained by deleting the 0's. We will use the word ``composition'' to mean weak composition by default. See \cref{ex:partitions} for an example. 

For a composition $\alpha=(\alpha_1,\ldots,\alpha_\ell)$, define the monomial $x^\alpha=x_1^{\alpha_1}\cdots x_\ell^{\alpha_\ell}$. A function in the variables $X$ is \textbf{symmetric} if the coefficient of $x^\alpha$ matches the coefficient of $x^\beta$ for any two compositions $\alpha$, $\beta$ such that $\sort(\alpha)=\sort(\beta)$.  Then the basis of \textbf{monomial symmetric functions}, denoted $m_\lambda(X)$, is given by 
\begin{align*}
    m_\lambda(X) = \sum_{\alpha: \sort(\alpha) = \lambda} x^\alpha.
\end{align*}

Each partition $\lambda$ corresponds to a \textbf{Young diagram}, which consists of $\lambda_i$ left justified boxes on the $i$th row. We use the convention that the rows are labelled from the bottom to top and columns from left to right, consistent with the French notation. The \textbf{conjugate partition} $\lambda'$ of a partition $\lambda$ is obtained by reflecting the diagram of $\lambda$ along its main diagonal. 

\begin{example}\label{ex:partitions}
For example, the composition $\alpha=(0,4,1,0,3,0)$ compresses to $\alpha^+=(4,1,3)$ and sorts to $\lambda =\sort(\alpha)= (4,3,1)$ with $\ell(\lambda)=3$. The conjugate partition is $\lambda'=(3,2,2,1)$, and the diagrams  of $\lambda$, and $\lambda'$ are below:
\begin{align*}
       \lambda = \resizebox{1.5cm}{!}{\ydiagram{1,3,4}} \qquad\qquad
        \lambda' = \ydiagram{1,2,2,3}.
\end{align*}
\end{example}

Let $\QSym$ be the ring of quasisymmetric functions in the variables $X = x_1,x_2,\ldots$. A function in the variables $X$ is \textbf{quasisymmetric} if the coefficient of $x^\alpha$ matches the coefficient of $x^\beta$ for any two compositions $\alpha$, $\beta$ such that $\alpha^+=\beta^+$. 
Clearly, all symmetric functions are quasisymmetric functions: $\Sym\subset \QSym$. Bases of this space are indexed by \textbf{strong compositions}. 
The basis of \textbf{monomial quasisymmetric functions} are defined as
    \begin{align*}
        M_\gamma(X) = \sum_{i_1 < \cdots < i_k} x_{i_1}^{\gamma_1}x_{i_2}^{\gamma_2} \cdots x_{i_k}^{\gamma_k}=\sum_{\alpha: \alpha^+=\gamma} x^\alpha.
    \end{align*}
By definition, these are a quasisymmetric refinement of the monomial symmetric functions, that is:
    \begin{align}\label{eq:monomial refinment}
        m_\lambda(X) = \sum_{\gamma: \sort(\gamma) = \lambda} M_\gamma (X).
    \end{align}
Finally, taking the set of monomials $\{x^\alpha\}$ ranging over all compositions $\alpha$ with finite support, we get a basis for the polynomial ring $\QQ[X]$, which functions as the nonsymmetric refinement for the $\{m_\lambda(X)\}$ and $\{M_\gamma(X)\}$ bases by construction.

\subsection{Multiline queues}\label{sec:MLQ}

\begin{definition}
    Let $\lambda$ be a partition, $L = \lambda_1$ and $n > \ell(\lambda)$ be an integer. A \textbf{multiline queue} of shape $(\lambda,n)$ is an arrangement of balls on an $L \times n$ array with rows numbered $1$ through $L$ from bottom to top, such that row $j$ contains $\lambda'_j$ balls. Columns are numbered $1$ through $n$ from left to right periodically modulo $n$, so that $j$ and $j + n$ correspond to the same column number. The site $(r, j)$ of $M$ refers to the cell in column $j$ of row $r$ of $M$. A multiline queue can be represented as a tuple $M = (B_1,\ldots, B_L)$ of $L$ subsets of $[n]:=\{1,\ldots, n\}$ where for each $j$, $|B_j|=\lambda'_j$, with the elements of $B_j$ corresponding to the sites containing balls in row $j$. Denote the set of multiline queues of shape $(\lambda,n)$ by $\MLQ_\lambda$. Throughout this article, we regard $n$ as fixed, and thus will omit it from the notation.
\end{definition}

The \textbf{Ferrari–-Martin (FM) algorithm} is a labeling procedure that deterministically assigns a label to each ball in a multiline queue $M$, producing a labeled multiline queue $L_M$. It is commonly described as a queuing process, in which, for each row $i$,
balls are paired between row $i$ and row $i - 1$, one at a time, according to a certain priority order. This form of the procedure produces the labeling $L_M$ in addition to a set of pairings between balls with the same label in adjacent rows. Note that while the set of pairings depends on the pairing order and may not be unique (see \cref{rem:not unique}), the labeling $L_M$ is uniquely determined.

\begin{definition}[FM algorithm]\label{def:FM} 
Let $M = (B_1,\ldots, B_L)\in\MLQ_\lambda$. For each row $r = L,\ldots, 2$:
    \begin{itemize}
        \item Every unlabeled ball is labeled ``$r$''.
        \item Once all balls in row $r$ are labeled, each of them is sequentially paired to the first unlabeled ball weakly to its right in row $r - 1$, wrapping around from column n to column $1$ if necessary. Once a ball in row $r-1$ is paired with a ball in row $r$, it acquires the label from the ball that paired to it. The order in which balls are paired from row $r$ to row $r - 1$ is from the largest label ``$L$'' to the smallest label ``$r$'', and (by convention) from left to right among balls with the same label. 
    \end{itemize}
    To complete the process, all unpaired balls in row $1$ are labeled ``$1$''. Define the array $L_M$ by setting $L_M(r,i)$ to be the label assigned to the ball at site $(r,i)$, or 0 if there is no ball. 
\end{definition}

See \cref{ex:FM algorithm} for a detailed run-through of the algorithm, and \cref{fig:MLQ array example} for an example of the array $L_M$. For a multiline queue $M=(B_1,B_2,...,B_L)$, the \textbf{content} of the multiline queue $x^M$ is the monomial:
\[ 
    x^M:= \prod_{i=1}^L\prod_{b\in B_i}x_b.
\]

\begin{definition}
    A \textbf{strand} of a multiline queue $M$ is a set of balls $\{b_1,...,b_h\}$, such that $b_i$ is in row $i$, and FM algorithm pairs $b_{i+1}$ with $b_i$ for all $i$. The \textbf{length} of the strand is $h$. The column containing the ball in row 1 of the strand is its \textbf{anchor}; if a strand of length $k$ is anchored at $i$, then equivalently $\type(M)_i=k$. Note that all the balls in a fixed strand have the same label, which is the length of the strand.
\end{definition}

\begin{definition}
    The \textbf{type} of a multiline queue is the composition $\type(M)$ obtained by reading the labels on the first row of $L_M$ and recording $0$ for an empty site. The \textbf{strong type} of a multiline queue is the strong composition $\strtype(M):= \type(M)^+$.
\end{definition}

\begin{definition}[Row word and Column word]
    For a multiline queue $M$, the \textbf{row word} of $M$, $rw(M)$, is obtained by recording the column numbers of balls in $M$, scanning the rows \textit{top to bottom}, and traversing each row \textit{right to left}. The \textbf{column word} of $M$, $cw(M)$, is obtained by recording the row numbers of balls in $M$, scanning the columns \textit{left to right}, and traversing each column \textit{top to bottom}.
\end{definition}
\begin{remark}\label{rem:not unique}
    Different pairing orders among balls of the same label may change the pairings themselves, but not which balls are paired to in the row below: thus the labels assigned remain the same (see, e.g.~\cite[Lemma 2.2]{AGS20}, \cite[Remark 3.14]{MV24}). Hence, for concreteness we adopt a left–to–right pairing convention and we treat the resulting pairings as a decoration on the multiline queue diagram (the pairings are determined uniquely up to this convention). In particular, $\type(M)$ is independent of the chosen pairing order.
\end{remark}

\begin{example}\label{ex:FM algorithm}
    We show the sequence of steps of the FM pairing procedure for the multiline queue $M=(\{1,3,4,5\},\{2,3,4\},\{3,5\})\in\MLQ_{(3,3,2,1)}$.
    \begin{center}
\resizebox{!}{1.75cm}{
\begin{tikzpicture}[scale=0.7]
\def \w{1};
\def \h{1};
\def \r{0.25};
    
\begin{scope}[xshift=0cm]
\node at (-1,1.5) {\large $M=$};
\draw[gray!50,thin,step=\w] (0,0) grid (5*\w,3*\h);
\foreach \xx\yy\i\c in {2/2/3/black,4/2/3/black,1/1/3/black,2/1/3/black,3/1/2/black,0/0/1/black,2/0/3/black,3/0/3/black,4/0/2/black}
    {
    \draw[fill=\c] (\w*.5+\w*\xx,\h*.5+\h*\yy) circle (\r cm);
    }
\end{scope}

\end{tikzpicture}
}
\end{center}

For $r=3$, we start by labelling all particles in row 3 with ``3''. Next, for $\ell=3$, pair particles with label $\ell$ in row 3, from left to right, to an available particle in row 2 weakly to the right:

\begin{center}
\resizebox{!}{1.75cm}{
\begin{tikzpicture}[scale=0.7]
\def \w{1};
\def \h{1};
\def \r{0.25};
    
\begin{scope}[xshift=0cm]
\node at (-1,1.5) {\large $M=$};
\draw[gray!50,thin,step=\w] (0,0) grid (5*\w,3*\h);
\foreach \xx\yy\i\c in {2/2/3/white,4/2/3/white,1/1/3/white,2/1/3/white,3/1/2/black,0/0/1/black,2/0/3/black,3/0/3/black,4/0/2/black}
    {
    \draw[fill=\c] (\w*.5+\w*\xx,\h*.5+\h*\yy) circle (\r cm);
    \node at (\w*.5+\w*\xx,\h*.5+\h*\yy) {\i};
    }

\draw[black!50!green] (\w*2.5,\h*2.5-\r)--(\w*2.5,\h*1.5+\r);
\draw[blue,-stealth] (\w*4.5,\h*2.5-\r)--(\w*4.5,\h*2.0)--(\w*5.3,\h*2.0);
\draw[blue] (-.2,\h*2.0)--(\w*1.5,\h*2.0)--(\w*1.5,\h*1.5+\r);
\end{scope}

\end{tikzpicture}
}
\end{center}
    
    Next, for $r=2$, label all unlabelled particles in row 2 with ``2''. Pair particles with label $\ell=3$ in row 2 to unpaired particles in row 1:

\begin{center}
\resizebox{!}{1.75cm}{
\begin{tikzpicture}[scale=0.7]
\def \w{1};
\def \h{1};
\def \r{0.25};
    
\begin{scope}[xshift=0cm]
\node at (-1,1.5) {\large $M=$};
\draw[gray!50,thin,step=\w] (0,0) grid (5*\w,3*\h);
\foreach \xx\yy\i\c in {2/2/3/white,4/2/3/white,1/1/3/white,2/1/3/white,3/1/2/white,0/0/1/black,2/0/3/white,3/0/3/white,4/0/2/black}
    {
    \draw[fill=\c] (\w*.5+\w*\xx,\h*.5+\h*\yy) circle (\r cm);
    \node at (\w*.5+\w*\xx,\h*.5+\h*\yy) {\i};
    }

\draw[black!50!green] (\w*2.5,\h*2.5-\r)--(\w*2.5,\h*1.5+\r);
\draw[blue,-stealth] (\w*4.5,\h*2.5-\r)--(\w*4.5,\h*2.0)--(\w*5.3,\h*2.0);
\draw[blue] (-.2,\h*2.0)--(\w*1.5,\h*2.0)--(\w*1.5,\h*1.5+\r);
    
\draw[black!50!green] (\w*2.5,\h*1.5-\r)--(\w*2.5,\h*1.02)--(\w*3.5,\h*1.02)--(\w*3.5,\h*.5+\r);
\draw[blue] (\w*1.5,\h*1.5-\r)--(\w*1.5,\h*.85)--(\w*2.5,\h*.85)--(\w*2.5,\h*.5+\r);

\end{scope}

\end{tikzpicture}
}
\end{center}
    To complete the pairings for $r=2$, pair particles with label $\ell=2$ in row 2 to unpaired particles weakly to the right in row 1. Finally, for $r=1$, label all unlabeled particles in row 1 with ``1'', completing the pairing procedure, and yielding the corresponding labeled array $L_M$. The multiline queue $M$ and its labeled array $L_M$ are shown in \cref{fig:MLQ array example}.

    \begin{figure}[h!]
    \centering
\resizebox{!}{1.75cm}{
\begin{tikzpicture}[scale=0.7]
\def \w{1};
\def \h{1};
\def \r{0.25};
    
\node at (-1,1.5) {\large $M=$};
\draw[gray!50,thin,step=\w] (0,0) grid (5*\w,3*\h);
\foreach \xx\yy\i\c in {2/2/3/white,4/2/3/white,1/1/3/white,2/1/3/white,3/1/2/white,0/0/1/white,2/0/3/white,3/0/3/white,4/0/2/white}
    {
    \draw[fill=\c](\w*.5+\w*\xx,\h*.5+\h*\yy) circle (\r cm);
    \node at (\w*.5+\w*\xx,\h*.5+\h*\yy) {\i};
    }
    
\draw[black!50!green] (\w*2.5,\h*2.5-\r)--(\w*2.5,\h*1.5+\r);
\draw[blue,-stealth] (\w*4.5,\h*2.5-\r)--(\w*4.5,\h*2.0)--(\w*5.3,\h*2.0);
\draw[blue] (-.2,\h*2.0)--(\w*1.5,\h*2.0)--(\w*1.5,\h*1.5+\r);
    
\draw[black!50!green] (\w*2.5,\h*1.5-\r)--(\w*2.5,\h*1.02)--(\w*3.5,\h*1.02)--(\w*3.5,\h*.5+\r);
\draw[blue] (\w*1.5,\h*1.5-\r)--(\w*1.5,\h*.85)--(\w*2.5,\h*.85)--(\w*2.5,\h*.5+\r);
\draw[red] (\w*3.5,\h*1.5-\r)--(\w*3.5,\h*1.15)--(\w*4.5,\h*1.15)--(\w*4.5,\h*.5+\r);

\node at (-1+8.5,1.5) {\large $L_M=$};
\draw[gray!50,thin,step=\w] (9,0) grid (5*\w+9,3*\h);
\foreach \xx\yy\i\c in {2/2/3/white,4/2/3/white,1/1/3/white,2/1/3/white,3/1/2/white,0/0/1/white,2/0/3/white,3/0/3/white,4/0/2/white,0/2/0/white,0/1/0/white,1/2/0/white,1/0/0/white,3/2/0/white,4/1/0/white}
    {
    \node at (\w*.5+\w*\xx+9,\h*.5+\h*\yy) {\i};
    }
\end{tikzpicture}
}
    \caption{An example of a labeled multiline queue $M$ with its corresponding array $L_M$.}
    \label{fig:MLQ array example}
\end{figure}
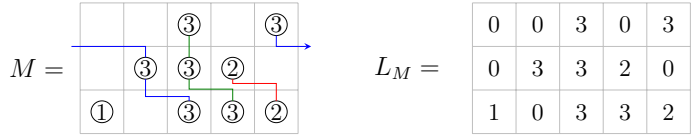

\noindent Reading the labels off the bottom row, we get $\type(M)=(1,0,3,3,2)$ and $\strtype(M)=(1,3,3,2)$. Recording the column numbers of balls in the order defined for row word, we get $rw(M) = 53.432.5431$. Here, labels of each row are separated by ".". Similarly, the column word is given by $cw(M) = 1.2.321.21.31$. 
\end{example}

We treat the pairings on the multiline queue as auxiliary data associated to the ball configuration that can be recovered uniquely through the FM algorithm. In our examples, we will sometimes omit the pairings and labels and show only the underlying ball configuration.

\begin{definition}[Major index of a multiline queue]
    Let $M \in \MLQ_\lambda$ and let $m_{\ell,r}$ be the number of balls labelled $\ell$ in row $r$ that wrap when paired to the ball labelled $\ell$ in row $r-1$. The \textbf{major index} of $M$ is then computed as follows:
        \begin{align*}
            \maj(M) = \sum_{2 \leq \ell \leq L}\sum_{2 \leq r \leq \ell} m_{\ell,r}(\ell - r + 1).
        \end{align*}
\end{definition}

   The following lemma describes the possible intersections of two strands in a non-wrapping multiline queue.
    \begin{lemma}\label{lemma:intersectingstrands}
        Let $s_1, s_2$ be two strands in a non-wrapping multiline queue $M$, with row-1 balls $y_1$ and $y_2$ respectively, and suppose that $s_1$ is weakly longer than $s_2$. Then $s_1$ and $s_2$ can intersect each other at most once. Moreover, if they intersect, then $y_1$ is to the left of $y_2$.
    \end{lemma}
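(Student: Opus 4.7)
The plan is to follow the two strands simultaneously from top to bottom, tracking only their relative horizontal order at each row, and to exploit the priority rule in the Ferrari--Martin procedure by which longer strands (larger labels) are paired first, ties broken left-to-right. Let $a_r$ and $b_r$ denote the columns of the row-$r$ balls of $s_1$ and $s_2$, defined for every $r$ where both balls exist. Call row $r$ \emph{type A} if $a_r < b_r$ and \emph{type B} if $a_r > b_r$ (equality is ruled out, since the strands are distinct). A crossing during the step from row $r$ to row $r-1$ is, by definition, a change of type, so the lemma will follow from showing that at most one such change can occur, and that it must be of the form type~B~$\to$~type~A.

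The core is the following pair of monotonicity statements, both proved by direct case analysis on where the row-$(r-1)$ balls sit relative to the interval between $a_r$ and $b_r$, and leveraging the non-wrapping hypothesis so that ``weakly to the right'' refers to a genuine linear order. First, type A is preserved downward: assuming $|s_1| > |s_2|$ so that $s_1$ pairs first, $s_1$ picks the leftmost unpaired ball of row $r-1$ weakly right of $a_r$, and a short check (splitting on whether this pick is less than, equal to, or greater than $b_r$) forces $b_{r-1} > a_{r-1}$ in every case. Second, type B either persists or switches to type A exactly once: if some unpaired row-$(r-1)$ ball lies in the half-open interval $[b_r, a_r)$, then $s_2$ picks the leftmost such one, which is strictly left of $a_{r-1}$, and we stay in type B; otherwise $s_2$ is forced to the second-leftmost unpaired ball weakly right of $a_r$, landing strictly right of $a_{r-1}$, and we switch to type A with one crossing.

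The case $|s_1| = |s_2|$ is handled by a symmetric variant of the same analysis: now the leftmost of the two strands pairs first in every step, and both types turn out to be preserved, so no crossings occur at all. Putting everything together, once the configuration is in type A it remains in type A, so the two strands cross at most once in total. Moreover, if they do cross, the crossing is of the form B~$\to$~A, so at row 1 we are in type A, i.e.\ $a_1 < b_1$, which is precisely the assertion that $y_1$ lies to the left of $y_2$.

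The main obstacle I anticipate is tightly checking that an A-to-B transition is genuinely impossible: this is where the non-wrapping hypothesis really bites, since a wrap would let $s_2$ circle around and land on a column less than $a_{r-1}$, producing the forbidden reversal. Every other step is bookkeeping on which of the two strands pairs first and on the location of the next unpaired ball in row $r-1$.
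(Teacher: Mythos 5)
Your proposal is correct and takes essentially the same approach as the paper: the paper also tracks the left-to-right order of the two strands' balls row by row (recording the label pair $\alpha_i$ read left to right), observes that the order with $s_1$ on the left cannot revert to the opposite order without $s_2$ wrapping, and that the opposite order can flip at most once at an intersection, which forces $y_1$ to the left of $y_2$ in row 1. The equal-length case is dispatched in both arguments by the left-to-right tie-breaking convention.
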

    
    \begin{proof}
        Let $s_1$ and $s_2$ have lengths $k$ and $\ell$, respectively (accordingly, these are also the labels of balls $y_1$ and $y_2$ in $L(M)$). If $k=\ell$, the strands do not intersect by definition according to the FM algorithm.
        
        Now suppose $k>\ell$. For $i\leq \ell$, let $\alpha_i\in \{k,\ell\}^2$ be the pair of labels of the balls in strands $s_1$ and $s_2$ in row $i$ when read left to right. Since the balls with label $k$ pair before balls with label $\ell$ in the FM algorithm, if $\alpha_i = (k,\ell)$, then $\alpha_{i-1} = (\ell,k)$ only if $s_2$ wraps around from row $i$ to $i-1$ -- a contradiction, since $M$ is non-wrapping. Moreover, again due to the pairing order in FM algorithm, if $\alpha_i = (\ell,k)$, then $\alpha_{i-1} = (k,\ell)$ if and only if the two strands intersect when $M$ is non-wrapping. 
        It follows that there is at most one index $i>1$ such that the two strands can intersect from row $i$ to row $i-1$. Furthermore, if there is such an $i$, then for all rows $i\leq j\leq k$, $\alpha_j=(\ell,k)$, and for $1\leq j<i$, we have $\alpha_{j} = (k,\ell)$, with $\alpha_1=(k,\ell)$ confirming $y_1$ is indeed left of $y_2$.
    \end{proof}

As the major index of a multiline queue $M$ encodes the pairings of $M$ that wrap, we say a multiline queue $M$ is \textbf{non-wrapping} if $\maj(M) = 0$. We let $\NMLQ_\lambda$ denote the set of all non-wrapping multiline queues in $\MLQ_\lambda$. Before proceeding, we establish notation for the components of $\MLQ_\lambda$ indexed by $\type$ or $\strtype$. 
\begin{definition}
    For a composition $\alpha$ with $\sort(\alpha) = \lambda$, $\MLQ[\alpha]$ is the set of multiline queues $M$ with $\type(M) = \alpha$. For a strong composition $\gamma$ with $\sort(\gamma) = \lambda$, define $\SMLQ[\gamma]$ to be the set of multiline queues $M$ with $\strtype(M) = \gamma$. Similarly, define $\NMLQ[\alpha]$ and $\SNMLQ[\gamma]$ to be the sets of nonwrapping multiline queues in $\MLQ[\alpha]$ and $\SMLQ[\gamma]$, respectively.
\end{definition}

Define also $M_\alpha\in\NMLQ[\alpha]$ to be the unique multiline queue with column content $\alpha$. We call this the \textbf{straight multiline queue} of type $\alpha$ since all balls in this configuration pair directly below, and all strands are vertical. We will use $M_\alpha$ as a ``representative'' for $\NMLQ[\alpha]$. Multiline queues can be alternatively described as binary matrices with finite support, with row sums corresponding to partitions. It will be useful to work in the larger set of unrestricted binary matrices with finite support:
\begin{definition}
    A \textbf{generalized multiline queue} $M$ is an $L\times n$ matrix with entries in $\{0,1\}$, represented as a tuple $M =(B_1,B_2,...,B_L)$, where $B_i$ is the set of columns with 1 in row $i$. Let $\mathcal{M}_2(L,n)$ denote the set of all generalized multiline queues.
\end{definition}

\begin{remark}The FM algorithm can be extended to generalized multiline queues as well, in order to associate a weak type to each generalized multiline queue \cite{AAMP11, AGS20} and a (generalized) major index \cite[Definition 4.29]{MV24} to each one. However, we will not be using these properties and thus refer the interested reader to the full treatment in \cite{AGS20} and \cite{MV24} (see also \cite{MS25}), where these are referred to as \emph{twisted multiline queues}. In particular, the crystal operators on multiline queues that we describe in \cref{sec:crystals} act on this larger set of objects. 
\end{remark}

\subsection{Schur functions and their refinements}\label{sec:quasischur}

The \textbf{Schur basis} $\{s_\lambda(X)\}$ for $\Sym$ is the unique basis that is orthogonal with respect to the Hall inner product, and upper triangular with respect to the $\{m_\lambda(X)\}$ basis. Classically, it is defined combinatorially as the generating function of \textbf{semistandard Young tableaux (SSYT)}.

An SSYT $T$ of shape $\lambda$ is a filling of the diagram of $\lambda$ with positive integers such that the entries weakly increase from left to right along rows and strictly increase from bottom to top along columns.  The \textbf{content} of a semistandard Young tableau $T$ is the composition $c(T) = (c_1,c_2,c_3,\ldots)$ where $c_i$ counts the occurrences of $i$ in $T$. Let $\SSYT(\lambda)$ denote the set of all semistandard Young tableaux of shape $\lambda$. Let $\SSYT(\lambda,\alpha)$ denote the subset with content $\alpha$. 

We represent the content of a tableau $T$ as the monomial $x^T := x^{c(T)}$. The \textbf{Schur function} $s_\lambda(X)$ is then defined as
    \begin{align*}
        s_\lambda = \sum_{T \in \SSYT(\lambda)}x^T.
    \end{align*}
A weight-preserving bijection between $\SSYT(\lambda)$ and $\NMLQ_\lambda$ was given in \cite[Theorem~A.4]{MV24}. Hence, we may equivalently define Schur functions as the generating function of non-wrapping multiline queues:
    \begin{align*}
        s_\lambda = \sum_{M \in \NMLQ_\lambda}x^M.
    \end{align*}
    
The \textbf{Demazure atoms} $\cA_\alpha$ (letting $\alpha$ range over all compositions) form a basis for the full polynomial ring $\QQ[X]$, and are a nonsymmetric refinement of the Schur basis. These were first studied by Lascoux and Sch\"utzenberger \cite{LS90} and were subsequently given a combinatorial description by Mason \cite{Mas09}. In \cite{Mas09}, Mason gives a description of these polynomials as a generating function of \textbf{semistandard augmented fillings (SSAF)}. There exists a weight-preserving bijection between semistandard augmented fillings of shape $\alpha$ and non-wrapping multiline queues of type $\alpha$ (see \cref{sec:comparison}). Thus, for any weak composition $\alpha$ such that $\sort(\alpha) = \lambda$, the \textbf{Demazure atom} $\cA_\alpha(X)$ is defined as 
    \begin{align*}
        \cA_\alpha(X) = \sum_{M \in \NMLQ[\alpha]}  x^M.
    \end{align*}

Between the Schur functions and the Demazure atoms are the \textbf{quasisymmetric Schur functions} $\QS_\gamma(X)$. Originally introduced by Haglund, Luoto, Mason, and van Willigenburg in \cite{HLMvW11}, they form a basis for $\QSym$ and are a quasisymmetric refinement of the Schur basis. They arise as a sum of Demazure atoms and inherit a combinatorial description as a sum over multiline queues of a prescribed strong type. For any strong composition $\gamma$, the quasisymmetric Schur function $\QS_\gamma(X)$ is defined as
    \begin{align*}
        \QS_\gamma(X) = \sum_{\alpha: \alpha^+ = \gamma} \cA_\alpha(X) = \sum_{\alpha: \alpha^+ = \gamma} \sum_{M \in \NMLQ[\alpha]} x^M = \sum_{M \in \SNMLQ[\gamma]} x^M.
    \end{align*}

One benefit of these multiline queue formulas is that it is immediate that the Demazure atoms and quasisymmetric Schur functions are nonsymmetric and quasisymmetric refinements of the Schur functions respectively. 

There is a similar story for $q$-generalizations of these bases, which we will see in the next section.

\subsection{$q$-Whittaker polynomials and their quasisymmetric and nonsymmetric refinements}\label{sec:qWhittaker}

The nonsymmetric Macdonald polynomials $E_\alpha(X;q,t)$, indexed by compositions $\alpha$, form a basis for the polynomial ring with parameters $q,t$. The number of variables in $E_\alpha(X;q,t)$ is determined by the number of parts of $\alpha$, so when we write $E_\alpha(X;q,t)$, we really mean $E_\alpha(x_1,\ldots,x_{m};q,t)$, where $m$ is the number of parts in $\alpha$. The $E_\alpha(X;q,t)$'s generalize $P_\lambda(X;q,t)$ in the sense that for any $\alpha$ such that $\lambda=\sort(\alpha)$ and any $m\in\NN$, we have
\[
P_{\lambda}(x_1,\ldots,x_m;q,t)=E_{0^m\alpha}(x_1,\ldots,x_m,0,\ldots,0;q,t).
\]
Moreover, the $q=t=\infty$ specialization yields the Demazure atom: $E_\alpha(X;\infty,\infty)=\cA_\alpha(X)$, proved combinatorially by Mason \cite{Mas09}. 

$P_\lambda(X;q,t)$ decomposes into nonsymmetric components in the $E_\alpha$ basis as
\[
P_\lambda(x_1,\ldots,x_n;q,t)=\sum_{\substack{\alpha=(\alpha_1,\ldots,\alpha_n)\\\sort(\alpha)=\lambda}}c_{\lambda\alpha}(q,t)E_\alpha(x_1,\ldots,x_n;q,t),\qquad c_{\lambda\alpha}(q,t)\in\QQ(q,t).
\]
Replacing $(q,t)\mapsto (q^{-1},t^{-1})$ and setting $q=t=0$, the left hand side becomes $s_\lambda(X)$, the coefficients in the sum become $c_{\lambda\alpha}(\infty,\infty)=1$, and the above decomposition reduces to the standard nonsymmetric refinement of Schur functions
\[
s_\lambda(X)=\sum_{\alpha:\sort(\alpha)=\lambda}\cA_\alpha(X)\,.
\]
The \textbf{permuted basement Macdonald polynomials} $E_\alpha^\sigma(X;q,t)$ are a generalization of the $E_\alpha$'s, additionally indexed by a permutation $\sigma\in\mathfrak{S}_m$ of the parts in $\alpha$ (assuming $\alpha$ has $m$ parts). Their properties were studied in \cite{Fer11,Ale19}, with a combinatorial formula for them given as a certain restriction of the original Haglund--Haiman--Loehr formulas in \cite{HHL08}. 

In \cite{CMW22}, a new formula for $P_\lambda(X;q,t)$ was found as a sum over certain enhanced multiline queues introduced by \cite{Mar20} (the multiline queues presented in \cref{sec:MLQ} are the $t=0$ version of the latter). This formula was found following the discovery in \cite{CdGW15} that the specialization at $x_1=\cdots=x_n=q=1$ of $P_\lambda(x_1,\ldots,x_n;1,q)$ is the \textbf{partition function} of the asymmetric simple exclusion process (ASEP) of type $\lambda$ on a circular lattice with $n$ sites. Moreover, the $x_1=\cdots=x_n=q=1$ specialization of the generating function of the multiline queues of \cite{CMW22}, when restricted to a given weak type $\alpha$, compute the stationary probability of the state $\alpha$ of the ASEP of type $(\lambda,n)$. The corresponding polynomials $f_\alpha(X;q,t)$-- obtained as generating functions of the enhanced multiline queues with weak type $\alpha$-- were hence named the \textbf{ASEP polynomials}. Importantly, this connection showed that the set of polynomials $\{f_\alpha(X;q,t)\}$ yield an honest refinement of $P_\lambda(X;q,t)$ \cite[Lemma 3]{CdGW15}:
\[
P_\lambda(x_1,\ldots,x_n;q,t)=\sum_{\substack{\alpha=(\alpha_1,\ldots,\alpha_n)\\\sort(\alpha)=\lambda}} f_\alpha(x_1,\ldots,x_n;q,t).
\]
(Note that a version of this expression can also be found in \cite{Mac96} and \cite[3.3.14]{HHL08}.) 

We obtain elegant formulas for the $q$-Whittaker polynomials $P_\lambda(X;q,0)$ and their nonsymmetric refinements $f_\alpha(X;q,0)$.
\begin{equation}\label{eq:P and F}
    P_\lambda(X;q,0)=\sum_{M\in\MLQ_\lambda}x^Mq^{\maj(M)},\qquad f_\alpha(X;q,t)=\sum_{M\in\MLQ[\alpha]}x^Mq^{\maj(M)}.
\end{equation}

In our convention, $E_\alpha(X;q,t)=E_{\rev(\alpha)}^{w_0}(X;q,t)$, where $w_0$ is the longest permutation and $\rev(\alpha)$ is reversal of $\alpha$; the rest of the $E_\alpha^\sigma$'s are obtained by sequentially applying \textbf{Demazure--Lusztig} operators to $E_\alpha$. It follows that since the ASEP polynomials satisfy the qKZ relations, the $f_\alpha$'s coincide with the nonsymmetric permuted basement Macdonald polynomials with lower index given by the anti-partition $\inc(\alpha)$. More precisely,
\[
f_\alpha(X;q,t)=E_{\inc(\alpha)}^\sigma(X;q,t),
\]
where $\sigma$ is any permutation that sorts $\alpha$ into $\inc(\alpha)$, namely $\alpha_{\sigma(1)}\leq \cdots\leq \alpha_{\sigma(n)}$.

Corteel, Haglund, Mason, Williams, and the first author showed in \cite{CHMMW22} that $f_\alpha(X;0,0)=\cA_\alpha(X)$, making the decomposition of $P_\lambda$ into $f_\alpha$'s a direct parallel to the Schur decomposition into Demazure atoms. This parallel made it natural to define the \textbf{quasisymmetric Macdonald polynomial} $G_\gamma(X;q,t)$, indexed by strong compositions, as the quasisymmetric refinement lying in between the $f_\alpha$'s and the $P_\lambda$'s:
\begin{definition}[{\cite{CHMMW22}}]
For a strong composition $\gamma$, define the quasisymmetric Macdonald polynomial $G_\gamma(X;q,t)$ by
\[G_\gamma(X;q,t):=\sum_{\alpha:\alpha^+=\gamma}f_\alpha(X;q,t).
\]
\end{definition}
Combinatorially, $G_\gamma(X;q,t)$ can be obtained as generating function of a restriction by strong type $\gamma$ of the set of enhanced multiline queues corresponding to $P_\lambda(X;q,t)$ in \cite{CMW22}. Thus at $t=0$, we obtain the following multiline queue formula,
\begin{equation}
    G_\gamma(X;q,0)=\sum_{M\in\SMLQ[\gamma]}x^Mq^{\maj(M)},
\end{equation}
and at $q=t=0$ we recover the quasisymmetric Schur functions:
\[G_\gamma(X;0,0)=\QS(X).\]

\subsection{Crystal operators on multiline queues}\label{sec:crystalops}

Considering a (generalized) multiline queue as a binary matrix, there are two natural tensor product realizations: by its rows or by its columns. Each admits a type $A^{(1)}$ Kirillov--Reshetikhin (KR) crystal structure, which corresponds to acting by crystal operators on the columns of the multiline queue in the first case, or on its rows in the second. We refer to these as the \textbf{column} and \textbf{row} operators. 

We first define the classical crystal operators on words. Kashiwara first described these in \cite{Kashiwara:1989md}, where he describes a basis for the $q$ analogue of the universal enveloping algebra $U_q(\mathfrak{g})$. The operators defined here are the root operators in the setting $\mathfrak{g}=\mathfrak{sl}_n$ and $q=0$. The column and row crystal operators on multiline queues correspond to the action of the classical crystal operators on the column word and the row word of the multiline queue, respectively. Using these, we will define a double crystal structure on $\mathcal{M}_2(L,n)$.

For a positive integer $n$, let $\Wc=[n]^*$ be the set of all words in the alphabet $[n]$.

\begin{definition}[Bracketing rule]
    Let $w\in \Wc$. For $1\leq i\leq n-1$, let $\theta_i(w)$ be the word in the letters $\{(,)\}$ formed by scanning $w$ from left to right, and replacing each $i$ with ")" and each $i+1$ with "(". Next, we iteratively match each opening parenthesis ( with a closing parenthesis ) to its right whenever there are no unmatched parentheses in between them, continuing until no more pairs can be formed. We then call the $i$'s and $i+1$'s corresponding to the matched (\emph{resp.}~unmatched) parentheses in $\theta_i(w)$ matched (\emph{resp.}~unmatched) letters in $w$. This procedure for pairing $i$'s and $i+1$'s is called the \textbf{bracketing} or $\textbf{signature}$ rule. 
\end{definition}

\begin{definition}[Raising and lowering operators]\label{def:wordoperators}
    We define $E_i:\Wc\to\Wc$ as follows: if $\theta_i(w)$ has no unmatched $i+1$'s then $E_i(w) = w$, if it does, then $E_i(w)$ is the word obtained by changing the leftmost unmatched $i+1$ in $w$ to $i$. We define $F_i:\Wc\to \Wc$ as follows: if $\theta_i(w)$ has no unmatched $i$'s then $F_i(w) = w$, if it does, then $F_i(w)$ is the word obtained by changing the rightmost unmatched $i$ in $w$ to $i+1$. We also define $E_i^\star(w)$ to be the word where we change all the unmatched $i+1$'s in $w$ to $i$.
\end{definition}

\begin{example}
    Let $w = 2211323113323$ and $i=2$. We use $\textunderscore$ to denote the entries which are not $i$ and $i+1$, and a hat/red colour to denote unmatched parentheses. We get $\theta_2(w)$:
    \[
    \theta_2(w)= \tcr{\hat)}\;\tcr{\hat)}\;\textunderscore\;\textunderscore\;(\;)\;\tcr{\hat(}\;\textunderscore\;\textunderscore\;\tcr{\hat(}\;(\;)\;\tcr{\hat(}
    \]
    The numbers corresponding to unmatched parentheses are hatted here: $w= \hat{2}\;\hat{2}\;1\;1\;3\;2\;\hat{3}\;1\;1\;\hat{3}\;3\;2\;\hat{3}$. Therefore we have the following, with changes underlined/red:
    \begin{align*}
        E_2(w) &= \hat{2}\;\hat{2}\;1\;1\;3\;2\;{\color{red}\underline{\hat2}}\;1\;1\;\hat{3}\;3\;2\;\hat{3}\\
        F_2(w)&=\hat{2}\;{\color{red}\underline{\hat3}}\;1\;1\;3\;2\;\hat{3}\;1\;1\;\hat{3}\;3\;2\;\hat{3}\\
        E_2^\star(w)&=\hat{2}\;\hat{2}\;1\;1\;3\;2\;{\color{red}\underline{\hat2}}\;1\;1\;{\color{red}\underline{\hat2}}\;3\;2\;\textcolor{red}{\underline{\hat2}}\,.
    \end{align*}
\end{example}

\begin{definition}[Row dropping operator]
    Define the \emph{dropping operator} $\eD_i$ to act on a (generalized) multiline queue $M$ by moving the ball corresponding to the leftmost unmatched entry $i+1$ in $\theta_i(cw(M))$ from row $i+1$ to row $i$; if all $i+1$'s are matched, $\eD_i(M)=M$.   We define $\eDs_i(M)\coloneqq (\eD_i)^{\circ j}(M)$, where $j$ is maximal such that $\eD_i$ acts nontrivially. In other words, $\eDs_i(M)$ is the generalized multiline queue obtained by dropping all the unmatched balls in row $i+1$ to row $i$.
\end{definition}

Similarly, the \emph{row lifting operator} $\fU_i$ acts on $M$ by moving the ball corresponding to the rightmost unmatched entry $i$ in $\theta_i(cw(M))$ from row $i$ to row $i+1$; if there is no such unmatched entry, $\fU_i(M)=M$. We will not use the operators $\fU_i$ in this paper, and direct the reader to \cite{MV24} for further details.

\begin{remark}\label{rem:unmatched}
    While the words "paired" and "matched" are closely related, here we assign them strictly different meanings. We use the words "(un)paired" to refer to balls that are (un)paired at any given step of the FM algorithm. In contrast, we use the words "(un)matched" to refer to balls in the multiline queue $M$ corresponding to the (un)matched entries in the bracket word $\theta_i(w(M))$ according to the usual bracketing rule (here $w(M)$ is either $\rw(M)$ or $\cw(M)$). In particular, \emph{paired} balls are by definition in the same strand and have the same label, while \emph{matched} balls are determined solely from the given bracketing word and ignore the data of the FM algorithm and the labeling $L(M)$.   
\end{remark}

\begin{definition}[Column operators]
We similarly define the \emph{column operators} $\eL_i$ and $\fR_i$, which act on the generalized multiline queue $M$ by moving the ball corresponding to the leftmost unmatched $i+1$ in $\theta_i(\rw(M))$ from column $i+1$ to column $i$, and the rightmost unmatched $i$ in $\theta_i(\rw(M))$ from column $i$ to column $i+1$, respectively. We use the notation $g^\leftrightarrow_i$ to represent an arbitrary column operator $\eL_i$ or $\fR_i$. 
\end{definition}

The operators we have defined can equivalently be characterized as follows:
\[\rw(\eL_i(M))=E_i(\rw(M)),\quad \rw(\fR_i(M))=F_i(\rw(M)),\quad \cw(\eD_i(M))=E_i(\cw(M)).
\]

While column operators are equivalent to applying row operators to a $90^\circ$ rotation of the multiline queue, in our setting they play very different roles: column operators will define the edges of a  multiline queue graded crystal graph, whereas the row dropping operators will project a multiline queue onto the 0-graded component (see \cref{def:graded crystal}).

\begin{definition}[Multiline queue crystal graph]
    The set $\MLQ_\lambda$ (or any subset of it) can be given a structure of a directed graph, where the vertices of the graph are the elements of the set $\MLQ_\lambda$, and we have a directed edge from multiline queue $M$ to $M'$ if and only if there is an $i$ such that $\fR_iM = M'$. For two multiline queues $M, M'\in \MLQ_\lambda$, a \textbf{path} from $M$ to $M'$ is sequence of mutliline queues $M=M_1,M_2,...,M_k=M'$, such that for all $j$, there is an $i$, such that $M_{j+1} = g^\leftrightarrow_i M_j$ and $M_j\not=M_h$ for all $j\not=h$.
\end{definition}
\begin{remark}
    Note that the paths on the crystal graph can traverse the directed edge opposite to the direction of the edge. In such cases, we are using $\eL_i$ instead of $\fR_i$ to traverse the edge.
\end{remark}
In \cref{fig:big figure}, we give an example of the multiline queue crystal graph $\NMLQ_{(3,3,1,1)}$.

\begin{example}
    Take the multiline queue $M$ from \cref{ex:FM algorithm}, which has $\rw(M)=53.432.5431$ and $\cw(M)=1.2.321.21.31$. The following bracketing words show unmatched brackets hatted/red:
    \begin{align*}
        \theta_2(rw(M))= \textunderscore\textcolor{red}{\hat(}\textunderscore(\;)\textunderscore\textunderscore\textcolor{red}{\hat(}\textunderscore\,,\quad
        \theta_3(rw(M))= \textunderscore\textcolor{red}{\hat)}\;(\;)\textunderscore\textunderscore(\;)\textunderscore\,,\quad
        \theta_2(cw(M))=\textunderscore\textcolor{red}{\hat)}\;(\;)\textunderscore\textcolor{red}{\hat)}\textunderscore\textcolor{red}{\hat(}\textunderscore\,.
    \end{align*}
    Therefore, $\fR_3(M), \eL_2(M)$, and $\eD_2(M)$ are shown as follows, with the balls that were moved by the corresponding operators highlighted:
    \begin{center}
\resizebox{!}{1.5cm}{
\begin{tikzpicture}[scale=0.7]
\def \w{1};
\def \h{1};
\def \r{0.25};
%

%
%
\begin{scope}[xshift=-9cm]    
\node at (-1.5,1.5) {\large $\eL_2(M)=$};
\draw[gray!50,thin,step=\w] (0,0) grid (5*\w,3*\h);
\foreach \xx\yy\i\c in {1/2/3/white,4/2/3/white,1/1/3/\highlight,2/1/3/white,3/1/2/white,0/0/1/white,2/0/3/white,3/0/3/white,4/0/2/white}
    {
    \draw[fill=\c](\w*.5+\w*\xx,\h*.5+\h*\yy) circle (\r cm);
    \node at (\w*.5+\w*\xx,\h*.5+\h*\yy) {\i};
    }
    
\draw[blue] (\w*1.5,\h*2.5-\r)--(\w*1.5,\h*1.5+\r);
\draw[black!50!green,-stealth] (\w*4.5,\h*2.5-\r)--(\w*4.5,\h*2.0)--(\w*5.3,\h*2.0);
\draw[black!50!green] (-.2,\h*2.0)--(\w*2.5,\h*2.0)--(\w*2.5,\h*1.5+\r);
    
\draw[black!50!green] (\w*2.5,\h*1.5-\r)--(\w*2.5,\h*1.02)--(\w*3.5,\h*1.02)--(\w*3.5,\h*.5+\r);
\draw[blue] (\w*1.5,\h*1.5-\r)--(\w*1.5,\h*.85)--(\w*2.5,\h*.85)--(\w*2.5,\h*.5+\r);
\draw[red] (\w*3.5,\h*1.5-\r)--(\w*3.5,\h*1.15)--(\w*4.5,\h*1.15)--(\w*4.5,\h*.5+\r);
\end{scope}

    %
    %
    %
\begin{scope}[xshift=0cm]
\node at (-1.5,1.5) {\large $\fR_3(M)=$};
\draw[gray!50,thin,step=\w] (0,0) grid (5*\w,3*\h);
\foreach \xx\yy\i\c in {3/2/3/\highlight,4/2/3/white,1/1/3/white,2/1/2/white,3/1/3/white,0/0/1/white,2/0/3/white,3/0/3/white,4/0/2/white}
    {
    \draw[fill=\c](\w*.5+\w*\xx,\h*.5+\h*\yy) circle (\r cm);
    \node at (\w*.5+\w*\xx,\h*.5+\h*\yy) {\i};
    }
    
\draw[black!50!green] (\w*3.5,\h*2.5-\r)--(\w*3.5,\h*1.5+\r);
\draw[blue,-stealth] (\w*4.5,\h*2.5-\r)--(\w*4.5,\h*2.0)--(\w*5.3,\h*2.0);
\draw[blue] (-.2,\h*2.0)--(\w*1.5,\h*2.0)--(\w*1.5,\h*1.5+\r);
    
\draw[black!50!green] (\w*3.5,\h*1.5-\r)--(\w*3.5,\h*.5+\r);
\draw[blue] (\w*1.5,\h*1.5-\r)--(\w*1.5,\h*.95)--(\w*2.5,\h*.95)--(\w*2.5,\h*.5+\r);
\draw[red] (\w*2.5,\h*1.5-\r)--(\w*2.5,\h*1.15)--(\w*4.5,\h*1.15)--(\w*4.5,\h*.5+\r);
\end{scope}
%
%
\begin{scope}[xshift=0cm]
\node at (-1+8.5,1.5) {\large $\eD_2(M)=$};
\draw[gray!50,thin,step=\w] (9,0) grid (5*\w+9,3*\h);
\foreach \xx\yy\i\c in {2/2/3/white,1/1/2/white,2/1/3/white,3/1/2/white,4/1/2/\highlight,0/0/2/white,2/0/3/white,3/0/2/white,4/0/2/white}
    {
    \draw[fill=\c](9+\w*.5+\w*\xx,\h*.5+\h*\yy) circle (\r cm);
    \node at (9+\w*.5+\w*\xx,\h*.5+\h*\yy) {\i};
    }
\draw[black!50!green] (9+\w*2.5,\h*2.5-\r)--(9+\w*2.5,\h*1.5+\r);

\draw[black!50!green] (9+\w*2.5,\h*1.5-\r)--(9+\w*2.5,\h*0.5+\r);
\draw[blue,-stealth] (9+\w*4.5,\h*1.5-\r)--(9+\w*4.5,\h*1.15)--(9+\w*5.3,\h*1.15);
\draw[red] (9+\w*3.5,\h*1.5-\r)--(9+\w*3.5,\h*1.05)--(9+\w*4.5,\h*1.05)--(9+\w*4.5,\h*.5+\r);
\draw[red] (9+\w*1.5,\h*1.5-\r)--(9+\w*1.5,\h*0.9)--(9+\w*3.5,\h*0.9)--(9+\w*3.5,\h*.5+\r);
\draw[blue] (9+-.2,\h*1.0)--(9+\w*0.5,\h*1.0)--(9+\w*0.5,\h*0.5+\r);
 \end{scope}
\end{tikzpicture}
}
\end{center}
On the other hand, all brackets are matched in $\theta_4(\rw(M))=(\;\textunderscore\;)\;\textunderscore\;\textunderscore\;(\;)\;\textunderscore\;\textunderscore\;$, so \[\eL_4(M)=\fR_4(M)=M.\] 
\end{example}

The \textbf{collapsing map} was introduced as an analogue for the RSK correspondence on multiline queues in \cite{MV24} to prove the following bijection:
\begin{equation}\label{eqn:collapsebijection}
    \MLQ_\lambda\simeq \bigcup_{\mu \leq \lambda}\NMLQ_\mu\times \SSYT(\mu', \lambda')
\end{equation}
In analogy to the classical RSK bijection, which takes a binary matrix as input and outputs a pair of semistandard tableaux, the collapsing map takes as input a multiline queue of shape $\lambda$, and outputs a non-wrapping multiline queue of shape $\mu\leq \lambda$, together with a semistandard tableau serving as the recording object, to make the operation invertible. It is convenient to define the collapsing map as a composition of dropping operators. 

For $1\leq a\leq b$, define the composition of a contiguous sequence of operators (applied from right to left):
\begin{equation*}
    \eDs_{[b,a]}\coloneq 
        \eDs_a\cdots \eDs_{b-1}\eDs_b,\qquad \eDs_{[a,b]}\coloneq
        \eDs_b\cdots \eDs_{a+1}\eDs_a\,. 
\end{equation*}
For example,  
\[\eDs_{[4,1]}=\eDs_1\eDs_2\eDs_3\eDs_4,\qquad \eDs_{[1,4]}=\eDs_4\eDs_3\eDs_2\eDs_1\,.
\]

\begin{definition}[Collapsing map]\label{def:colmap}
    Given a multiline queue $M=(B_1,\ldots,B_L)$ of shape $\lambda$, we recursively construct a non-wrapping multiline queue $\rho_N(M)$
and a recording tableau $\rho_Q(M)$. 
Initialize $M_1 = M$, and let $Q_1$ be the tableau with one row containing $|B_1|$ boxes, all filled with the entry 1. Now, assuming $(M_i, Q_i)$ has been obtained for some $1\leq i\leq L-1$, define $M_{i+1}= \eDs_{[i,1]} M_i$, and write
\[M_i = (B_1^{(i)}, B_2^{(i)},\ldots, B_L^{(i)}),\qquad M_{i+1} = (B_1^{(i+1)}, B_2^{(i+1)},\ldots, B_L^{(i+1)}).\] 
We obtain $Q_{i+1}$ from $Q_i$ by adding $|B_j^{(i+1)}|-|B_j^{(i)}|$ boxes to row  $j$ for $1\leq j<i+1$, and adding $|B_{i+1}^{(i+1)}|$ boxes to row $i+1$, and filling all the newly added boxes with the entry $i+1$. 

Finally, we define 
\[\rho_N(M)= M_{L},\qquad \rho_Q(M) = Q_{L}.
\]
\end{definition}

The collapsing operator $\rho_N:\MLQ_\lambda\mapsto\NMLQ$, for $\lambda$ with largest part $L$, can equivalently be written as
\begin{equation}
    \label{eq:rho op}
    \rho_N=\eDs_{[L-1,1]}\cdots\eDs_{[2,1]}\eDs_{[1,1]}.
\end{equation}
When the context is clear, we may abuse notation and write $\rho(M)$ for $\rho_N(M)$.

\begin{remark}
    The objects appearing in the intermediate steps of the collapsing map are generalized multiline queues in $\mathcal{M}_{(2)}$. However, the restriction of $M_i$ to rows $\{1,\ldots,i+1\}$ is by construction non-wrapping. In particular, the final output $\rho_N(M)$ is a non-wrapping multiline queue.
\end{remark}

\begin{example}\label{ex:collapsing}
    Let $M= (\{1,3,4,5\}, \{2,3,4,6\},\{2,3,4,5\},\{1,4,6\},\{3,5\})$. Below, we show the image of $M$ under the collapsing operator.
    \begin{center}
\resizebox{!}{3.5cm}{
\begin{tikzpicture}[scale=0.7]
\def \w{1};
\def \h{1};
\def \r{0.25};

\draw[gray!50,thin,step=\w] (0,0) grid (6*\w,5*\h);
\foreach \xx\yy\i\c in {2/4/5/white,4/4/5/white,0/3/4/white,3/3/5/white,5/3/5/white,1/2/5/white,2/2/4/white,3/2/5/white,4/2/3/white, 1/1/5/white,2/1/4/white,3/1/5/white,5/1/3/white, 0/0/3/white,2/0/5/white, 3/0/5/white, 4/0/4/white}
    {
    \draw[fill=\c](\w*.5+\w*\xx,\h*.5+\h*\yy) circle (\r cm);
    \node at (\w*.5+\w*\xx,\h*.5+\h*\yy) {\i};
    }
    
\draw[black!50!green](\w*2.5,\h*4.5-\r)--(\w*2.5,\h*4)--(\w*3.5,\h*4)--(\w*3.5,\h*3.5+\r);
\draw[black!50!green](\w*4.5,\h*4.5-\r)--(\w*4.5,\h*4)--(\w*5.5,\h*4)--(\w*5.5,\h*3.5+\r);

\draw[black!50!green](\w*3.5,\h*3.5-\r)--(\w*3.5,\h*2.5+\r);
\draw[black!50!green,-stealth](\w*5.5,\h*3.5-\r)--(\w*5.5,\h*3)--(\w*6.3,\h*3);
\draw[black!50!green](-0.2,\h*2.9)--(\w*1.5,\h*2.9)--(\w*1.5,\h*2.5+\r);
\draw[blue](0.5*\w, \h*3.5-\r)--(0.5\w, \h*3.1)--(2.5*\w, \h*3.1)--(2.5*\w,\h*2.5+\r);

\draw[black!50!green](\w*1.5,\h*2.5-\r)--(\w*1.5,\h*1.5+\r);
\draw[blue](\w*2.5,\h*2.5-\r)--(\w*2.5,\h*1.5+\r);
\draw[black!50!green](\w*3.5,\h*2.5-\r)--(\w*3.5,\h*1.5+\r);
\draw[red](4.5*\w, \h*2.5-\r)--(4.5\w, \h*2)--(5.5*\w, \h*2)--(5.5*\w,\h*1.5+\r);

\draw[black!50!green] (\w*3.5,\h*1.5-\r)--(\w*3.5,\h*.5+\r);
\draw[black!50!green] (\w*1.5,\h*1.5-\r)--(\w*1.5,\h*.95)--(\w*2.5,\h*.95)--(\w*2.5,\h*.5+\r);
\draw[blue](\w*2.5,\h*1.5-\r)--(\w*2.5,\h*1.05)--(\w*4.5,\h*1.05)--(\w*4.5,\h*0.5+\r);
\draw[red,-stealth] (\w*5.5,\h*1.5-\r)--(\w*5.5,\h*1)--(\w*6.3,\h*1);
\draw[red](-0.2,\h*1)--(0.5*\w, \h)--(0.5*\w, \h*0.5+\r);
\node at (3,-1) {$\lambda=(5,5,4,3)$};
\node at (3,-2) {$\maj(M)=4$};
\draw[thick,->] (8,2)--(10,2) node[midway,above] {\large $\rho$};

\begin{scope}[xshift=13cm]

\draw[gray!50,thin,step=\w] (0,0) grid (6*\w,5*\h);
\foreach \xx\yy\i\c in {2/4/5/white,0/3/4/white,3/3/5/white,1/2/4/white,2/2/3/white,3/2/5/white,4/2/3/white, 4/1/3/white,2/1/4/white,3/1/5/white,5/1/3/white, 0/0/1/white,1/0/1/white,2/0/4/white, 3/0/5/white, 4/0/3/white,5/0/3/white}
    {
    \draw[fill=\c](\w*.5+\w*\xx,\h*.5+\h*\yy) circle (\r cm);
    \node at (\w*.5+\w*\xx,\h*.5+\h*\yy) {\i};
    }
    
\draw[black!50!green](\w*2.5,\h*4.5-\r)--(\w*2.5,\h*4)--(\w*3.5,\h*4)--(\w*3.5,\h*3.5+\r);

\draw[black!50!green](\w*3.5,\h*3.5-\r)--(\w*3.5,\h*2.5+\r);
\draw[blue](0.5*\w, \h*3.5-\r)--(0.5\w, \h*3.1)--(1.5*\w, \h*3.1)--(1.5*\w,\h*2.5+\r);

\draw[black!50!green](\w*3.5,\h*2.5-\r)--(\w*3.5,\h*1.5+\r);
\draw[blue](\w*1.5,\h*2.5-\r)--(\w*1.5,\h*1.9)--(\w*2.5,\h*1.9)--(\w*2.5,\h*1.5+\r);
\draw[red](2.5*\w, \h*2.5-\r)--(2.5*\w,\h*2)--(4.5*\w,\h*2)--(4.5*\w,\h*1.5+\r);
\draw[red](4.5*\w, \h*2.5-\r)--(4.5\w, \h*2.1)--(5.5*\w, \h*2.1)--(5.5*\w,\h*1.5+\r);

\draw[blue] (\w*2.5,\h*1.5-\r)--(\w*2.5,\h*.5+\r);    
\draw[black!50!green] (\w*3.5,\h*1.5-\r)--(\w*3.5,\h*.5+\r);
\draw[red] (\w*4.5,\h*1.5-\r)--(\w*4.5,\h*.5+\r);
\draw[red] (\w*5.5,\h*1.5-\r)--(\w*5.5,\h*.5+\r);
\node at (3,-1) {$\mu=(5,4,3,3,1,1)$};
\node at (3,-2) {$\maj(\rho_N(M))=0$};


\cell{0}{10}{1};
\cell{0}{11}{1};
\cell{0}{12}{1};
\cell{0}{13}{1};
\cell{0}{14}{2};
\cell{0}{15}{4};

\cell{-1}{10}{2};
\cell{-1}{11}{2};
\cell{-1}{12}{2};
\cell{-1}{13}{3};

\cell{-2}{10}{3};
\cell{-2}{11}{3};
\cell{-2}{12}{3};
\cell{-2}{13}{5};

\cell{-3}{10}{4};
\cell{-3}{11}{4};

\cell{-4}{10}{5};

\end{scope}
\draw (29,-1) arc (-30:30:6);
\draw (12,-1) arc (210:150:6);
\node at (20.5,1) {\Huge ,};
\end{tikzpicture}
}

\end{center}

We also show the partially constructed pairs $(M_r,Q_r)$ in the collapsing procedure at each step in \cref{fig:collapsing steps}.
\begin{figure}[h]
    \centering
    \begin{tikzpicture}[scale=0.3]        
    \def \w{1};
        \def \h{1};
        \def \r{0.25};
        \node at (-1,6) {$r$};
        \node at (-1,2.5) {$M_r$};
        \node at (-1,-2) {$Q_r$};
        \foreach \i in {0,...,5}
        {
        \node at (2.5+7*\i,6) {\i};
        }
        \begin{scope}[xshift=0cm]
        \foreach \i in {0,...,5}
        {
        \draw[gray!50] (0,\i*\h)--(\w*6,\i*\h);
        }
        \foreach \i in {0,...,6}
        {
        \draw[gray!50] (\w*\i,0)--(\w*\i,5*\h);
        }
        \foreach \xx\yy in {0/0,2/0,3/0,4/0,1/1,2/1,3/1,5/1,1/2,2/2,3/2,4/2,0/3,3/3,5/3,2/4,4/4}
        {
        \fill[red!30] (\w*.5+\w*\xx,\h*.5+\h*\yy) circle (\r cm);
        }
        \node at (2.5,-2){\scalebox{0.6}{$\emptyset$}};
        \end{scope}
    
        
        \begin{scope}[xshift=7cm]
        \foreach \i in {0,...,5}
        {
        \draw[gray!50] (0,\i*\h)--(\w*6,\i*\h);
        }
        \foreach \i in {0,...,6}
        {
        \draw[gray!50] (\w*\i,0)--(\w*\i,5*\h);
        }
        \foreach \xx\yy in {1/1,2/1,3/1,5/1,1/2,2/2,3/2,4/2,0/3,3/3,5/3,2/4,4/4}
        {
        \fill[red!30] (\w*.5+\w*\xx,\h*.5+\h*\yy) circle (\r cm);
        }
        \foreach \xx\yy in {0/0,2/0,3/0,4/0}
        {
        \fill[black] (\w*.5+\w*\xx,\h*.5+\h*\yy) circle (\r cm);
        }
        \node at (2.5,-2) {\scalebox{0.6}{\tableau{1&1&1&1\\}}};
        \end{scope}
    
        
        \begin{scope}[xshift=14cm]
        \foreach \i in {0,...,5}
        {
        \draw[gray!50] (0,\i*\h)--(\w*6,\i*\h);
        }
        \foreach \i in {0,...,6}
        {
        \draw[gray!50] (\w*\i,0)--(\w*\i,5*\h);
        }
        \foreach \xx\yy in {1/2,2/2,3/2,4/2,0/3,3/3,5/3,2/4,4/4}
        {
        \fill[red!30] (\w*.5+\w*\xx,\h*.5+\h*\yy) circle (\r cm);
        }
        \foreach \xx\yy in {0/0,2/0,3/0,4/0,1/1,2/1,3/1,5/0}
        {
        \fill[black] (\w*.5+\w*\xx,\h*.5+\h*\yy) circle (\r cm);
        }
        \node at (2.5,-2) {\scalebox{0.6}{\tableau{2&2&2\\1&1&1&1&2}}};
        \draw[blue](1.5, 1.5-\r)--(1.5,0.8)--(2.5,0.8)--(2.5,0.5+\r);
        \draw[black!50!green](2.5, 1.5-\r)--(2.5,0.9)--(3.5,0.9)--(3.5,0.5+\r);
        \draw[blue](3.5, 1.5-\r)--(3.5,1)--(4.5,1)--(4.5,0.5+\r);
        \end{scope}
    
        
        \begin{scope}[xshift=21cm]
        \foreach \i in {0,...,5}
        {
        \draw[gray!50] (0,\i*\h)--(\w*6,\i*\h);
        }
        \foreach \i in {0,...,6}
        {
        \draw[gray!50] (\w*\i,0)--(\w*\i,5*\h);
        }
        \foreach \xx\yy in {0/3,3/3,5/3,2/4,4/4}
        {
        \fill[red!30] (\w*.5+\w*\xx,\h*.5+\h*\yy) circle (\r cm);
        }
        \foreach \xx\yy in {0/0,2/0,3/0,4/0,1/1,2/1,3/1,5/0,1/2,2/2,3/2,4/1}
        {
        \fill[black] (\w*.5+\w*\xx,\h*.5+\h*\yy) circle (\r cm);
        }
        
        \node at (2.5,-2.5) {\scalebox{0.6}{\tableau{3&3&3\\2&2&2&3\\1&1&1&1&2}}};
        \draw[blue](1.5, 1.5-\r)--(1.5,0.8)--(2.5,0.8)--(2.5,0.5+\r);
        \draw[black!50!green](2.5, 1.5-\r)--(2.5,0.9)--(3.5,0.9)--(3.5,0.5+\r);
        \draw[blue](3.5, 1.5-\r)--(3.5,1)--(4.5,1)--(4.5,0.5+\r);
        \draw[black!50!green](4.5, 1.5-\r)--(4.5,1.1)--(5.5,1.1)--(5.5,0.5+\r);
        \draw[blue](1.5,2.5-\r)--(1.5,1.5+\r);
        \draw[black!50!green](2.5,2.5-\r)--(2.5,1.5+\r);
        \draw[blue](3.5,2.5-\r)--(3.5,1.5+\r);
        \end{scope}
    
        
        \begin{scope}[xshift=28cm]
        \foreach \i in {0,...,5}
        {
        \draw[gray!50] (0,\i*\h)--(\w*6,\i*\h);
        }
        \foreach \i in {0,...,6}
        {
        \draw[gray!50] (\w*\i,0)--(\w*\i,5*\h);
        }
        \foreach \xx\yy in {2/4,4/4}
        {
        \fill[red!30] (\w*.5+\w*\xx,\h*.5+\h*\yy) circle (\r cm);
        }
        \foreach \xx\yy in {0/0,2/0,3/0,4/0,1/0,2/1,3/1,5/0,1/2,2/2,3/2,4/1,0/3,3/3,5/1}
        {
        \fill[black] (\w*.5+\w*\xx,\h*.5+\h*\yy) circle (\r cm);
        }
        \node at (2.5,-2.5) {\scalebox{0.6}{\tableau{4&4\\3&3&3\\2&2&2&3\\1&1&1&1&2&4}}};

        \draw[blue](2.5, 1.5-\r)--(2.5, 0.5+\r);
        \draw[blue](3.5, 1.5-\r)--(3.5, 0.5+\r);
        \draw[black!50!green](4.5, 1.5-\r)--(4.5, 0.5+\r);
        \draw[black!50!green](5.5, 1.5-\r)--(5.5, 0.5+\r);
        
        \draw[blue](1.5, 2.5-\r)--(1.5,1.8)--(2.5,1.8)--(2.5,1.5+\r);
        \draw[black!50!green](2.5, 2.5-\r)--(2.5,1.9)--(4.5,1.9)--(4.5,1.5+\r);
        \draw[blue](3.5, 2.5-\r)--(3.5,1.5+\r);

        \draw[blue](0.5,3.5-\r)--(0.5, 3)--(1.5,3)--(1.5,2.5+\r);
        \draw[blue](3.5,3.5-\r)--(3.5,2.5+\r);
        \end{scope}

        \begin{scope}[xshift=35cm]
        \foreach \i in {0,...,5}
        {
        \draw[gray!50] (0,\i*\h)--(\w*6,\i*\h);
        }
        \foreach \i in {0,...,6}
        {
        \draw[gray!50] (\w*\i,0)--(\w*\i,5*\h);
        }
        \foreach \xx\yy in {0/0,2/0,3/0,4/0,1/0,2/1,3/1,5/0,1/2,2/2,3/2,4/1,0/3,3/3,5/1,2/4,4/2}
        {
        \fill[black] (\w*.5+\w*\xx,\h*.5+\h*\yy) circle (\r cm);
        }
        \node at (2.5,-2.5) {\scalebox{0.6}{\tableau{5\\4&4\\3&3&3&5\\2&2&2&3\\1&1&1&1&2&4}}};

        \draw[blue](2.5, 1.5-\r)--(2.5, 0.5+\r);
        \draw[blue](3.5, 1.5-\r)--(3.5, 0.5+\r);
        \draw[black!50!green](4.5, 1.5-\r)--(4.5, 0.5+\r);
        \draw[black!50!green](5.5, 1.5-\r)--(5.5, 0.5+\r);
        
        \draw[blue](1.5, 2.5-\r)--(1.5,1.8)--(2.5,1.8)--(2.5,1.5+\r);
        \draw[black!50!green](2.5, 2.5-\r)--(2.5,1.9)--(4.5,1.9)--(4.5,1.5+\r);
        \draw[blue](3.5, 2.5-\r)--(3.5,1.5+\r);
        \draw[black!50!green](4.5, 2.5-\r)--(4.5,2.1)--(5.5,2.1)--(5.5,1.5+\r);

        \draw[blue](0.5,3.5-\r)--(0.5, 3)--(1.5,3)--(1.5,2.5+\r);
        \draw[blue](3.5,3.5-\r)--(3.5,2.5+\r);

        \draw[blue](2.5,4.5-\r)--(2.5,4)--(3.5,4)--(3.5,3.5+\r);
        \end{scope}
    \end{tikzpicture}
    \caption{Starting with $M_0=M$ in \cref{ex:collapsing}, we show the multiline queues $M_r$ (in black) along with the recording tableaux $Q_r$ at each step of the collapsing procedure of \cref{def:colmap}.}\label{fig:collapsing steps}
\end{figure}
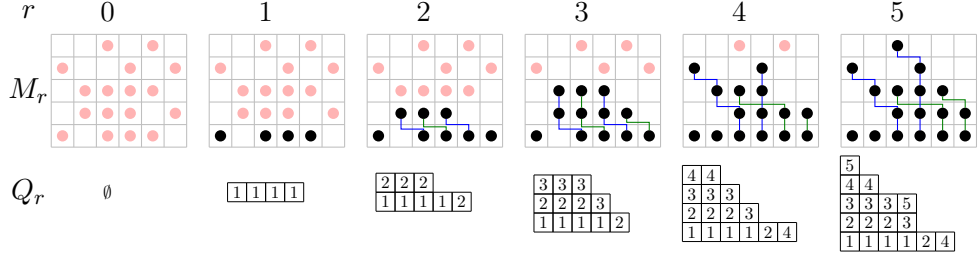

\end{example}

A key property of the row and column crystal operators is that they commute.
\begin{lemma}[{\cite{vanleeuwen2006double}, \cite{MV24}}]\label{prop:opcommute}
    Let $M$ be a multiline queue, $g^\leftrightarrow_i\in\{\eL_i,\fR_i\}$ be a column crystal operator, and let $\eD_j$ be a row crystal operator. Then
    \begin{equation*}
        g^\leftrightarrow_i\circ \eD_j(M) = \eD_j\circ g^\leftrightarrow_i(M).
    \end{equation*}
\end{lemma}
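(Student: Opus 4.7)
The plan is to prove the commutation directly by a case analysis on the positions of the balls moved by the two operators. Write $M' = g^\leftrightarrow_i(M)$: this modifies a single ball between columns $i$ and $i+1$ in some row $r^*$, where $r^*$ is determined from the $i$-bracketing of $rw(M)$, which in turn depends only on the sub-word $rw_i(M)$ of letters in $\{i, i+1\}$. Write $M'' = \eD_j(M)$: this moves a ball from row $j+1$ to row $j$ in some column $c^*$, determined from the $j$-bracketing of $cw(M)$, which depends only on the sub-word $cw_j(M)$ of letters in $\{j, j+1\}$. The goal is to show $g^\leftrightarrow_i(M'') = \eD_j(M')$.

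The key structural observation is a locality principle: $rw_i(M)$ is determined entirely by the positions of balls in columns $i$ and $i+1$, and $cw_j(M)$ is determined entirely by the positions of balls in rows $j$ and $j+1$. This immediately resolves the easy case. If $c^* \notin \{i, i+1\}$, then the contents of columns $i, i+1$ are untouched by $\eD_j$, so $rw_i(\eD_j(M)) = rw_i(M)$ and $g^\leftrightarrow_i$ selects the same ball in $M$ and in $\eD_j(M)$; since that move occurs in column $i$ or $i+1$ and the $\eD_j$ move occurs in a column disjoint from $\{i,i+1\}$ and the $g^\leftrightarrow_i$ move occurs in some row $r^*$ (possibly equal to $j$ or $j+1$), the two modifications touch disjoint cells and trivially commute. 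A symmetric argument handles $r^* \notin \{j, j+1\}$.

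The main obstacle, and where I expect to spend the most effort, is the remaining case in which $g^\leftrightarrow_i$ wishes to move a ball in a row of $\{j, j+1\}$ \emph{and} $\eD_j$ wishes to move a ball in a column of $\{i, i+1\}$. Here the two operators both interact with the $2\times 2$ sub-configuration $S$ at rows $\{j, j+1\}$, columns $\{i, i+1\}$, and applying one operator can in principle shift which ball in $S$ is ``leftmost unmatched'' for the other operator. My plan is to reduce this to a finite check: the $i$-bracketing of $rw(M)$ depends on the pattern of $i, i+1$ letters inside $S$ together with the external counts of matched/unmatched $i$'s and $i+1$'s contributed by the rows outside $\{j, j+1\}$, and analogously for the $j$-bracketing of $cw(M)$. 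Since these external contexts are preserved by both operators (because neither operator touches any row or column outside the block), the interaction reduces to a case analysis over the $2^4 = 16$ possible ball patterns inside $S$, cross-referenced with the finitely many relevant external ``contexts.'' In each subcase I will verify by direct computation that applying $g^\leftrightarrow_i$ then $\eD_j$ and applying $\eD_j$ then $g^\leftrightarrow_i$ produce the same final pattern on $S$ (and thus on all of $M$).

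As a conceptual check, this commutation is the manifestation on multiline queues of the double-crystal structure of van Leeuwen: reading $M$ row-by-row realizes it as a tensor product on which the column operators $g^\leftrightarrow_i$ act, while reading column-by-column realizes it as a tensor product on which the row operators $\eD_j$ act, and the two commuting crystal structures share the same underlying set. The case analysis above is essentially a bare-hands verification of this bicrystal principle in the present combinatorial setting.
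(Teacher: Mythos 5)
The paper does not actually prove this lemma; it quotes it from van Leeuwen and \cite{MV24}, so your plan to verify it directly is a legitimately different, self-contained route, and your locality principle (that the subword $\rw_i$ depends only on columns $i,i+1$ and $\cw_j$ only on rows $j,j+1$) is correct and is the right starting point.

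However, your case division has a genuine gap. In the case $c^*\notin\{i,i+1\}$ you conclude that ``the two modifications touch disjoint cells and trivially commute,'' but disjointness of the modified cells only shows the two moves commute \emph{once both selections are fixed}. You verified one direction --- that $g^\leftrightarrow_i$ selects the same ball in $M$ and in $\eD_j(M)$, which is indeed immediate since $\eD_j$ does not touch columns $i,i+1$ --- but the identity $g^\leftrightarrow_i(\eD_j(M))=\eD_j(g^\leftrightarrow_i(M))$ also requires that $\eD_j$ selects the \emph{same} ball in $M$ and in $g^\leftrightarrow_i(M)$. When $r^*\in\{j,j+1\}$ the column move does alter rows $j,j+1$, hence alters $\cw_j$ and potentially the matching in $\theta_j(\cw(M))$, so this is not automatic. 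For example, if $(j+1,i)$, $(j+1,i+1)$, $(j,i+1)$ are occupied and $(j,i)$ is empty, and $\eL_i$ moves the ball at $(j,i+1)$ to $(j,i)$, then the contiguous segment of $\theta_j(\cw(M))$ coming from these four cells changes from the pattern $(\,(\,)$ to $(\,)\,($: the locally unmatched opener migrates from the ball at $(j+1,i)$ to the ball at $(j+1,i+1)$. (In this instance $\eD_j$'s target happens to lie in $\{i,i+1\}$, landing you in the hard case; but ruling out such a migration of $\eD_j$'s selection in your ``easy'' case requires exactly the argument you defer to the hard case, namely that the column move preserves the reduced bracket form of that contiguous segment and the number and relative position of the unmatched openers it contributes.) Only the case where \emph{both} $c^*\notin\{i,i+1\}$ and $r^*\notin\{j,j+1\}$ is genuinely trivial. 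To repair the proof, you should isolate the bracket-invariance statement as a lemma --- the four cells at rows $\{j,j+1\}$, columns $\{i,i+1\}$ occupy contiguous segments of both $\rw_i$ and $\cw_j$, and each operator preserves the reduced form $)^a(^b$ of the other word's segment --- and apply it in both the mixed cases and the fully interacting case; with that in place your finite check over block patterns and (boundedly much) external context should close the argument.
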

\noindent This implies two fundamental properties: (1) the column operators commute with $\rho_N$ and (2) the column operators preserve the major index (energy) of a multiline queue. 

\begin{prop}[{\cite[Proof of Lemma 5.10]{MV24}}]\label{prop:majpreserver}
    Let $M$ be a multiline queue, and let $g^\leftrightarrow_i$
    be a column crystal operator. Then 
    \[g^\leftrightarrow_i(\rho_N(M)) = \rho_N(g^\leftrightarrow_i(M))
    \]
    and 
    \[\maj(M) = \maj(g^\leftrightarrow_i(M)).
    \]
\end{prop}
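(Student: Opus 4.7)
The proposition has two assertions, both of which I plan to derive from \cref{prop:opcommute}, which says that column operators commute with each individual row-dropping operator $\eD_j$.

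For the commutation $g^\leftrightarrow_i \circ \rho_N = \rho_N \circ g^\leftrightarrow_i$, I would first lift \cref{prop:opcommute} to the maximal iterate $\eDs_j$. Iterating commutativity gives $g^\leftrightarrow_i \circ \eD_j^{\circ k} = \eD_j^{\circ k} \circ g^\leftrightarrow_i$ for every $k$. Since $\eDs_j(M)$ is fixed by $\eD_j$, commutativity forces $g^\leftrightarrow_i(\eDs_j M)$ to be fixed by $\eD_j$ as well. But $g^\leftrightarrow_i(\eDs_j M) = \eD_j^{\circ k}(g^\leftrightarrow_i M)$ for any $k$ large enough, so this stable value equals $\eDs_j(g^\leftrightarrow_i M)$, yielding $g^\leftrightarrow_i \circ \eDs_j = \eDs_j \circ g^\leftrightarrow_i$. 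Since $\rho_N$ is the composition of $\eDs_j$'s given by \eqref{eq:rho op}, the desired commutation with $\rho_N$ follows.

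For the preservation of $\maj$, my plan is to express $\maj(M)$ in terms of the recording tableau $\rho_Q(M)$ and then show $\rho_Q$ itself is invariant under $g^\leftrightarrow_i$. By \cref{def:colmap}, $\rho_Q(M)$ depends only on the row sums $|B_j^{(r)}|$ of the intermediate multiline queues $M_r$. Column operators move a ball between adjacent columns within a single row, so they preserve each row sum; by the commutation established above, this row-sum invariance extends to every intermediate $M_r$, and hence $\rho_Q(g^\leftrightarrow_i M) = \rho_Q(M)$. Combining the collapsing bijection \eqref{eqn:collapsebijection} with the multiline queue formula $P_\lambda(X;q,0) = \sum_{M \in \MLQ_\lambda} x^M q^{\maj(M)}$ and the Lascoux--Sch\"utzenberger charge expansion \eqref{eq:P charge} forces the pointwise identity $\maj(M) = \charge(\rho_Q(M))$, and combined with the invariance of $\rho_Q$ this gives $\maj(g^\leftrightarrow_i M) = \maj(M)$.

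The hardest step will be pinning down $\maj(M) = \charge(\rho_Q(M))$ at the level of individual multiline queues rather than merely as an equality of generating functions. A direct combinatorial proof (the approach of \cite{MV24}) requires tracking how each application of $\eDs_j$ inside $\rho_N$ transforms the FM pairings --- specifically, how a wrapping pairing in $M$ is ``uncurled'' and recorded as a charge contribution of the corresponding entry of $\rho_Q(M)$. \cref{lemma:intersectingstrands} together with a careful analysis of strand crossings under dropping operators are the crucial tools. Once this identification is in hand, the $\maj$-invariance reduces to the elementary observation that column operators preserve row sums.
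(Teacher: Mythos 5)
Your first half is exactly the paper's argument: \cref{prop:opcommute} gives commutation of $g^\leftrightarrow_i$ with each single $\eD_j$, your stable-value argument correctly lifts this to the saturated operators $\eDs_j$, and \eqref{eq:rho op} then gives commutation with $\rho_N$. Your route to $\maj$-preservation (invariance of $\rho_Q$ under column operators, combined with $\maj(M)=\charge(\rho_Q(M))$) is also the intended one, and your observation that $\rho_Q$ depends only on the intermediate row sums $|B_j^{(r)}|$, which column operators preserve, is precisely \cref{lem:charge}.

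The one genuine flaw is the sentence claiming that the collapsing bijection \eqref{eqn:collapsebijection} together with the generating-function identities \eqref{eq:P and F} and \eqref{eq:P charge} ``forces the pointwise identity $\maj(M)=\charge(\rho_Q(M))$.'' It does not: equality of generating functions under a content-preserving bijection only yields equality of the multisets $\{(x^M,q^{\maj(M)})\}$ and $\{(x^N,q^{\charge(Q)})\}$; it cannot certify that the particular pair $(N,Q)=\rho(M)$ satisfies $\maj(M)=\charge(Q)$. You acknowledge this in your last paragraph, but the remedy is not the strand-crossing analysis you sketch there; the pointwise identity is exactly \cref{theorem:collthm}, which this paper imports as a black box from \cite[Theorem 4.23]{MV24}, just as it imports the present proposition from the proof of Lemma 5.10 there. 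Indeed, the paper runs the logic in the opposite direction --- it uses the present proposition to \emph{reprove} \eqref{eq:P charge} --- so deriving $\maj=\charge\circ\rho_Q$ from \eqref{eq:P charge} would invert the paper's internal order even though \cite{LS78} is an independent source. Replacing your ``forcing'' step with a citation of \cref{theorem:collthm} closes the gap and makes your argument coincide with the paper's.
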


This immediately implies two important properties that form the basis of our arguments.

\begin{cor}\label{cor:sequence}
    If $M,M'\in\MLQ_\lambda$ satisfy $\rho_Q(M)=\rho_Q(M')$, then $M$ and $M'$ are in the same connected component of $\MLQ_\lambda$, and hence there exists a sequence of operators $\sbf=g^\leftrightarrow_{i_\ell}\cdots g^\leftrightarrow_{i_1}$ such that $M'=\sbf M$.
\end{cor}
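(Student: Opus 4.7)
The plan is to use the bijection \eqref{eqn:collapsebijection} together with the commutativity from \cref{prop:majpreserver} to reduce the statement to the well-known connectedness of the classical type $A$ crystal. Recall that the bijection identifies $M \in \MLQ_\lambda$ with the pair $(\rho_N(M), \rho_Q(M)) \in \NMLQ_\mu \times \SSYT(\mu', \lambda')$ for some $\mu \leq \lambda$. My goal will be to exhibit a sequence $\sbf$ of column operators such that both coordinates of $\sbf M$ under this bijection agree with those of $M'$.

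First I would establish an auxiliary claim: column operators preserve $\rho_Q$, i.e., $\rho_Q(g^\leftrightarrow_i M) = \rho_Q(M)$ for every column operator $g^\leftrightarrow_i$. Reading off \cref{def:colmap}, the recording tableau $\rho_Q(M)$ is determined entry-by-entry from the row sums $|B_j^{(i)}|$ of the intermediate generalized multiline queues $M_i = \eDs_{[i-1,1]} \cdots \eDs_{[1,1]} M$. By \cref{prop:opcommute}, column operators commute with each $\eDs_j$, so the intermediate configuration associated to $g^\leftrightarrow_i M$ is exactly $g^\leftrightarrow_i M_i$. Since column operators only permute balls within a single row, they preserve every row sum $|B_j^{(i)}|$, and hence every entry of $\rho_Q$ is unchanged.

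Next, suppose $\rho_Q(M) = \rho_Q(M') = Q$, so $\rho_N(M)$ and $\rho_N(M')$ both lie in $\NMLQ_\mu$ for the common shape $\mu$ recorded by $Q$. By the weight-preserving bijection $\NMLQ_\mu \simeq \SSYT(\mu)$ from \cite[Thm.~A.4]{MV24}, together with the compatibility of column operators with classical crystal operators, the set $\NMLQ_\mu$ forms a single connected $\mathfrak{sl}_n$-crystal (isomorphic to the irreducible $B(\mu)$). Therefore there exists a sequence $\sbf = g^\leftrightarrow_{i_\ell} \cdots g^\leftrightarrow_{i_1}$ of column operators with $\sbf \, \rho_N(M) = \rho_N(M')$.

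Finally, I would combine these two ingredients. Applying $\sbf$ to $M$ and using \cref{prop:majpreserver} repeatedly gives $\rho_N(\sbf M) = \sbf \, \rho_N(M) = \rho_N(M')$, while the auxiliary claim gives $\rho_Q(\sbf M) = \rho_Q(M) = \rho_Q(M')$. Since the pair $(\rho_N, \rho_Q)$ determines a multiline queue uniquely under the bijection \eqref{eqn:collapsebijection}, we conclude $\sbf M = M'$, proving $M$ and $M'$ lie in the same connected component. I expect the main obstacle to be a clean verification of the auxiliary claim that column operators preserve $\rho_Q$; the subtlety is that although column operators commute with $\eDs_j$, one must check that the bookkeeping of ``which row gained which boxes at step $i$'' in \cref{def:colmap} only depends on row sums (and hence is unchanged), which is really a matter of unraveling the definition rather than a deep argument.
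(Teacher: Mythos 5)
Your proof is correct and follows essentially the same route the paper intends: \cref{cor:sequence} is left as an immediate consequence of \cref{prop:majpreserver}, and the surrounding material (the invariance of $\rho_Q$ under column operators in \cref{lem:charge}, the connectedness of $\NMLQ_\mu$ in \cref{rem:connected}, and the bijectivity of $\rho$ in \cref{theorem:collthm}) supplies exactly the three ingredients you assemble. Your verification of the auxiliary claim also matches the paper's justification of \cref{lem:charge}—the entries of $\rho_Q$ depend only on the row sums of the intermediate configurations, which column operators preserve.
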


\begin{cor}\label{prop:num of balls preserved}
    Let $M$ be a multiline queue such that $\theta_i(rw(M))$ contains $a$ and $b$ unmatched $i$'s and $i+1$'s, respectively. Then $\theta_i(rw(\rho_N(M)))$ also contains $a$ and $b$ unmatched $i$'s and $i+1$'s.
\end{cor}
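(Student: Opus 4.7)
The strategy is to combine two properties of column operators: they commute with the collapsing map (\cref{prop:majpreserver}), and they preserve the recording tableau $\rho_Q$. Together these reduce the statement to a basic fact from crystal theory: the number of unmatched $i$'s (respectively $i+1$'s) in $\theta_i(w)$ equals $\varphi_i(w)$ (respectively $\varepsilon_i(w)$), which is the length of the maximal $\fR_i$-string (respectively $\eL_i$-string) starting at $w$.

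First I would verify that the column operators preserve $\rho_Q(M)$. Each $g^\leftrightarrow_i$ moves a single ball between columns $i$ and $i+1$ within a fixed row, so it preserves every row-cardinality $|B_j|$. By \cref{prop:opcommute}, column operators also commute with every $\eDs_j$ appearing in \eqref{eq:rho op}. Thus the intermediate tuples $M_r$ in \cref{def:colmap} for $g^\leftrightarrow_i M$ are exactly $g^\leftrightarrow_i M_r$, and they have the same row-cardinalities as $M_r$. Since $\rho_Q(M)$ is built entirely from the sequence of row-cardinality increments $|B_j^{(r+1)}| - |B_j^{(r)}|$, we conclude $\rho_Q(g^\leftrightarrow_i M) = \rho_Q(M)$.

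Next I would exploit the bijection \eqref{eqn:collapsebijection}: the pair $(\rho_N(M), \rho_Q(M))$ determines $M$ uniquely. Fix a column operator $g^\leftrightarrow_i$ and suppose $g^\leftrightarrow_i(\rho_N(M)) = \rho_N(M)$. By \cref{prop:majpreserver}, $\rho_N(g^\leftrightarrow_i M) = g^\leftrightarrow_i(\rho_N(M)) = \rho_N(M)$, and by the previous paragraph $\rho_Q(g^\leftrightarrow_i M) = \rho_Q(M)$. Injectivity of the collapsing bijection then forces $g^\leftrightarrow_i M = M$. The converse direction is immediate from $\rho_N(g^\leftrightarrow_i M) = g^\leftrightarrow_i \rho_N(M)$. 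Hence
\[
g^\leftrightarrow_i M = M \quad\Longleftrightarrow\quad g^\leftrightarrow_i \rho_N(M) = \rho_N(M).
\]

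Finally, I would iterate: applying this equivalence to $\fR_i^{k-1} M$ (whose image under $\rho_N$ is $\fR_i^{k-1}\rho_N(M)$ by commutation) yields
\[
\fR_i^{k} M = \fR_i^{k-1} M \quad\Longleftrightarrow\quad \fR_i^{k} \rho_N(M) = \fR_i^{k-1} \rho_N(M),
\]
so the maximal $k$ for which $\fR_i$ still acts nontrivially is the same for $M$ and for $\rho_N(M)$. This maximal $k$ is exactly $\varphi_i(\rw(M))$, the number of unmatched $i$'s in $\theta_i(\rw(M))$. The identical argument with $\eL_i$ handles the unmatched $i+1$'s. The main obstacle is really the invariance of $\rho_Q$ under column operators; once that is in hand, the rest is a bookkeeping consequence of commutation and the bijectivity of the collapsing map.
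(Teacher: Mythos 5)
Your proof is correct and follows essentially the same route as the paper: both identify the number of unmatched $i$'s (resp.\ $i+1$'s) in $\theta_i(\rw(M))$ with the length of the maximal $\fR_i$- (resp.\ $\eL_i$-) string at $M$ and transfer this to $\rho_N(M)$ via the commutation of column operators with $\rho_N$ from \cref{prop:majpreserver}. The one difference is that you establish the key equivalence $g^\leftrightarrow_i M = M \iff g^\leftrightarrow_i\rho_N(M)=\rho_N(M)$ by re-deriving the $\rho_Q$-invariance and invoking injectivity of the collapsing bijection, whereas the content-preservation $x^M=x^{\rho_N(M)}$ already forces it (a nontrivially acting $g^\leftrightarrow_i$ changes the content), which is why the paper can cite \cref{prop:majpreserver} alone.
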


\begin{proof}
    There are $a$ unmatched $i$'s in $\theta_i(rw(M))$ if and only if $(\fR_i)^{k-1}(M) \neq (\fR_i)^{k}(M)$ for $1\leq k \leq a$ and $(\fR_i)^a(M) = (\fR_i)^{a+1}(M)$, with an analogous statement for $i+1$. The claim then follows from \cref{prop:majpreserver}.
\end{proof}

\noindent Since $\maj$ is preserved under the crystal operators, it induces a natural grading on the crystal graph.
\begin{definition}[Graded crystal graph]\label{def:graded crystal}
    Let $\MLQ_\lambda$ be the crystal of multiline queues of shape $\lambda$. We grade $\MLQ_\lambda$ by the $\maj$ statistic, with the homogeneous component with grading $k$ consisting of all multiline queues $M$ such that $\maj(M) = k$. In particular, $\NMLQ_\lambda$ is the 0-graded component of $\MLQ_\lambda$.
\end{definition}

For example, the graph in \cref{fig:big figure} is the $0$-graded component of $\MLQ_{(3,3,1,1)}$ for $n=4$.

\subsection{Schur positivity of $q$-Whittaker polynomials}

The $q$-Whittaker and Hall--Littlewood polynomials are related to the Schur basis via the Kostka--Foulkes polynomials $K_{\lambda,\mu}(q)$, as follows:
\begin{equation}
P_\lambda(X;q,0)=\sum_{\mu\leq\lambda}K_{\mu',\lambda'}(q)s_\mu(X)
\end{equation}
and
\begin{equation}
    s_\lambda(X)=\sum_{\mu\leq\lambda} K_{\lambda,\mu}(t)P_\mu(X;0,t).
\end{equation}

In their seminal paper \cite{LS78}, Lascoux and Sch\"utzenberger gave a formula for $K_{\lambda,\mu}(q)$ in terms of the \textbf{charge} statistic on semistandard tableaux.

\begin{theorem}[{\cite{LS78}}]\label{thm:LS}
    For partitions $\lambda,\mu$, the Kostka--Foulkes polynomial is given by
    \begin{equation}\label{eqn:KF_coeff}
        K_{\lambda,\mu}(q)=\sum_{T\in\SSYT(\lambda,\mu)}q^{\charge(T)}
    \end{equation}
    
\end{theorem}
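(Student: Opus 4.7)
My plan is to prove \cref{thm:LS} within the multiline queue framework of this paper. The starting point is the combinatorial formula $P_\lambda(X;q,0) = \sum_{M\in\MLQ_\lambda} x^M q^{\maj(M)}$ from (\ref{eq:P and F}). Since the dropping operators $\eDs_i$ move balls only within fixed columns, the content $x^M$ is preserved under the collapsing map $\rho = (\rho_N, \rho_Q)$; the bijection (\ref{eqn:collapsebijection}) then rewrites the generating function as
\begin{equation*}
P_\lambda(X;q,0) = \sum_{\mu\leq\lambda}\ \sum_{N\in\NMLQ_\mu}\ \sum_{T\in\SSYT(\mu',\lambda')} x^N\, q^{\maj(\rho^{-1}(N,T))}.
\end{equation*}

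The technical heart of the argument is to show that $\maj(M) = \charge(\rho_Q(M))$ for every $M\in\MLQ_\lambda$, so in particular $\maj$ depends only on the recording tableau $\rho_Q(M)$. I would proceed by induction along the recursion of \cref{def:colmap}: at each step, $\eDs_{[r,1]}$ applied to $M_r$ drops precisely the balls that participated in wraps at level $r+1$, and each such drop should decrease $\maj$ by exactly the amount contributed to $\charge(\rho_Q(M))$ by the new row-$(r{+}1)$ entries added to $Q_{r+1}$ under the cyclic reading order defining charge. \cref{lemma:intersectingstrands} controls the geometry of strands in the resulting non-wrapping configurations, which is what allows one to match each ``wrap discharge'' at level $r$ with the correct weight of the recorded letter. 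Once the identity $\maj(M) = \charge(\rho_Q(M))$ is established, substituting into the display above and using $\sum_{N\in\NMLQ_\mu} x^N = s_\mu(X)$ yields
\begin{equation*}
P_\lambda(X;q,0) = \sum_{\mu\leq\lambda} s_\mu(X) \sum_{T\in\SSYT(\mu',\lambda')} q^{\charge(T)},
\end{equation*}
and comparison with the Kostka--Foulkes expansion $P_\lambda(X;q,0) = \sum_{\mu\leq\lambda} K_{\mu',\lambda'}(q)\, s_\mu(X)$ recovers the stated formula after the cosmetic relabeling $(\lambda,\mu)\leftrightarrow(\mu',\lambda')$.

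The main obstacle is the identity $\maj = \charge$ itself. Each individual dropping step is local and well-behaved, but matching the cumulative major index of $M$ against the charge of $\rho_Q(M)$ --- a statistic computed via a global cyclic reading of $T$ --- requires delicate accounting of how wraps propagate through successive rounds of the collapsing recursion. Conceptually, this is the type-$A^{(1)}$ energy-equals-charge identification for Kirillov--Reshetikhin crystals \cite{NY97} translated into the multiline-queue setting, and carrying that translation out cleanly is the real work of the proof.
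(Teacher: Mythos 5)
Your overall route is the same as the paper's: start from the multiline queue formula \eqref{eq:P and F}, push it through the collapsing bijection \eqref{eqn:collapsebijection}, use $x^M=x^{\rho_N(M)}$ and $\maj(M)=\charge(\rho_Q(M))$ to factor the generating function as $\sum_\mu s_\mu \sum_Q q^{\charge(Q)}$, and compare coefficients against the defining expansion of $P_\lambda(X;q,0)$ in the Schur basis. That closing computation is exactly the chain of equalities the paper writes out, and it is correct as you state it.

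The difference — and the gap — is that the paper does \emph{not} prove the identity $\maj(M)=\charge(\rho_Q(M))$; it imports it wholesale as \cref{theorem:collthm}, citing \cite{MV24}. You instead propose to establish it by induction along the recursion of \cref{def:colmap}, and your sketch of that induction is not yet an argument. The step ``$\eDs_{[r,1]}$ applied to $M_r$ drops precisely the balls that participated in wraps at level $r+1$, and each such drop decreases $\maj$ by exactly the charge contribution of the new entries of $Q_{r+1}$'' is the entire content of the theorem, asserted rather than proved. It also quietly identifies \emph{matched} balls (determined by the bracketing rule on $\cw(M)$, which governs which balls $\eDs_i$ moves) with \emph{paired/wrapping} balls (determined by the FM algorithm, which governs $\maj$); \cref{rem:unmatched} explicitly warns that these are different notions, and reconciling them across the many rows and iterated applications of $\eDs_{[r,1]}$ is precisely the delicate part of the proof in \cite{MV24}. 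Finally, the paper never defines $\charge$ (it says so explicitly), so the bookkeeping you describe cannot even be carried out with the tools available here. Either cite \cref{theorem:collthm} as the paper does — in which case your proof is complete and identical to the paper's — or supply a genuine proof of $\maj=\charge$, which is a substantial piece of work in its own right.
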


Since our only use of charge is through the following theorem, we omit the definition and instead refer the reader to \cite[Chapter 5]{Lothaire_2002}. 

\begin{theorem}[{\cite[Theorem 4.23]{MV24}}]\label{theorem:collthm}
    The operator $\rho:\MLQ_\lambda\to \bigcup_{\mu\leq \lambda}\NMLQ_\mu\times \SSYT(\mu',\lambda')$ defined by:
    \begin{equation*}
        \rho(M) = (\rho_N(M),\rho_Q(M))
    \end{equation*}
    is a bijection such that $x^M = x^{\rho_N(M)}$ and $\maj(M) = \charge(Q)$.
\end{theorem}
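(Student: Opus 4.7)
The plan is to prove the theorem in three stages: verifying that $\rho$ is well-defined and weight-preserving, establishing bijectivity by constructing an explicit inverse, and finally matching the statistics $\maj$ and $\charge$.

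First, for well-definedness and weight preservation, I would observe that each elementary dropping operator $\eD_i$ moves a single ball from row $i+1$ to row $i$ within the same column, so the monomial $x^M$, which depends only on the multiset of occupied columns, is unchanged. Composing, $x^{\rho_N(M)} = x^M$. Next, by induction on $i$, one verifies that after applying $\eDs_{[i,1]}$ the restriction of $M_{i+1}$ to rows $\{1,\ldots,i+1\}$ is non-wrapping: by definition $\eDs_i$ drops precisely the unmatched balls in row $i+1$, which are exactly those causing wraps relative to row $i$, so iterating $\eDs_i\cdots\eDs_1$ after having already made rows $\{1,\ldots,i\}$ non-wrapping yields a non-wrapping configuration on rows $\{1,\ldots,i+1\}$. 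This shows $\rho_N(M)\in\NMLQ_\mu$ for some $\mu\leq\lambda$. For $\rho_Q(M)$, the recording tableau adds boxes labeled $i+1$ at stage $i+1$, and the semistandardness (weakly increasing along rows, strictly increasing up columns) follows because balls dropped at stage $i+1$ land in rows $\leq i+1$ and the counting $|B_j^{(i+1)}|-|B_j^{(i)}|$ produces the correct row shape to interleave with earlier labels.

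Second, for the bijection, I would construct the inverse explicitly by running the dropping process backwards. Given $(N, Q)\in\NMLQ_\mu\times \SSYT(\mu',\lambda')$, set $M_L:=N$ and $Q_L:=Q$, and for each $r=L, L-1,\ldots, 2$ read off from $Q_r$ the boxes with label $r$, which specify exactly how many balls sit in each row of $M_r$ that must be lifted to reverse $\eDs_{[r-1,1]}$. Applying the corresponding sequence of lifting operators $\fU_i^\star$ (the inverses of $\eDs_i$ at the level of dropped balls) then recovers $M_{r-1}$. Each single step of $\eDs_i$ is deterministic and removes a unique leftmost unmatched ball in row $i+1$; symmetrically, at each inversion step the recording data selects which ball to lift, so this reverse procedure is well-defined and produces the unique preimage $M$. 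Thus $\rho$ has a two-sided inverse.

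Third, the hardest part is the identity $\maj(M)=\charge(\rho_Q(M))$. I would argue by induction on $L$, the number of rows of $\lambda$. The base case $L=1$ is trivial since both statistics vanish. For the inductive step, fix $M\in\MLQ_\lambda$ and analyze a single block $\eDs_{[i,1]}$ of the collapsing. By \cref{prop:majpreserver} and \cref{prop:opcommute}, the dropping operators commute with the column operators, so $\maj$ is preserved under column-operator moves; one can therefore reduce to a canonical representative within the connected component where the wrapping structure is easier to read off. The key computation is to show that each wrap in row $r$ of $M$ involving a strand of length $\ell$, which contributes $\ell-r+1$ to $\maj(M)$, corresponds bijectively to an entry in $Q=\rho_Q(M)$ whose contribution to $\charge$ is $\ell-r+1$; this is done by tracking how each dropping step converts a wrapping pair into a recorded label in $Q$ of the appropriate cyclic-shift class. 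The identification with the standard charge on SSYT then follows by invoking the classical description of charge via cyclic rotations of the reading word and matching each rotation to an application of $\eD_i$.

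The main obstacle I expect is the last stage. The charge statistic is defined through a global iterative procedure on semistandard tableaux (via standardization and cyclic rotations), while $\maj$ is encoded locally by wrapping pairs produced by the FM algorithm. Bridging these two requires a careful local-to-global argument, and the commutation of \cref{prop:opcommute} together with the structural \cref{lemma:intersectingstrands} will be the principal tools to force the two statistics into agreement step by step.
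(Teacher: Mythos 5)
This statement is not proved in the paper at all: it is quoted verbatim from \cite[Theorem 4.23]{MV24} and used as a black box, so there is no internal proof to compare your proposal against. Judged on its own terms, your first two stages have the right shape. Weight preservation is immediate as you say, and the inverse is indeed reconstructed by reading the entries of $Q$ labelled $r$ to decide how many balls to lift at each stage. But even here you assert rather than prove two nontrivial facts: that the boxes added at stage $i+1$ form a horizontal strip on $Q_i$ (this is what makes $\rho_Q(M)$ semistandard, and it does not follow from ``the counting produces the correct row shape''), and that the lifting procedure always has enough liftable balls in the right positions to be well defined. Both require an argument about how $\eDs_i$ interacts with the bracketing across several rows, not just two adjacent ones.

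The genuine gap is your third stage. The identity $\maj(M)=\charge(\rho_Q(M))$ is the entire content of the theorem, and your proposal for it is a statement of intent rather than a proof: you posit a bijection sending each wrap of a label-$\ell$ ball at row $r$ (contributing $\ell-r+1$ to $\maj$) to an entry of $Q$ with charge value $\ell-r+1$, but you give no mechanism for identifying that entry or for computing its charge value, and charge is a global statistic defined through standardization and cyclic rotation of the whole reading word, so such an entrywise matching cannot be set up locally without substantial additional structure. Moreover, your plan to ``reduce to a canonical representative'' using \cref{prop:majpreserver} is circular as stated: that proposition is itself extracted from the proof of \cite[Lemma 5.10]{MV24}, which sits downstream of Theorem 4.23 in that paper, so you cannot invoke it to prove the theorem. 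A correct proof (as in \cite{MV24}) has to build the $\maj$--$\charge$ correspondence directly, row by row through the collapsing, which is a genuinely delicate local-to-global argument that your sketch does not supply.
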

\begin{example}
    Continuing \cref{ex:collapsing}, we have that $\rho_N(M)\in\NMLQ_\mu$ with the same content as $M$ and $\maj$ equal to 0, and $\rho_Q(M)\in\SSYT(\mu',\lambda')$ with $\charge(\rho_Q(M))=\maj(M)=4$. 
\end{example}

We prove \cref{thm:LS}, using collapsing on the multiline queue crystal. This will serve as a warm-up for our proof for the nonsymmetric and quasisymmetric cases, highlighting the main obstacle of a disconnected crystal in these settings.
  
The key property we use is that the collapsing operator $\rho$ is a composition of row crystal operators, which commute with the column crystal operators by \cref{prop:opcommute}. The numbers of balls that collapse at each step of the collapsing procedure  determine the entries of the recording tableau $\rho_Q(M)$. By the commutativity property, these numbers remain unchanged under any application of column crystal operators. Thus we obtain the following lemma.
\begin{lemma}[{Proof of \cite[Lemma 5.10]{MV24}}]\label{lem:charge}
    Let $M$ be a multiline queue, and let $g^\leftrightarrow_i\in\{\eL_i,\fR_i\}$ be a column crystal operator. Then we have:
    \begin{equation*}
        \rho_Q(g^\leftrightarrow_i(M)) = \rho_Q(M)
    \end{equation*}
    In particular, $\charge(\rho_Q(g^\leftrightarrow_i(M))=\charge(\rho_Q(M))$.
\end{lemma}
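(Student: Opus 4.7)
The plan is to exploit the commutativity of row and column crystal operators given by \cref{prop:opcommute}, together with the observation that column operators preserve row sums. Since the recording tableau $\rho_Q(M)$ is built entirely from the sequence of row-size changes tracked during the collapsing procedure of \cref{def:colmap}, these two ingredients together will force $\rho_Q$ to be invariant under column operators.

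First, we observe that a column operator $g^\leftrightarrow_i \in \{\eL_i,\fR_i\}$ either fixes $M$ or moves a single ball between columns $i$ and $i+1$ within a single row (this is immediate from its definition via the row word, whose letters are column indices indexed row by row). In particular, for every $j$ the row size $|B_j|$ is preserved. Second, we extend \cref{prop:opcommute} from each $\eD_j$ to the completed operator $\eDs_j$: since $\eDs_j N = \eD_j^k N$ for every $k$ at least as large as the number of nontrivial applications of $\eD_j$ on $N$, we may choose a single $k$ large enough to stabilize both $M$ and $g^\leftrightarrow_i(M)$ simultaneously, and iterating \cref{prop:opcommute} then yields $g^\leftrightarrow_i \circ \eDs_j = \eDs_j \circ g^\leftrightarrow_i$. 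Consequently $g^\leftrightarrow_i$ commutes with every composition $\eDs_{[k,1]}$ in the definition \eqref{eq:rho op} of $\rho_N$.

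With these two facts in hand, we run the collapsing procedure in parallel on $M$ and on $g^\leftrightarrow_i(M)$. Let $M_1 = M, M_2, \ldots, M_L$ be the intermediate queues for $M$, with $M_{k+1} = \eDs_{[k,1]}M_k$, and let $M'_1, \ldots, M'_L$ be the corresponding intermediate queues for $g^\leftrightarrow_i(M)$. Commutativity of $g^\leftrightarrow_i$ with each $\eDs_{[k,1]}$ gives $M'_k = g^\leftrightarrow_i(M_k)$ at every step, and row-size preservation of $g^\leftrightarrow_i$ then yields $|B_j^{(k)}| = |B_j'^{(k)}|$ for every $j$ and $k$. Since the boxes added at step $k\to k+1$ of $\rho_Q$ are governed precisely by the differences $|B_j^{(k+1)}| - |B_j^{(k)}|$ for $j<k+1$ together with $|B_{k+1}^{(k+1)}|$ for row $k+1$, the recording tableaux coincide box by box. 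Hence $\rho_Q(g^\leftrightarrow_i(M)) = \rho_Q(M)$, and the equality of charges is an immediate consequence.

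The only slightly delicate step is upgrading \cref{prop:opcommute} from $\eD_j$ to $\eDs_j$, since a priori the number of nontrivial applications of $\eD_j$ could differ between $M$ and $g^\leftrightarrow_i(M)$. This is handled by the uniform-$k$ argument above, which works because once $\eD_j$ stabilizes on a multiline queue it stays stable under further applications, so $\eDs_j$ is recovered by any sufficiently large power of $\eD_j$.
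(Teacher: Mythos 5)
Your proof is correct and follows essentially the same route as the paper: the paper's justification is precisely that $\rho$ is built from row operators commuting with column operators (\cref{prop:opcommute}), and that the recording tableau is determined by the row-size changes at each collapsing step, which column operators cannot affect. Your only addition is the careful uniform-$k$ upgrade of commutativity from $\eD_j$ to $\eDs_j$, a detail the paper leaves implicit but which you handle correctly.
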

\begin{remark}\label{rem:connected}
    For a partition $\mu$, the bijection \cite[Theorem A.4]{MV24} between $\SSYT(\mu)$ and $\NMLQ_\mu$ is a crystal isomorphism. In particular, the crystal graph of $\NMLQ_\mu$ is connected. 
\end{remark}

From \cref{theorem:collthm}, we get that each connected component of $\MLQ_\lambda$ is associated to a recording tableau given by the image of $\rho_Q$ acting on that component; we may call this the \emph{fiber} of $\rho_Q$ corresponding to that recording tableau. The set of connected components is therefore indexed by the semistandard tableaux of content $\lambda'$.

From \cref{theorem:collthm} and \cref{lem:charge}, we have that for any $N_1, N_2\in\NMLQ_\mu$,
\begin{equation}\label{eq:iff}
        \rho^{-1}(N_1,Q)\in \MLQ_\lambda\iff \rho^{-1}(N_2,Q)\in \MLQ_\lambda\,.
\end{equation}
Combining \eqref{eq:iff} with \cref{rem:connected}, we obtain an elementary proof for \cref{thm:LS}. Recall the expression for the $q$-Kostka polynomials in \eqref{eqn:KF_coeff}, from which we have:
\begin{align*}
    \sum_\mu K_{\mu',\lambda'}(q)s_\mu &= \sum_\mu \left(\sum_{Q\in \SSYT(\mu', \lambda')}q^{\charge(Q)}\right)\left(\sum_{N\in \NMLQ_\mu}x^N\right)\\
    &=\sum_\mu \sum_{\substack{Q\in \SSYT(\mu', \lambda'),\\N\in \text{NMLQ}_\mu}}q^{\charge(Q)}x^N\\
    &=\sum_\mu \sum_{\substack{Q\in \SSYT(\mu', \lambda'),\\N\in \text{NMLQ}_\mu}}q^{\charge(Q)}x^{\rho^{-1}(N,Q)}\\
    &= \sum_{M\in \text{MLQ}_\lambda}q^{\maj(M)}x^M = P_\lambda(X;q,0)\,  
\end{align*}
A multiline queue formula for $K_{\mu',\lambda'}$ is then obtained in terms of the pre-image of $\rho$. Recall that for a partition $\mu$, $M_\mu$ is the unique multiline queue with $\mu_i$ balls in column $i$; this will serve as a representative of $\NMLQ_\mu$.
\begin{align*}
    K_{\mu',\lambda'}(q) &= \sum_{Q\in \SSYT(\mu',\lambda')} q^{\charge(Q)}\\
    &= \sum_{Q\in \SSYT(\mu',\lambda')}q^{\maj(\rho^{-1}(M_\mu, Q))}
    &= \sum_{\substack{N\in \text{MLQ}_\lambda\\\rho_N(N) = M_\mu}}q^{\maj(N)}\,.
\end{align*}

Our goal now is to use the collapsing map in a similar way to prove the Demazure atom-positivity of the ASEP polynomials and the quasi-Schur positivity of the quasisymmetric Macdonald polynomials at $t=0$.

To show the Demazure atom-positivity of ASEP polynomials at $t=0$, we restrict the collapsing map to $\MLQ(\alpha)$ with $\sort(\alpha) = \lambda$. Here, we would like to show that the image of the collapsing map decomposes properly, that is, if some multiline queue from a particular nonsymmetric component shows up in the image, then the entire nonsymmetric component is in the image. Stating this precisely, we show that there exist sets $S_{\alpha,\beta}\subseteq \SSYT(\sort(\beta)',\sort(\alpha)')$ such that the collapsing map, when restricted to $\MLQ(\alpha)$, gives the following bijection:
\[\MLQ(\alpha) \simeq \bigcup_\beta \NMLQ(\beta) \times S_{\alpha,\beta}\,.\]
This is equivalent to showing the following theorem:

\begin{restatable}{theorem}{weaktype}\label{theorem:weaktype}
Let $\lambda$ be a partition and let $M, M' \in\MLQ_\lambda$ be multiline queues in the same connected component of the graded crystal graph (such that $\rho_Q(M)=\rho_Q(M')$). Then, 
            \begin{equation}\label{eq:toshow}
                    \type(\rho(M)) = \type(\rho(M'))  \implies \type(M) = \type(M').
            \end{equation}    
\end{restatable}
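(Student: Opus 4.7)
The plan is to establish a stronger statement: that the type of $M\in\MLQ_\lambda$ is determined by the pair $(\type(\rho_N(M)),\rho_Q(M))$. Since $\rho_Q$ is constant on each connected component of the graded crystal, this factorization would immediately imply the theorem. Equivalently, for each recording tableau $Q\in\SSYT(\mu',\lambda')$, every set of the form $\MLQ[\alpha]\cap \rho_Q^{-1}(Q)$ should be a union of preimages of sets of the form $\NMLQ[\beta]$ under the bijective restriction $\rho_N|_Q:\rho_Q^{-1}(Q)\to \NMLQ_\mu$ induced by \cref{theorem:collthm}.

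By \cref{cor:sequence}, if $M$ and $M'$ lie in the same component, then $M'=\sbf M$ for some sequence $\sbf=g^{\leftrightarrow}_{i_\ell}\cdots g^{\leftrightarrow}_{i_1}$ of column operators, and by \cref{prop:majpreserver} we have $\rho_N(M')=\sbf\rho_N(M)$. My first attempt would be to reduce the problem to a single-step analysis: characterize, for each column operator $g^{\leftrightarrow}_i$, precisely when $\type(g^{\leftrightarrow}_i M)=\type(M)$, in terms of the local ball configuration and the bracket word $\theta_i(\rw(M))$. The hope is that this characterization is the same for $M$ and $\rho_N(M)$, since by \cref{prop:num of balls preserved} the numbers of unmatched $i$'s and $i+1$'s in $\theta_i(\rw(\cdot))$ coincide. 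In the best case, this would show that a sequence $\sbf$ preserves the type of $\rho_N(M)$ if and only if it preserves the type of $M$, from which the theorem would follow.

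The main obstacle, as highlighted in the introduction, is that type preservation cannot in general be checked locally at each step: the intermediate multiline queues along $\sbf$ may have types that differ from the starting types, even when the endpoints agree, and since $\NMLQ[\beta]$ may be disconnected under column operators, a naive induction on the length of $\sbf$ will fail. To address these subtleties, I would attempt to induct on the recursive structure of multiline queues, either on the number of rows $L$ or on the size of $\lambda$, by peeling off the topmost row and reducing to a smaller multiline queue whose strand anchors can be analyzed inductively. \cref{lemma:intersectingstrands}, which constrains how strands can cross in a non-wrapping multiline queue, would likely play a central role in controlling how strand anchors are transformed under the lifting maps that invert $\rho$, allowing us to explicitly track the type transformation through each step. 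The hardest part will be to show that this transformation does not depend on the (possibly disconnected) component of $\NMLQ[\beta]$ containing $\rho_N(M)$ within the fiber of $Q$.
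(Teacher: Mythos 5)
Your reduction and your diagnosis of the obstacle are both on target, and your hoped-for single-step characterization does exist in the paper (\cref{cor:Harper's lemma}: $g^\leftrightarrow_i$ changes $\type$ exactly when $M$ is $g^\leftrightarrow_i$-full, meaning $\type(M)_i<\type(M)_{i+1}$ and there is exactly one unmatched $i+1$ in $\theta_i(\rw(M))$). However, the key hope in your second paragraph --- that this characterization ``is the same for $M$ and $\rho_N(M)$'' --- is false, and this is precisely where the proof becomes delicate. The unmatched-letter count is indeed preserved by $\rho$ (\cref{prop:num of balls preserved}), but the ascent condition $\type(\cdot)_i<\type(\cdot)_{i+1}$ only transfers in one direction: ascents of $\type(M)$ are ascents of $\type(\rho(M))$ (\cref{thm:rho type}), not conversely. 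Consequently one only gets that $g^\leftrightarrow_i$ changes the type of $M$ \emph{only if} it changes the type of $\rho(M)$ (\cref{cor:M change type only if}), and a generic path from $\rho(M)$ to $\rho(M')$ cannot be lifted step by step with full information about the types upstairs. Your proposal does not supply the mechanism that closes this one-sided gap.

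The paper's resolution, which is absent from your sketch, is to choose a \emph{special} path in \cref{lem:step 3}: first descend from $\rho(M)$ to the lowest-weight element $M_\delta$ using only $\fR_i$'s, then return to the straight multiline queue $M_\alpha$ by applying the starred raising operators $\eLs_i$ in the reverse order. Along the return leg every type-changing application of $\eLs_i$ removes an ascent, which is exactly the situation where the two-sided equivalence of \cref{lem:type si}(i) holds, so the evolution of $I_L$ upstairs can be reversed step for step and shown to return to $I_L(M)$. This path construction, together with the properties (A)--(C) used to track $I_L$, is the heart of the argument and cannot be replaced by an appeal to \cref{lemma:intersectingstrands} alone. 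Your row-peeling induction is also the right frame, but it additionally requires the compatibility statement that $\type(\rho(M))=\type(\rho(M'))$ forces $\type(\rho^{(L-1)}(M))=\type(\rho^{(L-1)}(M'))$ (\cref{lem:step1}), a separate nontrivial lemma your proposal does not address; without it the induction hypothesis cannot be invoked on the truncated queues.
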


We will prove \cref{theorem:weaktype} in \cref{sec:proof of main}. We also derive from \cref{theorem:weaktype} the quasisymmetric analogue as seen in the following proposition:

\begin{restatable}{prop}{sameQ}\label{prop:sameQ}
    Let $\lambda$ be a partition and let $M, M' \in\MLQ_\lambda$ be multiline queues in the same connected component of the graded crystal graph (such that $\rho_Q(M)=\rho_Q(M')$). Then, 
            \begin{equation}\label{eq:toshow qsym}
                    \strtype(\rho(M)) = \strtype(\rho(M'))  \implies \strtype(M) = \strtype(M').
            \end{equation}   
\end{restatable}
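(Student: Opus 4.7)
The plan is to derive \cref{prop:sameQ} from \cref{theorem:weaktype} by reducing to adjacent zero-swaps of types on the collapsed level. Set $\beta := \type(\rho(M))$, $\beta' := \type(\rho(M'))$, and $Q := \rho_Q(M) = \rho_Q(M')$, so $\beta^+ = \beta'^+$ by hypothesis. \cref{theorem:weaktype} implies that $\type(M)$ depends only on $\beta$ and $Q$, so we may replace $M$ and $M'$ by the canonical representatives $\widetilde M := \rho^{-1}(M_\beta, Q)$ and $\widetilde M' := \rho^{-1}(M_{\beta'}, Q)$ associated with the straight multiline queues $M_\beta, M_{\beta'}$. The claim then reduces to showing $\strtype(\widetilde M) = \strtype(\widetilde M')$.

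Since $\beta^+ = \beta'^+$, we may transform $\beta$ into $\beta'$ by a sequence of adjacent swaps exchanging a $0$ with a neighboring positive entry, so by transitivity it suffices to handle one such swap: say $\beta_i = 0$, $\beta_{i+1} = k > 0$, and $\beta'$ is obtained by exchanging these two entries. A direct inspection of the bracketing rule shows that $M_{\beta'} = (\eL_i)^k M_\beta$, since each application of $\eL_i$ to $M_\beta$ shifts the topmost remaining ball of the column-$(i+1)$ strand one column to the left, until after $k$ steps the entire strand has been relocated to column $i$. By commutativity of column operators with the collapsing map (\cref{prop:majpreserver}),
\[ \widetilde M' \;=\; \rho^{-1}\bigl((\eL_i)^k M_\beta,\, Q\bigr) \;=\; (\eL_i)^k \widetilde M. \]
So the problem reduces to showing that $\strtype$ is preserved by $(\eL_i)^k$ applied to $\widetilde M$.

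To establish this, we analyze the FM structure of $\widetilde M$ relative to the bracket word $\theta_i(\rw(\widetilde M))$: since the column-$(i+1)$ strand being shifted in $M_\beta$ is isolated (column $i$ is empty in the collapsed picture), the balls in $\widetilde M$ moved by successive applications of $\eL_i$ should all belong to a single strand being transplanted from column $i+1$ to column $i$, leaving the multiset of strand lengths at occupied columns unchanged. The verification should proceed by induction on the largest part of $\lambda$ (or on the number of rows), exploiting the recursive structure of multiline queues mentioned in the proof sketch.

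The main obstacle is precisely this promotion from the collapse to the lift. In general, the column operator $\eL_i$ can alter $\strtype$ of a multiline queue (for example by effectively swapping two strands between columns $i$ and $i+1$ via the FM pairing rule), so the content of the proof is to check that no such swap occurs along the specific path determined by our adjacent zero-swap. The emptiness of one of the two columns $i, i+1$ at the collapsed level is the key constraint that must be shown to propagate to $\widetilde M$, ensuring that each $\eL_i$ in the path moves a ball of a single, isolated strand.
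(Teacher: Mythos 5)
Your reduction is the same as the paper's: invoke \cref{theorem:weaktype} to replace $M,M'$ by the canonical representatives $\rho^{-1}(M_\beta,Q)$ and $\rho^{-1}(M_{\beta'},Q)$, decompose $\beta\to\beta'$ into adjacent swaps of a zero with a positive entry, realize each swap on the straight multiline queue as $(\eL_i)^k M_\beta = M_{\beta'}$ (the paper writes this as the crystal reflection operator $r_i$, which coincides with $\eLs_i=(\eL_i)^k$ here), and push it through $\rho^{-1}$ by commutativity. All of that is correct.

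The gap is that the one step carrying the actual content --- that $(\eL_i)^k$ preserves $\strtype$ of the lift $\widetilde M$ --- is left at the level of ``should proceed by induction on the number of rows'' and ``must be shown to propagate.'' You correctly identify the needed constraint (emptiness of column $i$ must survive the passage from $M_\beta$ to $\widetilde M$) but do not prove it, and the strand-by-strand FM analysis you sketch is not how one wants to close it. The paper's observation is much shorter: $\rho$ and $\rho^{-1}$ are compositions of the row operators $\eD_j$ and their inverses, which move balls only vertically within a fixed column, so the set of occupied columns of $\rho^{-1}(M_\beta,Q)$ is identical to that of $M_\beta$. Since $\beta_i=0$ forces column $i$ of the straight multiline queue $M_\beta$ to be empty, column $i$ of $\widetilde M$ is empty as well. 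Then $(\eL_i)^k\widetilde M$ is literally the interchange of an empty column with column $i+1$, the operator commutes with $\rho^{-1}$ for the same column-preservation reason, and $\strtype$ is unchanged because the FM labeling disregards empty columns entirely --- there is no second strand in column $i$ for $\eL_i$ to swap with, which dissolves the worry you raise in your last paragraph. With that one observation inserted, your argument is complete and agrees with the paper's; no induction on $\lambda_1$ is needed.
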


We sketch the proof of \cref{theorem:weaktype}, which will be the focus of \cref{sec:induction}. Write $N,N'=\rho(M),\rho(M')\in\NMLQ_\mu$. First, as $M,M'$ are in the same connected component of $\MLQ_\lambda$, there exists a sequence of column operators $\sbf=g^\leftrightarrow_{j_m}\cdots g^\leftrightarrow_{j_1}$ such that $M' = \sbf\,M$. As $\rho$ commutes with $\sbf$, we also have $N'=\sbf\,N$. To parallel \eqref{eq:iff} for the symmetric case, we must show \eqref{eq:toshow}, meaning that if $N$ and $N'$ lie in the same nonsymmetric component $\NMLQ[\alpha]$ of $\NMLQ_\mu$, then $M$ and $M'$ lie in the same nonsymmetric component of $\MLQ_\lambda$. However, there are two fundamental obstructions.

First, crystal operators do not preserve nonsymmetric type. As a result, along the path $g^\leftrightarrow_{j_1},\ldots,g^\leftrightarrow_{j_m}$ from $N$ to $N'$ in $\NMLQ_\mu$, even if both endpoints $N$ and $N'$ lie in $\NMLQ[\alpha]$, the intermediate vertices may be outside of this set.

Second, it is not possible to circumvent this by choosing a different path from $N$ to $N'$, since the restriction of $\NMLQ_\mu$ to $\NMLQ[\alpha]$ in general produces a \emph{disconnected graph}. Thus there need not exist any path between $N$ and $N'$ that is entirely contained within $\NMLQ[\alpha]$,  and in general, any sequence of multiline queues along a path from $N\in\NMLQ[\alpha]$ to $N'\in\NMLQ[\alpha]$ may be forced to pass through different nonsymmetric components before returning to $\NMLQ[\alpha]$. See \cref{fig:big figure} for such an example. 

Thus it does not seem possible to construct an argument based on preserving types or global connectivity. Instead, we shall construct a special path whose behavior with respect to the nonsymmetric components it passes through can be analyzed directly.  Our approach is to consider a particular path from $N$ to $M_\alpha$, the unique straight multiline queue of type $\alpha$, and to simultaneously track its preimage under $\rho$. In particular, this path passes through the lowest weight element of $\NMLQ_\mu$. In \cref{sec:crystals}, we analyze the action of a single column operator on a multiline queue and on its image under $\rho$, using the combinatorial characterization of \cref{cor:Harper's lemma} that determines exactly when the column crystal operator changes the nonsymmetric type. This local analysis will allow us to track how type evolves along the path in order to conclude our results.

We illustrate \cref{theorem:weaktype} and the description above with \cref{ex:path}.

   \begin{example}\label{ex:path}
        Let $M,M'\in\MLQ[(0,5,3,2)]$ and $N,N'\in\NMLQ[(1,4,3,2])$ for 
        $N=\rho(M)$, $N'=\rho(M')$, and \[\rho_Q(M)=\rho_Q(M')=\begin{ytableau}
        4\\3&5\\2&2&2\\1&1&1&3
    \end{ytableau}\,.\] 
      The sequence of column operators $\eL_2\fR_1(\fR_2)^2$ from $M$ to $M'$ passes through components of types $(0,5,3,2) \to (0,3,5,2) \to (0,5,3,2)$, as shown below. By \cref{prop:opcommute}, the same sequence takes $N$ to $N'$, traversing the types $(1,4,3,2) \to (1,3,4,2) \to (1,4,3,2)$. 
       
        \begin{center}
        \resizebox{1\linewidth}{!}{
        \begin{tikzpicture}[scale=0.7]
            \def \w{1};
        \def \h{1};
        \def \r{0.25};
        \def \d{6};
        
        \begin{scope}[xshift=-\d cm]
        \node at (-1.25, 2) {\Large $N=$};
        \foreach \i in {0,...,4}
        {
        \draw[gray!50] (0,\i*\h)--(\w*4,\i*\h);
        }
        \foreach \i in {0,...,4}
        {
        \draw[gray!50] (\w*\i,0)--(\w*\i,4*\h);
        }

        \draw[blue] (\w*1.5,\h*3.5-\r)--(\w*1.5,\h*.5+\r);
        \draw[red] (\w*.5,\h*2.5-\r)--(\w*.5,\h*1.85)--(\w*2.5,\h*1.85)--(\w*2.5,\h*.5+\r);
        \draw[black!50!green] (\w*3.5,\h*1.5-\r)--(\w*3.5,\h*.5+\r);

        \draw[fill=white] (\w*1.5,\h*3.5) circle (\r cm) node[color=black] {4};
        \draw[fill=white] (\w*0.5,\h*2.5) circle (\r cm) node[color=black] {3};
        \draw[fill=white] (\w*1.5,\h*2.5) circle (\r cm) node[color=black] {4};
        \draw[fill=white] (\w*1.5,\h*1.5) circle (\r cm) node[color=black] {4};
        \draw[fill=white] (\w*2.5,\h*1.5) circle (\r cm) node[color=black] {3};
        \draw[fill=white] (\w*3.5,\h*1.5) circle (\r cm) node[color=black] {2};
        \draw[fill=white] (\w*0.5,\h*.5) circle (\r cm) node[color=black] {1};
        \draw[fill=white] (\w*1.5,\h*.5) circle (\r cm) node[color=black] {4};
        \draw[fill=white] (\w*2.5,\h*.5) circle (\r cm) node[color=black] {3};
        \draw[fill=white] (\w*3.5,\h*.5) circle (\r cm) node[color=black] {2};
        
        \node[red] at (\w*0.5,-.5*\h) {1};
        \node[red] at (\w*1.5,-.5*\h) {4};
        \node[red] at (\w*2.5,-.5*\h){3};
        \node[red] at (\w*3.5,-.5*\h) {2};
        \end{scope}

        \begin{scope}[xshift=0cm]
        \draw[->] (-1.5,2)--(-.5,2);
        \node at (-1,1) {$\fR_2$};
        
        \foreach \i in {0,...,4}
        {
        \draw[gray!50] (0,\i*\h)--(\w*4,\i*\h);
        }     
        \foreach \i in {0,...,4}
        {
        \draw[gray!50] (\w*\i,0)--(\w*\i,4*\h);
        }

        \draw[blue] (\w*1.5,\h*3.5-\r)--(\w*1.5,\h*2.85)--(\w*2.5,\h*2.85)--(\w*2.5,\h*2.5+\r) (\w*2.5,\h*2.5-\r)--(\w*2.5,\h*.5+\r);
        \draw[red] (\w*.5,\h*2.5-\r)--(\w*.5,\h*1.85)--(\w*1.5,\h*1.85)--(\w*1.5,\h*.5+\r);
        \draw[black!50!green] (\w*3.5,\h*1.5-\r)--(\w*3.5,\h*.5+\r);

        \draw[fill=white] (\w*1.5,\h*3.5) circle (\r cm) node[color=black] {4};
        \draw[fill=white] (\w*0.5,\h*2.5) circle (\r cm) node[color=black] {3};
        \draw[fill=\highlight] (\w*2.5,\h*2.5) circle (\r cm) node[color=black] {4};
        \draw[fill=white] (\w*1.5,\h*1.5) circle (\r cm) node[color=black] {3};
        \draw[fill=white] (\w*2.5,\h*1.5) circle (\r cm) node[color=black] {4};
        \draw[fill=white] (\w*3.5,\h*1.5) circle (\r cm) node[color=black] {2};
        \draw[fill=white] (\w*0.5,\h*.5) circle (\r cm) node[color=black] {1};
        \draw[fill=white] (\w*1.5,\h*.5) circle (\r cm) node[color=black] {3};
        \draw[fill=white] (\w*2.5,\h*.5) circle (\r cm) node[color=black] {4};
        \draw[fill=white] (\w*3.5,\h*.5) circle (\r cm) node[color=black] {2};
        
        \node[red] at (\w*0.5,-.5*\h) {1};
        \node[red] at (\w*1.5,-.5*\h) {3};
        \node[red] at (\w*2.5,-.5*\h){4};
        \node[red] at (\w*3.5,-.5*\h) {2};
        \end{scope}

        \begin{scope}[xshift=\d cm]
        \draw[->] (-1.5,2)--(-.5,2);
        \node at (-1,1) {$\fR_2$};
        
        \foreach \i in {0,...,4}
        {
        \draw[gray!50] (0,\i*\h)--(\w*4,\i*\h);
        }
        \foreach \i in {0,...,4}
        {
        \draw[gray!50] (\w*\i,0)--(\w*\i,4*\h);
        }

        \draw[blue] (\w*2.5,\h*3.5-\r)--(\w*2.5,\h*.5+\r);
        \draw[red] (\w*.5,\h*2.5-\r)--(\w*.5,\h*1.85)--(\w*1.5,\h*1.85)--(\w*1.5,\h*.5+\r);
        \draw[black!50!green] (\w*3.5,\h*1.5-\r)--(\w*3.5,\h*.5+\r);

        \draw[fill=\highlight] (\w*2.5,\h*3.5) circle (\r cm) node[color=black] {4};
        \draw[fill=white] (\w*0.5,\h*2.5) circle (\r cm) node[color=black] {3};
        \draw[fill=white] (\w*2.5,\h*2.5) circle (\r cm) node[color=black] {4};
        \draw[fill=white] (\w*1.5,\h*1.5) circle (\r cm) node[color=black] {3};
        \draw[fill=white] (\w*2.5,\h*1.5) circle (\r cm) node[color=black] {4};
        \draw[fill=white] (\w*3.5,\h*1.5) circle (\r cm) node[color=black] {2};
        \draw[fill=white] (\w*0.5,\h*.5) circle (\r cm) node[color=black] {1};
        \draw[fill=white] (\w*1.5,\h*.5) circle (\r cm) node[color=black] {3};
        \draw[fill=white] (\w*2.5,\h*.5) circle (\r cm) node[color=black] {4};
        \draw[fill=white] (\w*3.5,\h*.5) circle (\r cm) node[color=black] {2};
        
        \node[red] at (\w*0.5,-.5*\h) {1};
        \node[red] at (\w*1.5,-.5*\h) {3};
        \node[red] at (\w*2.5,-.5*\h){4};
        \node[red] at (\w*3.5,-.5*\h) {2};
        \end{scope}

        \begin{scope}[xshift=2*\d cm]
        \draw[->] (-1.5,2)--(-.5,2);
        \node at (-1,1) {$\fR_1$};
        \foreach \i in {0,...,4}
        {
        \draw[gray!50] (0,\i*\h)--(\w*4,\i*\h);
        }
        \foreach \i in {0,...,4}
        {
        \draw[gray!50] (\w*\i,0)--(\w*\i,4*\h);
        }
        
        \draw[blue] (\w*2.5,\h*3.5-\r)--(\w*2.5,\h*.5+\r);
        \draw[red] (\w*1.5,\h*2.5-\r)--(\w*1.5,\h*.5+\r);
        
        \draw[black!50!green] (\w*3.5,\h*1.5-\r)--(\w*3.5,\h*.5+\r);

        \draw[fill=white] (\w*2.5,\h*3.5) circle (\r cm) node[color=black] {4};
        \draw[fill=\highlight] (\w*1.5,\h*2.5) circle (\r cm) node[color=black] {3};
        \draw[fill=white] (\w*2.5,\h*2.5) circle (\r cm) node[color=black] {4};
        \draw[fill=white] (\w*1.5,\h*1.5) circle (\r cm) node[color=black] {3};
        \draw[fill=white] (\w*2.5,\h*1.5) circle (\r cm) node[color=black] {4};
        \draw[fill=white] (\w*3.5,\h*1.5) circle (\r cm) node[color=black] {2};
        \draw[fill=white] (\w*0.5,\h*.5) circle (\r cm) node[color=black] {1};
        \draw[fill=white] (\w*1.5,\h*.5) circle (\r cm) node[color=black] {3};
        \draw[fill=white] (\w*2.5,\h*.5) circle (\r cm) node[color=black] {4};
        \draw[fill=white] (\w*3.5,\h*.5) circle (\r cm) node[color=black] {2};
        
        \node[red] at (\w*0.5,-.5*\h) {1};
        \node[red] at (\w*1.5,-.5*\h) {3};
        \node[red] at (\w*2.5,-.5*\h){4};
        \node[red] at (\w*3.5,-.5*\h) {2};
        \end{scope}

        \begin{scope}[xshift=3*\d cm]
        \node at (5.25, 2) {\Large $=N'$};
        \draw[->] (-1.5,2)--(-.5,2);
        \node at (-1,1) {$\eL_2$};
        \foreach \i in {0,...,4}
        {
        \draw[gray!50] (0,\i*\h)--(\w*4,\i*\h);
        }
        \foreach \i in {0,...,4}
        {
        \draw[gray!50] (\w*\i,0)--(\w*\i,4*\h);
        }
        
        \draw[blue] (\w*1.5,\h*3.5-\r)--(\w*1.5,\h*.5+\r);
        \draw[red] (\w*2.5,\h*2.5-\r)--(\w*2.5,\h*.5+\r);
        
        \draw[black!50!green] (\w*3.5,\h*1.5-\r)--(\w*3.5,\h*.5+\r);

        \draw[fill=\highlight] (\w*1.5,\h*3.5) circle (\r cm) node[color=black] {4};
        \draw[fill=white] (\w*2.5,\h*2.5) circle (\r cm) node[color=black] {3};
        \draw[fill=white] (\w*1.5,\h*2.5) circle (\r cm) node[color=black] {4};
        \draw[fill=white] (\w*2.5,\h*1.5) circle (\r cm) node[color=black] {3};
        \draw[fill=white] (\w*1.5,\h*1.5) circle (\r cm) node[color=black] {4};
        \draw[fill=white] (\w*3.5,\h*1.5) circle (\r cm) node[color=black] {2};
        \draw[fill=white] (\w*0.5,\h*.5) circle (\r cm) node[color=black] {1};
        \draw[fill=white] (\w*2.5,\h*.5) circle (\r cm) node[color=black] {3};
        \draw[fill=white] (\w*1.5,\h*.5) circle (\r cm) node[color=black] {4};
        \draw[fill=white] (\w*3.5,\h*.5) circle (\r cm) node[color=black] {2};
        
        \node[red] at (\w*0.5,-.5*\h) {1};
        \node[red] at (\w*1.5,-.5*\h) {4};
        \node[red] at (\w*2.5,-.5*\h){3};
        \node[red] at (\w*3.5,-.5*\h) {2};
        \end{scope}

        \begin{scope}[xshift=-\d cm,yshift=6cm]
        \node at (-1.25, 2) {\Large $M=$};
        \foreach \i in {0,...,5}
        {
        \draw[gray!50] (0,\i*\h)--(\w*4,\i*\h);
        }
        \foreach \i in {0,...,4}
        {
        \draw[gray!50] (\w*\i,0)--(\w*\i,5*\h);
        }

        \draw[blue] (\w*1.5,\h*4.5-\r)--(\w*1.5,\h*4.1)--(\w*4.2,\h*4.1) (-.2*\w,\h*4.1)--(\w*.5,\h*4.1)--(\w*.5,\h*3)--(\w*1.5,\h*3)--(\w*1.5,\h*.5+\r);
        \draw[red]  (\w*3.5,\h*2.5-\r)--(\w*3.5,\h*2.1)--(\w*4.2,\h*2.1) (-.2*\w,\h*2.1)--(\w*.5,\h*2.1)--(\w*.5,\h*.9)--(\w*2.5,\h*.9)--(\w*2.5,\h*.5+\r);
        \draw[black!50!green] (\w*2.5,\h*1.5-\r)--(\w*2.5,\h*1.1)--(\w*3.5,\h*1.1)--(\w*3.5,\h*.5+\r);

        \draw[fill=white] (\w*1.5,\h*4.5) circle (\r cm) node[color=black] {5};
        \draw[fill=white] (\w*0.5,\h*3.5) circle (\r cm) node[color=black] {5};
        \draw[fill=white] (\w*1.5,\h*2.5) circle (\r cm) node[color=black] {5};
        \draw[fill=white] (\w*3.5,\h*2.5) circle (\r cm) node[color=black] {3};
        \draw[fill=white] (\w*0.5,\h*1.5) circle (\r cm) node[color=black] {3};
        \draw[fill=white] (\w*1.5,\h*1.5) circle (\r cm) node[color=black] {5};
        \draw[fill=white] (\w*2.5,\h*1.5) circle (\r cm) node[color=black] {2};
        \draw[fill=white] (\w*1.5,\h*.5) circle (\r cm) node[color=black] {5};
        \draw[fill=white] (\w*2.5,\h*.5) circle (\r cm) node[color=black] {3};
        \draw[fill=white] (\w*3.5,\h*.5) circle (\r cm) node[color=black] {2};
        
        \node[red] at (\w*1.5,-.5*\h) {5};
        \node[red] at (\w*2.5,-.5*\h){3};
        \node[red] at (\w*3.5,-.5*\h) {2};
        
        \draw[->] (2*\w,-1)--(2*\w,-1.8);
        \node at (2.5*\w,-1.5) {$\rho$};
        \end{scope}

         \begin{scope}[xshift=0cm,yshift=6cm]
        \draw[->] (-1.5,2)--(-.5,2);
        \node at (-1,1) {$\fR_2$};
        
        \foreach \i in {0,...,5}
        {
        \draw[gray!50] (0,\i*\h)--(\w*4,\i*\h);
        }
        \foreach \i in {0,...,4}
        {
        \draw[gray!50] (\w*\i,0)--(\w*\i,5*\h);
        }

        \draw[blue] (\w*1.5,\h*4.5-\r)--(\w*1.5,\h*4.1)--(\w*4.2,\h*4.1) (-.2*\w,\h*4.1)--(\w*.5,\h*4.1)--(\w*.5,\h*3)--(\w*2.5,\h*3)--(\w*2.5,\h*.5+\r);
        \draw[red]  (\w*3.5,\h*2.5-\r)--(\w*3.5,\h*2.1)--(\w*4.2,\h*2.1) (-.2*\w,\h*2.1)--(\w*.5,\h*2.1)--(\w*.5,\h*.9)--(\w*1.5,\h*.9)--(\w*1.5,\h*.5+\r);
        \draw[black!50!green] (\w*1.5,\h*1.5-\r)--(\w*1.5,\h*1.1)--(\w*3.5,\h*1.1)--(\w*3.5,\h*.5+\r);

        \draw[fill=white] (\w*1.5,\h*4.5) circle (\r cm) node[color=black] {5};
        \draw[fill=white] (\w*0.5,\h*3.5) circle (\r cm) node[color=black] {5};
        \draw[fill=\highlight] (\w*2.5,\h*2.5) circle (\r cm) node[color=black] {5};
        \draw[fill=white] (\w*3.5,\h*2.5) circle (\r cm) node[color=black] {3};
        \draw[fill=white] (\w*0.5,\h*1.5) circle (\r cm) node[color=black] {3};
        \draw[fill=white] (\w*1.5,\h*1.5) circle (\r cm) node[color=black] {2};
        \draw[fill=white] (\w*2.5,\h*1.5) circle (\r cm) node[color=black] {5};
        \draw[fill=white] (\w*1.5,\h*.5) circle (\r cm) node[color=black] {3};
        \draw[fill=white] (\w*2.5,\h*.5) circle (\r cm) node[color=black] {5};
        \draw[fill=white] (\w*3.5,\h*.5) circle (\r cm) node[color=black] {2};
        
        \node[red] at (\w*1.5,-.5*\h) {3};
        \node[red] at (\w*2.5,-.5*\h){5};
        \node[red] at (\w*3.5,-.5*\h) {2};
        
        \draw[->] (2*\w,-1)--(2*\w,-1.8);
        \node at (2.5*\w,-1.5) {$\rho$};
        \end{scope}
        
        \begin{scope}[xshift=\d cm,yshift=6cm]
        \draw[->] (-1.5,2)--(-.5,2);
        \node at (-1,1) {$\fR_2$};
        
        \foreach \i in {0,...,5}
        {
        \draw[gray!50] (0,\i*\h)--(\w*4,\i*\h);
        }
        \foreach \i in {0,...,4}
        {
        \draw[gray!50] (\w*\i,0)--(\w*\i,5*\h);
        }

        \draw[blue] (\w*2.5,\h*4.5-\r)--(\w*2.5,\h*4.1)--(\w*4.2,\h*4.1) (-.2*\w,\h*4.1)--(\w*.5,\h*4.1)--(\w*.5,\h*3)--(\w*2.5,\h*3)--(\w*2.5,\h*.5+\r);
        \draw[red]  (\w*3.5,\h*2.5-\r)--(\w*3.5,\h*2.1)--(\w*4.2,\h*2.1) (-.2*\w,\h*2.1)--(\w*.5,\h*2.1)--(\w*.5,\h*.9)--(\w*1.5,\h*.9)--(\w*1.5,\h*.5+\r);
        \draw[black!50!green] (\w*1.5,\h*1.5-\r)--(\w*1.5,\h*1.1)--(\w*3.5,\h*1.1)--(\w*3.5,\h*.5+\r);

        \draw[fill=\highlight] (\w*2.5,\h*4.5) circle (\r cm) node[color=black] {5};
        \draw[fill=white] (\w*0.5,\h*3.5) circle (\r cm) node[color=black] {5};
        \draw[fill=white] (\w*2.5,\h*2.5) circle (\r cm) node[color=black] {5};
        \draw[fill=white] (\w*3.5,\h*2.5) circle (\r cm) node[color=black] {3};
        \draw[fill=white] (\w*0.5,\h*1.5) circle (\r cm) node[color=black] {3};
        \draw[fill=white] (\w*1.5,\h*1.5) circle (\r cm) node[color=black] {2};
        \draw[fill=white] (\w*2.5,\h*1.5) circle (\r cm) node[color=black] {5};
        \draw[fill=white] (\w*1.5,\h*.5) circle (\r cm) node[color=black] {3};
        \draw[fill=white] (\w*2.5,\h*.5) circle (\r cm) node[color=black] {5};
        \draw[fill=white] (\w*3.5,\h*.5) circle (\r cm) node[color=black] {2};
        
        \node[red] at (\w*1.5,-.5*\h) {3};
        \node[red] at (\w*2.5,-.5*\h){5};
        \node[red] at (\w*3.5,-.5*\h) {2};
        
        \draw[->] (2*\w,-1)--(2*\w,-1.8);
        \node at (2.5*\w,-1.5) {$\rho$};
        \end{scope}
        
        \begin{scope}[xshift=2*\d cm,yshift=6cm]
        \draw[->] (-1.5,2)--(-.5,2);
        \node at (-1,1) {$\fR_1$};
        
        \foreach \i in {0,...,5}
        {
        \draw[gray!50] (0,\i*\h)--(\w*4,\i*\h);
        }
        \foreach \i in {0,...,4}
        {
        \draw[gray!50] (\w*\i,0)--(\w*\i,5*\h);
        }

        \draw[blue] (\w*2.5,\h*4.5-\r)--(\w*2.5,\h*4.1)--(\w*4.2,\h*4.1) (-.2*\w,\h*4.1)--(\w*1.5,\h*4.1)--(\w*1.5,\h*3)--(\w*2.5,\h*3)--(\w*2.5,\h*.5+\r);
        \draw[red]  (\w*3.5,\h*2.5-\r)--(\w*3.5,\h*2.1)--(\w*4.2,\h*2.1) (-.2*\w,\h*2.1)--(\w*.5,\h*2.1)--(\w*.5,\h*.9)--(\w*1.5,\h*.9)--(\w*1.5,\h*.5+\r);
        \draw[black!50!green] (\w*1.5,\h*1.5-\r)--(\w*1.5,\h*1.1)--(\w*3.5,\h*1.1)--(\w*3.5,\h*.5+\r);

        \draw[fill=white] (\w*2.5,\h*4.5) circle (\r cm) node[color=black] {5};
        \draw[fill=\highlight] (\w*1.5,\h*3.5) circle (\r cm) node[color=black] {5};
        \draw[fill=white] (\w*2.5,\h*2.5) circle (\r cm) node[color=black] {5};
        \draw[fill=white] (\w*3.5,\h*2.5) circle (\r cm) node[color=black] {3};
        \draw[fill=white] (\w*0.5,\h*1.5) circle (\r cm) node[color=black] {3};
        \draw[fill=white] (\w*1.5,\h*1.5) circle (\r cm) node[color=black] {2};
        \draw[fill=white] (\w*2.5,\h*1.5) circle (\r cm) node[color=black] {5};
        \draw[fill=white] (\w*1.5,\h*.5) circle (\r cm) node[color=black] {3};
        \draw[fill=white] (\w*2.5,\h*.5) circle (\r cm) node[color=black] {5};
        \draw[fill=white] (\w*3.5,\h*.5) circle (\r cm) node[color=black] {2};
        
        \node[red] at (\w*1.5,-.5*\h) {3};
        \node[red] at (\w*2.5,-.5*\h){5};
        \node[red] at (\w*3.5,-.5*\h) {2};
        
        \draw[->] (2*\w,-1)--(2*\w,-1.8);
        \node at (2.5*\w,-1.5) {$\rho$};
        \end{scope}

        \begin{scope}[xshift=3*\d cm,yshift=6cm]
        \node at (5.25, 2) {\Large $=M'$};
        \draw[->] (-1.5,2)--(-.5,2);
        \node at (-1,1) {$\eL_2$};
        
        \foreach \i in {0,...,5}
        {
        \draw[gray!50] (0,\i*\h)--(\w*4,\i*\h);
        }
        \foreach \i in {0,...,4}
        {
        \draw[gray!50] (\w*\i,0)--(\w*\i,5*\h);
        }

        \draw[blue] (\w*2.5,\h*4.5-\r)--(\w*2.5,\h*4.1)--(\w*4.2,\h*4.1) (-.2*\w,\h*4.1)--(\w*1.5,\h*4.1)--(\w*1.5,\h*.5+\r);
        \draw[red]  (\w*3.5,\h*2.5-\r)--(\w*3.5,\h*2.1)--(\w*4.2,\h*2.1) (-.2*\w,\h*2.1)--(\w*.5,\h*2.1)--(\w*.5,\h*.9)--(\w*2.5,\h*.9)--(\w*2.5,\h*.5+\r);
        \draw[black!50!green] (\w*2.5,\h*1.5-\r)--(\w*2.5,\h*1.1)--(\w*3.5,\h*1.1)--(\w*3.5,\h*.5+\r);

        \draw[fill=white] (\w*2.5,\h*4.5) circle (\r cm) node[color=black] {5};
        \draw[fill=white] (\w*1.5,\h*3.5) circle (\r cm) node[color=black] {5};
        \draw[fill=\highlight] (\w*1.5,\h*2.5) circle (\r cm) node[color=black] {5};
        \draw[fill=white] (\w*3.5,\h*2.5) circle (\r cm) node[color=black] {3};
        \draw[fill=white] (\w*0.5,\h*1.5) circle (\r cm) node[color=black] {3};
        \draw[fill=white] (\w*1.5,\h*1.5) circle (\r cm) node[color=black] {5};
        \draw[fill=white] (\w*2.5,\h*1.5) circle (\r cm) node[color=black] {2};
        \draw[fill=white] (\w*1.5,\h*.5) circle (\r cm) node[color=black] {5};
        \draw[fill=white] (\w*2.5,\h*.5) circle (\r cm) node[color=black] {3};
        \draw[fill=white] (\w*3.5,\h*.5) circle (\r cm) node[color=black] {2};
        
        \node[red] at (\w*1.5,-.5*\h) {5};
        \node[red] at (\w*2.5,-.5*\h){3};
        \node[red] at (\w*3.5,-.5*\h) {2};
        
        \draw[->] (2*\w,-1)--(2*\w,-1.8);
        \node at (2.5*\w,-1.5) {$\rho$};
        \end{scope}
        \end{tikzpicture}
        }
        \end{center}
    \end{example}

    \begin{remark}
         An observant reader may find that in \cref{ex:path}, there exists a sequence of column operators from $N$ to $N'$ that stays within the subgraph $\NMLQ[(1,4,3,2)]$ (for instance, $(\fR_1)^2\fR_2\eL_1$ is such a sequence). However, such a sequence is not guaranteed to exist in general. The smallest example of two nonwrapping multiline queues of the same type $\alpha$ lying in distinct connected components of $\NMLQ[\alpha]$ occurs in $\NMLQ_{(3,3,1,1)}$ for $\alpha=(1,3,1,3)$. As shown in \cref{fig:big figure},  the component $\NMLQ[(1,3,1,3)]$ is  disconnected.  Note that for nonwrapping multiline queues, each component of $\NMLQ[\alpha]$ is supported on exactly as many columns as the number of parts of $\alpha$; however, this is no longer the case for $\SNMLQ[\gamma]$. Thus, to exhibit a disconnected quasisymmetric component, one would need to consider  $\NMLQ_{(3,3,1,1)}$ on eight columns, which is a graph on 3135 vertices. One may verify with computer code that the component $\SNMLQ[(1,3,1,3)]$ is disconnected.
    \end{remark}

    \begin{figure}[htp]
        \centering
        \resizebox{\linewidth}{!}{\input{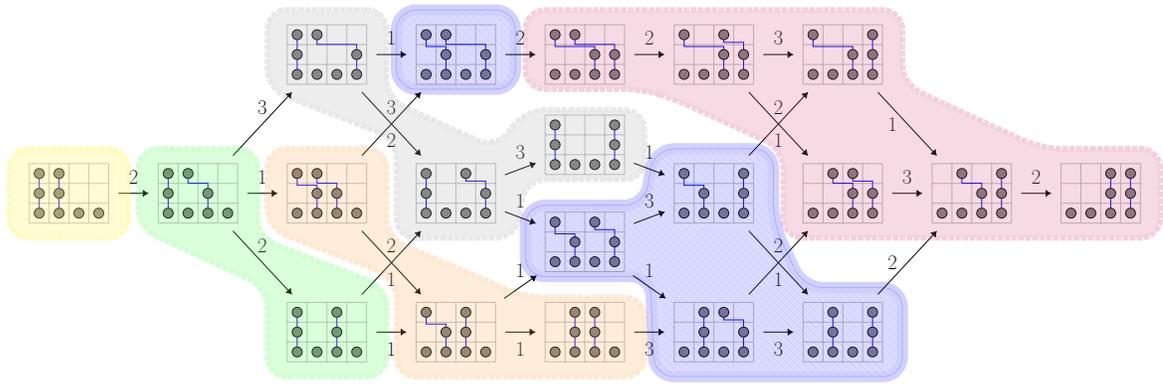}}
        \caption{We show the crystal $\NMLQ_{(3,3,1,1)}$. The different nonsymmetric components $\NMLQ[\alpha]$, for $\sort(\alpha)=(3,3,1,1)$, are highlighted. In particular, note that the component $\NMLQ[(1,3,1,3)]$, highlighted with dashed lines, is disconnected.}
        \label{fig:big figure}
    \end{figure}

\section{Crystal operators and type}\label{sec:crystals}

We examine how collapsing and column crystal operators interact with the $\type$ of a multiline queue. 

\subsection{Column crystal operators and $\type$ change}\label{sec:typechange}

In this section, we give an explicit condition in \cref{cor:Harper's lemma} for when a column crystal operator acting on a multiline queue $M$ induces a $\type$ change; this will serve as a key step in the proofs of our main results.

\begin{definition}\label{def:ei full}
    A multiline queue $M$ is $\mathbf{\eL_i}$\textbf{-full} if $\type(M_i)<\type(M)_{i+1}$ and there is exactly one unpaired $i+1$ in $\theta_i(\rw(M))$.  $M$ is $\mathbf{\fR_i}$\textbf{-full} if $\fR_i(M)$ is $\eL_i$-full.
\end{definition}

\begin{lemma}\label{cor:Harper's lemma}
    For any multiline queue $M$ and column crystal operator $g^\leftrightarrow_i \in \{\eL_i,\fR_i\}$,    
        \[
            \type(g^\leftrightarrow_i(M)) = 
            \begin{cases}
                 s_i \cdot \type(M) & \text{if $M$ is $g^\leftrightarrow_i$-full,} \\
                 \type(M) & \text{otherwise.}
            \end{cases}
        \]
\end{lemma}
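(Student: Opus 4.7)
My plan is to first reduce to the case $g^\leftrightarrow_i = \eL_i$ by observing that $\eL_i$ and $\fR_i$ act as mutual inverses whenever they act nontrivially, and the definition of $\fR_i$-fullness is set up so that $M$ is $\fR_i$-full iff $\fR_i(M)$ is $\eL_i$-full. Thus applying the $\eL_i$ statement to $\fR_i(M)$ yields the $\fR_i$ statement.

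For the $\eL_i$ case, let $b$ be the unique ball moved by $\eL_i$; it sits at some row $r$, column $i{+}1$ in $M$, and at row $r$, column $i$ in $\eL_i(M)$. I would track carefully how the FM pairings restructure when $b$ is shifted. The key observation, derived from the FM priority rules, is that only strands spanning columns $i$ and $i{+}1$ between rows $1$ and $r$ can have their labels change, so the analysis is local. Using the bracketing interpretation of $\theta_i(\rw(M))$, unmatched $i{+}1$'s correspond to balls in column $i{+}1$ whose strand-tops sit strictly above the strand-tops of certain balls in column $i$; translating these bracket-matchings into strand structure is what lets us tie the type change to a bracketing condition.

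The proof itself would then be a case analysis on the two $\eL_i$-full conditions. (i) If $\type(M)_i \geq \type(M)_{i+1}$, then \cref{lemma:intersectingstrands} forces the strand anchored at column $i{+}1$ to lie entirely ``under'' the one anchored at column $i$ in the sense relevant here; moving $b$ from column $i{+}1$ to column $i$ re-routes a ball interior to some strand, but the anchors of the two competing strands in row $1$ remain unchanged. (ii) If there are at least two unpaired $i{+}1$'s, then even after $b$ moves, another unpaired $i{+}1$-ball ``takes its place'' in the FM pairing competition, so the row-$1$ anchoring at positions $i$ and $i{+}1$ is preserved. (iii) When both fullness conditions hold, I would verify that $b$ is precisely the topmost ball of the strand anchored at column $i{+}1$, whose strand is longer than the one at column $i$; moving $b$ to column $i$ then forces the FM algorithm to reassign the strand anchors at columns $i$ and $i{+}1$, yielding exactly the swap $s_i \cdot \type(M)$.

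The main obstacle I expect is bookkeeping the re-routing of strands under FM: the pairing order depends on labels that themselves shift after $b$ moves, so one has to argue that despite this apparent circularity, only the two strands anchored at columns $i$ and $i{+}1$ are reconfigured and only in the fully-full case. I anticipate that a clean way to handle this is to phrase the strand structure directly in terms of the bracket-matching of $\theta_i(\rw(M))$, which converts the dynamical FM statement into a combinatorial statement about parentheses and avoids the need to simulate the algorithm step by step.
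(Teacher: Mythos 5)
Your high-level strategy is the same as the paper's: reduce to $\eL_i$ via the inverse relationship with $\fR_i$, then argue that moving the single ball $b$ only reorganizes the FM labels locally near columns $i$ and $i+1$, and that the row-$1$ labels at positions $i$ and $i+1$ swap exactly when both fullness conditions hold. However, there are concrete gaps in how you propose to carry this out. First, in your case (i) you invoke \cref{lemma:intersectingstrands}, which is stated only for \emph{non-wrapping} multiline queues, while \cref{cor:Harper's lemma} must hold for arbitrary $M$; wrapping strands are precisely where the ``anchors stay put'' intuition is most delicate. Second, your case (iii) asserts that $b$ is the topmost ball of the strand anchored at column $i+1$; this is false in general --- the strand through $b$ typically continues above row $r$ through other columns (see the ball moved by $\eL_2$ in \cref{ex:iactive}). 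What is true, and what the paper proves via the $i$-active region (\cref{def:active region}, \cref{lemma:active region}), is that $b$ sits above a vacancy in column $i$ and that all column-$(i+1)$ balls in the active region carry the same label.

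The deeper issue is that the central claim --- that re-pairing is confined to columns $i,i+1$ and that the labels there simply transpose within a well-defined band of rows --- is exactly the hard part, and your proposed mechanism of reading it off from the bracket matching of $\theta_i(\rw(M))$ does not deliver it: that bracketing only records which balls sit in column $i$ versus column $i+1$, and says nothing a priori about FM labels or about balls in other columns whose pairings could be disturbed when $b$ moves. The paper's \cref{lemma:column swap} establishes this by an induction on active rows with an explicit analysis of the local configurations, including those in which balls from columns $j,j'\notin\{i,i+1\}$ pair into or out of the active region; one must prove that those pairings are undisturbed, and nothing in your sketch addresses this potential cascade. Without an argument of that kind, the ``locality'' you rely on in all three cases is an assumption rather than a conclusion.
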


To prove \cref{cor:Harper's lemma}, we will give an explicit combinatorial characterization for $\eL_i$-fullness in terms of the multiline queue configuration. Recall \cref{def:FM} of the label array $L_M$ corresponding to a multiline queue $M$. We begin with some preliminary lemmas.

The following lemma states that the labels on any sequence of balls within the same column in consecutive rows must be weakly increasing from top to bottom.
\begin{lemma} \label{lemma:vertical}
    Fix a column $i$, and let $r\geq t$ be such that $L_M(s,i) > 0$ for all $r \geq s > t$. Then, $L_M(r,i) \leq L_M(r-1,i) \leq \cdots \leq L_M(t,i)$.
\end{lemma}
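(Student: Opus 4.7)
The plan is to reduce the chain of inequalities to the single-step claim that $L_M(s,i) \leq L_M(s-1,i)$ for each $s$ with $t < s \leq r$ (with both labels positive). Iterating along $s = r, r-1, \ldots, t+1$ then yields the full conclusion, assuming $L_M(t,i)>0$ so the final inequality makes sense.

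To prove the single-step claim, I would analyze how the label at $(s-1,i)$ is produced by the FM algorithm during the pairing step between rows $s$ and $s-1$. Writing $\ell_1 := L_M(s,i)$, the key observation is that $(s-1,i)$ cannot remain unlabeled at the end of this step: if it did, then when $(s,i)$ itself is processed, the rule ``pair with the first unlabeled ball weakly to its right'' would force $(s,i)$ to pair with $(s-1,i)$ directly below. So some ball $(s,j')$ pairs with $(s-1,i)$ during the row-$s$-to-row-$(s-1)$ pairing, and $L_M(s-1,i) = L_M(s,j')$. If $j'=i$, then $L_M(s-1,i) = \ell_1$ and we are done. If $j' \neq i$, then $(s,j')$ was processed strictly before $(s,i)$ in the FM order -- decreasing labels, with left-to-right tiebreak for equal labels -- so either $L_M(s,j') > \ell_1$, or $L_M(s,j') = \ell_1$ with $j' < i$. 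In either case, $L_M(s-1,i) \geq \ell_1$, as required.

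I do not anticipate any substantive obstacle here: the argument is a direct unfolding of the FM priority rules, and the positivity hypothesis of the lemma is precisely what is needed so that the single-step inequality can be concatenated without interruption across the range $t \leq s \leq r$. The main conceptual content is the dichotomy ``either $(s,i)$ pairs straight down, or the ball that does pair with $(s-1,i)$ had priority over $(s,i)$.''
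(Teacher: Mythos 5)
Your proof is correct and follows essentially the same route as the paper's: for each consecutive pair of rows, either the ball at $(s,i)$ pairs straight down (equal labels), or the ball at $(s-1,i)$ was already claimed by a ball processed earlier in the FM priority order, which therefore carries a weakly larger label. Your added remark that the ball at $(t,i)$ must also be present for the final inequality to be meaningful is a fair observation about the statement, but does not change the argument.
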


\begin{proof}
    For any $r\geq s>t$, let $x$ and $y$ be the balls at sites $(s,i)$ and $(s-1,i)$ respectively. Then, during the pairing process from row $s$, either $x$ pairs to $y$, in which case $L_M(k,i) = L_M(k-1,i)$, or $y$ was already paired, in which case $L_M(s,i) \leq L_M(s-1,i)$ since balls are paired in order from larger labels to smaller labels. 
\end{proof}

The following lemma states that a ball labeled $\ell > r$ in row $r$ cannot have a smaller labeled ball directly to its left unless there is a ball with the same label directly above.
\begin{lemma} \label{lemma:horizontal}
    If $L_M(r,i) > r$ and $L_M(r+1,i) \neq L_M(r,i)$, then either $L_M(r,i-1) = 0$ or $L_M(r,i-1) \geq L_M(r,i)$.
\end{lemma}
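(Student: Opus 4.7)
The plan is to trace through the FM algorithm's processing of row $r+1$ and show that the ball at $(r,i-1)$, if present, must have been paired (and so labeled) \emph{before} the pairing that assigned label $\ell = L_M(r,i)$ to the site $(r,i)$. This is exactly what the conclusion $L_M(r,i-1) \geq \ell$ encodes, since in the processing of row $r+1$ only balls of label $\geq \ell$ have been paired by that moment.

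First I would set up the relevant step of the algorithm. Since $\ell > r$, the site $(r,i)$ received its label $\ell$ during the processing of row $r+1$: some ball at $(r+1,j)$ carrying label $\ell$ paired with $(r,i)$. The hypothesis $L_M(r+1,i) \neq \ell$ rules out $j = i$, since otherwise the pairing starting at $(r+1,i)$ would stop at $(r,i)$ and both would share the label $\ell$. Hence either $j < i$ with the pairing walk moving rightward, or $j > i$ with a wrap-around from column $n$ to column $1$.

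The key observation is that within row $r+1$'s processing, pairings are carried out in strictly decreasing order of label, and left-to-right within each label class. So at the moment $(r+1,j)$ with label $\ell$ pairs, every site in row $r$ that has already been paired was paired by a row-$r+1$ ball of label $\geq \ell$, and therefore now carries a label $\geq \ell$. This lets me split into three cases for $(r,i-1)$ at that moment: (i)~empty, which gives $L_M(r,i-1)=0$ directly; (ii)~already paired, which forces $L_M(r,i-1) \geq \ell$; or (iii)~unpaired.

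The main obstacle is ruling out case (iii), which I handle by showing that the pairing walk from $(r+1,j)$ to $(r,i)$ must pass through column $i-1$ of row $r$, so an unpaired ball there would have captured the pairing before it reached $(r,i)$. This requires a small case check: when $j \leq i-1$ (no wrap), column $i-1$ is either the starting column of the walk (if $j = i-1$) or an intermediate one; when $j \geq i+1$ (wrap), column $i-1$ interpreted cyclically (so equal to $n$ when $i=1$) is the last column visited before $i$. In every case, the presence of an unpaired ball at $(r,i-1)$ would divert the pairing there, contradicting the assumption that $(r+1,j)$ paired with $(r,i)$. Thus case (iii) is impossible, and the lemma follows.
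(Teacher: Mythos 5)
Your proposal is correct and follows essentially the same route as the paper's proof: identify the row-$(r+1)$ ball that pairs to $(r,i)$, note it cannot sit at $(r+1,i)$, and derive a contradiction because its pairing walk must pass over column $i-1$, where an unpaired (i.e.\ smaller-labeled) ball would have intercepted it. The paper states this more tersely, while you make explicit the decreasing-label pairing order and the wrap-around case, but the underlying argument is identical.
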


\begin{proof}
    Let $x$ be the ball in row $r+1$ paired to the ball $y$ at site $(r,i)$. By assumption, $L_M(r+1,i) \neq L_M(r,i)$, so $x$ is not at site $(r+1,i)$. Suppose $0<L_M(r,i-1) <L_M(r,i)$. Then the ball at site $(r,i-1)$ was unpaired when ball $y$ was paired-- a contradiction, since $x$ must necessarily skip over site $(r,i-1)$ when pairing to ball $y$. This proves the claim.
\end{proof}

Next we examine the multiline queue $\eL_i(M)$ whenever $\eL_i$ acts nontrivially. As we will see, $L_{\eL_i(M)}$ only differs from $L_M$ in a specific subset of its sites, which we call the \emph{$i$-active region}.

\begin{definition} \label{def:active region}
    Suppose $\eL_i(M) \neq M$, and let $x$ be the ball at site $(r,i+1)$ that moves to site $(r,i)$ in $\eL_i(M)$; let $\ell$ be the label of $x$. Let $p \leq r$ be the maximal row such that $L_M(p-1,i) = 0$ or $L_M(p-1,i) \geq \ell$. If no such $p$ exists, set $p=1$.  Define the $\mathbf{i}$\textbf{-active region} of $M$ as the collection of sites
    \[
        \act_i(M) = \{(s,j): p \leq s \leq r, j \in\{i, i+1\}\},
    \] 
    and define an $\mathbf{i}$\textbf{-active row} as a row containing a site in $\act_i(M)$.
\end{definition}

Note that by construction, if $\act_i(M)$ has topmost row $r$, then $M$ has no vacancies for any sites in $\act_i(M)$ below row $r$. See \cref{ex:iactive} for examples.

\begin{lemma}
    A multiline queue $M$ is $\eL_i$-full if and only if $\act_i(M)$ contains row 1.
\end{lemma}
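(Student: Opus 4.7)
My plan is to analyze the bracket word $\theta_i(\rw(M)) = B_L B_{L-1} \cdots B_1$, where each block $B_s$ takes one of four values $\emptyset, \texttt{(}, \texttt{)}, \texttt{()}$ depending on whether $(s, i)$ and $(s, i+1)$ are occupied (with $\texttt{(}$ preceding $\texttt{)}$, since the row is scanned right-to-left). Let $h_s$ denote the stack height after processing $B_L, \ldots, B_s$. Since $x$ at $(r, i+1)$ is the topmost unmatched $(i+1)$, we must have $h_{r+1} = 0$, $h_r = 1$, $(r, i)$ empty (else $B_r = \texttt{()}$ would match $x$ internally in row $r$), and $h_s \geq 1$ for all $s \leq r$ (else $x$ would be popped). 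Condition (ii) of $\eL_i$-fullness is precisely $h_1 = 1$.

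For the forward direction, assume $\act_i(M)$ contains row $1$, i.e., $p = 1$: column $i$ is occupied with labels $< \ell$ at every row $s = 1, \ldots, r-1$. Then each $B_s \in \{\texttt{)}, \texttt{()}\}$, so the stack steps are non-positive; combined with $h_s \geq 1$ and $h_r = 1$, every step must equal $0$, giving $B_s = \texttt{()}$ throughout (and $h_1 = 1$, so (ii) holds). Column $i+1$ is therefore occupied at rows $1, \ldots, r$, and \cref{lemma:vertical} yields $L_M(1, i+1) \geq L_M(r, i+1) = \ell > L_M(1, i)$, verifying condition (i). Hence $M$ is $\eL_i$-full.

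For the converse, assume $M$ is $\eL_i$-full. Condition (i) combined with the stack constraint forces $B_1 = \texttt{()}$: $B_1 = \texttt{(}$ would require $h_2 = 0$ (violating $h_2 \geq 1$), and $B_1 \in \{\emptyset, \texttt{)}\}$ would give $\type(M)_i \geq \type(M)_{i+1}$. The crux is to extend this to $B_s = \texttt{()}$ for all $s = 1, \ldots, r - 1$, equivalently $(s, i)$ occupied. I would argue by contradiction: let $s_0 \in \{2, \ldots, r-1\}$ be maximal with $(s_0, i)$ empty. Then rows $s_0 + 1, \ldots, r-1$ all satisfy $B_s = \texttt{()}$ (by the forward argument restricted to them), so column $i+1$ is occupied at rows $s_0 + 1, \ldots, r$, and \cref{lemma:vertical} gives labels $\geq \ell$ there. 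Tracing the strand through $(s_0 + 1, i+1)$ downward via the FM pairing rules (using \cref{lemma:horizontal} and the wraparound structure) should yield row-$1$ labels incompatible with $L_M(1, i) < L_M(1, i+1)$, producing the desired contradiction. Once $B_s = \texttt{()}$ throughout, \cref{lemma:vertical} in column $i$ gives $L_M(s, i) \leq L_M(1, i) < \ell$, so $p = 1$.

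The main obstacle is the contradiction step above: the bracketing walk can a priori rise above height $1$ (via a $B_{s'} = \texttt{(}$ later balanced by a $B_s = \texttt{)}$ with $s < s'$), so stack analysis alone is insufficient. Ruling out a gap $(s_0, i)$ empty requires carefully combining condition (i) with the label monotonicity from \cref{lemma:vertical} and \cref{lemma:horizontal}, together with the FM pairing rules (including wraparound), to show that any such gap would propagate a label inconsistency down to row $1$.
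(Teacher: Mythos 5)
Your forward direction ($\act_i(M)$ containing row $1$ implies $\eL_i$-fullness) is correct and matches the paper's argument in substance; the stack bookkeeping is a clean way to see that each row below $r$ contributes a self-matching pair $\texttt{()}$, so that $x$ is the unique unmatched $i+1$ and the ascent $L_M(1,i)<L_M(1,i+1)$ follows from \cref{lemma:vertical}.

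The converse, however, is where the entire content of the lemma lives, and you have not proved it: you reduce it to ruling out a maximal row $s_0<r$ with $(s_0,i)$ empty and then say a downward trace of the strand ``should yield'' a contradiction, explicitly flagging this as an obstacle. That step is the theorem. Moreover, even granting $B_s=\texttt{()}$ for all $s<r$, your closing line ``$L_M(s,i)\le L_M(1,i)<\ell$'' does not follow: condition (i) gives only $L_M(1,i)<L_M(1,i+1)$, and \cref{lemma:vertical} gives only $L_M(1,i+1)\ge\ell$, so you cannot conclude $L_M(1,i)<\ell$ without first showing $L_M(1,i+1)=\ell$ exactly (this is essentially \cref{lemma:active region}). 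The paper closes both gaps at once by propagating \emph{upward} from row $1$ rather than downward from row $r$: if $0<L_M(h,i)<L_M(h,i+1)$, then the ball pairing down to $(h,i+1)$ has higher priority than whatever pairs to $(h,i)$, so it must sit at $(h+1,i+1)$ — otherwise it would grab the still-unpaired ball at $(h,i)$ on its way. Hence $L_M(h+1,i+1)=L_M(h,i+1)$, and the strict ascent persists at row $h+1$ whenever $(h+1,i)$ is occupied. Iterating, the label $L_M(1,i+1)$ travels up column $i+1$ until the first row $r'$ with $(r',i)$ vacant (such a row must exist, else $L_M(1,i)=L$, a contradiction); the ball at $(r',i+1)$ is then unmatched, hence equals $x$ by condition (ii), giving $\ell=L_M(1,i+1)$ and therefore $L_M(s,i)\le L_M(1,i)<\ell$ for all $s<r$ by \cref{lemma:vertical}, i.e.\ $p=1$. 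I recommend replacing your downward trace with this upward propagation; the priority argument is the ingredient your stack analysis cannot supply.
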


\begin{proof}
    If $\act_i(M)$ contains row 1, then $L_M(1,i) < L_M(1,i+1)$ or equivalently, $\type(M)_i < \type(M)_{i+1}$. Let $x$ be the ball at site $(r,i+1)$ that moves to site $(r,i)$ in $\eL_i(M)$. Since $x$ is the highest unmatched ball in column $i+1$ of $M$ and $M$ has no vacancies in $\act_i(M)$ below row $r$, $x$ is the \textit{only} unmatched ball in column $i+1$ in $\theta_i(rw(M))$. Thus, $M$ is $\eL_i$-full. 

    For the converse, if $M$ is $\eL_i$-full, then $L_M(1,i) < L_M(1,i+1)$. We claim that this ascent in ball labels in row 1 implies that there exists a row $r$ such that $L_M(r,i)=0$, $L_M(r,i+1)>0$, and $L_M(h,i)>0$ and $L_M(h,i+1)>0$ for all $1\leq h<r$. We further claim that the ball at site $(r,i+1)$ has label $L_M(1,i+1)$. This claimed configuration is visualized below, with the ball with claimed label $L_M(1,i+1)$ highlighted, and the `$\ast$' above row $r$ indicating arbitrary content.
    \[
    \begin{tikzpicture}[scale=0.5]
            \def \w{1};
        \def \h{1};
        \def \r{0.25};
        \foreach \i in {0,...,5}
        {
        \draw[gray!50] (0,\i*\h)--(\w*2,\i*\h);
        }
        \foreach \i in {0,1,2}
        {
        \draw[gray!50] (\w*\i,0)--(\w*\i,5*\h);
        }
        \node at (-1,.5*\h) {\tiny row $1$};
        \node at (-1,2.5*\h-.5*\h) { $\vdots$};
        \node at (-1,4*\h-.5*\h) {\tiny row $r$};
        \node at (-1,5*\h-.5*\h) { $\vdots$};
        \node at (0.5*\w,-0.45) {\tiny$i$};
        \node at (1.5*\w,-0.5) {\tiny $i+1$};

        \foreach \i\j in {0/0,1/0,0/2,1/2}
        {
        \draw[fill=black] (\i+\w*0.5,\j+\h*.5) circle (\r cm) node[color=black] {};
        }
        \draw[fill=\highlight] (\w*1.5,\h*3.5) circle (\r cm) node[color=black] {};
        \node at (\w*.5,\h*4.5) {$\ast$};
        \node at (\w*1.5,\h*4.5) {$\ast$};
        \node at (\w*.5,1.7*\h) { $\vdots$};
        \node at (\w*1.5,1.7*\h) { $\vdots$};
    \end{tikzpicture}
    \]

    If $L_M(1,i) = 0$, this claim holds trivially. So suppose that $L_M(h,i+1)>L_M(h,i)>0$ for some $h\geq 1$ (we are guaranteed to have this for $h=1$). Then, there is a ball $y$ in row $h+1$ that pairs with the ball at site $(h,i+1)$ with higher priority than the ball that pairs with the ball at site $(h,i)$, which can only occur if ball $y$ is in column $i+1$. Then if $L_M(h+1,i)>0$, we have $L_M(h+1,i)\leq L_M(h,i)<L_M(h,i+1)=L_M(h+1,i+1)$. 

    Iteratively applying the statement above for $h=1,2,\ldots$, we have that either $L_M(h,i)>0$ and $L_M(h,i+1)=L_M(1,i+1)$ for $1\leq h\leq L$, or there is some row $r$ such that $L_M(r,i)=0$, $L_M(r,i+1)=L_M(1,i+1)$, with $L_M(h,i)>0$ and $L_M(h,i+1)=L_M(1,i+1)$ for all $1\leq h<r$. In the latter case, we have proved our claim, and the former case implies $L_M(1,i)=L$, a contradiction. 

    Now, observe that the ball at site $(r,i+1)$ which is highlighted in yellow above is unmatched in $\theta_i(rw(M))$. Since $M$ is $\eL_i$-full, this is the only such unmatched ball and hence is the ball moved by $\eL_i$. Lastly, since $L_M(r,i+1) = L_M(1,i+1) > L_M(1,i) \geq L_M(h,i)$ for all $1 \leq h < r$ by \cref{lemma:vertical}, $\act_i(M)$ is precisely the configuration pictured above. In particular, $\act_i(M)$ contains sites in row 1.
\end{proof}

\begin{example}\label{ex:iactive}
    We show the active regions for $i=2,5,7$ in the multiline queue $M$ shown below on the left, with the balls moved by $\eL_2, \eL_5, \eL_7$ respectively highlighted in yellow. 
    The $2$-active region $\act_2(M)$ is shown in red and, as it extends down to the first row, $M$ is $\eL_2$-full. The region $\act_5(M)$ is shown in blue and stops at row three since there is no ball at site $(5,2)$. The region $\act_7(M)$ is shown in green and also stops at row three since there is a ball with label $3$ at site $(7,2)$, which is \textit{weakly larger} than the label of the ball at site $(8,3)$ (the one moved by $\eL_7$). 
\smallskip

\begin{center}
        \resizebox{!}{3cm}{
\begin{tikzpicture}[scale=0.7]
\def \w{1};
\def \h{1};
\def \r{0.25};
    
\node at (-1,2.5) {\large $M=$};

\def \one{red!40!white};
\def \two{blue!40!white};
\def \three{black!50!green!25!white};

\foreach \xxx\yyy\c in {
1/3/\one,2/3/\one,1/2/\one,2/2/\one,1/1/\one,2/1/\one,1/0/\one,2/0/\one,
4/3/\two,5/3/\two,4/2/\two,5/2/\two,
6/2/\three,7/2/\three}
    {
    \draw[\c, fill=\c](\xxx,\yyy) rectangle (\xxx+\w,\yyy+\h);
    }


\draw[gray!50,thin,step=\w] (0,0) grid (9*\w,5*\h);
\foreach \xx\yy\i\c in {0/2/4/white,
1/0/4/white,1/1/4/white,1/2/3/white,1/4/5/white,
2/0/5/white,2/1/5/white,2/2/5/white,2/3/5/\highlight,
3/1/3/white,
4/0/3/white,4/2/3/white,
5/1/4/white,5/2/4/white,5/3/4/\highlight,
6/0/4/white,6/1/3/white,
7/0/3/white,7/1/3/white,7/2/3/\highlight,
8/0/3/white,8/3/4/white}
    {
    \draw[fill=\c](\w*.5+\w*\xx,\h*.5+\h*\yy) circle (\r cm);
    \node at (\w*.5+\w*\xx,\h*.5+\h*\yy) {\i};
    }




\draw[black](\w*1.5,\h*4.5-\r)--(\w*1.5,\h*4.1)--(\w*2.5,\h*4.1)--(\w*2.5,\h*3.5+\r);
\draw[black](\w*2.5,\h*3.5-\r)--(\w*2.5,\h*2.5+\r);
\draw[black](\w*2.5,\h*2.5-\r)--(\w*2.5,\h*1.5+\r);
\draw[black](\w*2.5,\h*1.5-\r)--(\w*2.5,\h*0.5+\r);

\draw[black](\w*5.5,\h*3.5-\r)--(\w*5.5,\h*2.5+\r);
\draw[black](\w*5.5,\h*2.5-\r)--(\w*5.5,\h*1.5+\r);
\draw[black](\w*5.5,\h*1.5-\r)--(\w*5.5,\h*0.9)--(\w*6.5,\h*0.9)--(\w*6.5,\h*0.5+\r);

\draw[black,-stealth](\w*8.5,\h*3.5-\r)--(\w*8.5,\h*2.9)--(\w*9.3,\h*2.9);
\draw[black](-0.2,\h*2.9)--(\w*0.5,\h*2.9)--(\w*0.5,\h*2.5+\r);
\draw[black](\w*0.5,\h*2.5-\r)--(\w*0.5,\h*1.9)--(\w*1.5,\h*1.9)--(\w*1.5,\h*1.5+\r);
\draw[black](\w*1.5,\h*1.5-\r)--(\w*1.5,\h*0.5+\r);

\draw[black](\w*1.5,\h*2.5-\r)--(\w*1.5,\h*2.1)--(\w*3.5,\h*2.1)--(\w*3.5,\h*1.5+\r);
\draw[black](\w*3.5,\h*1.5-\r)--(\w*3.5,\h*1)--(\w*4.5,\h*1)--(\w*4.5,\h*0.5+\r);

\draw[black](\w*4.5,\h*2.5-\r)--(\w*4.5,\h*1.9)--(\w*6.5,\h*1.9)--(\w*6.5,\h*1.5+\r);
\draw[black](\w*6.5,\h*1.5-\r)--(\w*6.5,\h*1)--(\w*7.5,\h*1)--(\w*7.5,\h*0.5+\r);

\draw[black](\w*7.5,\h*2.5-\r)--(\w*7.5,\h*1.5+\r);
\draw[black](\w*7.5,\h*1.5-\r)--(\w*7.5,\h*1.1)--(\w*8.5,\h*1.1)--(\w*8.5,\h*0.5+\r);
\end{tikzpicture}
}
\end{center}

    \smallskip
   Below, we show $\eL_2(M)$ and $\eL_5(M)$: in both cases, the labeled balls within the corresponding active regions of $M$ (shown in red and blue, respectively) swap columns. Note that since $M$ is $\eL_2$-full, $\type(e_2(M))=s_2 \cdot \type(M)$, whereas $M$ is not $\eL_5$-full, so $\type(\eL_5(M))=\type(M)$. 
    \smallskip
    
        \centering
        \resizebox{!}{3cm}{
\begin{tikzpicture}[scale=0.7]
\def \w{1};
\def \h{1};
\def \r{0.25};
    
\node at (-1.4,2.5) {\large $e_2(M)=$};

\def \one{red!40!white};
\def \two{blue!40!white};
\def \three{black!50!green!25!white};

\foreach \xxx\yyy\c in {
1/3/\one,2/3/\one,1/2/\one,2/2/\one,1/1/\one,2/1/\one,1/0/\one,2/0/\one,
}
    {
    \draw[\c, fill=\c](\xxx,\yyy) rectangle (\xxx+\w,\yyy+\h);
    }

\draw[gray!50,thin,step=\w] (0,0) grid (9*\w,5*\h);
\foreach \xx\yy\i\c in {0/2/4/white,
1/0/5/white,1/1/5/white,1/2/5/white,1/3/5/\highlight,1/4/5/white,
2/0/4/white,2/1/4/white,2/2/3/white,
3/1/3/white,
4/0/3/white,4/2/3/white,
5/1/4/white,5/2/4/white,5/3/4/white,
6/0/4/white,6/1/3/white,
7/0/3/white,7/1/3/white,7/2/3/white,
8/0/3/white,8/3/4/white}
    {
    \draw[fill=\c](\w*.5+\w*\xx,\h*.5+\h*\yy) circle (\r cm);
    \node at (\w*.5+\w*\xx,\h*.5+\h*\yy) {\i};
    }

\draw[black](\w*1.5,\h*4.5-\r)--(\w*1.5,\h*3.5+\r) (\w*1.5,\h*3.5-\r)--(\w*1.5,\h*2.5+\r) (\w*1.5,\h*2.5-\r)--(\w*1.5,\h*1.5+\r); 

\draw[black](\w*2.5,\h*1.5-\r)--(\w*2.5,\h*0.5+\r);

\draw[black](\w*5.5,\h*3.5-\r)--(\w*5.5,\h*2.5+\r);
\draw[black](\w*5.5,\h*2.5-\r)--(\w*5.5,\h*1.5+\r);
\draw[black](\w*5.5,\h*1.5-\r)--(\w*5.5,\h*0.9)--(\w*6.5,\h*0.9)--(\w*6.5,\h*0.5+\r);

\draw[black,-stealth](\w*8.5,\h*3.5-\r)--(\w*8.5,\h*2.9)--(\w*9.3,\h*2.9);
\draw[black](-0.2,\h*2.9)--(\w*0.5,\h*2.9)--(\w*0.5,\h*2.5+\r);
\draw[black](\w*0.5,\h*2.5-\r)--(\w*0.5,\h*1.9)--(\w*2.5,\h*1.9)--(\w*2.5,\h*1.5+\r);
\draw[black](\w*1.5,\h*1.5-\r)--(\w*1.5,\h*0.5+\r);

\draw[black](\w*2.5,\h*2.5-\r)--(\w*2.5,\h*2.1)--(\w*3.5,\h*2.1)--(\w*3.5,\h*1.5+\r);
\draw[black](\w*3.5,\h*1.5-\r)--(\w*3.5,\h*1)--(\w*4.5,\h*1)--(\w*4.5,\h*0.5+\r);

\draw[black](\w*4.5,\h*2.5-\r)--(\w*4.5,\h*1.9)--(\w*6.5,\h*1.9)--(\w*6.5,\h*1.5+\r);
\draw[black](\w*6.5,\h*1.5-\r)--(\w*6.5,\h*1)--(\w*7.5,\h*1)--(\w*7.5,\h*0.5+\r);

\draw[black](\w*7.5,\h*2.5-\r)--(\w*7.5,\h*1.5+\r);
\draw[black](\w*7.5,\h*1.5-\r)--(\w*7.5,\h*1.1)--(\w*8.5,\h*1.1)--(\w*8.5,\h*0.5+\r);

\begin{scope}[xshift=12.5cm]   
\node at (-1.4,2.5) {\large $e_5(M)=$};

\def \one{red!40!white};
\def \two{blue!40!white};
\def \three{black!50!green!25!white};

\foreach \xxx\yyy\c in {
4/3/\two,5/3/\two,4/2/\two,5/2/\two,
}
    {
    \draw[\c, fill=\c](\xxx,\yyy) rectangle (\xxx+\w,\yyy+\h);
    }


\draw[gray!50,thin,step=\w] (0,0) grid (9*\w,5*\h);
\foreach \xx\yy\i\c in {0/2/4/white,
1/0/4/white,1/1/4/white,1/2/3/white,1/4/5/white,
2/0/5/white,2/1/5/white,2/2/5/white,2/3/5/white,
3/1/3/white,
4/0/3/white,5/2/3/white,
5/1/4/white,4/2/4/white,4/3/4/\highlight,
6/0/4/white,6/1/3/white,
7/0/3/white,7/1/3/white,7/2/3/white,
8/0/3/white,8/3/4/white}
    {
    \draw[fill=\c](\w*.5+\w*\xx,\h*.5+\h*\yy) circle (\r cm);
    \node at (\w*.5+\w*\xx,\h*.5+\h*\yy) {\i};
    }

\draw[black](\w*1.5,\h*4.5-\r)--(\w*1.5,\h*4.1)--(\w*2.5,\h*4.1)--(\w*2.5,\h*3.5+\r);
\draw[black](\w*2.5,\h*3.5-\r)--(\w*2.5,\h*2.5+\r);
\draw[black](\w*2.5,\h*2.5-\r)--(\w*2.5,\h*1.5+\r);
\draw[black](\w*2.5,\h*1.5-\r)--(\w*2.5,\h*0.5+\r);
\draw[black](\w*4.5,\h*3.5-\r)--(\w*4.5,\h*2.5+\r);
\draw[black](\w*4.5,\h*2.5-\r)--(\w*4.5,\h*1.9)--(\w*5.5,\h*1.9)--(\w*5.5,\h*1.5+\r);
\draw[black](\w*5.5,\h*1.5-\r)--(\w*5.5,\h*0.9)--(\w*6.5,\h*0.9)--(\w*6.5,\h*0.5+\r);
\draw[black](\w*5.5,\h*2.5-\r)--(\w*5.5,\h*1.9)--(\w*6.5,\h*1.9)--(\w*6.5,\h*1.5+\r);
\draw[black,-stealth](\w*8.5,\h*3.5-\r)--(\w*8.5,\h*2.9)--(\w*9.3,\h*2.9);
\draw[black](-0.2,\h*2.9)--(\w*0.5,\h*2.9)--(\w*0.5,\h*2.5+\r);
\draw[black](\w*0.5,\h*2.5-\r)--(\w*0.5,\h*1.9)--(\w*1.5,\h*1.9)--(\w*1.5,\h*1.5+\r);
\draw[black](\w*1.5,\h*1.5-\r)--(\w*1.5,\h*0.5+\r);
\draw[black](\w*1.5,\h*2.5-\r)--(\w*1.5,\h*2.1)--(\w*3.5,\h*2.1)--(\w*3.5,\h*1.5+\r);
\draw[black](\w*3.5,\h*1.5-\r)--(\w*3.5,\h*1)--(\w*4.5,\h*1)--(\w*4.5,\h*0.5+\r);
\draw[black](\w*6.5,\h*1.5-\r)--(\w*6.5,\h*1)--(\w*7.5,\h*1)--(\w*7.5,\h*0.5+\r);
\draw[black](\w*7.5,\h*2.5-\r)--(\w*7.5,\h*1.5+\r);
\draw[black](\w*7.5,\h*1.5-\r)--(\w*7.5,\h*1.1)--(\w*8.5,\h*1.1)--(\w*8.5,\h*0.5+\r);
\end{scope}
\end{tikzpicture}
}
\end{example}

The following lemma states that the labels in $M$ of all balls in column $i+1$ of $\act_i(M)$ are equal to the label of the topmost ball moved by $\eL_i$. 

\begin{lemma} \label{lemma:active region}
    Suppose $\eL_i(M) \neq M$ and let $r$ be the row at which they differ. Then $L_M(s,i+1)=L_M(r,i+1)$ for all $(s,i+1)\in\act_i(M)$. 
\end{lemma}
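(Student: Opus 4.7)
The plan is to induct on $r - s$, starting from the trivial base $s = r$ where $L_M(r, i+1) = \ell$ holds by the definition of $\ell$. For the inductive step, fix $s$ with $p \leq s < r$ and assume $L_M(s+1, i+1) = \ell$; I will show $L_M(s, i+1) = \ell$. The crucial structural input is that by the maximality condition defining $p$, the site $(s, i)$ is occupied with $L_M(s, i) < \ell$.

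Consider the FM pairing from row $s+1$ to row $s$, and let $z$ be the ball in row $s+1$ that pairs to $(s, i+1)$, so $L_M(z) = L_M(s, i+1)$. Applying the upward monotonicity of \cref{lemma:vertical} to column $i+1$ from row $p$ to row $r$ (whose occupancy is guaranteed by the remark following \cref{def:active region}) gives $L_M(s, i+1) \geq \ell$, so it suffices to rule out strict inequality. If $z$ lies at column $i+1$, then $z = (s+1, i+1)$, and $L_M(z) = \ell$ by the inductive hypothesis, contradicting $L_M(z) > \ell$; so $z$ must lie at some column $c \neq i+1$.

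The crux of the argument is now a purely local observation. Because balls pair in strictly decreasing order of label and $L_M(s, i) < \ell < L_M(z)$, the ball at $(s, i)$ is still unpaired at the instant $z$ pairs. The FM rule forces $z$ to pair with the first unpaired ball weakly right of column $c$ in row $s$, but any such scan (from $c$ with wrap) must traverse column $i$ before reaching column $i+1$, and at $(s, i)$ it encounters an occupied, unpaired target. Hence $z$ would pair at $(s, i)$ or earlier rather than at $(s, i+1)$, a contradiction. This rules out $L_M(s, i+1) > \ell$ and closes the induction.

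The main potential obstacle is to check that column $i+1$ is genuinely fully occupied between rows $p$ and $r$ (so that \cref{lemma:vertical} applies) and that the wraparound case $c > i+1$ does not evade the argument; both are resolved cleanly—occupancy is immediate from the remark after \cref{def:active region}, and wraparound is benign because the scanning path from column $c$ to column $i+1$ in row $s$ always passes through column $i$ regardless of direction. No machinery beyond the FM pairing priority and \cref{lemma:vertical} is needed.
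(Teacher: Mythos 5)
Your proof is correct and takes essentially the same route as the paper: choosing the maximal violating row there corresponds to your downward induction, and both arguments combine \cref{lemma:vertical} with the observation that a ball pairing into $(s,i+1)$ from another column would have to skip the occupied, still-unpaired, smaller-labeled ball at $(s,i)$. The only cosmetic difference is that the paper packages this last step as \cref{lemma:horizontal}, whereas you rederive it inline from the FM pairing priorities.
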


\begin{proof}
    Let $\ell = L_M(r,i+1)$ and apply \cref{lemma:vertical} to obtain $L_M(s,i+1) \geq \ell$ for all $r \geq s \geq p$. Assume for sake of contradiction that there exists some $r \geq s \geq p$ such that $L_M(s,i+1) > \ell$. Choose $s$ maximal with respect to this property. Then $L_M(s,i+1) > s$ since by maximality,     
        \begin{align*}
            L_M(s,i+1) > L_M(s+1,i+1) \geq s+1 > s.
        \end{align*}
     Thus, we may apply \cref{lemma:horizontal} and since $L_M(s,i) \neq 0$, this yields $L_M(s,i) \geq L_M(s,i+1)$. But this contradicts the definition of $\act_i(M)$.
\end{proof}

 Let $L_M^{(r)}$ denote the $r$th row of $L_M$. The following lemma will be a base case for \cref{lemma:column swap}.

\begin{lemma} \label{lemma:no label change}
    Suppose $\eL_i(M) \neq M$ and let $r$ be the row at which they differ. Then $L_M$ and $L_{\eL_i(M)}$ match above row $r$ and $L_{\eL_i(M)}^{(r)} = s_i \cdot L_M^{(r)}$.
\end{lemma}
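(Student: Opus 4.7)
The plan is to verify the two assertions of the lemma in sequence. For the claim that labels match above row $r$, I would exploit that $\eL_i$ modifies only the ball configuration on row $r$. Since the FM algorithm processes rows from top down and the labels on row $s$ are determined by the labels on row $s+1$ together with the ball positions on row $s$, a descending induction on $s$ from $L$ to $r+1$ yields $L_M(s,\cdot)=L_{\eL_i(M)}(s,\cdot)$ for every $s>r$.

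For the second assertion, I would analyze the FM pairings from row $r+1$ to row $r$ in both multiline queues. The row-$r+1$ balls and their inherited labels are identical. The row-$r$ balls are identical by identity, with $x$ occupying column $i+1$ in $M$ and the adjacent empty column $i$ in $\eL_i(M)$. Since no other row-$r$ ball lies between these two columns, the cyclic list of row-$r$ balls, read by identity, is the same in both. Consequently, for any row-$r+1$ ball $u$ sitting at a column $c\neq i+1$, the greedy cyclic ``search weakly right'' from $c$ encounters row-$r$ balls in the same identity-order in $M$ and in $\eL_i(M)$, so $u$'s FM partner is preserved by identity. The delicate case is $u=v:=(r+1,i+1)$ (if it exists): in $M$ the search immediately sees $x$ at column $i+1$, whereas in $\eL_i(M)$ it begins on an empty cell and must continue rightward.

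The main obstacle, and crux of the proof, is to rule out any discrepancy in this delicate case; I would handle it via the auxiliary claim that \emph{if $v=(r+1,i+1)$ exists, then $v$ cannot pair with $x$ in $M$}. Granting this, $x$ is paired in $M$ by some $u^*\neq v$ before $v$'s FM turn, and by the identity argument $u^*$ also pairs with $x$ in $\eL_i(M)$; hence when $v$ is processed in either multiline queue, $x$ is already taken, and $v$'s search past columns $i,i+1$ proceeds identically, so $v$ pairs with the same partner by identity. To prove the auxiliary claim, suppose for contradiction $v$ pairs with $x$ in $M$; then $v$ and $x$ lie on a common strand of length $\ell=L_M(r,i+1)$, so $v$ carries label $\ell$. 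Because $v$ contributes a ``$($'' strictly to the left of $x$ in $\rw(M)$, the leftmost-unmatched hypothesis forces $v$ to be matched, with the only candidate ``$)$'' strictly between $v$ and $x$ in $\rw(M)$ being $z:=(r+1,i)$, so $z$ must exist. Iterating this matching argument up $v$'s strand (by analyzing the column structure forced by the successive ``$($''s above $v$ together with the FM pairing rule) produces a chain of column-$i$ balls whose FM labels propagate downward to give $z$ label $\ell$; but then $z$ is a label-$\ell$ ball strictly left of $v$ in row $r+1$, so in the FM order $z$ pairs with $x$ before $v$, contradicting our assumption. Combining the identity argument with the auxiliary claim gives $L_{\eL_i(M)}^{(r)}=s_i\cdot L_M^{(r)}$.
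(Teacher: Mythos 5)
Your overall strategy coincides with the paper's: the first claim is the same one-line top-to-bottom observation; for the second claim you reduce to showing that each row-$(r+1)$ ball keeps the same FM partner, isolate the ball $v$ at site $(r+1,i+1)$ as the only delicate case, and prove by contradiction that $v$ cannot pair with $x$, by forcing a ball $z$ at $(r+1,i)$ with a large label that would claim $x$ first. Your derivation of the existence of $z$ from the leftmost-unmatched-``$($'' condition in $\theta_i(\rw(M))$ is correct and in fact a clean way to see what the paper phrases as ``no vacancies in column $i$.'' Your handling of the bookkeeping (that $x$'s partner $u^\ast$ is processed before $v$, so availability sets agree at $v$'s turn) is also sound and somewhat more explicit than the paper's.

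The gap is in the sentence ``Iterating this matching argument up $v$'s strand \ldots\ produces a chain of column-$i$ balls whose FM labels propagate downward to give $z$ label $\ell$.'' Two problems. First, the iteration must go up \emph{column $i+1$} only for as long as the strand through $v$ and $x$ actually occupies that column; the strand need not stay there, so you must identify the first row $h+1$ at which it leaves (or the row $h$ at which it begins) and stop the chain there. Second, and more importantly, the label bound on the top of the column-$i$ chain is not a ``propagation'' — it requires a case split that is the real content of the paper's proof: either the strand begins at row $h$, in which case $L_M(h,i)\geq h=\ell$ simply because every ball in row $h$ has label at least $h$; or the strand reaches $(h,i+1)$ from a ball in row $h+1$ in some column $c\neq i+1$, whose weakly-rightward search must pass over site $(h,i)$ before reaching $(h,i+1)$, so $(h,i)$ is already paired and hence $L_M(h,i)\geq\ell$. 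Only then does \cref{lemma:vertical} carry the bound down column $i$ to give $L_M(z)\geq\ell$. Note also that the correct conclusion is $L_M(z)\geq\ell$, not $L_M(z)=\ell$; equality is neither guaranteed nor needed for the contradiction (a strictly larger label makes $z$ pair even earlier). As written, the decisive step of the proof is asserted rather than proved.
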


\begin{proof}
    The first claim follows directly from the fact that the FM labeling procedure is carried out from top to bottom and $M$ and $\eL_i(M)$ match above row $r$.
    
For the other claim, it suffices to show that if there is a ball $y$ in row $r+1$ that pairs down to $x$ in $M$, then that same ball also pairs down to $x$ in $\eL_i(M)$. If the ball $y$ is not in the same column as $x$, then $x$ being moved by $\eL_i$ has no impact on the pairing process and $y$ still pairs to $x$ in $\eL_i(M)$. Hence, assume $y$ is indeed in column $i+1$. Let $z$ be the ball directly to the left of $y$ at site $(r+1,i)$ in $M$. Since $y$ pairs down to $x$, the label of $z$ must be strictly less than the label of $y$.

Consider the strand $s$ going through $y$ and $x$. Scanning up column $i+1$, starting at row $r$, let $h+1$ be the first row such that $s$ does not appear in column $i+1$. By definition there are no vacancies in column $i+1$ between rows $r$ and $q$. Further, since $x$ is the highest unmatched ball in column $i+1$ of $M$, there are no vacancies in column $i$ between rows $r+1$ and $h$. Thus, both $y$ and $z$ have label at least $h$ by \cref{lemma:vertical}. If the strand $s$ began in row $h$, then $y$ would have label weakly less than $z$, a contradiction. So there is a ball in row $h+1$ and \textit{not} in column $i+1$ that pairs down to the ball at site $(h,i+1)$. But this implies that the ball at site $(h,i)$ was already paired to a ball in row $h-1$. By another application of \cref{lemma:vertical}, this means the label of $z$ must be weakly greater than the label of $y$, a contradiction.  
\end{proof}

The following lemma states when $\eL_i$ acts nontrivially, the labels in the active region $\act_i$ swap between columns $i$ and $i+1$. For an example, see \cref{ex:iactive}.
\begin{lemma} \label{lemma:column swap}
    Suppose $\eL_i(M) \neq M$ and suppose $\act_i(M)$ spans rows $p$ through $r$ for some $p\leq r$. Then,
        \begin{align}\label{eq:Lsm}
            L_{\eL_i(M)}^{(s)} = \begin{cases}
                s_i \cdot L_M^{(s)} & p \leq s \leq r, \\
                L_M^{(s)} & \text{otherwise}.
            \end{cases}
        \end{align}
\end{lemma}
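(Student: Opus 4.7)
The plan is to prove the claim in two stages by descending induction on the row index $s$, using \cref{lemma:no label change} as the base case. That lemma already handles $s=r$ (and trivially $s>r$, since $M$ and $\eL_i(M)$ agree there). For $p\le s<r$ I will run a descending induction inside the active region, and then handle rows $s<p$ via a separate boundary argument. Write $\ell_s := L_M(s,i)$ for $p\le s\le r-1$, so that $0<\ell_s<\ell$ by the maximality of $p$, and $L_M(s,i+1)=\ell$ by \cref{lemma:active region}.

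For the inductive step at row $s$ with $p\le s<r$, assume $L_{\eL_i(M)}^{(s+1)}=s_i\cdot L_M^{(s+1)}$ and run the FM pairing from row $s+1$ down to row $s$ label by label. Balls with labels strictly greater than $\ell$ in row $s+1$ occupy identical positions with identical labels in $M$ and $\eL_i(M)$ (these positions must be $\ne i,i+1$ since $L_M(s+1,i),L_M(s+1,i+1)\le\ell$), and none of them can pair with $(s,i)$ or $(s,i+1)$ because in $M$ those sites will receive labels $\ell_s$ and $\ell$; hence the higher-label pairings agree and leave $(s,i),(s,i+1)$ unpaired in both queues. For label $\ell$, the inductive hypothesis combined with \cref{lemma:active region} shows the label-$\ell$ ball sitting at $(s+1,i+1)$ in $M$ sits at $(s+1,i)$ in $\eL_i(M)$. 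Left-to-right processing of any label-$\ell$ balls at positions strictly less than $i$ is identical in both queues, and none of them can claim $(s,i)$ or $(s,i+1)$ (that would contradict $L_M(s,i)=\ell_s<\ell$ and would force the shifted ball to pair incorrectly); once they are processed, the moved label-$\ell$ ball pairs with the first unpaired weakly-right site, which is $(s,i+1)$ in $M$ and $(s,i)$ in $\eL_i(M)$. A symmetric analysis for labels $<\ell$ compares the ball that in $M$ pairs with $(s,i)$ to its counterpart in $\eL_i(M)$: since $(s,i)$ is now already claimed by the label-$\ell$ pairing in $\eL_i(M)$, the search for the first unpaired weakly-right site jumps past $(s,i)$ and lands on $(s,i+1)$, which is unpaired in $\eL_i(M)$ (it was the label-$\ell$ target in $M$). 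This yields $L_{\eL_i(M)}^{(s)}=s_i\cdot L_M^{(s)}$ and, for positions $\ne i,i+1$, equality of labels in both queues.

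For rows $s<p$, the maximality of $p$ forces $L_M(p-1,i)=0$ or $L_M(p-1,i)\ge\ell$. In the first case the label-$\ell$ ball at $(p,i)$ in $\eL_i(M)$ simply skips the empty site $(p-1,i)$ and pairs with the same row-$(p-1)$ target as the label-$\ell$ ball at $(p,i+1)$ in $M$. In the second case, higher-or-equal-label processing at row $p$ happens identically in both queues, so $(p-1,i)$ is already paired before label $\ell$ is processed; the searches for an unpaired weakly-right target from $(p,i)$ in $\eL_i(M)$ and from $(p,i+1)$ in $M$ therefore yield the same site, and the subsequent pairings coincide. Iterating, row $p-1$ and all lower rows receive identical labels in $M$ and $\eL_i(M)$. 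The main obstacle will be the careful case analysis during the inductive step, especially ruling out pathological same-label pairings (possibly through wrapping) that could inadvertently land on $(s,i)$ or $(s,i+1)$ and break the correspondence between $M$ and $\eL_i(M)$; the structural constraints from \cref{lemma:active region}, \cref{lemma:vertical}, and \cref{lemma:horizontal} are exactly what make these pathological cases impossible.
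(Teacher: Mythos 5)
Your proposal is correct and follows essentially the same route as the paper: the paper's induction on the number of $i$-active rows (deleting the bottom row and re-applying the statement) is exactly your descending induction on the row index, and its configuration analysis for the inductive step and for the row-$(p-1)$ boundary is the same label-by-label FM tracking you describe. The one spot needing extra care is your second boundary case: when $L_M(p-1,i)=\ell$ exactly (which the definition of $\act_i(M)$ permits), the site $(p-1,i)$ is claimed \emph{during} rather than before label-$\ell$ processing, and since the label-$\ell$ ball in row $p$ has moved from column $i+1$ to column $i$ that processing is not literally identical in the two queues, so you must still verify that the ball claiming $(p-1,i)$ in $M$ does so in $\eL_i(M)$ before the moved ball's search reaches it — the paper's proof handles this row by arguing the label at $(p-1,i)$ strictly exceeds $\ell$.
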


\begin{proof}
    We first prove this under the assumption that $M$ is $\eL_i$-full. Induct on the number of $i$-active rows in $M$. The base case, when $M$ has just one $i$-active row, follows directly from \cref{lemma:no label change}. Now assume $M$ has more than one $i$-active row and let $M'$ be the multiline queue obtained by deleting the bottom row of $M$. As we have assumed $M$ is $\eL_i$-full, the bottom row of $M$ is $i$-active so $M'$ has one fewer $i$-active rows than $M$. By the induction hypothesis, the statement holds for $M'$ or equivalently, the statement holds for every row of $M$ except row $1$. To show \eqref{eq:Lsm} for $s=1$, we must prove the following points:
    
        \begin{enumerate}[label=(\arabic*), ref=(\arabic*)]
            \item\label{e:1} $L_{\eL_i(M)}(1,i) = L_M(1,i+1)$,
            \item\label{e:2} $L_{\eL_i(M)}(1,i+1) = L_M(1,i)$, 
            \item\label{e:3} $L_{\eL_i(M)}(1,i') = L_M(1,i')$ for $i' \neq i,i+1$.
        \end{enumerate}

    Since the first and second rows of $M$ are $i$-active, there are balls at sites $(1,i)$, $(1,i+1)$, $(2,i)$, and $(2,i+1)$ by \cref{lemma:active region}. Moreover, by the same result, the ball at site $(2,i+1)$ pairs down to the ball at site $(1,i+1)$. Thus, there are only two possible configurations in the cells in rows 1 and 2 in columns $i$ and $i+1$ of $M$ that are relevant, depending on whether the ball at site $(2,i)$ pairs to the ball directly below it or not. We show these configurations below: in configuration (a), the ball at site $(2,i)$ pairs directly below, and in configuration (b), it pairs to a ball at site $(1,j')$ while the ball below it is paired to from a ball at site $(2,j)$, for $j,j'\not\in\{i,i+1\}$. (As pairings may be wrapping, we allow for $j>i$ and $i> j'$.) We associate the red pairing with the label $L_M(1,i)$ and the blue pairing with the label $L_M(1,i+1)$.

   \begin{center}
        \resizebox{!}{1.5cm}{
        \begin{tikzpicture}[scale=0.7]
        \def \w{1};
        \def \h{1};
        \def \r{0.20};
        \def \s{7}; 

        \begin{scope}[xshift=0cm]
        \node at (-2,1) {\large (a)};
        \draw[gray!50,thin,step=\w] (0,0) grid (2*\w,2*\h);
        \foreach \xx\yy\i\c in {0/0//black,0/1//black,1/0//black,1/1//black}
            {
            \draw[fill=\c](\w*.5+\w*\xx,\h*.5+\h*\yy) circle (\r cm);
            \node at (\w*.5+\w*\xx,\h*.5+\h*\yy) {\i};
            }
            
        \draw[red] (\w*0.5,\h*1.5-\r)--(\w*0.5,\h*0.5+\r);
        \draw[blue] (\w*1.5,\h*1.5-\r)--(\w*1.5,\h*0.5+\r);

        \node at (\w*-0.5,\h*0.5) {\scriptsize$1$};
        \node at (\w*-0.5,\h*1.5) {\scriptsize$2$};
        \node at (\w*0.5,\h*2.5) {\scriptsize$i$};
        \node at (\w*1.5,\h*2.5) {\scriptsize$i+1$};
        \end{scope}

        \begin{scope}[xshift=\s cm]
            \node at (-2,1) {\large (b)};
        \draw[gray!50,thin,step=\w] (0,0) grid (6*\w,2*\h);
        \foreach \xx\yy\i\c in {2/0//black,2/1//black,3/0//black,3/1//black,
        0/1//black,5/0//black}
            {
            \draw[fill=\c](\w*.5+\w*\xx,\h*.5+\h*\yy) circle (\r cm);
            \node at (\w*.5+\w*\xx,\h*.5+\h*\yy) {\i};
            }

        \draw[blue] (\w*3.5,\h*1.5-\r)--(\w*3.5,\h*0.5+\r);
        \draw[red] (\w*0.5,\h*1.5-\r)--(\w*0.5,\h*.85)--(\w*2.5,\h*.85)--(\w*2.5,\h*.5+\r);
        \draw[black!50!green] (\w*2.5,\h*1.5-\r)--(\w*2.5,\h*1.15)--(\w*5.5,\h*1.15)--(\w*5.5,\h*.5+\r);

        \node at (\w*1.5,\h*1.5) {$\cdots$};
        \node at (\w*4.5,\h*0.5) {$\cdots$};

        \node at (\w*-0.5,\h*0.5) {\scriptsize$1$};
        \node at (\w*-0.5,\h*1.5) {\scriptsize$2$};
        \node at (\w*2.5,\h*2.5) {\scriptsize$i$};
        \node at (\w*3.5,\h*2.5) {\scriptsize$i+1$};
        \node at (\w*0.5,\h*2.5) {\scriptsize$j$};
        \node at (\w*5.5,\h*2.5) {\scriptsize$j'$};
        \end{scope}

        \end{tikzpicture}
        }
        \end{center}

     The reader should note that the proof of \ref{e:1} will be identical for either configuration; we only distinguish which case we are in for the proofs of \ref{e:2} and \ref{e:3}.

   We claim that in $\eL_i(M)$, the pairings in the two relevant configurations will be in configurations (a$'$) and (b$'$) below, respectively.
   
     \begin{center}
        \resizebox{!}{1.5cm}{
        \begin{tikzpicture}[scale=0.7]
        \def \w{1};
        \def \h{1};
        \def \r{0.20};
        \def \s{7}; 

        \begin{scope}[xshift=0cm]
                    \node at (-2,1) {\large (a$'$)};

        \draw[gray!50,thin,step=\w] (0,0) grid (2*\w,2*\h);
        \foreach \xx\yy\i\c in {0/0//black,0/1//black,1/0//black,1/1//black}
            {
            \draw[fill=\c](\w*.5+\w*\xx,\h*.5+\h*\yy) circle (\r cm);
            \node at (\w*.5+\w*\xx,\h*.5+\h*\yy) {\i};
            }
            
        \draw[blue] (\w*0.5,\h*1.5-\r)--(\w*0.5,\h*0.5+\r);
        \draw[red] (\w*1.5,\h*1.5-\r)--(\w*1.5,\h*0.5+\r);

        \node at (\w*-0.5,\h*0.5) {\scriptsize$1$};
        \node at (\w*-0.5,\h*1.5) {\scriptsize$2$};
        \node at (\w*0.5,\h*2.5) {\scriptsize$i$};
        \node at (\w*1.5,\h*2.5) {\scriptsize$i+1$};
        \end{scope}

        \begin{scope}[xshift=\s cm]
                        \node at (-2,1) {\large (b$'$)};

        \draw[gray!50,thin,step=\w] (0,0) grid (6*\w,2*\h);
        \foreach \xx\yy\i\c in {2/0//black,2/1//black,3/0//black,3/1//black,
        0/1//black,5/0//black}
            {
            \draw[fill=\c](\w*.5+\w*\xx,\h*.5+\h*\yy) circle (\r cm);
            \node at (\w*.5+\w*\xx,\h*.5+\h*\yy) {\i};
            }

        \draw[blue] (\w*2.5,\h*1.5-\r)--(\w*2.5,\h*0.5+\r);
        \draw[red] (\w*0.5,\h*1.5-\r)--(\w*0.5,\h*.85)--(\w*3.5,\h*.85)--(\w*3.5,\h*.5+\r);
        \draw[black!50!green] (\w*3.5,\h*1.5-\r)--(\w*3.5,\h*1.15)--(\w*5.5,\h*1.15)--(\w*5.5,\h*.5+\r);

        \node at (\w*1.5,\h*1.5) {$\cdots$};
        \node at (\w*4.5,\h*0.5) {$\cdots$};

        \node at (\w*-0.5,\h*0.5) {\scriptsize$1$};
        \node at (\w*-0.5,\h*1.5) {\scriptsize$2$};
        \node at (\w*2.5,\h*2.5) {\scriptsize$i$};
        \node at (\w*3.5,\h*2.5) {\scriptsize$i+1$};
        \node at (\w*0.5,\h*2.5) {\scriptsize$j$};
        \node at (\w*5.5,\h*2.5) {\scriptsize$j'$};
        \end{scope}

        \end{tikzpicture}
        }
        \end{center}
 The blue and red pairings will imply \ref{e:1} and \ref{e:2}, respectively. In configuration (a$'$), the blue and red pairings will also imply \ref{e:3}, and in (b$'$), the green pairing plus the first two claims will imply \ref{e:3}.

We will now prove our claim. Let $\ell = L_M(r,i+1)$, the label of the ball moved by $\eL_i$. We first prove \ref{e:1}. By \cref{lemma:active region} and the induction hypothesis, $L_{\eL_i(M)}(2,i) = \ell$. Assume for sake of contradiction that $L_{\eL_i(M)}(1,i) \neq \ell$. By \cref{lemma:vertical}, $L_{\eL_i(M)}(1,i) > \ell$, implying that there is a ball at some site $(2,j)$ that pairs down to the ball at site $(1,i)$ of $\eL_i(M)$. Again by the induction hypothesis, $L_M(2,j) = L_{\eL_i(M)}(2,j) > \ell$ so there is a ball $y$ at site $(2,j)$ of $M$ with label larger than $\ell$ that does not pair to any ball before column $i$. But since the first row of $M$ is $i$-active, $L_M(1,i) < \ell$, implying that $y$ would pair to the ball at site $(1,i)$ of $M$; therefore $L_M(1,i) > \ell$, a contradiction.
    
    Next, we prove \ref{e:2}. Let $x$ be the ball at site $(1,i)$ of $M$, let $y$ be the ball at site $(2,j)$ of $M$ that pairs down to $x$, and let $m = L_M(1,i)$. If $j = i$, then $M$ is in configuration (a) above. In this case, \ref{e:2} follows by an argument almost identical to that of \ref{e:1}. If $j = i+1$, then $m = \ell$, contradicting the definition of $\act_i(M)$. Therefore, we may assume $j \neq i, i+1$, so $M$ is in configuration (b) above. Let $x'$ be the ball at site $(1,i+1)$ of $\eL_i(M)$, and let $y'$ be the ball at site $(2,j)$ of $\eL_i(M)$. By the induction hypothesis, $L_{\eL_i(M)}(2,j) = L_M(2,j) = m$. Assume for sake of contradiction that $y'$ does not pair to $x'$. Then either $y'$ pairs to some new ball that came before $x'$ in the pairing process or some new ball $z'$ pairs to $x'$. The first scenario cannot occur since the only new ball before $x'$ is labeled $\ell > m$, so it will have already been paired. The second scenario also cannot occur, but requires a more detailed explanation: in order for such a $z'$ to exist, it must come before $y'$ in pairing order and must be in column $i$. In fact, such a $z'$ does exist: the ball at site $(2,i)$ does have label $\ell > m$. However, in the proof of \ref{e:1}, ball $z'$ was shown to pair down to the ball at site $(2,i)$. Thus, $y'$ does indeed pair down to $x'$, and so $L_{\eL_i(M)}(1,i+1) = m = L_M(1,i)$.

    Lastly, we prove \ref{e:3}. If $M$ is in configuration (a) above, this follows from \ref{e:1} and \ref{e:2}. Thus, assume $M$ is in configuration (b). Let $u$ be the ball at site $(2,i)$ of $M$, let $w$ be the ball at site $(1,j')$ of $M$ that $u$ pairs down to, and let $L_M(2,i) = h$. Since $M$ is in the right configuration, we can not have $j' = i$ and by \ref{e:1}, we can not have $j' = i+1$. Further, with $m$ as defined above, $h \leq m$ by \cref{lemma:vertical}. Let $u'$ be the ball at site $(2,i+1)$ of $\eL_i(M)$ and let $w'$ be the ball at site $(1,j')$ of $\eL_i(M)$. By the induction hypothesis, $L_{\eL_i(M)}(2,i+1) = L_M(2,i) = h$. Assume for sake of contradiction that $u'$ does not pair down to $w'$. Then, again, either $u'$ pairs to some new ball that came before $w'$ in the pairing process or some new ball $v'$ pairs to $w'$. The first scenario cannot occur because the only new ball that could come before $w'$ is the ball at site $(1,i+1)$. But by the proof of \ref{e:2}, the ball at site $(1,i+1)$ of $\eL_i(M)$ is paired before we determine which ball $u'$ pairs to. The second scenario cannot occur since there is not even a candidate for a new ball. Thus, $u'$ does indeed pair down to $w'$ and so $L_{\eL_i(M)}(1,j') = L_M(1,j')$. Moreover, since the balls pictured in row 2 of configuration (b) above paired to the same set of balls in $M$ and in $\eL_i(M)$, the pairings of the remaining balls in row 2 will remain the same, proving \ref{e:3}.

    We now drop the assumption that $M$ is $\eL_i$-full and prove the statement in full generality. Consider the multiline queue $M'$ obtained from $M$ by deleting the first $p-1$ rows of $M$. Then $M'$ is $\eL_i$-full so the statement is true for $M'$, or equivalently, for all but the bottom $p-1$ rows of $M$. Thus, it suffices to show that $L_{\eL_i(M)}(p-1,i') = L_M(p-1,i')$ for all $i'$. We analyze rows $p$ and $p-1$. Since row $p$ is $i$-active, $L_M(p,i) < L_M(p,i+1)$ but since row $p-1$ is not, $L_M(p-1,i) \neq L_M(p,i)$. Therefore, the cells in rows $p$ and $p-1$ in columns $i$ and $i+1$ must be in configuration (c) below. (Again, as pairings may be wrapping, we allow for $j < i+1$ and $j' < i$. We also allow for $j = i+1$.) We claim that in $\eL_i(M)$, the pairings will be in configuration (c') below, as this will prove 
    
            \begin{center}
            \resizebox{!}{1.5cm}{
            \begin{tikzpicture}[scale=0.7]
            \def \w{1};
            \def \h{1};
            \def \r{0.20};
            \begin{scope}[xshift=0cm]
            \node at (-2,1) {\large (c)};
            \draw[gray!50,thin,step=\w] (0,0) grid (6*\w,2*\h);
            \foreach \xx\yy\i\c in {0/1//black,1/1//black,
            3/0//black,5/0//black}
                {
                \draw[fill=\c](\w*.5+\w*\xx,\h*.5+\h*\yy) circle (\r cm);
                \node at (\w*.5+\w*\xx,\h*.5+\h*\yy) {\i};
                }
  
            \draw[black!50!green] (\w*1.5,\h*1.5-\r)--(\w*1.5,\h*.85)--(\w*3.5,\h*.85)--(\w*3.5,\h*.5+\r);
            \draw[red] (\w*0.5,\h*1.5-\r)--(\w*0.5,\h*1.15)--(\w*5.5,\h*1.15)--(\w*5.5,\h*.5+\r);

            \node at (\w*2.5,\h*0.5) {$\cdots$};
            \node at (\w*4.5,\h*0.5) {$\cdots$};
    
            \node at (\w*-0.45,\h*1.5) {\scriptsize$p$};
            \node at (\w*-0.85,\h*0.5) {\scriptsize$p-1$};
            \node at (\w*0.5,\h*2.5) {\scriptsize$i$};
            \node at (\w*1.5,\h*2.5) {\scriptsize$i+1$};
            \node at (\w*3.5,\h*2.5) {\scriptsize$j$};
            \node at (\w*5.5,\h*2.5) {\scriptsize$j'$};
            \end{scope}
            \begin{scope}[xshift=11cm]
            \node at (-2,1) {\large (c$'$)};
            \draw[gray!50,thin,step=\w] (0,0) grid (6*\w,2*\h);
            \foreach \xx\yy\i\c in {0/1//black,1/1//black,
            3/0//black,5/0//black}
                {
                \draw[fill=\c](\w*.5+\w*\xx,\h*.5+\h*\yy) circle (\r cm);
                \node at (\w*.5+\w*\xx,\h*.5+\h*\yy) {\i};
                }
    
            \draw[black!50!green] (\w*0.5,\h*1.5-\r)--(\w*0.5,\h*.85)--(\w*3.5,\h*.85)--(\w*3.5,\h*.5+\r);
            \draw[red] (\w*1.5,\h*1.5-\r)--(\w*1.5,\h*1.15)--(\w*5.5,\h*1.15)--(\w*5.5,\h*.5+\r);

            \node at (\w*2.5,\h*0.5) {$\cdots$};
            \node at (\w*4.5,\h*0.5) {$\cdots$};
    
            \node at (\w*-0.45,\h*1.5) {\scriptsize$p$};
            \node at (\w*-0.85,\h*0.5) {\scriptsize$p-1$};
            \node at (\w*0.5,\h*2.5) {\scriptsize$i$};
            \node at (\w*1.5,\h*2.5) {\scriptsize$i+1$};
            \node at (\w*3.5,\h*2.5) {\scriptsize$j$};
            \node at (\w*5.5,\h*2.5) {\scriptsize$j'$};
            \end{scope}
            \end{tikzpicture}
            }
            \end{center}
        
    Let $x$ be the ball at site $(p,i+1)$ of $M$ and let $y$ be the ball at site $(p-1,j)$ of $M$ that $x$ pairs down to. Since the statement is true for row $p$, $L_{\eL_i(M)}(p,i) = L_M(p,i+1) = \ell$. Let $x'$ and $y'$ be the balls at sites $(p,i)$ and $(p-1,j)$ of $\eL_i(M)$ respectively. Assume for sake of contradiction that $x'$ does not pair down to $y'$. Then either $x'$ pairs to some new ball that came before $y'$ in the pairing process or some new ball $z'$ pairs to $y'$. To show the first scenario can not occur, first note that the only candidate for such a ball would be at site $(p-1,i)$ of $\eL_i(M)$. If no such ball exists, we are done, so assume $L_{\eL_i(M)}(p-1,i) > 0$. By definition of $\act_i(M)$, $L_{M}(p-1,i) > \ell$ and since the labels of balls greater than $\ell$ on row $p$ of $M$ and $\eL_i(M)$ are the same, the pairing process will be the same implying that $L_{\eL_i(M)}(p-1,i) > \ell$. This prohibits the first scenario since $x'$ has label $\ell$ and therefore will not pair to the ball at site $(p-1,i)$. The second scenario also can not occur since the only candidate for $z'$ would be the ball at site $(p,i+1)$ which has label strictly less than $\ell$. Thus, $L_{\eL_i(M)}(p-1,j) = \ell$. 

    Let $u$ and $v$ be the balls at sites $(p,i)$ and $(p-1,j')$ of $M$ respectively and let $m = L_{M}(p,i)$. By definition of $\act_i(M)$, $m < \ell$, and since the statement is true for row $p$, $L_{\eL_i(M)}(p,i+1) = m$. Let $u'$ and $v'$ be the balls at sites $(p,i+1)$ and $(p-1,j')$ of $\eL_i(M)$ respectively. Assume for sake of contradiction that $u'$ does not pair down to $v'$. Then either $u'$ pairs to some new ball that came before $v'$ in the pairing process or some new ball $w'$ pairs to $v'$. The first scenario can not occur since $u'$ comes across the same balls as $u$ does. The second scenario can not occur since there is no candidate for $w'$. Thus, $L_{\eL_i(M)}(p-1,j') = m$. Lastly, for $k \neq j,j'$, $L_{\eL(i)}(p-1,k) = L_M(p-1,k)$ follows from the fact that the balls at sites $(p,i)$ and $(p,i+1)$ in $\eL_i(M)$ paired to balls at the same sites as they did in $M$ and thus the pairings for every ball other than these two will be the same as they were in $M$, completing the proof.
\end{proof}

It is now immediate that $\eL_i$ acts on the type of a multiline queue by $s_i$ if and only if $M$ is $\eL_i$-full, with the same condition true for $\fR_i$-fullness by definition, thereby proving \cref{cor:Harper's lemma}.

\subsection{Properties of $\type$ under collapsing}\label{sec:rho}

In this section, we will write $\rho(M)\coloneqq \rho_N(M)$, as the recording tableau $\rho_Q(M)$ will not be needed. We focus on analyzing the relationship of $\type(M)$ with $\type(\rho(M))$. We obtain two key properties: \cref{thm:rho type} gives us that the set of coinversions of $\type(M)$ are contained in the set of coinversions of $\type(\rho(M))$, and \cref{lem:M full} gives us that a column crystal operator $g^\leftrightarrow_i$ changes the type in $M$ \emph{only if} it changes the type in $\rho(M)$.

Thus far, we have viewed the type of a multiline queue in terms of strands obtained through pairing of particles via the FM algorithm, with the pairing order within a given row given by the labels of the particles in that row. However, collapsing, and more generally, the row operators $\eD_i$, disregard labels of particles, and thus do not preserve the strands coming from the FM algorithm. Thus from the strand perspective, it becomes difficult to track the type of a multiline queue after applying $\eD_i$'s. To circumvent this obstacle, we introduce an alternative perspective on $\type(M)$ that allows it to be computed without recording particle labels. This perspective is based on the \textbf{cylindrical matching rule}, which retains a useful invariant structure in its interaction with the row crystal operators $\eD_i$. 

\begin{definition}[Cylindrical matching rule]\label{def:comb R}
    For $a,b\subseteq[n]$, consider the bracket word $w=\theta_1(\cw((a,b)))$, where $(a,b)$ is viewed as a two-row generalized multiline queue with row $b$ above row $a$. We regard $w$ as written on a cylinder, so that the rightmost entry is left-adjacent to the leftmost entry. The cylindrical pairing rule recursively matches pairs of open and closed parentheses in $w$ (on the cylinder) with no unmatched parentheses between them. Define $R(a,b)\subseteq a$ to be the set of elements of $a$ corresponding to the cylindrically matched \emph{closed} parentheses in $w$. We extend the notation to a tuple $(b_1,\ldots,b_L)$ by recursively defining
    \[R(b_1,\ldots,b_L)\coloneq R(b_1,R(b_2,\cdots,R(b_{L-1},b_L)\cdots)),\]
    where in the right hand side, $R$ is evaluated from the inside out.
\end{definition}

\begin{remark} The cylindrical pairing rule coincides with the algorithm of Nakayashiki--Yamada~\cite[Rule~3.10]{NY95}, which is used to compute the output of the \emph{combinatorial $R$ matrix} between single-column Kirillov--Reshetikhin crystals. In this interpretation, when $(a,b)$ are two adjacent rows of a multiline queue with $|a|\geq |b|$ and $b$ above $a$, $R(a,b)$ is precisely the set of balls in row $a$ that are paired to by balls in row $b$, independent of any pairing order. 
\end{remark}

It was first observed in \cite{KMO15} that the FM algorithm for multiline queues can be realized as a corner transfer matrix built from the combinatorial $R$ via the map $R(\cdot,\cdot)$. See \cite[Definition~4.1, Theorem~4.4]{MS25} for an extension to generalized multiline queues.\footnote{In the multiline queue setting, our notation $R(a,b)$ corresponds to the quantity $R(a\otimes b)_1$ in \cite{MS25}. When $|b|>|a|$, the definition needs to be modified to include the unmatched balls in row $b$.}

For a multiline queue $M$, denote the set of positions with label $k$ by 
\[I_k(M) \coloneq \{i:\type(M)_i = k\},\qquad I_{\geq k}(M)\coloneq \bigcup_{j\geq k} I_j(M)=\{i:\type(M)_i \geq k\}\,.\]
 Since only $\type(M)$ is needed to compute $I_k(M)$, we may also write $I_k(\type(M))$ for this set. 

The reader may verify, by induction on rows, that for a multiline queue $M=(B_1,\ldots,B_L)$, 
    \begin{equation}\label{eq:R}
    I_L(M) = R(B_1,\ldots, B_L)\,.
    \end{equation}
For $k < L$, let $\pi^{(k)}(M)$ denote the truncated multiline queue obtained by restricting $M$ to the first $k$ rows. Note that $\cup_{k\geq 1}I_k(M)=B_1$, the set of balls in the bottom row. For $k>1$, repeating the process on the multiline queue $\pi^{(k)}(M)$ and using
\[
I_{\geq k}(M)=I_{k}(\pi^{(k)}(M))\,,
\]
 we obtain
    \begin{align}I_{\geq k}(M) &= R(B_1,\ldots, B_{k})\,. 
    \label{eq:R2}
\end{align}
Since $\type(M)$ can be recovered from the sets $\{I_k(M)\}_{k\geq 1}$ (equivalently, the sets $\{I_{\geq k}(M)\}_{k\geq 1}$), it follows that $\type(M)$ can be computed strictly in terms of the map $R(\cdot,\cdot)$. See \cref{ex:R}.

\begin{example}
    \label{ex:R}
    We compute $I_k(M)$ for $1\leq k\leq 5$ for the multiline queue $M=(B_1,B_2,B_3)=(\{1,3,4,5\},\{2,3,4\},\{3,5\})$ in \cref{ex:FM algorithm} with $\type(M)=(1,0,3,3,2)$. We have:
\[
        R(B_2,B_3)=\{2,3\},\qquad R(B_1,B_2)=\{3,4,5\},\qquad R(B_1,R(B_2,B_3))=\{3,4\}.
\]
    Thus
\begin{align*}
I_{\geq 3}(M)
  &= R(B_1,R(B_2,B_3))= \{3,4\}\,,&
  \qquad I_3(M)=\{3,4\}\,, \\[0.5ex]
I_{\geq2}(M)
  &= R(B_1,B_2) = \{3,4,5\}\,,&
  \qquad I_2(M)=\{5\}\,, \\[0.5ex]
I_{\geq1}(M)
  &= B_1 = \{1,3,4,5\}\,,&
  \qquad I_1(M)=\{1\}\,.
\end{align*}
\end{example}

In seeking a similar expression for $\type(\rho(M))$, cylindrical matching is no longer relevant, and we return instead to classical matching. Although we cannot compute the full $\type(\rho(M))$ in this way, we will obtain a simple expression that parallels \eqref{eq:R} for $I_L(\rho(M))$, where $L$ is the largest label in $\type(M)$. 

\begin{definition}
    For $a,b \subseteq [n]$, let $\theta(a,b)\subseteq a$ denote the set of elements in $a$ that are \emph{classically  matched} in $\theta_1(\cw((a,b)))$ when $(a,b)$ is viewed as a two-row generalized multiline queue. We extend the definition for a tuple $(b_1,\ldots,b_L)$ by recursively defining
    \[\theta(b_1,\ldots,b_L)\coloneq\theta(b_1,\theta(b_2,\ldots,\theta(b_{L-1},b_L)\ldots)),\]
    where $\theta$ is evaluated from inside out. 
    Notice that this construction is well-defined for any generalized multiline queue in $\mathcal{M}_{(2)}$. To emphasize this generality, we introduce the notation 
    \[b_1\otimes b_2\otimes\cdots\otimes b_L \coloneq (b_1,\ldots,b_L)\qquad\text{and}\qquad \theta(b_1\otimes\cdots\otimes b_L)\coloneq \theta(b_1,\ldots,b_L).\] 
    We also write $\eDs_i(b_1\otimes \cdots \otimes b_L)$ for the result of applying the classical raising operator $\eDs_i$ to the adjacent pair $b_i\otimes b_{i+1}$, leaving all other components unchanged. See \cref{ex:theta}.
\end{definition}

\begin{example}\label{ex:theta}
For some examples, 
\[\theta(\{1,3,4\}\otimes\{2,5\})=\{3\}\,, \qquad \theta(\{3,5\}\otimes\{1,2,4\}\otimes\{3,4\})=\{5\}\,,
\] and for $b_1\otimes b_2\otimes b_3=\{3,5\}\otimes\{1,2,4\}\otimes\{3,4\}$, we have
\[\eDs_1(b_1\otimes b_2\otimes b_3)=\{1,3,5\}\otimes\{2,4\}\otimes\{3,4\}\,,\qquad \eDs_2(b_1\otimes b_2\otimes b_3)=\{3,5\}\otimes\{1,2,3,4\}\otimes\{4\}\,.\]
\end{example}

We first prove a technical lemma for $\theta$ that will be fundamental in our recursive arguments. 

\begin{lemma}
    \label{lem:three row}
    Let $a,b,c\in[n]$. Then
    \begin{equation}\label{eq:three term}
    \theta(a\otimes b\otimes c)=\theta(\eDs_1(a\otimes b\otimes c)).
    \end{equation}
    Furthermore, for any $b_1,\ldots,b_L\subseteq [n]$, we have 
    \begin{equation}\label{eq:ei invariant}
    \theta(b_1\otimes\ldots\otimes b_L)=\theta(\eDs_i(b_1\otimes \ldots\otimes b_L))\qquad\text{for any $1\leq i<L$}\,.
    \end{equation}
\end{lemma}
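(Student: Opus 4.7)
The plan is to first establish the three-row identity \eqref{eq:three term} and then derive \eqref{eq:ei invariant} for general $L$ by induction on $L$. Granting \eqref{eq:three term}, the induction works as follows: if $i > 1$, then $\eDs_i$ acts entirely inside the inner tuple $(b_2, \ldots, b_L)$, so using $\theta(b_1, \ldots, b_L) = \theta(b_1, \theta(b_2, \ldots, b_L))$ the claim reduces to the induction hypothesis on the tuple of length $L-1$ with operator $\eDs_{i-1}$. If $i = 1$, I set $c := \theta(b_3, \ldots, b_L)$ so that $\theta(b_1, \ldots, b_L) = \theta(b_1, b_2, c)$ and $\theta(\eDs_1(b_1 \otimes \cdots \otimes b_L)) = \theta(\tilde{b}_1, \tilde{b}_2, c)$, and the claim reduces directly to \eqref{eq:three term}. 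The base case $L = 2$ is the two-row identity $\theta(b_1, b_2) = \theta(\eDs_1(b_1 \otimes b_2))$, which follows from the definition of $\eDs_1$: moving the unmatched ``$2$''s at columns of $b_2 \setminus b_1$ down to row $1$ turns each such ``('' of the bracket word into a ``)'', but every such new ``)'' remains unmatched, so the set of matched pairs (and hence $\theta$) is unchanged.

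For \eqref{eq:three term}, let $U \subseteq b$ denote the set of unmatched ``$2$''s in the bracketing of $(a, b)$, so $\eDs_1(a \otimes b) = (a \cup U) \otimes (b \setminus U)$. I will induct on $|U|$: the base case $|U| = 0$ is trivial, and for $|U| \geq 1$ it suffices to prove the single-step identity
\[
\theta(a, \theta(b, c)) = \theta(a \cup \{u\}, \theta(b \setminus \{u\}, c)),
\]
where $u$ is the leftmost element of $U$ (corresponding to one application of $\eD_1$). Iterating $|U|$ times then yields the full $\eDs_1$ statement, since each step reduces the analogous unmatched-2 set by one.

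The proof of the single-step identity splits on whether $u \in \theta(b, c)$. When $u \notin \theta(b, c)$, the element $u$ is an unmatched ``)'' in the $(b, c)$ bracketing, so removing it leaves $\theta(b, c)$ unchanged; meanwhile, because $u$ is the leftmost unmatched ``('' in $(a, b)$, direct inspection of the outer bracket word shows that the new ``)'' introduced at column $u$ in $(a \cup \{u\}, \theta(b, c))$ finds no available ``('' to its left to pair with, hence is itself unmatched, so $\theta(a, \theta(b, c))$ is unchanged. The harder case is $u \in \theta(b, c)$: removing $u$ from $b$ triggers a rearrangement of the inner matching, as the $c$-column $v$ formerly matched to $u$ either becomes unmatched or rematches with a column of $b$ strictly right of $u$, possibly cascading further. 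I plan to handle this via a local sliding argument, tracking column-by-column how the bracket word of $(b \setminus \{u\}, c)$ differs from that of $(b, c)$: the newly matched elements of $\theta(b \setminus \{u\}, c)$ all lie strictly to the right of $u$, and combining this with the insertion of $u$ into $a$ produces a bijection between the matched columns of $a$ under the two outer bracketings. The main obstacle will be making this bookkeeping rigorous; since classical matching is greedy and non-crossing, I expect a finite case analysis on the local structure of the three bracket words around $u$ to suffice.
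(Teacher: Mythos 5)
Your reduction of the general case \eqref{eq:ei invariant} to the three-row identity \eqref{eq:three term}, and your base case $L=2$, match the paper's argument and are fine. The problem is that you never actually prove the three-row identity: you reduce it to the single-step claim $\theta(a,\theta(b,c))=\theta(a\cup\{u\},\theta(b\setminus\{u\},c))$, dispose of the easy case $u\notin\theta(b,c)$, and then for the case $u\in\theta(b,c)$ you only announce a ``local sliding argument'' that you ``expect'' a finite case analysis to complete. That case is the entire mathematical content of the lemma. When $u$ is removed from $b$, its partner in $\theta(b,c)$ can rematch to a column of $b$ far to the right of $u$, which can displace that column's partner, and so on; the cascade is global, not local, so there is no a priori bounded list of configurations ``around $u$'' to check. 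Simultaneously the bottom row gains a new $)$ at $u$, so both rows of the outer bracketing change at once. You correctly identify that the newly matched elements of $\theta(b\setminus\{u\},c)$ lie strictly right of $u$, but the asserted bijection between the matched columns of $a$ under the two outer bracketings is exactly the statement to be proved, and no argument for it is given.

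For comparison, the paper avoids the one-element-at-a-time cascade entirely: it applies $\eDs_1$ all at once, writes $b=b^m\sqcup b^u$, observes that the elements of $b^u$ added to the bottom row are unmatched there (by invertibility of $\eDs_1$) and hence removable without affecting $\theta$, and thereby reduces \eqref{eq:three term} to the single clean identity $\theta(a\otimes b\otimes c)=\theta(a\otimes b^m\otimes c)$, which it proves by a two-directional containment argument tracking explicit matching strings $x\to y\to z$ through the three rows. If you want to salvage your single-step route, you would need to carry out an analogue of that double containment for each step; as written, the proposal stops exactly where the difficulty begins.
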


See \cref{ex:three row} for an illustration of the lemma.

\begin{proof}
    To prove the identity \eqref{eq:three term}, decompose $b$ into its classically matched and unmatched elements in $\theta_1((a,b))$, denoted $b^m$ and $b^u$, respectively, and write $b=b^m \sqcup b^u$. Then $\eDs_1(a\otimes b)=(a\sqcup b^u)\otimes b^m$, 
    so we can write $\theta(\eDs_1(a\otimes b\otimes c))$ as $\theta((a\sqcup b^u)\otimes b^m \otimes c)$.

    We first observe that if $\theta((d\sqcup d')\otimes e)\cap d'=\emptyset$, then $\theta((d\sqcup d')\otimes e)=\theta(d\otimes e)$. Therefore, since the action of $\eDs_1$ can be inverted, all elements in $b^u$ are unmatched in row 1 of $\theta_1((a\sqcup b^u)\otimes b^m)$, and so $\theta((a\sqcup b^u)\otimes b')\cap b^u=\emptyset$ for any $b'\subseteq b^m$, including $b'=\theta(b^m\otimes c)$. Therefore,
   \begin{equation}\label{eq:unm}
   \theta((a\sqcup b^u)\otimes b^m\otimes c)=\theta(a\otimes b^m\otimes c).
   \end{equation}

Thus the statement reduces to showing
 \begin{equation}\label{eq:abc}\theta(a\otimes b\otimes c)=\theta(a\otimes b^m\otimes c),
\end{equation}
namely that the unmatched balls in the intermediate row do not impact the set of matched positions in the bottom row in strings originating from the top row.

First we show that $\theta(a\otimes b^m \otimes c)\subseteq \theta(a\otimes b\otimes c)$. Suppose $x\in c$ matches with $y\in b^m$, which matches with $z\in a$ for an element $z\in\theta(a\otimes b^m\otimes c)$. Then there is no  $y'\in b^m\setminus \theta(b^m\otimes c)$ such that $x\leq y'<y$ (otherwise the string of matchings would be $x\to y'\to z$). 
If $y\in \theta(b\otimes c)$, then $x\to y\to z$ is a string of matchings in $\theta(a\otimes b \otimes c)$ as well. If $y\not\in \theta(b\otimes c)$, then in $\theta(a\otimes b\otimes c)$, $x$ matches with some $y''\in \theta(b\otimes c)$ where $x\leq y''<y$ and hence $y''\in b^u$. As $y''\in b^u$, there is no $z''\in a\setminus \theta(a\otimes b)$ such that $y''\leq z''<y$, and thus $y''$ must match with $z$ in $\theta(a\otimes\theta(b\otimes c))$, giving the string of matchings $x\to y''\to z$ in $\theta(a\otimes b\otimes c)$. Thus in either case, $z\in\theta(a\otimes b\otimes c)$, proving the containment of the two sets. 

Now we show the converse. Suppose $x\in c$ matches with $y\in b$, which matches with $z\in a$ for an element $z\in\theta(a\otimes\theta(b\otimes c))$.  The string of matchings $x\to y\to z$ implies there does not exist $x'\in c$ such that $x<x'\leq y$ (otherwise the matchings would be $x'\to y\to z$), and there does not exist $(x'',y')\in c\otimes b$ such $y<x''\leq y'\leq z$ (otherwise the matchings would be $x''\to y'\to z$). Consider the element $y$. If $y\in b^m$, we still have the matchings $x\to y\to z$ in $a\otimes b^m \otimes c$. If $y\in b^u$, there must be some $y'\in b^m$ that matches with $z$ in $a\otimes b$ (since $z\in\theta(a\otimes b)$), and hence $y<y'\leq z$). But then $x\to y'\to z$ is a string of matchings in $a\otimes b^m\otimes c$. Thus in either case, $z\in \theta(a\otimes b^m \otimes c)$. Since this holds for each element $z\in\theta(a\otimes \theta(b\otimes c))$, we obtain the reverse containment, giving equality \eqref{eq:abc}, which in turn yields the desired three-row equality.

Finally, we prove \eqref{eq:ei invariant} using the recursive structure of the $\theta$ map. Notice that the case $L=2$ is obtained by setting $b=c$ in \eqref{eq:three term}. For $L\geq 3$, we have
\begin{align*}\theta(\eDs_1(b_1\otimes\cdots \otimes b_L))&= \theta(\eDs_1(b_1\otimes b_2)\otimes b_2\otimes\cdots\otimes b_L)\\
&=\theta(\eDs_1(b_1\otimes b_2\otimes \theta(b_3\otimes\cdots\otimes b_L)))\\
&=\theta(b_1\otimes b_2\otimes \theta(b_3\otimes\cdots\otimes b_L))\\
&=\theta(b_1\otimes\cdots\otimes b_L).
\end{align*}
Here, the second equality is due to the fact that evaluating the $\theta$ function on $(b_3\otimes \cdots\otimes b_L)$ is independent of the first two rows, and thus can be completed before the action of $\eDs_1$ on the pair $b_1\otimes b_2$. The third equality invokes \eqref{eq:three term} for $a=b_1$, $b=b_2$, and $c=\theta(b_3\otimes\cdots\otimes b_L)$.  Now we use the fact that $\eDs_i(b_1\otimes\cdots \otimes b_L)=b_1\otimes\cdots\otimes b_{i-1}\otimes \eDs_1(b_i\otimes \cdots\otimes b_L)$ to obtain \eqref{eq:ei invariant}.
\end{proof}

\begin{example}
    \label{ex:three row}
    For $a\otimes b\otimes c=\{1,3,6\}\otimes\{1,2,3,5,6\}\otimes\{2,4,6\}$, let $a'\otimes b'\otimes c=\eDs_1(a\otimes b\otimes c)$. We show the strings of matched balls (highlighted below) that give $\theta(b\otimes c)=\{2,5,6\}$ and $\theta(a\otimes \theta(b\otimes c))=\{3,6\}$ on the left, and the strings of matched balls that give $\theta(b'\otimes c)=\{3,6\}$ and $\theta(a'\otimes \theta(b\otimes c))=\{3,6\}$ on the right. Observe that even though $\theta(b\otimes c)\neq \theta(b'\otimes c)$, we still have that $\theta(a\otimes b\otimes c)=\theta(a'\otimes b'\otimes c)=\{3,6\}$.
     \begin{center}
\resizebox{!}{2cm}{
\begin{tikzpicture}[scale=0.7]
\def \w{1};
\def \h{1};
\def \r{0.25};
    
\begin{scope}[xshift=0cm]
\node at (-2,1.5) {\Large $a\otimes b\otimes c=$};
\draw[gray!50,thin,step=\w] (0,0) grid (6*\w,3*\h);
\foreach \i in {1,...,6}
{
\node at (\w*\i-\w*.5,-0.5) {\small \i};
}
\foreach \xx\yy\i\c in {1/2/3/\highlight,3/2/3/\highlight,5/2/3/\highlight,0/1/3/black,1/1/3/\highlight,2/1/3/black,4/1/2/\highlight,5/1/3/\highlight,0/0/1/black,2/0/3/\highlight,5/0/3/\highlight}
    {
    \draw[fill=\c] (\w*.5+\w*\xx,\h*.5+\h*\yy) circle (\r cm);
    }

\draw[black!50!green,ultra thick] (\w*1.5,\h*2.5-\r)--(\w*1.5,\h*1.5+\r) (\w*1.5,\h*1.5-\r)--(\w*2.5,\h*0.5+\r);
\draw[black!50!green,ultra thick] (\w*3.5,\h*2.5-\r)--(\w*4.5,\h*1.5+\r);
\draw[black!50!green,ultra thick] (\w*5.5,\h*2.5-\r)--(\w*5.5,\h*1.5+\r) (\w*5.5,\h*1.5-\r)--(\w*5.5,\h*0.5+\r);
\end{scope}

\begin{scope}[xshift=13.5cm]
\node at (-3,1.5) {\Large $\eDs_1(a\otimes b\otimes c)=$};
\draw[gray!50,thin,step=\w] (0,0) grid (6*\w,3*\h);
\foreach \i in {1,...,6}
{
\node at (\w*\i-\w*.5,-0.5) {\small \i};
}
\foreach \xx\yy\i\c in {1/2/3/\highlight,3/2/3/black,5/2/3/\highlight,0/1/3/black,1/0/3/black,2/1/3/\highlight,4/0/2/black,5/1/3/\highlight,0/0/1/black,2/0/3/\highlight,5/0/3/\highlight}
    {
    \draw[fill=\c] (\w*.5+\w*\xx,\h*.5+\h*\yy) circle (\r cm);
    }
\draw[black!50!green,ultra thick] (\w*1.5,\h*2.5-\r)--(\w*2.5,\h*1.5+\r) (\w*2.5,\h*1.5-\r)--(\w*2.5,\h*0.5+\r);
\draw[black!50!green,ultra thick] (\w*5.5,\h*2.5-\r)--(\w*5.5,\h*1.5+\r) (\w*5.5,\h*1.5-\r)--(\w*5.5,\h*0.5+\r);
\end{scope}

\end{tikzpicture}
}
\end{center}
\end{example}

\begin{prop}\label{prop:label L through collapsing}
    For a multiline queue $M = (B_1,\ldots,B_L)$ of height $L$,
        \begin{align*}
            I_L(\rho(M)) = \theta(B_1 \otimes \cdots \otimes  B_L).
        \end{align*}
\end{prop}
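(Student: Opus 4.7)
The plan is to combine two ingredients: (a) for a non-wrapping multiline queue $N = (C_1, \ldots, C_L)$, the identity $I_L(N) = \theta(C_1 \otimes \cdots \otimes C_L)$ holds; and (b) $\theta(b_1 \otimes \cdots \otimes b_L)$ is invariant under each of the row-dropping operators $\eDs_i$ that compose to form $\rho$. Since $\rho(M)$ is non-wrapping by construction, applying (a) to $\rho(M)$ and (b) to transfer the result back to the original rows yields the claim.

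For (a), equation \eqref{eq:R} already gives $I_L(N) = R(C_1, \ldots, C_L)$, so the task reduces to showing $R = \theta$ on non-wrapping multiline queues. The key observation is that if the FM pairing between two adjacent rows $a, b$ of $N$ does not wrap, then no matched pair in the cylindrical bracket word $\theta_1(\cw((a, b)))$ crosses the seam of the cylinder, so cylindrical matching coincides with classical matching: $R(a,b) = \theta(a,b)$. To lift this to the iterated definition $R(C_1, R(C_2, \ldots, C_L))$, I would argue by induction on $L$. The inductive step requires that the pairing of $C_i$ with the reduced set $\theta(C_{i+1}, \ldots, C_L) \subseteq C_{i+1}$ remains non-wrapping; this follows from the geometry of strands via \cref{lemma:intersectingstrands}, which arranges long strands weakly to the left of short strands in each row of a non-wrapping MLQ, so removing short-strand tops cannot push the remaining pairings across the seam.

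For (b), by \eqref{eq:rho op} the operator $\rho$ is a composition of operators $\eDs_j$, each acting on a single adjacent pair of rows. Equation \eqref{eq:ei invariant} of \cref{lem:three row} asserts that $\theta(b_1 \otimes \cdots \otimes b_L)$ is invariant under any $\eDs_j$, so iterating along the sequence defining $\rho$ gives $\theta(C_1 \otimes \cdots \otimes C_L) = \theta(B_1 \otimes \cdots \otimes B_L)$, where $(C_1, \ldots, C_L) = \rho(M)$. Combined with (a), this yields the desired equality.

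The main obstacle is the inductive step in (a): while the two-row case is essentially immediate from the definition of non-wrapping, verifying at each recursion level that passing to the reduced set $\theta(C_{i+1}, \ldots, C_L)$ keeps the pairing with $C_i$ non-wrapping requires careful use of the strand geometry in \cref{lemma:intersectingstrands}. Once this is in place, everything else is a direct formal consequence of the invariance lemma.
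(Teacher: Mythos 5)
Your proposal is correct, and it uses the same two essential ingredients as the paper --- the invariance of $\theta$ under the dropping operators (\cref{lem:three row}) and the identification of cylindrical with classical matching on non-wrapping configurations --- but it organizes them differently. The paper runs an induction on $L$: it uses the braid relations to rewrite $\rho$ so that the top $L-1$ rows are collapsed first, introduces the intermediate configurations $N^{(k)}=\eDs_k\cdots\eDs_{L-1}(B_1,C_2,\ldots,C_L)$, and threads \eqref{eq:ei invariant} through that specific factorization before invoking the inductive hypothesis on $M'=(B_2,\ldots,B_L)$. Your step (b) shortcuts all of this: since \eqref{eq:ei invariant} holds for arbitrary tuples and \eqref{eq:rho op} exhibits $\rho$ as a composition of operators $\eDs_i$, the equality $\theta(\rho(M))=\theta(M)$ follows by applying the invariance factor by factor, with no induction and no braid-relation rewriting. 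Conversely, your step (a) is more careful than the corresponding step in the paper, which asserts $R(\rho(M))=\theta(\rho(M))$ for the full $L$-row non-wrapping queue on the strength of the two-row observation alone. Your worry about the inductive step --- that replacing $C_{i+1}$ by the reduced set could in principle introduce seam-crossing matches --- is legitimate, though it resolves more directly than via \cref{lemma:intersectingstrands}: the reduced set $R(C_{i+1},\ldots,C_L)$ is exactly the set of label-$L$ balls in row $i+1$, these pair first in the FM algorithm from row $i+1$ to row $i$, the remark following \cref{def:comb R} identifies $R(C_i,\,\cdot\,)$ with the outcome of that pairing, and none of those pairings wrap because $\maj(N)=0$; hence the cylindrical matching at every level of the recursion has no match across the seam and coincides with the classical one. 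Either justification closes the gap, so your argument goes through; its net effect is a shorter proof of the transfer step at the cost of a slightly longer (but more complete) proof of the non-wrapping identification.
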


\begin{proof}
Our proof is by induction on $L$, where the base case $L=2$ indeed corresponds to $I_L(\rho(B_1\otimes B_2))=I_L(\eDs_1(B_1\otimes B_2))=\theta(B_1\otimes B_2)$, from \eqref{eq:ei invariant}. 

We will use the shorthand $R(N)$ and $\theta(N)$ to mean $R(D_1,\ldots,D_L)$ and $\theta(D_1,\ldots,D_L)$, respectively, for any generalized multiline queue $N=(D_1,\ldots,D_L)$.

Our inductive arguments thus far entailed adding a row to the top of a multiline queue; here, we will add a row to the bottom. Consider the multiline queue $M'=(B_2,\ldots,B_L)$ consisting of the top $L-1$ rows of $M$. As the operators $\{\eDs_i\}$ satisfy the braid relations (see \cite[Theorem 4.6]{MV24}), we may write $\rho$ as follows (recall that $\eDs_{[b,a]}=\eDs_a\cdots\eDs_{b-1}\eDs_b$ and $\eDs_{[a,b]}=\eDs_b\eDs_{b-1}\cdots\eDs_a$ when $a<b$):
\[
\rho(M)=\eDs_{[1,L-1]}\eDs_{[L-1,2]}\cdots\eDs_{[3,2]}\eDs_{[2,2]}(M)= 
\eDs_{L-1}\cdots\eDs_1\Big[\eDs_{[L-1,2]}\cdots\eDs_{[3,2]}\eDs_{[2,2]}(M)\Big].
\]
 We leave it as an exercise to the reader to verify the equality above by repeatedly applying the following identity to \eqref{eq:rho op} for $r=2,\ldots,L-1$:
\[\eDs_{[r,1]}\eDs_{[1,r-1]}=(\eDs_1\cdots \eDs_r)(\eDs_{r-1}\cdots\eDs_1)=(\eDs_r\cdots\eDs_1)(\eDs_2\cdots\eDs_r)=\eDs_{[1,r]}\eDs_{[r,2]}.\]

We have that
$\rho(M')$ is given by rows 2 through $L$ of $\eDs_{[L-1,2]}\cdots\eDs_{[3,2]}\eDs_{[2,2]}(M)$. Writing $\rho(M')=(C_2,\ldots,C_L)$, we then obtain
\[\rho(M)=\eDs_{L-1}\cdots \eDs_1((B_1,C_2,\ldots,C_L))\,.\] 
We will construct a sequence of partially-collapsed multiline queues $N^{(k)}$ corresponding to sequential applications of the $\eDs_i$ from bottom to top:
\[N^{(0)}\coloneq (B_1,C_2,\ldots,C_L),\qquad N^{(k)}\coloneq \eDs_k(N^{(k-1)})\qquad\text{for $1\leq k\leq L$},\]
so that $\rho(M)=N^{(L-1)}$. The careful reader may observe that $N^{(k)}$ is a generalized multiline queue: since only rows 1 through $k+1$ have been collapsed, row $k+2$ may have more particles than row $k+1$. Nonetheless, $\theta(N^{(k)})$ is still well-defined. We construct a chain of equalities by iteratively invoking \eqref{eq:ei invariant} of \cref{lem:three row} for each operator $\eDs_i$ for $i=1,\ldots,L-1$ to obtain
\begin{equation}\label{eq:3}
\theta(N^{(0)})=\theta(N^{(1)})=\cdots=\theta(N^{(L-1)})=\theta(\rho(M)).
\end{equation}
By induction, assume $I_{L-1}(\rho(M'))=\theta(M')$. We observe that if $a,b\subseteq[n]$ are two rows such that $(a,b)$ has no wrapping pairings, then $R(a,b)=\theta(a\otimes b)$. Thus in a nonwrapping multiline queue, classical pairing coincides with cylindrical pairing, so we may write 
\[I_L(\rho(M))=R(\rho(M))=\theta(\rho(M))\,,\]   
which equals $\theta(N^{(0)})$ by \eqref{eq:3}. By induction, we have that $I_{L-1}(\rho(M'))=\theta(\rho(M'))=\theta(M')$, and thus $I_L(\rho(M))=\theta(N^{(0)})=\theta(B_1,\theta(M'))=\theta(M)$, as claimed. 
\end{proof}

\begin{remark}
    \cref{prop:label L through collapsing} is true for the generalized multiline queues studied in \cite{MV24}, as well.
\end{remark}

Next we prove a property of collapsing a row above a nonwrapping multiline queue, as illustrated in \cref{ex:one row}. For $1\leq \ell\leq L$, let $\rho^{(\ell)}(M)=\rho(\pi^{(\ell)}(M))$ denote the collapsing of the first $\ell$ rows of $M$. 
    \begin{lemma}\label{lemma:onerowcollapse}
        Let $\lambda$ be a partition with $\lambda_1 = L$, and $M\in\MLQ_\lambda$ be a multiline queue. Then, writing $\type(\rho (M))=\alpha$ and $\type(\rho^{(L-1)}(M))=\beta$, we have 
        $\alpha_i-\beta_i\in\{0,1\}$ for each $i$.
    \end{lemma}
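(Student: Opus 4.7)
The plan is to prove the bounds $\beta_i \leq \alpha_i \leq \beta_i+1$ separately, using the structure of the cascade $\eDs_{L-1}, \eDs_{L-2}, \dots, \eDs_1$ applied on top of the already-collapsed first $L-1$ rows.

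The first step is a setup: write $N := \rho^{(L-1)}(M) = (C_1,\dots,C_{L-1})$ and observe that, because the operators $\eDs_{[k,1]}$ for $k \leq L-2$ act trivially on the non-wrapping prefix $(C_1,\dots,C_{L-1})$ (non-wrapping multiline queues being fixed points of the $\eDs_i$), we have
\[
\rho(M)=\eDs_{[L-1,1]}(C_1,\dots,C_{L-1},B_L)=\eDs_1\eDs_2\cdots\eDs_{L-1}(C_1,\dots,C_{L-1},B_L).
\]
The next step is a preservation lemma: throughout this cascade, the balls of each $C_j$ remain in row $j$. Indeed, since $N$ is non-wrapping, every ball of $C_{j+1}$ is classically matched with some ball of $C_j$; when $\eDs_{j+1}$ dumps extras from $B_L$ into row $j+1$, we are only adding open brackets, which by monotonicity of standard bracket matching can only preserve (not destroy) the existing matches of $C_{j+1}$ with $C_j$. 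Consequently $\eDs_j$ never moves a $C_{j+1}$ ball, and iteratively $\rho(M)_j = C_j\sqcup V_j$ where $V_j\subseteq B_L$ (as columns), and the $V_j$'s partition $B_L$.

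For the upper bound $\alpha_i\leq\beta_i+1$, I would start with the case $k=L$, which follows cleanly from \cref{prop:label L through collapsing} and the $\theta$-invariance of \cref{lem:three row}:
\[
I_L(\rho(M))=\theta(C_1\otimes\cdots\otimes C_{L-1}\otimes B_L)=\theta(C_1\otimes\cdots\otimes C_{L-2}\otimes\theta(C_{L-1}\otimes B_L))\subseteq \theta(C_1\otimes\cdots\otimes C_{L-1})=I_{\geq L-1}(N),
\]
using that $\theta(C_{L-1}\otimes B_L)\subseteq C_{L-1}$ and that $\theta$ is monotonic in its top argument. To obtain $I_{\geq k}(\rho(M))\subseteq I_{\geq k-1}(N)$ for every $k$, I would then iterate: since $\pi^{(k)}(\rho(M))$ is itself non-wrapping, $I_{\geq k}(\rho(M))=\theta(\rho(M)_1\otimes\cdots\otimes\rho(M)_k)$, and one applies the same ``dropping through the top'' argument to the shorter cascade that produces $\pi^{(k)}(\rho(M))$ from $(C_1,\dots,C_{k-1},C_k,\widetilde B)$ for a suitable effective top row $\widetilde B$ (the combined contribution that ever visits row $k$).

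For the lower bound $\alpha_i\geq\beta_i$, the preservation $C_j\subseteq\rho(M)_j$ means that every ball supporting the length-$\beta_i$ strand of $N$ at column $i$ is still present in $\rho(M)$. I would then argue via $\theta$-monotonicity in the top argument that $I_{\geq k}(N)=\theta(C_1\otimes\cdots\otimes C_k)\subseteq \theta(C_1\otimes\cdots\otimes(C_k\cup V_k))\subseteq I_{\geq k}(\rho(M))$, where the last inclusion uses the analogue of the iterated argument above applied to the truncation $\pi^{(k)}(\rho(M))$. Combining the two bounds gives the claim.

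The main obstacle will be promoting the clean $k=L$ monotonicity argument to general $k$: the sets $V_j$ can shift the FM pairings in $\rho(M)$ relative to those in $N$, so one cannot simply apply $\theta$-monotonicity naively on a single pair of rows. The recursive structure of the cascade — the fact that the balls cascading into row $k$ from above are precisely the ``unmatched'' portion of what $\eDs_k$ saw, together with the preservation of $C_k$ — is what should let this iterated argument go through, but writing it out carefully (most likely by induction on $L-k$, with the $k=L$ case as the base) is where the real work lies.
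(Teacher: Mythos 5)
Your overall strategy---reducing the lemma to the two families of inclusions $I_{\geq k}(\rho^{(L-1)}(M))\subseteq I_{\geq k}(\rho(M))\subseteq I_{\geq k-1}(\rho^{(L-1)}(M))$ and analyzing the cascade $\eDs_1\cdots\eDs_{L-1}$ applied to $(C_1,\ldots,C_{L-1},B_L)$---is essentially the paper's, and your $k=L$ computation via \cref{prop:label L through collapsing} is correct. However, your ``preservation lemma'' is false, and it is load-bearing. Adding open brackets to row $j+1$ does preserve the set of matched \emph{closed} brackets in row $j$ (this is the monotonicity $\theta(a\otimes b)\subseteq\theta(a\otimes(b\cup b'))$), but it can un-match a previously matched \emph{open} bracket by stealing its partner: a newly dropped ball sitting between a $C_{j+1}$ ball and the $C_j$ ball it was matched to will claim that match, leaving the $C_{j+1}$ ball unmatched, whereupon $\eDs_j$ sends it down to row $j$. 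Concretely, take $M=(\{3\},\{1\},\{2\})$ on $n=3$ columns. Then $\rho^{(2)}(M)=(\{3\},\{1\})$ is already nonwrapping, but in the cascade $\eDs_2$ drops the top ball to site $(2,2)$, after which the bracket word for $\theta_1$ in columns $1,2,3$ reads $(\,(\,)$, so the $C_2$ ball at column $1$ becomes unmatched and falls to row $1$; one gets $\rho(M)=(\{1,3\},\{2\})$, hence $\rho(M)_2=\{2\}\not\supseteq C_2=\{1\}$ and $\rho(M)_1\setminus C_1=\{1\}\not\subseteq B_3=\{2\}$. This destroys the decomposition $\rho(M)_j=C_j\sqcup V_j$ with $V_j\subseteq B_L$, and with it your proof of the lower bound $\alpha_i\geq\beta_i$.

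The conclusion is still true in the example ($\beta=(0,0,2)$, $\alpha=(1,0,2)$), but it must be extracted without tracking individual balls. The paper does this by working one operator at a time: for each intermediate configuration $M^{(k)}$ in the cascade it carves out a maximal nonwrapping multiline queue $N^{(k)}$ (discarding the still-unmatched balls in row $k+1$), shows via \cref{lem:three row} and \cref{prop:label L through collapsing} that $I_{\geq j}(N^{(k)})=\theta(\pi^{(j)}(M^{(k)}))$ is unchanged by $\eDs_k$ for every $j\neq k$, and that for $j=k$ the set can only grow, with the new elements drawn from $I_{k-1}(N^{(k)})$; since each level $k$ is visited once and the value after an increase is $k$, no index can be incremented twice. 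If you want to salvage your two-inclusion plan, you would need to establish the inclusions by exactly this kind of stepwise $\theta$-bookkeeping rather than by ball preservation; the ``effective top row'' device cannot be made to work as stated, because the balls entering row $k$ from above are not a subset of $B_L$.
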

    
    \begin{proof}    
        We will think of $\rho(M)$ as the collapsing of the $L$th row of $M$ onto the partial collapse $\rho^{(L-1)}(M)$ of the bottom $L-1$ rows. Define 
        \[
        M' = (C_1,\ldots,C_{L-1},\emptyset,B_L)\,,
        \]
        where $\rho^{(L-1)}(M)=(C_1,\ldots,C_{L-1})$ and $B_L$ is the $L$th row of $M$. Equivalently, 
        \[M'=\fRs_L\eDs_{[L-2,1]}\eDs_{[L-3,1]}...\eDs_{[1,1]}(M)\,,
        \]
        where $\fRs_L$ lifts all balls in row $L$ to row $L+1$. Then, by \eqref{eq:rho op}, 
        \[
\rho^{(L-1)}(M)=\pi^{(L-1)}(M')
\qquad\text{and}\qquad
\rho(M)=\eDs_{[L,1]}(M')\,.
\]

Consider the intermediate steps corresponding to applying the operators $\eDs_{k}$ one at a time. Define 
        \[
        M^{(L)}\coloneq M',\qquad\text{and}\quad  M^{(k)}\coloneq \eDs_{k+1}(M^{(k+1)})\qquad \text{for $0\leq k\leq L-1$}\,,
        \]
        so that $M^{(0)}=\rho(M)$. We wish to compare the types $\beta=\type(\pi^{(L-1)}(M^{(L)}))$ and $\alpha=\type(M^{(0)})$ by sequentially comparing the types of the intermediate configurations $M^{(k)}$ and $M^{(k-1)}$ for each $k$. However, the intermediate configurations $M^{(k)}$ are not necessarily valid multiline queues, so $\type(\cdot)$ isn't well defined on them. To get around this, for each $M^{(k)}$, we define a ``maximal'' non-wrapping multiline queue $N^{(k)}$ contained within it, as follows. 

        For each $1\leq k\leq L$, define $N^{(k)}$ to be the multiline queue obtained by keeping $M^{(k)}$ unchanged at all rows $i\neq k+1$, and in row $k+1$ keeping only those balls that are matched in the bracketing word $\theta_{k}(\cw(M^{(k)}))$ (equivalently, these are the balls that are not moved by $\eDs_{k}$ applied to $M^{(k)}$). Set $N^{(0)}\coloneq M^{(0)}$, and note that $N^{(L)}=\rho^{(L-1)}(M)$. By construction each $\eDs_i$ acts trivially on $N^{(k)}$, hence $N^{(k)}$ is nonwrapping. 
        
       Let $\alpha^{(k)} = \type(N^{(k)})$, so that $\alpha^{(L)} = \beta$ and $\alpha^{(0)} = \alpha$. We compare $\alpha^{(k)}$ and $\alpha^{(k-1)}$ via the label sets $I_{\geq j}(N^{(k)})$ and $I_{\geq j}(N^{(k-1)})$ for $1\leq j\leq L$. For a nonwrapping multiline queue $N$, by \eqref{eq:R2}  we have
\[
I_{\geq j}(N)=\theta(\pi^{(j)}(N))\,.
\]
By construction of $N^{(k)}$ and \eqref{eq:unm} applied at row $k+1$, $\theta(\pi^{(j)}(N^{(k)}))=\theta(\pi^{(j)}(M^{(k)}))$ for all $k,j$. Thus, 
\[I_{\geq j}(N^{(k)})=\theta(\pi^{(j)}(M^{(k)})).\]
If $j>k$, then by \cref{prop:label L through collapsing} we obtain
\[
I_{\geq j}(N^{(k-1)})=\theta(\pi^{(j)}(\eDs_k(M^{(k)})))=\theta(\pi^{(j)}(M^{(k)}))=I_{\geq j}(N^{(k)}).
\]
If $j\leq k-1$, then $N^{(k)}$ and $N^{(k-1)}$ agree on rows $k-1$ and below, so again
$I_{\geq j}(N^{(k-1)})=I_{\geq j}(N^{(k)})$.
Thus, the only possible change occurs for $j=k$, which we now examine. In this case, the $k$th row of $M^{(k-1)}=\eDs_k M^{(k)}$ contains the $k$th row of $M^{(k)}$, hence
\[
I_{\geq k}(N^{(k)})=\theta(\pi^{(k)}(M^{(k)}))\subseteq \theta(\pi^{(k)}(M^{(k-1)})) = I_{\geq k}(N^{(k-1)}).
\]

This inclusion means precisely that when we pass from $\alpha^{(k)}$ to $\alpha^{(k-1)}$, no entry can decrease, and the only entries that can change are those equal to $k-1$, which may increase to $k$. Note that these entries correspond to the newly added elements $I_{\geq k}(N^{(k-1)})\setminus I_{\geq k}(N^{(k)})$, and they are indeed a subset of the entries in $I_{k-1}(N^{(k)})$:
\[
I_{\geq k}(N^{(k-1)})\setminus I_{\geq k}(N^{(k)})\subseteq I_{\geq k-1}(N^{(k-1)})\setminus I_{\geq k}(N^{(k)})=I_{\geq k-1}(N^{(k)})\setminus I_{\geq k}(N^{(k)})=I_{k-1}(N^{(k)}).
\]
Since each $k$ occurs exactly once in the collapse, each index $1\leq i\leq n$ can be increased at most once overall. Therefore
\[
\alpha_i-\beta_i\in\{0,1\}\qquad\text{for all }i,
\]
as claimed.
\end{proof}

    \begin{example}\label{ex:one row}
    In the fourth image of \cref{fig:collapsing steps}, we see $\rho^{(4)}(M)$ in black with type $\beta=(1,1,4,4,3,2)$. In the fifth image, we see $\rho(M)$ with type $\alpha=(1,1,4,5,3,3)$, and indeed $\alpha_i-\beta_i\in\{0,1\}$ for each $i$.
    \end{example}

    \begin{example}
        \label{ex:ghost balls}
       For $L=4$, we show an example of $M$ and $\eDs_{[2,1]}\eDs_{[1,1]}(M)$ below:
        \begin{center}
    \resizebox{!}{2cm}{
    \begin{tikzpicture}[scale=0.7]
    \def \w{1};
    \def \h{1};
    \def \r{0.25};

    \begin{scope}[xshift=0cm]
    \node at (-1.5,2) {\large $M=$};
    \draw[gray!50,thin,step=\w] (0,0) grid (6*\w,4*\h);
    \foreach \xx\yy\i\c in {2/3/white,4/3/white,5/3/white,0/2/white,2/2/white,1/1/white,2/1/white,3/1/white,5/2/white,0/0/white, 1/0/white,2/0/white,4/0/white,5/1/white}
    {
    \draw[fill=black] (\w*.5+\w*\xx,\h*.5+\h*\yy) circle (\r cm);
    }
    \end{scope}
    
    \begin{scope}[xshift=15cm]
    \node at (-2.5,2) {\large $\eDs_{[2,1]}\eDs_{[1,1]}(M)=$};
    \draw[gray!50,thin,step=\w] (0,0) grid (6*\w,4*\h);
    \foreach \xx\yy\i\c in {2/3/white,4/3/white,5/3/white,0/2/white,2/2/white,1/1/white,2/1/white,3/1/white,5/1/white,0/0/white,1/0/white,2/0/white,4/0/white,5/0/white}
    {
    \draw[fill=black] (\w*.5+\w*\xx,\h*.5+\h*\yy) circle (\r cm);
    }
    \end{scope}
    
    \end{tikzpicture}
    }
    \end{center}
    We show the sequence of configurations $M^{(k)}$, with multiline queues $N^{(k)}$ shown as the black balls, and the balls in $M^{(k)}\setminus N^{(k)}$ highlighted. Accordingly, the strands reflect the pairings in $N^{(k)}$, ignoring the highlighted balls.
    \begin{center}
        \begin{tikzpicture}[scale=0.4]        
    \def \w{1};
        \def \h{1};
        \def \r{0.25};
        \node at (-1.5,6) {$k$};
        \node at (-1.5,2.5) {$M^{(k)}$};
        \node at (-1.5,-1) {$\alpha^{(k)}$};
        \foreach \i\x in {4/0,3/1,2/2,1/3,0/4}
        {
        \node at (2.5+7*\x,6) {\i};
        }
        \begin{scope}[xshift=0cm]
        \foreach \i in {0,...,5}
        {
        \draw[gray!50] (0,\i*\h)--(\w*6,\i*\h);
        }
        \foreach \i in {0,...,6}
        {
        \draw[gray!50] (\w*\i,0)--(\w*\i,5*\h);
        }
        
        \foreach \xx\yy\c in {2/4/\highlight,4/4/\highlight,5/4/\highlight,0/2/black,2/2/black,1/1/black,2/1/black,3/1/black,5/1/black,0/0/black,1/0/black,2/0/black,4/0/black,5/0/black}
        {
        \draw[fill=\c] (\w*.5+\w*\xx,\h*.5+\h*\yy) circle (\r cm);
        }

        \draw[black!50!green] (\w*0.5,\h*2.5-\r)--(\w*0.5, \h*2.1)--(\w*1.5,\h*2.1)--(\w*1.5,\h*1.5+\r);
        \draw[black!50!green] (\w*2.5,\h*2.5-\r)--(\w*2.5,\h*1.5+\r);

        \draw[black!50!green] (\w*1.5,\h*1.5-\r)--(\w*1.5,\h*0.5+\r);
        \draw[black!50!green] (\w*2.5,\h*1.5-\r)--(\w*2.5,\h*0.5+\r);
        \draw[black!50!green](\w*3.5,\h*1.5-\r)--(\w*3.5,\h*1.1)--(\w*4.5,\h*1.1)--(\w*4.5,\h*0.5+\r);
        \draw[black!50!green] (\w*5.5,\h*1.5-\r)--(\w*5.5,\h*0.5+\r);
        \node at (0,-1) {(};
        \node at (6,-1) {)};
        \foreach \x\i in {0.5/1,1.5/3,2.5/3,3.5/0,4.5/2}
        {
        \node at (\x,-1) {\i,};
        }
        \node at (5.5,-1) {2\textcolor{white}{,}};
        \end{scope}
    
        
        \begin{scope}[xshift=7cm]
        \foreach \i in {0,...,5}
        {
        \draw[gray!50] (0,\i*\h)--(\w*6,\i*\h);
        }
        \foreach \i in {0,...,6}
        {
        \draw[gray!50] (\w*\i,0)--(\w*\i,5*\h);
        }
        
        \foreach \xx\yy\c in {2/3/black,4/3/\highlight,5/3/\highlight,0/2/black,2/2/black,1/1/black,2/1/black,3/1/black,5/1/black,0/0/black,1/0/black,2/0/black,4/0/black,5/0/black}
        {
        \draw[fill=\c] (\w*.5+\w*\xx,\h*.5+\h*\yy) circle (\r cm);
        }

        \draw[black!50!green] (\w*2.5,\h*3.5-\r)--(\w*2.5,\h*2.5+\r);
        \draw[black!50!green] (\w*0.5,\h*2.5-\r)--(\w*0.5, \h*2.1)--(\w*1.5,\h*2.1)--(\w*1.5,\h*1.5+\r);
        
        \draw[black!50!green] (\w*2.5,\h*2.5-\r)--(\w*2.5,\h*1.5+\r);

        \draw[black!50!green] (\w*1.5,\h*1.5-\r)--(\w*1.5,\h*0.5+\r);
        \draw[black!50!green] (\w*2.5,\h*1.5-\r)--(\w*2.5,\h*0.5+\r);
        \draw[black!50!green](\w*3.5,\h*1.5-\r)--(\w*3.5,\h*1.1)--(\w*4.5,\h*1.1)--(\w*4.5,\h*0.5+\r);
        \draw[black!50!green] (\w*5.5,\h*1.5-\r)--(\w*5.5,\h*0.5+\r);
        \node at (0,-1) {(};
        \node at (6,-1) {)};
        \foreach \x\i in {0.5/1,1.5/3,2.5/4,3.5/0,4.5/2}
        {
        \node at (\x,-1) {\i,};
        }
        \node at (5.5,-1) {2\textcolor{white}{,}};
        \end{scope}
    
        
        \begin{scope}[xshift=14cm]
        \foreach \i in {0,...,5}
        {
        \draw[gray!50] (0,\i*\h)--(\w*6,\i*\h);
        }
        \foreach \i in {0,...,6}
        {
        \draw[gray!50] (\w*\i,0)--(\w*\i,5*\h);
        }
        
        \foreach \xx\yy\c in {2/3/black,4/2/\highlight,5/2/black,0/2/black,2/2/black,1/1/black,2/1/black,3/1/black,5/1/black,0/0/black,1/0/black,2/0/black,4/0/black,5/0/black}
        {
        \draw[fill=\c] (\w*.5+\w*\xx,\h*.5+\h*\yy) circle (\r cm);
        }

        \draw[black!50!green] (\w*2.5,\h*3.5-\r)--(\w*2.5,\h*2.5+\r);
        \draw[black!50!green] (\w*0.5,\h*2.5-\r)--(\w*0.5, \h*2.1)--(\w*1.5,\h*2.1)--(\w*1.5,\h*1.5+\r);
        \draw[black!50!green] (\w*2.5,\h*2.5-\r)--(\w*2.5,\h*1.5+\r);
        \draw[black!50!green] (\w*5.5,\h*2.5-\r)--(\w*5.5,\h*1.5+\r);

        \draw[black!50!green] (\w*1.5,\h*1.5-\r)--(\w*1.5,\h*0.5+\r);
        \draw[black!50!green] (\w*2.5,\h*1.5-\r)--(\w*2.5,\h*0.5+\r);
        \draw[black!50!green](\w*3.5,\h*1.5-\r)--(\w*3.5,\h*1.1)--(\w*4.5,\h*1.1)--(\w*4.5,\h*0.5+\r);
        \draw[black!50!green] (\w*5.5,\h*1.5-\r)--(\w*5.5,\h*0.5+\r);
        \node at (0,-1) {(};
        \node at (6,-1) {)};
        \foreach \x\i in {0.5/1,1.5/3,2.5/4,3.5/0,4.5/2}
        {
        \node at (\x,-1) {\i,};
        }
        \node at (5.5,-1) {3\textcolor{white}{,}};
        \end{scope}
    
        
        \begin{scope}[xshift=21cm]
        \foreach \i in {0,...,5}
        {
        \draw[gray!50] (0,\i*\h)--(\w*6,\i*\h);
        }
        \foreach \i in {0,...,6}
        {
        \draw[gray!50] (\w*\i,0)--(\w*\i,5*\h);
        }
        
        \foreach \xx\yy\c in {2/3/black,4/1/black,5/2/black,0/2/black,2/2/black,1/1/black,2/1/black,3/1/\highlight,5/1/black,0/0/black,1/0/black,2/0/black,4/0/black,5/0/black}
        {
        \draw[fill=\c] (\w*.5+\w*\xx,\h*.5+\h*\yy) circle (\r cm);
        }

        \draw[black!50!green] (\w*2.5,\h*3.5-\r)--(\w*2.5,\h*2.5+\r);
        \draw[black!50!green] (\w*0.5,\h*2.5-\r)--(\w*0.5, \h*2.1)--(\w*1.5,\h*2.1)--(\w*1.5,\h*1.5+\r);
        \draw[black!50!green] (\w*2.5,\h*2.5-\r)--(\w*2.5,\h*1.5+\r);
        \draw[black!50!green] (\w*5.5,\h*2.5-\r)--(\w*5.5,\h*1.5+\r);

        \draw[black!50!green] (\w*1.5,\h*1.5-\r)--(\w*1.5,\h*0.5+\r);
        \draw[black!50!green] (\w*2.5,\h*1.5-\r)--(\w*2.5,\h*0.5+\r);
        \draw[black!50!green](\w*4.5,\h*1.5-\r)--(\w*4.5,\h*0.5+\r);
        \draw[black!50!green] (\w*5.5,\h*1.5-\r)--(\w*5.5,\h*0.5+\r);
        \node at (0,-1) {(};
        \node at (6,-1) {)};
        \foreach \x\i in {0.5/1,1.5/3,2.5/4,3.5/0,4.5/2}
        {
        \node at (\x,-1) {\i,};
        }
        \node at (5.5,-1) {3\textcolor{white}{,}};
        \end{scope}
    
        
        \begin{scope}[xshift=28cm]
        \foreach \i in {0,...,5}
        {
        \draw[gray!50] (0,\i*\h)--(\w*6,\i*\h);
        }
        \foreach \i in {0,...,6}
        {
        \draw[gray!50] (\w*\i,0)--(\w*\i,5*\h);
        }
        
        \foreach \xx\yy\c in {2/3/black,4/1/black,5/2/black,0/2/black,2/2/black,1/1/black,2/1/black,3/0/black,5/1/black,0/0/black,1/0/black,2/0/black,4/0/black,5/0/black}
        {
        \draw[fill=\c] (\w*.5+\w*\xx,\h*.5+\h*\yy) circle (\r cm);
        }

        \draw[black!50!green] (\w*2.5,\h*3.5-\r)--(\w*2.5,\h*2.5+\r);
        \draw[black!50!green] (\w*0.5,\h*2.5-\r)--(\w*0.5, \h*2.1)--(\w*1.5,\h*2.1)--(\w*1.5,\h*1.5+\r);
        \draw[black!50!green] (\w*2.5,\h*2.5-\r)--(\w*2.5,\h*1.5+\r);
        \draw[black!50!green] (\w*5.5,\h*2.5-\r)--(\w*5.5,\h*1.5+\r);

        \draw[black!50!green] (\w*1.5,\h*1.5-\r)--(\w*1.5,\h*0.5+\r);
        \draw[black!50!green] (\w*2.5,\h*1.5-\r)--(\w*2.5,\h*0.5+\r);
        \draw[black!50!green] (\w*4.5,\h*1.5-\r)--(\w*4.5,\h*0.5+\r);
        \draw[black!50!green] (\w*5.5,\h*1.5-\r)--(\w*5.5,\h*0.5+\r);
        \node at (0,-1) {(};
        \node at (6,-1) {)};
        \foreach \x\i in {0.5/1,1.5/3,2.5/4,3.5/1,4.5/2}
        {
        \node at (\x,-1) {\i,};
        }
        \node at (5.5,-1) {3\textcolor{white}{,}};
        \end{scope}
    \end{tikzpicture}
    \end{center}
    We see that for $k=4,3,2,1$, $\alpha^{(k)}$ may only differ from $\alpha^{(k-1)}$ on particles of type $k-1$ in $\alpha^{(k)}$, which may increase to type $k$ in $\alpha^{(k-1)}$.  
\end{example}

\begin{lemma}\label{lemma:ascents onerowcollapse}
    Let $\alpha = \type(\rho^{(k-1)}(M))$ and $\beta = \type(\rho^{(k)}(M))$. Then $\alpha_i < \alpha_j$ implies $\beta_i < \beta_j$ for $i<j$.
\end{lemma}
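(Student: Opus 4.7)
The plan is a case analysis on the gap $\alpha_j - \alpha_i$. When $\alpha_j - \alpha_i \geq 2$, \cref{lemma:onerowcollapse} immediately gives $\beta_j \geq \alpha_j \geq \alpha_i + 2 > \alpha_i + 1 \geq \beta_i$, so the ascent is preserved automatically. The substantive case is $\alpha_j = \alpha_i + 1$; writing $v := \alpha_i$, the only way the ascent could fail is if $\beta_i = v+1 = \beta_j$, i.e., column $i$ is boosted from $v$ to $v+1$ while column $j$ stays at $v+1$. Ruling out this configuration will be the heart of the argument.

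To do so I will invoke the stepwise decomposition from the proof of \cref{lemma:onerowcollapse}: the intermediate configurations $(M^{(r)}, \alpha^{(r)})$ interpolate between $\alpha^{(k)} = \alpha$ and $\alpha^{(0)} = \beta$, with $M^{(r)} = \eDs_{r+1}(M^{(r+1)})$, and the transition $\alpha^{(r+1)} \to \alpha^{(r)}$ promotes only entries of value $r$ to $r+1$. Consequently column $i$ can be boosted only at step $v+1 \to v$, while column $j$ can be boosted only at the earlier step $v+2 \to v+1$. The boost set at step $r+1 \to r$ is $\theta(\pi^{(r+1)}(M^{(r)})) \setminus \theta(\pi^{(r+1)}(M^{(r+1)}))$, using the characterization in the proof of \cref{lemma:onerowcollapse} together with \cref{prop:label L through collapsing}. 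The failure mode thus becomes a simultaneous condition on these two consecutive boost sets, and I will show it is incompatible with $i < j$.

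The main obstacle will be the bracket-matching analysis. The base case $k = 2$ is clean: it uses the classical fact that unmatched open parentheses in a bracket word lie strictly to the right of unmatched closed parentheses, so the failure mode forces a dropped opening parenthesis at column $i$ to lie left of an unmatched closing parenthesis at column $j$, a contradiction. For general $k$, this obstruction must be propagated through the recursion $\theta(B_1, \ldots, B_r) = \theta(B_1, \theta(B_2, \ldots, B_r))$. The propagation will be controlled using \cref{lem:three row} (invariance of $\theta$ under $\eDs_i$) and \cref{lemma:intersectingstrands} (non-intersection of strands of differing lengths in non-wrapping multiline queues), which together organize the strand-level combinatorics so that the left-right ordering from the base case transfers to the multi-row setting.
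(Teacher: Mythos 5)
Your proposal takes essentially the same route as the paper: reduce to the critical case $\alpha_i+1=\alpha_j=\beta_i=\beta_j=\ell$ via \cref{lemma:onerowcollapse}, locate the boost of column $i$ at the single step $\ell$ of the stepwise decomposition from that proof, and derive the contradiction from the fact that the dropped ball (an unmatched opening parenthesis) would have to sit to the left of an unmatched closing parenthesis, namely the top of the strand anchored at $j$. The one place your plan stays at the level of a sketch is the ``propagation through the recursion'' for general $k$, which the paper makes concrete by using \cref{lemma:intersectingstrands} to place the top of the strand anchored at $i$ (in row $\ell-1$) strictly to the left of the top of the strand anchored at $j$ (in row $\ell$), so that the pairing target of any dropped ball lies left of the latter --- but since this is exactly the ingredient you cite, the approach matches.
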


\begin{proof}
Since $k$ is arbitrary, it suffices to prove the claim for $k=L$. By \cref{lemma:onerowcollapse}, we have that $(\beta_i - \alpha_i),(\beta_j - \alpha_j) \in\{0,1\}$. Since $\alpha_i < \alpha_j$, we have $\beta_i < \beta_j$ unless $\alpha_i + 1 = \alpha_j=\beta_i = \beta_j=\ell$ for some $\ell\leq L-1$. Suppose this is the case.  Let us use the notation from the proof of \cref{lemma:onerowcollapse}, defining the intermediate configurations $M^{(L)},\ldots,M^{(0)}$ and the nonwrapping multiline queues $N^{(L)},\ldots,N^{(0)}$ they contain, having types $\alpha^{(L)},\alpha^{(L-1)},\ldots,\alpha^{(0)}$ with $\alpha^{(L)}=\alpha$ and $\alpha^{(0)}=\beta$.

By the proof of \cref{lemma:onerowcollapse},  $\alpha^{(k)}_i=\alpha^{(k-1)}_i$ for all $k\neq\ell$. Thus the change from $\alpha^{(k)}_i=\alpha_i=\ell-1$ to $\alpha^{(k-1)}=\ell=\beta_i$ must occur at the step $k=\ell$ when passing from $M^{(\ell)}$ to $M^{(\ell-1)}=\eDs_\ell M^{(\ell)}$. Thus we have
\[
i\in I_{\geq \ell-1}(N^{(\ell)})\setminus I_{\geq \ell}(N^{(\ell)}),\qquad j\in I_{\geq \ell}(N^{(\ell)})\setminus I_{\geq \ell+1}(N^{(\ell)}),\qquad i,j\in I_{\geq \ell}(N^{(\ell-1)}).
\]
Let $x$ and $y$ be the balls in $N^{(\ell)}$ at the tops of the strands anchored at $i$ and $j$, respectively: $x$ is in row $\ell-1$ and $y$ in row $\ell$; since $i\not\in I_{\geq \ell}(N^{(\ell)})$, we must have that $x$ is to the left of $y$ by \cref{lemma:intersectingstrands}. Let $x'$ be the ball in $N^{(\ell-1)}$ in row $\ell-1$ of the strand anchored at $i$. Note that $x'$ must be weakly left of $x$. If $i\in\theta(\pi^{(\ell)}(\eDs_\ell(M^{(\ell)})))\setminus \theta(\pi^{(\ell)}(M^{(\ell)}))$, there must be a ball $z$ in row $\ell+1$ of $M^{(\ell)}$ that is dropped by $\eDs_\ell$ to pair with $x'$: thus $z$ is weakly left of $x'$, and thus also of $y$. But as $y$ is unmatched in row $\ell$ of $\theta_\ell(M^{(\ell)})$, it is not possible for such $z$ to exist.
\end{proof}

The following lemma will assist us with the proof of \cref{thm:rho type}. See \cref{ex:srho type} for an illustration.
\begin{lemma}\label{lem:srho type}
    Let $j\in R(B_1,B_2,...,B_r,J)$ and $j\not\in\theta(B_1,B_2,...,B_r,J)$. Then for all $i<j$ with $i\in \theta(B_1,B_2,....,B_r)$, we have $i\in R(B_1,B_2,...,B_r,J)$. 
\end{lemma}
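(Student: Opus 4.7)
The plan is to proceed by induction on $r$, handling the case $r=1$ directly and reducing the general case via the recursive definitions of $R$ and $\theta$.

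For the base case $r=1$, I must show: given $j\in R(B_1, J)\setminus\theta(B_1,J)$ and $i\in B_1$ with $i<j$, then $i\in R(B_1,J)$. Since $j$ is cylindrically but not classically matched, the cylindrical pair containing $j$ must wrap: there is a ball of $J$ at some column $p>j$ whose cyclic pairing with $B_1$ traverses the arc $p,p+1,\dots,n,1,\dots,j-1$ before stopping at $j$. For the cylindrical process to reach $j$ this way, every ball of $B_1$ at each of those intermediate columns must already be paired with an earlier ball of $J$; in particular, every ball of $B_1$ at a column less than $j$ is matched in $R(B_1,J)$, so $i\in R(B_1,J)$.

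For the inductive step with $r\geq 2$, let $X_2\coloneqq R(B_2,\dots,B_r,J)$ and $Z_2\coloneqq\theta(B_2,\dots,B_r,J)$, both subsets of $B_2$, so that $R(B_1,\dots,B_r,J)=R(B_1,X_2)$ and $\theta(B_1,\dots,B_r,J)=\theta(B_1,Z_2)$. A general structural observation---that classical matching embeds into cylindrical matching and that both are monotone in their top argument---yields $Z_2\subseteq X_2$. I then split into two cases based on whether $j$ is placed by wrapping in the cylindrical pairing $R(B_1,X_2)$. In the wrapping case, the base-case argument applied at the level of $(B_1,X_2)$ shows that every $i'<j$ in $B_1$ is matched in $R(B_1,X_2)$, giving $i\in R(B_1,\dots,B_r,J)$ directly. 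In the non-wrapping case, $j$ is classically matched in $(B_1,X_2)$ with some $x_2\in X_2$, and since $j\notin\theta(B_1,Z_2)$, the partner must satisfy $x_2\in X_2\setminus Z_2$. Applying the inductive hypothesis to the shorter tuple $(B_2,\dots,B_r,J)$ at $x_2$, every element of $\theta(B_2,\dots,B_r)$ at a column less than $x_2$ lies in $X_2$. Writing $i\in\theta(B_1,\theta(B_2,\dots,B_r))$ via its classical partner $y\in\theta(B_2,\dots,B_r)$, the nested structure of classical matching forces $y<x_2$, hence $y\in X_2$, and this promotion produces a cylindrical partner for $i$ in $R(B_1,X_2)=R(B_1,\dots,B_r,J)$.

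The subtlest step will be the non-wrapping case of the inductive step: I must carefully verify that the classical partner $y$ of $i$ indeed lies at a column strictly less than $x_2$ so that the inductive hypothesis applies, and then that $y\in X_2$ is in fact cylindrically matched to $i$ rather than being consumed by some other ball of $B_1$. This will rely on the ordered structure of unmatched parentheses after classical matching (unmatched closes appear to the left of unmatched opens) together with the monotonicity of both classical and cylindrical pairing in their top argument.
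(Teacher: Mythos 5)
Your overall strategy coincides with the paper's: induction on $r$, a base case resolved by observing that a cylindrically-but-not-classically matched close must be matched by a wrapping arc that passes over everything to its left, and an inductive step that locates a witness in $R(B_2,\ldots,B_r,J)\setminus\theta(B_2,\ldots,B_r,J)$ at which to apply the inductive hypothesis. The base case and the wrapping case are fine. The gap is in the non-wrapping case: the claim that $j\notin\theta(B_1,Z_2)$ forces the classical partner $x_2$ of $j$ in $(B_1,X_2)$ to lie in $X_2\setminus Z_2$ is false. Passing from $X_2$ to $Z_2$ can unmatch $j$ by a cascade even when $j$'s own partner survives: a removed open elsewhere frees an intermediate close, which then steals $j$'s partner. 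Concretely, take $n=3$, $r=2$, $B_1=\{2,3\}$, $B_2=\{1,2\}$, $J=\{1,3\}$. Then $Z_2=\theta(B_2,J)=\{1\}$ and $X_2=R(B_2,J)=\{1,2\}$ (the open at column $3$ wraps to match the close at column $2$). The bracket word for $(B_1,X_2)$ is $(\,(\,)\,)$, so $j=3$ is classically matched with partner $x_2=1$, which lies in $Z_2$; nevertheless $\theta(B_1,Z_2)=\{2\}$, so $3\notin\theta(B_1,B_2,J)$. At this point your argument stalls: the inductive hypothesis cannot be applied at $x_2$, since $x_2$ does not lie in $R\setminus\theta$ of the shorter tuple.

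The paper's proof repairs exactly this by casing on the position of $j$'s partner $b$ relative to $i$ and $j$ rather than on membership in $X_2\setminus Z_2$. If $b\le i$ or $b>j$, the matching arc from $b$ to $j$ passes over $i$, so $i$ is matched outright (this disposes of the configuration above, where $b=1<i=2$). Only when $i<b\le j$ is a witness needed, and if $b$ itself lies in $Z_2=\theta(B_2,\ldots,B_r,J)$ the paper first produces a different witness $b'\in(b,j]$ with $b'\in R(B_2,\ldots,B_r,J)\setminus\theta(B_2,\ldots,B_r,J)$ before invoking the inductive hypothesis. You should reorganize your case split along these lines; note also that your deduction ``$y<x_2$'' is only available under the case assumption $i<x_2$ (the paper gets $a<b$ from $a\le i<b$), and that your final promotion step should be phrased in terms of all elements of $\theta(B_2,\ldots,B_r)$ in the window up to $i$ surviving into $X_2$, since $\theta(B_2,\ldots,B_r)\not\subseteq X_2$ in general.
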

\begin{proof}
    We proceed by induction on $r$. For $r=1$, let $b\in J$ such that $b$ matches with $j$ in $R(B_1,J)$. Since $j\notin \theta(B_1,J)$, we have $b>j$. Now since $b$ matched with $j$ while skipping $i$, $i$ is already matched with something in $J$, so we have $i\in R(B_1,J)$.

    Now let $r>1$. Let $a\in \theta(B_2,B_3,...,B_r)$ be such that $a$ matches with $i$ in $\theta(B_1,B_2,...,B_r)$. Let $b\in R(B_2,B_3,...,B_r,J)$ such that $b$ matches with $j$ in $R(B_1,B_2,...,B_r,J)$. We have two cases:
    \begin{enumerate}[label=\textbf{Case \Roman*}.,ref=Step~\Roman*,
    align=left,        
        leftmargin=44pt,    
          labelindent=0pt,   
          labelsep=0.2em,   
        labelwidth=4.2em]
    \item Suppose $b<i$ or $j<b$. In either case, $b$ matches with $j$ while skipping $i$, so $i$ is matched with something in $R(B_2,B_3,...,B_r,J)$, so $i\in R(B_1,B_2,...,B_r,J)$.
    \item Suppose $i<b\leq j$. We will show the following:
    \begin{equation} \label{eq:contain}
        c\in [a,i] \cap \theta(B_2,B_3,...,B_r)\implies c\in R(B_2,B_3,...,B_r,J) 
    \end{equation}
    Now since $a$ matched with $i$ in $\theta(B_1,B_2,...,B_r)$, we conclude $a$ matches with $i$ in $R(B_1,B_2,...,B_r,J)$ and we have $i\in R(B_1,B_2,...,B_r,J)$. 
    
    Here, we have two possibilities, $b\in \theta(B_2,B_3,...,B_r,J)$ or $b\notin \theta(B_2,B_3,...,B_r,J)$. If $b\notin \theta(B_2,B_3,...,B_r,J)$, then using the induction hypothesis with $b$ in place of $j$ and $i<b$, we get \eqref{eq:contain}. If $b \in \theta(B_2,B_3,...,B_r,J)$, then the fact that $j\notin \theta(B_1,B_2,...,B_r,J)$ implies that there is a $b'\in R(B_1,B_2,...,B_r,J)\setminus\theta(B_2,B_3,...,B_r,J)$ such that $b<b'\leq j$. Now using the induction hypothesis with $b'$ in place of $j$ and $i<b<b'$, we get \eqref{eq:contain}.
    \end{enumerate}
\end{proof}

\begin{example}
    \label{ex:srho type}
    We show an example of \cref{lem:srho type} for the multiline queue $M=(B_1,B_2,B_3,J)$ shown below, with strings of matched balls giving $R(M)=\{3,4,5,7\}$, $\theta(M)=\{3,4,7\}$, and $\theta(B_1,B_2,B_3)=\{3,4,5,7\}$ shown below (matched balls are highlighted). The index $j=5$ satisfies $j\in R(M)\setminus\theta(M)$. Then, the indices $i\in\theta(B_1,B_2,B_3)$ with $i<j$ indeed satisfy the property $i\in R(M)$.
     \begin{center}
\resizebox{\linewidth}{!}{
\begin{tikzpicture}[scale=0.7]
\def \w{1};
\def \h{1};
\def \r{0.25};
    
\begin{scope}[xshift=0cm]
\node at (3.5*\w,4.5*\h) {\Large $R(M)$};
\draw[gray!50,thin,step=\w] (0,0) grid (7*\w,4*\h);
\foreach \i in {1,...,7}
{
\node at (\w*\i-\w*.5,-0.5) {\small \i};
}
\foreach \xx\yy\i\c in {0/3/4/\highlight,3/3/5/\highlight,5/3/5/\highlight,6/3/5/\highlight,1/2/5/\highlight,2/2/4/\highlight,3/2/5/\highlight,4/2/3/black,6/2/5/\highlight, 1/1/5/\highlight,2/1/4/\highlight,3/1/5/\highlight,5/1/3/black,6/1/5/\highlight, 0/0/3/black,2/0/5/\highlight, 3/0/5/\highlight, 4/0/4/\highlight,6/0/5/\highlight}
    {
    \draw[fill=\c](\w*.5+\w*\xx,\h*.5+\h*\yy) circle (\r cm);
    }

\draw[black!50!green,ultra thick](\w*6.5,\h*3.5-\r)--(\w*6.5,\h*2.5+\r) (\w*6.5,\h*2.5-\r)--(\w*6.5,\h*1.5+\r) (\w*6.5,\h*1.5-\r)--(\w*6.5,\h*.5+\r);
\draw[black!50!green,ultra thick](\w*3.5,\h*3.5-\r)--(\w*3.5,\h*2.5+\r) (\w*3.5,\h*2.5-\r)--(\w*3.5,\h*1.5+\r) (\w*3.5,\h*1.5-\r)--(\w*3.5,\h*.5+\r);

\draw[black!50!green](\w*3.5,\h*3.5-\r)--(\w*3.5,\h*2.5+\r);
\draw[black!50!green,-stealth, ultra thick](\w*5.5,\h*3.5-\r)--(\w*5.5,\h*3)--(\w*7.3,\h*3);
\draw[black!50!green, ultra thick](-0.2,\h*2.9)--(\w*2.5,\h*2.9)--(\w*2.5,\h*2.5+\r) (\w*2.5,\h*2.5-\r)--(\w*2.5,\h*1.5+\r) (\w*2.5,\h*1.5-\r)--(\w*2.5,\h*.5+\r);
\draw[blue,ultra thick](0.5*\w, \h*3.5-\r)--(0.5\w, \h*3.1)--(1.5*\w, \h*3.1)--(1.5*\w,\h*2.5+\r) (1.5*\w,\h*2.5-\r)--(1.5*\w,\h*1.5+\r) (1.5*\w,\h*1.5-\r)--(1.5*\w,\h*1)--(4.5*\w,\h*1)--(4.5*\w,\h*.5+\r);

\draw[black!50!green](\w*1.5,\h*2.5-\r)--(\w*1.5,\h*1.5+\r);
\draw[black!50!green](\w*3.5,\h*2.5-\r)--(\w*3.5,\h*1.5+\r);

\end{scope}

\begin{scope}[xshift=8.5cm]
\node at (3.5*\w,4.5*\h) {\Large $\theta(M)$};
\draw[gray!50,thin,step=\w] (0,0) grid (7*\w,4*\h);
\foreach \i in {1,...,7}
{
\node at (\w*\i-\w*.5,-0.5) {\small \i};
}
\foreach \xx\yy\i\c in {0/3/4/\highlight,3/3/5/\highlight,5/3/5/black,6/3/5/\highlight,1/2/5/\highlight,2/2/4/black,3/2/5/\highlight,4/2/3/black,6/2/5/\highlight, 1/1/5/\highlight,2/1/4/black,3/1/5/\highlight,5/1/3/black,6/1/5/\highlight, 0/0/3/black,2/0/5/\highlight, 3/0/5/\highlight, 4/0/4/black,6/0/5/\highlight}
    {
    \draw[fill=\c](\w*.5+\w*\xx,\h*.5+\h*\yy) circle (\r cm);
    }

\draw[black!50!green,ultra thick](\w*6.5,\h*3.5-\r)--(\w*6.5,\h*2.5+\r) (\w*6.5,\h*2.5-\r)--(\w*6.5,\h*1.5+\r) (\w*6.5,\h*1.5-\r)--(\w*6.5,\h*.5+\r);
\draw[black!50!green,ultra thick](\w*3.5,\h*3.5-\r)--(\w*3.5,\h*2.5+\r) (\w*3.5,\h*2.5-\r)--(\w*3.5,\h*1.5+\r) (\w*3.5,\h*1.5-\r)--(\w*3.5,\h*.5+\r);

\draw[black!50!green](\w*3.5,\h*3.5-\r)--(\w*3.5,\h*2.5+\r);
\draw[blue,ultra thick](0.5*\w, \h*3.5-\r)--(1.5*\w,\h*2.5+\r) (1.5*\w,\h*2.5-\r)--(1.5*\w,\h*1.5+\r) (1.5*\w,\h*1.5-\r)--(2.5*\w,\h*.5+\r);

\draw[black!50!green](\w*1.5,\h*2.5-\r)--(\w*1.5,\h*1.5+\r);
\draw[black!50!green](\w*3.5,\h*2.5-\r)--(\w*3.5,\h*1.5+\r);
\end{scope}

\begin{scope}[xshift=17cm]
\node at (3.5*\w,4.5*\h) {\Large $\theta(B_1,B_2,B_3)$};
\draw[gray!50,thin,step=\w] (0,0) grid (7*\w,3*\h);
\foreach \i in {1,...,7}
{
\node at (\w*\i-\w*.5,-0.5) {\small \i};
}
\foreach \xx\yy\i\c in {1/2/5/\highlight,2/2/4/\highlight,3/2/5/\highlight,4/2/3/black,6/2/5/\highlight, 1/1/5/\highlight,2/1/4/\highlight,3/1/5/\highlight,5/1/3/black,6/1/5/\highlight, 0/0/3/black,2/0/5/\highlight, 3/0/5/\highlight, 4/0/4/\highlight,6/0/5/\highlight}
    {
    \draw[fill=\c](\w*.5+\w*\xx,\h*.5+\h*\yy) circle (\r cm);
    }

\draw[black!50!green,ultra thick] (\w*6.5,\h*2.5-\r)--(\w*6.5,\h*1.5+\r) (\w*6.5,\h*1.5-\r)--(\w*6.5,\h*.5+\r);
\draw[black!50!green,ultra thick](\w*3.5,\h*2.5-\r)--(\w*3.5,\h*1.5+\r) (\w*3.5,\h*1.5-\r)--(\w*3.5,\h*.5+\r);
\draw[black!50!green,ultra thick](\w*2.5,\h*2.5-\r)--(\w*2.5,\h*1.5+\r) (\w*2.5,\h*1.5-\r)--(\w*2.5,\h*.5+\r);

(\w*2.5,\h*2.5-\r)--(\w*2.5,\h*1.5+\r) (\w*2.5,\h*1.5-\r)--(\w*2.5,\h*.5+\r);
\draw[blue,ultra thick] (1.5*\w,\h*2.5-\r)--(1.5*\w,\h*1.5+\r) (1.5*\w,\h*1.5-\r)--(4.5*\w,\h*.5+\r);

\draw[black!50!green](\w*1.5,\h*2.5-\r)--(\w*1.5,\h*1.5+\r);
\draw[black!50!green](\w*3.5,\h*2.5-\r)--(\w*3.5,\h*1.5+\r);
\end{scope}
\end{tikzpicture}
}
\end{center}
\end{example}

We proceed to the first main result of this section, which is illustrated in \cref{ex:coinv}.

\begin{theorem}\label{thm:rho type}
    Let $\alpha=\type(M)$ and $\beta=\type(\rho(M))$. For $i<j$,
    \[\alpha_i<\alpha_{j}\qquad\implies \qquad \beta_i<\beta_{j}.\]
\end{theorem}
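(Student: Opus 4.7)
The plan is to induct on $L$, the number of rows of $M$. The base case $L=1$ is immediate since $\rho(M)=M$. For the inductive step, a key observation is that $I_{\geq k}(M)=R(B_1,\ldots,B_k)$ depends only on the bottom $k$ rows; writing $M'=\pi^{(L-1)}(M)$ and $\gamma=\type(M')$, this gives $\alpha_i=\gamma_i$ unless $i\in I_L(M)$ (in which case $\alpha_i=L$). Moreover $\rho(M')=\rho^{(L-1)}(M)$, so any ascent established at $(i,j)$ in $\type(\rho^{(L-1)}(M))$ lifts to $\rho(M)$ via \cref{lemma:ascents onerowcollapse}.

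Given $i<j$ with $\alpha_i<\alpha_j$, if neither index lies in $I_L(M)$ then $\gamma_i<\gamma_j$ is an ascent in $M'$, and the inductive hypothesis combined with the above lifting yields $\beta_i<\beta_j$. The only other possibility compatible with $\alpha_i<\alpha_j$ is $j\in I_L(M)$ and $i\notin I_L(M)$, so that $\alpha_j=L$. If $j\in\theta(B_1,\ldots,B_L)$, then \cref{prop:label L through collapsing} gives $\beta_j=L$, while $i\notin R(B_1,\ldots,B_L)\supseteq\theta(B_1,\ldots,B_L)$ forces $\beta_i<L=\beta_j$. If instead $j\in R(B_1,\ldots,B_L)\setminus\theta(B_1,\ldots,B_L)$, the contrapositive of \cref{lem:srho type} (with $r=L-1$ and $J=B_L$) forces $i\notin\theta(B_1,\ldots,B_{L-1})$. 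When additionally $j\in\theta(B_1,\ldots,B_{L-1})$, applying \cref{prop:label L through collapsing} to $\pi^{(L-1)}(M)$ shows that $j$ is a position of label $L-1$ in $\rho^{(L-1)}(M)$ while $i$ has strictly smaller label there, so the ascent at level $L-1$ again lifts to $\rho(M)$.

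The remaining case, in which both $i$ and $j$ lie in $R(B_1,\ldots,B_{L-1})\setminus\theta(B_1,\ldots,B_{L-1})$, is the main obstacle. Here the length-$(L-1)$ strands anchored at $i$ and $j$ in $M'$ both wrap cyclically, and the added ball in row $L$ extends exactly one of them to length $L$. I expect a structural analysis of the Ferrari--Martin pairing to resolve this: because the length-$L$ cascade pairs with highest priority at each row and always selects the cyclically leftmost unpaired ball weakly to its right, the promoted anchor should always be the cyclically leftmost among the wrapping anchors. Consequently $j$ being promoted to label $L$ would force $i$ to be promoted as well, contradicting $\alpha_i<\alpha_j=L$, so this subcase does not arise under the ascent hypothesis. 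Making this final step rigorous—likely by a careful analysis of the cascade path exploiting \cref{lemma:intersectingstrands} together with iterated applications of \cref{lem:srho type} at levels below $L-1$—is the crux of the argument.
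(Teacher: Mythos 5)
Your reduction is sound up to the last case, and up to that point it essentially coincides with the paper's argument: the case $\alpha_j<L$ goes by induction on the truncation $\pi^{(L-1)}(M)$ and lifting via \cref{lemma:ascents onerowcollapse}, and when $j\in\theta(B_1,\ldots,B_L)$ you correctly get $\beta_j=L>\beta_i$ from \cref{prop:label L through collapsing}. But the final case is a genuine gap, not a routine verification. You apply \cref{lem:srho type} only at the top level ($r=L-1$, $J=B_L$), and when $j\notin\theta(B_1,\ldots,B_{L-1})$ you are left hoping that the configuration ``both $i$ and $j$ wrap at level $L-1$ yet only $j$ is promoted to label $L$'' cannot occur. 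That vacuity claim is unproven, and it is also not what the paper establishes: the paper does not rule this configuration out, it shows instead that the types of $i$ and $j$ must already separate at some \emph{lower} level of the partial collapse. Concretely, let $r$ be the largest level with $j\in\theta(B_1,\ldots,B_r)$ (equivalently $j\in I_r(\rho^{(r)}(M))$ but $j\notin I_{r+1}(\rho^{(r+1)}(M))$, by \cref{prop:label L through collapsing}); in your stuck case $1\le r\le L-2$. If $i\notin\theta(B_1,\ldots,B_r)$, then $\type(\rho^{(r)}(M))_i<r=\type(\rho^{(r)}(M))_j$ and repeated application of \cref{lemma:ascents onerowcollapse} lifts this ascent to $\beta$. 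If instead $i\in\theta(B_1,\ldots,B_r)$, set $J=R(B_{r+1},\ldots,B_L)$; then $R(B_1,\ldots,B_r,J)=R(B_1,\ldots,B_L)\ni j$, while $\theta(B_1,\ldots,B_r,J)\subseteq\theta(B_1,\ldots,B_{r+1})\not\ni j$, so \cref{lem:srho type} applies exactly as stated and yields $i\in R(B_1,\ldots,B_L)=I_L(M)$, contradicting $\alpha_i<L$.

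Your proposed structural fact (``the promoted anchor is always the cyclically leftmost among the wrapping anchors'') amounts to a strengthening of \cref{lem:srho type} in which the hypothesis $i\in\theta(B_1,\ldots,B_r)$ is replaced by $i\in R(B_1,\ldots,B_r)\setminus\theta(B_1,\ldots,B_r)$. Even if such a statement is true, it is not proved by the heuristic about a single Ferrari--Martin cascade: for $L\ge 4$ the sets $R(B_1,\ldots,B_{L-1})\setminus\theta(B_1,\ldots,B_{L-1})$ are produced by nested matchings, and \cref{lemma:intersectingstrands} only controls non-wrapping configurations. The paper's choice of the level $r$ at which both indices are still \emph{classically} matched is precisely what lets the existing lemma close the argument without any new combinatorial input.
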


\begin{proof}
    If $\alpha_j <L$, we induct on the number of rows in $M$. Let $\alpha' = \type(\pi^{(L-1)}M)$ and $\beta' = \type(\rho^{(L-1)}(M))$. Since $\alpha'$ is obtained from $\alpha$ by replacing every occurrence of $L$ with an $L-1$, $\alpha'_i < \alpha'_j$. By induction, $\beta'_i < \beta'_j$ and so $\beta_i < \beta_j$ by \cref{lemma:ascents onerowcollapse}.

    Thus assume $\alpha_j = L$. Let $\beta^{(k)} = \type(\rho^{(k)}(M))$ so that $\beta^{(L)} = \beta$. We want to show that there exists some $1\leq r \leq L$ such that $\beta_i^{(r)} < \beta_j^{(r)}$ since then, we may repeatedly apply \cref{lemma:ascents onerowcollapse} to conclude that $\beta_i < \beta_j$. If $\beta_j = L$, or equivalently $j \in I_L(\rho(M))$, then $\beta_i < \beta_j$ since $I_L(\rho(M)) \subseteq I_L(M)$. Thus, we may assume there exists an $1\leq r< L$ such that $j \in I_r(\rho^{(r)}(M))$ but $j \notin I_{r+1}(\rho^{(r+1)}(M))$. Assume for sake of contradiction $i \in I_r(\rho^{(r)}(M))$. We want to show that this implies $\alpha_i=L$. Let $M = (B_1,B_2,...,B_L)$. From \eqref{eq:R}, we have
    \[
        \alpha_i= L \iff i\in R(B_1,B_2,...,B_L)
    \]

    By \cref{prop:label L through collapsing},
        \begin{align*}
            i,j \in \theta(B_1,B_2,\ldots, B_r),\qquad j\not\in\theta(B_1,\ldots,B_r,B_{r+1}).
        \end{align*}
    Let $J= R(B_{r+1},B_{r+2},...,B_L)$. Since $J\subset B_{r+1}$, we get $\theta(B_1,B_2,...,B_r,J)\subset \theta(B_1,B_2,...,B_r,B_{r+1})$, hence $j\notin \theta(B_1,B_2,...,B_r,J)$. We also have
    \[
    \alpha_j = L\implies j\in R(B_1,B_2,...,B_L) = R(B_1,B_2,...,B_r,J)
    \] 
    Since $i<j$, we use \cref{lem:srho type} to conclude $i\in R(B_1,B_2,...,B_r,J) = R(B_1,B_2,...,B_L)$, hence, $\alpha_i=L$.
    \end{proof}

    Note that \cref{lem:srho type} can be obtained as the contrapositive of \cref{thm:rho type} with $\alpha_j=L$ and $\beta_i\geq L-1$.

    \begin{example}\label{ex:coinv}
    Consider the multiline queue $M$ from \cref{ex:collapsing} with $\type(M)=(3,0,5,5,4,0)$ and $\type(\rho(M))=(1,1,4,5,3,3)$. The coinversions in $\type(M)$ are $\{(1,3),(1,4),(1,5),(2,3),(2,4),(2,5)\}$ and the coinversions in $\type(\rho(M))$ are $\{(1,3),(1,4),(1,5),(1,6),(2,3),(2,4),(2,5),(2,6),(3,4)\}$, verifying \cref{thm:rho type}. Observe that there is no preservation of the inversion structure: $(4,5)$ is an inversion in both $\type(M)$ and $\type(\rho(M))$, whereas $(1,6)$ is an inversion in $\type(M)$ and a coinversion in $\type(\rho(M))$.
    \end{example}

One of the powerful aspects of $\fR_i$- and $\eL_i$-fullness is that the property is preserved under $\rho$.

\begin{lemma}\label{lem:M full}
For a multiline queue $M$, if $M$ is $g^\leftrightarrow_i$-full, then so is $\rho(M)$. 
\end{lemma}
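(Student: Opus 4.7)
The plan is to reduce the statement to two ingredients already available in the paper: the preservation of the count of unmatched $i$'s and $(i+1)$'s under $\rho$, and the coinversion-containment statement of \cref{thm:rho type}. I will handle $\eL_i$-fullness first and then deduce $\fR_i$-fullness from it.

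Suppose $M$ is $\eL_i$-full. By \cref{def:ei full}, this means (a) $\type(M)_i<\type(M)_{i+1}$, and (b) $\theta_i(\rw(M))$ contains exactly one unmatched $i+1$. To show $\rho(M)$ is $\eL_i$-full, I must verify (a$'$) $\type(\rho(M))_i<\type(\rho(M))_{i+1}$ and (b$'$) $\theta_i(\rw(\rho(M)))$ contains exactly one unmatched $i+1$. For (a$'$), apply \cref{thm:rho type} to the pair of indices $(i,i+1)$: the hypothesis $\type(M)_i<\type(M)_{i+1}$ immediately yields $\type(\rho(M))_i<\type(\rho(M))_{i+1}$. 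For (b$'$), note that $\rho$ is a composition of row operators $\eDs_j$, which commute with the column operators by \cref{prop:opcommute}. Consequently, \cref{prop:num of balls preserved} applies: the number of unmatched $i+1$'s in $\theta_i(\rw(\cdot))$ is preserved under any composition of row crystal operators, and in particular under $\rho$. Therefore the single unmatched $i+1$ in $\theta_i(\rw(M))$ corresponds to a single unmatched $i+1$ in $\theta_i(\rw(\rho(M)))$, establishing (b$'$).

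For the $\fR_i$-full case, suppose $M$ is $\fR_i$-full, meaning by \cref{def:ei full} that $\fR_i(M)$ is $\eL_i$-full. By the case just proved, $\rho(\fR_i(M))$ is $\eL_i$-full. But \cref{prop:majpreserver} gives $\rho(\fR_i(M))=\fR_i(\rho(M))$, so $\fR_i(\rho(M))$ is $\eL_i$-full, which is exactly the statement that $\rho(M)$ is $\fR_i$-full.

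There is essentially no obstacle here: the whole point of the two preceding results (\cref{thm:rho type} and \cref{prop:num of balls preserved}) is precisely to package the two requirements of fullness so that they pass through $\rho$. The only thing to check is that the bracketing rule used in \cref{def:ei full} for the count of unmatched balls matches the one used in \cref{prop:num of balls preserved}, which it does by definition, since both refer to $\theta_i(\rw(\cdot))$.
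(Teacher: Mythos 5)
Your proof is correct and follows essentially the same route as the paper's: the $\eL_i$ case is handled by combining \cref{thm:rho type} (for the type inequality) with \cref{prop:num of balls preserved} (for the single unmatched $i+1$), and the $\fR_i$ case is reduced to the $\eL_i$ case via the commutation $\rho(\fR_i(M))=\fR_i(\rho(M))$ from \cref{prop:majpreserver}. Nothing further is needed.
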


\begin{proof}
We first prove the claim for $g^\leftrightarrow_i=\eL_i$. Assume $M$ is $\eL_i$-full, which means that $\type(M)_i < \type(M)_{i+1}$ and that there is exactly one unmatched ball in column $i+1$ in $\theta_i(\rw(M))$. From \cref{thm:rho type}, we have $\type(\rho(M))_i<\type(\rho(M))_{i+1}$. From \cref{prop:num of balls preserved}, there is exactly one unmatched ball in column $i+1$ in $\theta_i(\rw(\rho(M)))$ as well. Thus, $\rho(M)$ is $\eL_i$-full.

For $g^\leftrightarrow_i=\fR_i$, assume $M$ is $\fR_i$-full, or in other words, $\fR_i(M)$ is $\eL_i$-full. Then $\rho(\fR_i(M))$ is $\eL_i$-full and thus $\fR_i(\rho(M))=\rho(\fR_i(M))$ is $\eL_i$-full as well. By definition, this means that $\rho(M)$ is $\fR_i$-full.
\end{proof}

Finally, as a corollary of \cref{lem:M full} we obtain the second main result of the section.

\begin{cor}\label{cor:M change type only if}
    For a multiline queue $M$ and a column operator $g^\leftrightarrow_i\in\{\eL_i,\fR_i\}$, 
    \[\type(g^\leftrightarrow_i(M))=s_i\cdot\type(M)\qquad \text{only if}\qquad \type(\rho(g^\leftrightarrow_i(M)))=s_i\cdot\type(\rho(M))\,.\]
\end{cor}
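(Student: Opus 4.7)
The plan is to chain together the three ingredients that have just been established: the combinatorial characterization of type change via fullness (Lemma \ref{cor:Harper's lemma}), the preservation of fullness under $\rho$ (Lemma \ref{lem:M full}), and the commutation of $\rho$ with column operators (Proposition \ref{prop:majpreserver}). There is essentially no new combinatorics to do; the content is in assembling these facts.

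First, assume the hypothesis $\type(g^\leftrightarrow_i(M)) = s_i\cdot \type(M)$. By Lemma \ref{cor:Harper's lemma} applied to $M$, this non-trivial swap of type entries forces $M$ to be $g^\leftrightarrow_i$-full (since otherwise the operator would fix the type). Next, I would invoke Lemma \ref{lem:M full} to transfer fullness from $M$ to $\rho(M)$: so $\rho(M)$ is itself $g^\leftrightarrow_i$-full. Applying Lemma \ref{cor:Harper's lemma} in the other direction to $\rho(M)$ then yields
\[
\type\bigl(g^\leftrightarrow_i(\rho(M))\bigr) = s_i \cdot \type(\rho(M)).
\]

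Finally, I would use Proposition \ref{prop:majpreserver}, which gives $g^\leftrightarrow_i \circ \rho = \rho \circ g^\leftrightarrow_i$. Substituting this into the equation above gives
\[
\type\bigl(\rho(g^\leftrightarrow_i(M))\bigr) = \type\bigl(g^\leftrightarrow_i(\rho(M))\bigr) = s_i \cdot \type(\rho(M)),
\]
which is the desired conclusion. There is no real obstacle here: the technical work was front-loaded into Lemma \ref{lem:M full}, which itself was a clean consequence of Theorem \ref{thm:rho type} (preservation of coinversions) and Corollary \ref{prop:num of balls preserved} (preservation of the number of unmatched entries). I would present this corollary as a short, two-line deduction in the paper.
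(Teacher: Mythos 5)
Your proof is correct and is exactly the deduction the paper intends: the paper states this as an immediate corollary of Lemma \ref{lem:M full}, and the chain fullness of $M$ (via Lemma \ref{cor:Harper's lemma}) $\Rightarrow$ fullness of $\rho(M)$ (Lemma \ref{lem:M full}) $\Rightarrow$ type change for $\rho(M)$ (Lemma \ref{cor:Harper's lemma} again), combined with the commutation $\rho\circ g^\leftrightarrow_i = g^\leftrightarrow_i\circ\rho$ from Proposition \ref{prop:majpreserver}, is precisely the intended argument. No gap.
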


Thus our analysis of the interaction between $\rho$ and $\type$ in \cref{sec:rho} reduces to the following three properties. 
    \begin{lemma}
        \label{lem:type si}
        Let $M\in\MLQ_\lambda$ with $L=\max \lambda$. 
        \begin{itemize}
        \item[i.] Suppose $i$ is such that $\type(M)_i<\type(M)_{i+1}$. Then, for $s\in\{\epsilon,s_i\}$, we have 
        \begin{equation}
        \type(\rho(\eL_i\,M))=s\cdot\type(\rho(M))\qquad\iff\qquad  \type(\eL_i\,M)=s\cdot\type(M).
        \end{equation}
        \item[ii.]
       \begin{equation}
\type(\rho(\fR_i\,M))=\type(\rho(M))\qquad\implies\qquad
            \type(\fR_i\,M)=
                \type(M).
        \end{equation}
        \item[iii.]
     \begin{equation}
            \type(\rho(\fR_i\,M))=s_i\cdot\type(\rho(M))\qquad\implies\qquad \type(M)_{i}\geq \type(M)_{i+1}.
        \end{equation}
       \end{itemize}
    \end{lemma}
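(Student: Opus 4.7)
The plan is to deduce all three parts from a shared toolkit: \cref{cor:Harper's lemma}, the commutativity $\rho\circ g^\leftrightarrow_i = g^\leftrightarrow_i\circ\rho$ of \cref{prop:majpreserver}, the ball-count transport of \cref{prop:num of balls preserved}, the forward implication \cref{cor:M change type only if}, and the ascent transport \cref{thm:rho type}. The key structural observation is that, by Harper's lemma, asking whether $g^\leftrightarrow_i$ preserves or swaps $\type$ is the same as asking whether the underlying multiline queue is $g^\leftrightarrow_i$-full, and $g^\leftrightarrow_i$-fullness decomposes into an ascent condition on $\type$ plus a unique-unmatched-$(i+1)$ condition on $\theta_i(\rw(\cdot))$. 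Both halves transport cleanly between $M$ and $\rho(M)$.

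For part (i), the hypothesis $\type(M)_i<\type(M)_{i+1}$ upgrades via \cref{thm:rho type} to $\type(\rho(M))_i<\type(\rho(M))_{i+1}$, so the two Harper outputs ``preserve $\type$'' and ``swap $\type$'' are genuinely distinct for each of $M$ and $\rho(M)$. Both biconditionals (for $s=\epsilon$ and $s=s_i$) therefore collapse to the single equivalence ``$M$ is $\eL_i$-full $\iff$ $\rho(M)$ is $\eL_i$-full.'' The ascent halves of fullness hold in both by what we just said, so this reduces to comparing the counts of unmatched $i+1$'s in $\theta_i(\rw(M))$ and $\theta_i(\rw(\rho(M)))$, which coincide by \cref{prop:num of balls preserved}.

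For part (ii), I would argue by contradiction. Suppose instead that $\type(\fR_i M)=s_i\cdot\type(M)\neq\type(M)$. Then $M$ is $\fR_i$-full, which by unpacking Harper's definition forces $\type(M)_i>\type(M)_{i+1}$, and hence $\type(\fR_i M)_i<\type(\fR_i M)_{i+1}$. Applying \cref{thm:rho type} to the multiline queue $\fR_i M$ produces the strict inequality $\type(\rho(\fR_i M))_i<\type(\rho(\fR_i M))_{i+1}$. On the other hand, \cref{cor:M change type only if} forwards the supposed type swap in $M$ to $\type(\rho(\fR_i M))=s_i\cdot\type(\rho(M))$, and combined with the hypothesis $\type(\rho(\fR_i M))=\type(\rho(M))$ this forces $\type(\rho(M))_i=\type(\rho(M))_{i+1}$, contradicting the strict inequality just obtained.

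For part (iii), apply Harper's lemma to $\rho(M)$: since $\fR_i\rho(M)=\rho(\fR_i M)$ and its type is assumed to equal $s_i\cdot\type(\rho(M))$, either $\rho(M)$ is $\fR_i$-full (which unpacks to $\type(\rho(M))_i>\type(\rho(M))_{i+1}$) or the swap at positions $i,i+1$ is already trivial (giving equality there). In either case $\type(\rho(M))_i\geq\type(\rho(M))_{i+1}$, and the contrapositive of \cref{thm:rho type} returns $\type(M)_i\geq\type(M)_{i+1}$. The main conceptual obstacle is really located in part (i), where one must recognize that the biconditional reduces to the symmetric statement ``fullness transports in both directions under $\rho$'' and then see \cref{prop:num of balls preserved} as supplying the missing direction complementary to \cref{lem:M full}; once this is identified, parts (ii) and (iii) are short arguments that combine \cref{thm:rho type} with Harper's lemma applied at $\fR_i M$ and $\rho(M)$ respectively.
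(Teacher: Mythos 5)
Your proposal is correct and takes essentially the same route as the paper, which proves (i) from \cref{thm:rho type} together with \cref{prop:num of balls preserved} via \cref{def:ei full}, (ii) from the contrapositive of \cref{cor:M change type only if}, and (iii) from the contrapositive of \cref{thm:rho type}. If anything, your argument for (ii) is more careful than the paper's one-liner: by also invoking \cref{thm:rho type} at $\fR_i M$ you explicitly rule out the degenerate case $\type(\rho(M))_i=\type(\rho(M))_{i+1}$, where the bare contrapositive of \cref{cor:M change type only if} does not apply.
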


    \begin{proof}
    (i.) follows from \cref{thm:rho type} along with \cref{prop:num of balls preserved} by \cref{def:ei full}.
    
    (ii.) follows from the contrapositive of \cref{cor:M change type only if}.

    (iii.) is the contrapositive of \cref{thm:rho type}.
    \end{proof}

\section{Nonsymmetric and quasisymmetric refinements of $\text{MLQ}_\lambda$}\label{sec:induction}
In this section, we prove our main results, \cref{theorem:weakmain,theorem:main}.

    The key to proving \cref{theorem:weakmain} is \cref{theorem:weaktype}, which we restate here for convenience. The proof of this result will be the focus of this section.
    \weaktype*

    Our proof will be by induction on number of rows $L$ of the multiline queue. The base case $L=1$ is trivial. Suppose that the claim holds for any multiline queue on fewer than $L$ rows. Our proof will consist of the following steps. Let $\lambda$ be a partition with $\lambda_1=L$, and let $M,M'\in\MLQ_\lambda$ be two multiline queues in the same connected component. 
    \begin{enumerate}[label=\textbf{Step \Roman*}.,ref=Step~\Roman*,
    align=left,        
        leftmargin=44pt,    
          labelindent=0pt,   
          labelsep=0.2em,   
        labelwidth=4.2em]
  \item \label{step:I} In \cref{lem:step1} we show that if $\rho(M)$ and $\rho(M')$ have the same type, then the collapses of their restrictions to the bottom $L-1$ rows also have the same type:
            \[
                \type(\rho(M)) = \type(\rho(M')) \quad\implies\quad \type(\rho^{(L-1)}(M)) = \type(\rho^{(L-1)}(M')).
            \]
        \item \label{step:II} From \ref{step:I} and the induction hypothesis applied to the restrictions of $M$ and $M'$ to their bottom $L-1$ rows, we conclude that the pair of truncated multiline queues must have the same type:
            \begin{equation*}
               \type(\rho(M)) = \type(\rho(M'))\implies \type(\pi^{(L-1)}(M)) = \type(\pi^{(L-1)}(M')).
            \end{equation*}
            This implies $I_k(M)=I_k(M')$ for all $k<L-1$, and 
            \[I_L(M)\cup I_{L-1}(M)=I_L(M')\cup I_{L-1}(M')\,.\]
        \item \label{step:III} Lastly, \cref{lem:step 3} shows that $\type(\rho(M)) = \type(\rho(M'))$ implies that $I_L(M)=I_L(M')$. Together with \ref{step:II}, this proves \cref{theorem:weaktype}.
    \end{enumerate}
   
    The following lemma corresponds to \ref{step:I} in the proof sketch.

    \begin{lemma}\label{lem:step1}
    Let $\lambda$ be a partition with $L\coloneq \lambda_1$, and let $M,M'\in\MLQ_\lambda$ be two multiline queues in the same connected component such that $\type(\rho(M))=\type(\rho(M'))$. Then
    \[\type(\rho^{(L-1)}(M)) = \type(\rho^{(L-1)}(M'))\,.\]
    \end{lemma}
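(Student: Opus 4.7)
My plan is to track the evolution of the types $\gamma^{(k)} := \type(\rho(M_k))$ and $\delta^{(k)} := \type(\rho^{(L-1)}(M_k))$ along a path of column crystal operators connecting $M$ to $M'$. Such a path $\sbf = g^\leftrightarrow_{i_m} \circ \cdots \circ g^\leftrightarrow_{i_1}$ exists by \cref{cor:sequence}. Define $M_k = g^\leftrightarrow_{i_k} \cdots g^\leftrightarrow_{i_1}(M)$, $N_k = \rho(M_k)$, and $P_k = \rho^{(L-1)}(M_k)$. Since both $\rho$ and $\rho^{(L-1)}$ are compositions of row crystal operators, they commute with the column operators (\cref{prop:opcommute}), so $N_k = g^\leftrightarrow_{i_k}(N_{k-1})$ and $P_k = g^\leftrightarrow_{i_k}(P_{k-1})$ at every step. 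The task is then to show that $\gamma^{(0)} = \gamma^{(m)}$ forces $\delta^{(0)} = \delta^{(m)}$.

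By \cref{cor:Harper's lemma}, at each step $\gamma^{(k)}$ is either $\gamma^{(k-1)}$ or $s_{i_k} \cdot \gamma^{(k-1)}$ according to whether $N_{k-1}$ is $g^\leftrightarrow_{i_k}$-full, and the analogous statement holds for $\delta^{(k)}$ with $P_{k-1}$. I aim to establish the following step-wise compatibility: whenever $g^\leftrightarrow_{i_k}$ acts nontrivially on $\gamma^{(k-1)}$ via $s_{i_k}$ but fixes $\delta^{(k-1)}$, we necessarily have $\delta^{(k-1)}_{i_k} = \delta^{(k-1)}_{i_k+1}$, so that the formal transposition $s_{i_k}$ would have fixed $\delta^{(k-1)}$ anyway; a symmetric statement handles the opposite mismatch. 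With this in hand, the formal product of $s_{i_k}$'s along the path acts on $\gamma^{(0)}$ as the identity (since $\gamma^{(0)} = \gamma^{(m)}$) and in a compatible way on $\delta^{(0)}$, yielding $\delta^{(0)} = \delta^{(m)}$.

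The step-wise compatibility is the main technical obstacle, and I expect it to split into two subcases. When the failure of fullness for $P_{k-1}$ comes from an ordering violation — $\delta^{(k-1)}_{i_k} \geq \delta^{(k-1)}_{i_k+1}$ despite $\gamma^{(k-1)}_{i_k} < \gamma^{(k-1)}_{i_k+1}$ — the entry-wise bound $\gamma^{(k)}_j - \delta^{(k)}_j \in \{0,1\}$ of \cref{lemma:onerowcollapse} immediately forces $\delta^{(k-1)}_{i_k} = \delta^{(k-1)}_{i_k+1}$ by a short inequality chase, and the claim follows. The harder case is when the ordering holds but the bracketing of $\rw(P_{k-1})$ has too many unmatched $(i_k+1)$'s: here the unmatched counts equal those of $\pi^{(L-1)}(M_{k-1})$ by \cref{prop:num of balls preserved} and may genuinely differ from those in $\rw(M_{k-1})$ because the top row of $M_{k-1}$ contributes to the bracketing of $N_{k-1}$ but not of $P_{k-1}$. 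To resolve this, I plan to combine the finer implications of \cref{lem:type si} (applied simultaneously to $M_{k-1}$ and to $\pi^{(L-1)}(M_{k-1})$), the ascent preservation of \cref{lemma:ascents onerowcollapse}, and the coinversion containment of \cref{thm:rho type}, to show that any such discrepancy cannot contribute a net change in $\delta$ once the entire path is taken into account. Making this global cancellation precise is the technical heart of the proof.
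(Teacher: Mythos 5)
Your plan has a gap that is not just technical but structural: the final deduction does not follow even if the step-wise compatibility were fully established. You write that ``the formal product of $s_{i_k}$'s along the path acts on $\gamma^{(0)}$ as the identity (since $\gamma^{(0)}=\gamma^{(m)}$),'' but fixing $\gamma^{(0)}$ is not the same as being the identity permutation: the product only lies in the stabilizer of $\gamma^{(0)}$, which is the Young subgroup permuting positions with \emph{equal} entries of $\gamma^{(0)}$. By \cref{lemma:onerowcollapse} one can have $\gamma_i=\gamma_j$ while $\delta_i\neq\delta_j$ (e.g.\ in \cref{ex:one row}, $\type(\rho(M))=(1,1,4,5,3,3)$ while $\type(\rho^{(L-1)}(M))=(1,1,4,4,3,2)$, so positions $5,6$ are tied in $\gamma$ but not in $\delta$), so a word that stabilizes $\gamma^{(0)}$ need not stabilize $\delta^{(0)}$. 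Worse, your own compatibility claim in the second direction explicitly permits steps where $\delta$ changes by $s_{i_k}$ while $\gamma_{i_k}=\gamma_{i_k+1}$ (so $\gamma$ is unchanged); such steps can accumulate to a net nontrivial permutation of $\delta^{(0)}$ with no trace in $\gamma$. Ruling this out is exactly the content of the lemma, and no amount of formal transposition bookkeeping recovers the lost information about which of two $\gamma$-tied positions carries the smaller $\delta$-entry. Separately, the ``harder case'' of your step-wise compatibility (the ordering holds but $\rw(P_{k-1})$ has too many unmatched $i_k{+}1$'s) is left entirely open, and the tools you cite do not obviously close it, since \cref{prop:num of balls preserved} and \cref{lem:M full} compare a multiline queue with its \emph{collapse}, not with its \emph{truncation} $\pi^{(L-1)}$, whose bracketing genuinely differs by the prefix contributed by the top row.

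For contrast, the paper's proof does not track the path at all beyond one coarse consequence: it uses \cref{cor:sequence} and \cref{cor:Harper's lemma} only to conclude that $\abf=\type(\rho^{(L-1)}(M))$ and $\bbf=\type(\rho^{(L-1)}(M'))$ are rearrangements of one another. It then combines the entrywise bounds $0\le c_i-a_i\le 1$ and $0\le c_i-b_i\le 1$ from \cref{lemma:onerowcollapse} with a direct combinatorial argument: assuming $\abf\neq\bbf$, it selects the minimal value $\eta$ and minimal position $j$ where they disagree with $\{a_j,b_j\}=\{\eta,\eta+1\}$, finds a later position $\ell$ where the roles are reversed, and derives a contradiction from \cref{lemma:intersectingstrands} together with the observation that the ball collapsing from row $L$ to extend the strand at $j$ could instead have paired with the top of the strand at $\ell$. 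If you want to salvage your approach, you would need a substitute for this direct argument that controls the ambiguity within $\gamma$-tied positions; as written, the proposal cannot reach the conclusion.
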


    \newcommand{\abf}{\mathbf{a}}
    \newcommand{\bbf}{\mathbf{b}}
    \newcommand{\cbf}{\mathbf{c}}

    \begin{proof}
         Let $k$ be the number of columns in $M$, and set $\abf = a_1a_2 \ldots a_k = \type(\rho^{(L-1)}(M))$ and $\bbf = b_1b_2 \ldots b_k=\type(\rho^{(L-1)}(M'))$. Since $M$ and $M'$ are in the same connected component, by \cref{cor:sequence},  there exists a sequence of column operators $\sbf$ such that $M'=\sbf M$. By \cref{cor:Harper's lemma}, a column operator may change the type of a multiline queue at most by a simple transposition. Hence $\abf$ is a permutation of $\bbf$, and so $\sort(\abf)=\sort(\bbf)$. We show that if $\type(\rho(M)) = \type(\rho(M'))$ then $\abf=\bbf$.
        
        Let $\type(\rho(M)) = \type(\rho(M')) = c_1c_2 \ldots c_k$. From \cref{lemma:onerowcollapse} we get that $0\leq c_i-a_i\leq 1$ and $0\leq c_i-b_i\leq 1$, which also implies that $|a_i-b_i|\leq 1$ for each $i$. Now assume $\abf\neq\bbf$ and let $\eta$ be the smallest integer such that there exists some index $i$ such that $a_i \neq b_i$ with $\{a_i,b_i\} = \{\eta,\eta+1\}$. Fix $j$ to be the smallest such index, and without loss of generality assume that $a_j=\eta$ and $b_j=\eta+1$. 
        
        The minimality of $\eta$ implies that $j$ is the first index for which $\bbf$ has fewer $\eta$'s than $\abf$ within their first $j$ entries. 
        Now, since $\bbf$ is a permutation of $\abf$, it follows there must exist some index $\ell>j$ such that $b_\ell = \eta$ and $a_\ell = \eta+1$. 
        
      We examine two cases: $\eta=0$ and $\eta>0$:
        \begin{enumerate}[align=left,        
                  leftmargin=34pt,    
                  labelindent=0pt,   
                  labelsep=0.25em,   
                  labelwidth=3em]
            \item[$\eta=0$:] Let $x$ be the ball in the first row of column $\ell$. Let $t$ be the ball that is collapsed from the highest level and ended in the first row of column $j$. We already have a contradiction: indeed, since $j<\ell$, $t$ could have paired with $x$ during the FM algorithm, rather than collapsing.
            
            \item[$\eta>0$:] Let $s_1$ and $s_2$ be the strands anchored at columns $j$ and $\ell$ respectively in $\rho^{(L-1)}(M)$, and let $v_1$ and $v_2$ be their topmost balls, lying in rows $\eta$ and $\eta+1$, respectively. Let $u_1$ and $u_2$ be the bottommost balls of strands $s_1$ and $s_2$, lying in sites $j$ and $\ell$, respectively. Since $j<\ell$, and the length of $s_1$ is less than $s_2$, \cref{lemma:intersectingstrands} (since $\rho^{(L-1)}(M)$ is non-wrapping) implies that $s_1$ and $s_2$ don't intersect. Thus since $u_1$ is to the left of $u_2$, the ball $v_1$ is to the left of the ball in row $\eta$ in $s_2$. Finally, the fact that the FM algorithm didn't pair $v_2$ with $v_1$ implies that $v_2$ is \emph{strictly} to the right of $v_1$. 
        
            Now consider the collapsing of the balls at row $L$ to obtain $\rho(M)$ and $\rho(M')$ from $\rho^{(L-1)}(M)$ and $\rho^{(L-1)}(M')$. As $c_j-1\leq a_j,b_j\leq c_j$, necessarily $c_j=\eta+1$. Thus there must exist some ball  $t$ in a row \emph{above} $\eta+1$ in $M$, that collapses to row $\eta+1$ in $\rho(M)$ and is paired with $v_1$ by the FM algorithm. Since $\rho(M)$ is non-wrapping, $t$ is weakly to the left of $v_1$. 
            
            Now, since $a_\ell = b_\ell+1$, and $c_\ell\leq b_\ell+1=\eta+1$, we have $c_\ell=\eta+1$, so $v_2$ is the topmost ball in its strand in $\rho(M)$ (as well as in $\rho^{(L-1)}(M)$). But since $v_2$ is in row $\eta+1$ in $\rho^{(L-1)}(M)$, this produces a contradiction: indeed, $v_2$ lies strictly to the right of $v_1$ and thus also of $t$, and so $t$ could not have collapsed to row $\eta+1$, as it could have matched with $v_2$ instead. 
        \end{enumerate}
    \end{proof}
    
    Next, we focus on \ref{step:III} in the proof sketch.

    For a permutation $\sigma\in\mathfrak{S}_n$ and a set $I\subseteq [n]$, denote the induced action of $\sigma$ on $I$ by
\[ \sigma(I)=\{\sigma(j)\ \colon\ j\in I\}.
\]
For example, $s_1(\{2,3\})=\{1,3\}$ and $s_2(\{2,3\}=\{2,3\}$.

    \begin{prop}
        \label{lem:step 3}
        Let $\lambda$ be a partition with $L\coloneq \lambda_1$, and let $M,M'\in\MLQ_\lambda$ be two multiline queues in the same connected component such that $\type(\rho(M))=\type(\rho(M'))$. Then
        \[I_L(M)=I_L(M').\]
    \end{prop}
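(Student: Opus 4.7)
My plan is to pass to the collapsed side using \cref{theorem:collthm}. Since $M\sim M'$ in the crystal graph, \cref{prop:majpreserver} gives that the sequence $\sbf$ with $M'=\sbf M$ also satisfies $\rho(M')=\sbf\,\rho(M)$, while \cref{lem:charge} gives $\rho_Q(M)=\rho_Q(M')=:Q$. Thus $N:=\rho(M)$ and $N':=\rho(M')$ lie in the same connected component of $\NMLQ_\mu$ with common type $\beta$, and $M=\rho^{-1}(N,Q)$, $M'=\rho^{-1}(N',Q)$. The task reduces to showing: for $N,N'\in\NMLQ[\beta]$ belonging to a common connected component of $\NMLQ_\mu$ and any recording tableau $Q\in\SSYT(\mu',\lambda')$, one has $I_L(\rho^{-1}(N,Q))=I_L(\rho^{-1}(N',Q))$. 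Note that the cardinality constraint $|I_L(M)|=|I_L(M')|=|B_L|$ is automatic since column operators preserve the number of balls per row.

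I plan to induct on the length $m$ of a minimum-length column-operator sequence $\sbf$ taking $N$ to $N'$; the base case $m=0$ is trivial. For the inductive step, let $g=g^{\leftrightarrow}_{i}$ be the first operator and $N_1=g N$. When $g$ fixes $\type(N)$, the contrapositive of \cref{lem:M full} gives that $M$ is not $g$-full either, so \cref{cor:Harper's lemma} yields $\type(gM)=\type(M)$ and hence $I_L(gM)=I_L(M)$; since $\rho$ commutes with $g$ and preserves $\rho_Q$, we have $gM=\rho^{-1}(N_1,Q)$, and the inductive hypothesis applied to $(N_1,N')$ closes this case. When $g$ changes $\type(N)$ to $s_i\cdot\beta$, $N$ is $g$-full and by \cref{lem:M full} so is $M$, producing coordinated $s_i$-swaps on both $\type(M)$ and $\type(N)$; the preserved-ascent theorem \cref{thm:rho type} combined with the Step~II identity $\type(\pi^{(L-1)}(M))=\type(\pi^{(L-1)}(M'))$ then pins down the local values $\alpha_i,\alpha_{i+1},\beta_i,\beta_{i+1}$ and controls the simultaneous evolution of $I_L(M)$ and $I_L(N)$ at columns $i,i+1$.

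The main obstacle is that $\NMLQ[\beta]$ may be disconnected (see \cref{fig:big figure}), so any path from $N$ to $N'$ must exit $\NMLQ[\beta]$ and later re-enter — naive induction on length then stalls, as intermediate multiline queues no longer share the type $\beta$ needed to invoke the inductive hypothesis. To handle this, I plan to route the argument through the straight multiline queue $M_\beta$, which lies in $\NMLQ[\beta]$ and is accessible from any $N\in\NMLQ[\beta]$ by the connectedness of $\NMLQ_\mu$ (\cref{rem:connected}). Choosing sequences $\sbf_1,\sbf_2$ with $\sbf_1 N=M_\beta=\sbf_2 N'$, the uniqueness of the bijection in \cref{theorem:collthm} forces $\sbf_1 M=\rho^{-1}(M_\beta,Q)=\sbf_2 M'$, so comparing $I_L(M)$ and $I_L(M')$ amounts to analyzing two canonical lifted paths emanating from the common multiline queue $\rho^{-1}(M_\beta,Q)$. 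A step-by-step local analysis along these canonical paths, using \cref{lem:M full} and \cref{thm:rho type} to ensure that every type-changing step acts compatibly on $I_L(M)$ and $I_L(N)$ (so that the cumulative trivial effect on $I_L(N)$ forces trivial cumulative effect on $I_L(M)$), should then yield $I_L(M)=I_L(M')$.
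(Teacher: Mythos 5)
Your reduction is sound and matches the paper's in outline: you correctly pass to $N=\rho(M)$, $N'=\rho(M')$ via \cref{prop:majpreserver} and \cref{lem:charge}, recognize that $\NMLQ[\beta]$ may be disconnected so that induction on path length stalls, and propose to compare both $M$ and $M'$ against the common anchor $\rho^{-1}(M_\beta,Q)$. The paper does exactly this (with $M_\alpha$ as the anchor). The relevant tools you cite --- \cref{cor:Harper's lemma}, \cref{lem:M full}, \cref{thm:rho type} --- are also the right ones.

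However, there is a genuine gap at the heart of the argument: the final sentence, asserting that ``a step-by-step local analysis along these canonical paths \dots should then yield $I_L(M)=I_L(M')$,'' is precisely the statement that needs to be proved, and it is not true that an \emph{arbitrary} choice of $\sbf_1$ with $\sbf_1 N=M_\beta$ admits such an analysis. The difficulty is asymmetric: \cref{lem:type si}~(ii) guarantees that a type change in $M$ forces a type change in $N$, but not conversely, so along a generic path the set of steps that permute $\type(M)$ is a proper, uncontrolled subsequence of those that permute $\type(N)$, and the fact that the net permutation of $\type(N)$ is trivial does not by itself force the net effect on $I_L(M)$ to be trivial --- that implication is essentially the proposition itself. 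The paper resolves this by constructing a very particular path: first descend from $N$ to the lowest-weight element $M_\delta$ of $\NMLQ_{\sort(\beta)}$ using only $\fR_i$'s (so every type change removes a descent, and \cref{lem:type si}~(iii) controls where $L$'s can sit), then ascend to $M_\beta$ using the starred operators $\eLs_i$ in the exactly reversed order of the decorated transpositions (so every type change removes an ascent, where \cref{lem:type si}~(i) is an equivalence). Only with this two-phase structure can one set up the inductive bookkeeping (the paper's properties (A)--(C) and the comparison of $I_L^{(j)}$ with $I_L^{\rev(j)}$) showing that each change to $I_L$ of the preimage in phase one is undone by the matching step in phase two. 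Without specifying this path and carrying out that induction, the proof is incomplete.
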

    We prove \cref{lem:step 3} by showing the above for the special case when $\rho(M') = M_\alpha$ where $\alpha= \type(M)$. We do this by producing a sequence from $\rho(M)$ to the lowest weight element in $\NMLQ_{\sort(\alpha)}$ using only $\fR_i$s, and from there back to $M_\alpha$ using only $\eLs_i$s in a particular order. This allows us to track the changes in type explicitly for the preimages under $\rho$ using \cref{lem:type si}.

    \begin{proof}[Proof of \cref{lem:step 3}]
    Let $N=\rho(M)$, $N'=\rho(M')$, and let $\alpha=\type(N)=\type(N')$. We wish to show that 
    \begin{equation}\label{eq:IL}
    I_L(M)=I_L(M').
    \end{equation}
Set $n=\sum_i\alpha_i$ to be the total number of balls in $M$, and consider the crystal $\NMLQ_{\sort(\alpha)}$ on $n$ columns. We will construct a sequence $\mathbf{g}$ of column operators that provides a path from $N$ to the straight multiline queue $M_\alpha$ within $\NMLQ_{\sort(\alpha)}$ by passing through the lowest weight element of the crystal  (the unique multiline queue such that every $\fR_i$ for $1\leq i<n$ acts trivially). By commutativity of $\mathbf{g}$ with $\rho$, we then have $M'=\mathbf{g}\,M$. We will choose $\mathbf{g}$ in order to explicitly track its action on the positions of the largest particles $L$ in the types of the multiline queues in $\MLQ_\lambda$ obtained by sequentially applying the operators in $\mathbf{g}$.

Let $\sbf=\fR_{i_m}\cdots \fR_{i_1}$ be a sequence of column lowering operators with $1\leq i_k<n$ such that $\sbf\,N$ is the straight multiline queue $M_\delta$, where $\delta_1\leq \cdots\leq \delta_n$ and $\sort(\delta)=\sort(\alpha)$. Such a sequence exists since $\MLQ_{\sort(\alpha)}$ is connected and finite, and $M_\delta$ is its lowest-weight element. 

Whenever an application of $\fR_i$ changes the type of the multiline queue by removing a descent at site $i$ (equivalently, when the associated simple transposition $s_i$ acts on the type by removing a descent), we decorate this action with a right arrow and write $s^\rightarrow_i$. Similarly, if an application of $\eL_i$ changes the type by removing an ascent, we decorate the corresponding $s_i$ with a left arrow, writing $s^\leftarrow_i$.

By \cref{cor:Harper's lemma}, there exists a subsequence $\fR_{i_{u_k}},\ldots,\fR_{i_{u_1}}$ of $\sbf$, for some set of indices $u_1<\ldots<u_k$, such that each successive operator $\fR_{i_{u_j}}$ changes the type of the multiline queue in $\NMLQ_{\sort(\alpha)}$ by removing a descent (with all other operators acting trivially on the type of the multiline queue). Let the corresponding decorated sequence of simple transpositions be $s^\rightarrow_{a_k}\cdots s^\rightarrow_{a_1}$, with $a_j=i_{u_j}$. Define
\[
\alpha^{(j)} := 
  s^\rightarrow_{a_j}\cdots s^\rightarrow_{a_1}\cdot\alpha,
  \qquad 0\le j\le k,
\]
so that the types evolve sequentially as
\[
  \alpha=\alpha^{(0)}
  \xrightarrow{s^\rightarrow_{a_1}}
  \alpha^{(1)}
  \xrightarrow{s^\rightarrow_{a_2}}
  \cdots
  \xrightarrow{s^\rightarrow_{a_k}}
  \alpha^{(k)}=\delta=\type(M_\delta),
\]
and hence
$\delta=s^\rightarrow_{a_k}\cdots s^\rightarrow_{a_1}\cdot\alpha$. Equivalently, the sequence of transpositions $s^\rightarrow_{a_k}\cdots s^\rightarrow_{a_1}$ sorts $\alpha$ into the weakly increasing composition $\delta$. 

We now examine how the sequence of operators $\sbf$ acts on the corresponding multiline queue in $\MLQ_\lambda$ 
by tracking the sequence of types of multiline queues obtained by successively applying the operators in $\sbf$ to $M$.

When the sequence $\sbf$ is applied to $M$, there is some subsequence of indices $v_1<\cdots<v_\ell$ corresponding to those operators $\fR_{i_{v_j}}$ that change the type of the multiline queue in $\MLQ_{\lambda}$.  By \cref{lem:type si} (ii), this subsequence is contained in the subsequence that acts nontrivially on the type in $\NMLQ_{\sort(\alpha)}$. We track how the associated sequence of transpositions $s^\rightarrow_{b_\ell}\cdots s^\rightarrow_{b_1}$ with $b_j=i_{v_j}$ affects the positions of the largest label $L$ in the multiline queue in $\MLQ_\lambda$ by tracking $I_L$ in the sequence of multiline queues $\{\fR_{i_{u_j}}\fR_{i_{u_j-1}}\cdots \fR_{i_1}\,M\}_{1\leq j\leq k}$. Note that if $s\not\in\{u_1,\ldots,u_k\}$, then $\type(\fR_{i_{s}}\fR_{i_{s-1}}\cdots \fR_{i_1}\,M)=\type(\fR_{i_{s-1}}\fR_{i_{s-2}}\cdots\fR_{i_1}\,M)$; thus keeping track of this sequence is sufficient for our purposes. 

Set $\mathbf{s}^{(0)}=\id$ and $I_L^{(0)}=I_L(M)$, and for $1\leq j\leq k$, let \[\mathbf{s}^{(j)}=s^\rightarrow_{u_j}\cdots s^\rightarrow_{u_1}.\]
We define $\sbf_1= \fR_{i_{u_1}}\cdots \fR_{i_1}$ and for $2\leq j\leq k$ let
\[ \sbf_j = \fR_{i_{u_j}}\fR_{i_{u_j-1}}\cdots\fR_{i_{u_{j-1}+1}}.\]
Let $\sbf^{(j)}$ be the subsequence of operators $\{\fR_i\}$ in $\sbf$ ending with the operator that causes the change in type given by $s^\rightarrow_{a_j}$ in $\NMLQ_{\sort(\alpha)}$: 
\[\sbf^{(j)}\coloneq \sbf_j\sbf_{j-1}\cdots\sbf_1, \qquad I_L^{(j)}\coloneq I_L(\sbf^{(j)}\,M).\]

We have the following property: if $a_j=i$, meaning that
\[\mathbf{s}^{(j)}=s^\rightarrow_i\mathbf{s}^{(j-1)},\]
then the following hold:
\begin{enumerate}[label=(\Alph*), ref=\Alph*]
\item\label{i:A} By  \cref{lem:type si} (iii), if $i+1\in I_L^{(j-1)}$ then necessarily $i\in I_L^{(j-1)}$.
\item\label{i:B} We have either $I_L^{(j)}=I_L^{(j-1)}$ or $I_L^{(j)}=s_i\cdot I_L^{(j-1)}$. In the latter case, necessarily $i+1\in I_L^{(j)}$ and $i\not\in I_L^{(j)}$. 
\item\label{i:C} By Property \ref{i:A}, conversely, if $i+1\in I_L^{(j)}$ and $i\not\in I_L^{(j)}$, we must have $I_L^{(j)}=s^\rightarrow_i\cdot I_L^{(j-1)}$.
\end{enumerate}

We now define the sequence of operators 
\[\mathbf{t}=\eLs_{u_1}\cdots \eLs_{u_k}
\]
by reversing the sequence of transpositions $\mathbf{s}$. We claim that $\mathbf{t}\,M_\delta=M_\alpha$. Indeed, for any composition $\beta$, if $\beta_i<\beta_{i+1}$, then 
\begin{equation}\label{eq:si}
\eLs_i\,M_\beta=M_{s^\leftarrow_i\beta}.
\end{equation}
Since each $s^\rightarrow_i$ in the sequence $\mathbf{s}$ creates an ascent, applying the corresponding $\eLs_i$'s in the reverse order successively removes
these ascents, as \eqref{eq:si} holds at each application. Consequently, 
\[\mathbf{t}\sbf\,N=\mathbf{t}\,M_\delta=M_\alpha,\]
as desired.

Now we will show that the action of the operators in $\mathbf{t}$ on the positions of the $L$'s in the multiline queues in $\MLQ_\lambda$ ``reverses'' the action of the operators in $\sbf$ on those positions.  Define 
\[
\mathbf{t}^{\rev(k)}\coloneq \id,\qquad \mathbf{t}^{\rev(j)}\coloneq \eLs_{a_{j+1}}\cdots \eLs_{a_k}\quad\text{for $0\leq j<k$.}
\]
and 
\[I_L^{\rev(j)}\coloneq I_L(\mathbf{t}^{\rev(j)}\sbf\,M).
\]

We proceed by induction on $j$ with the goal of proving that 
\begin{equation}\label{eq:M alpha} I_L(\mathbf{t}\sbf\,M)=I_L(\mathbf{t}^{\rev(0)}\sbf\,M)=I_L^{\rev(0)}=I_L^{(0)}=I_L(M)\,,
\end{equation}
 yielding the desired equality \eqref{eq:IL}. The base case $j=k$ is the trivial identity $I_L^{\rev(k)}= I_L(\sbf\,M)=I_L^{(k)}$. We will show that for $j\leq k$,
\[I_L^{\rev(j)}=I_L^{(j)}\qquad \implies \qquad I_L^{\rev(j-1)}=I_L^{(j-1)},
\]
from which it follows inductively that $I_L^{\rev(0)}=I_L^{(0)}$, as desired.

We now prove the induction hypothesis. Suppose $I_L^{\rev(j)}=I_L^{(j)}$, and let $i=a_j$, so that $\mathbf{s}^{(j)}=s^\rightarrow_i \mathbf{s}^{(j-1)}$ and $\mathbf{t}^{\rev(j-1)}=\eLs_i\mathbf{t}^{\rev(j)}$. 
Then, either $I_L^{(j)}=I_L^{(j-1)}$, or $I_L^{(j)}=s_i\cdot I_L^{(j-1)}$. We examine both cases: 

\smallskip
\noindent\textbf{Case 1:} $I_L^{(j)}=I_L^{(j-1)}$.
First suppose that
\[
\{i,i+1\}\subseteq I_L^{(j)}
\qquad\text{or}\qquad
\{i,i+1\}\cap I_L^{(j)}=\emptyset.
\]
By induction, the same holds for $I_L^{\rev(j)}$, so we also have
$I_L^{\rev(j-1)}=I_L^{(j)}$ since $\eLs_i$ only acts on sites $i,i+1$.

Now suppose that
\[
i\in I_L^{(j)} \qquad\text{and}\qquad i+1\not\in I_L^{(j)}.
\]
The same holds for $I_L^{\rev(j)}$. Thus site $i$ in
$\type(\mathbf{t}^{\rev(j)}\sbf\,M)$ does not have an ascent, so
$s^\leftarrow_i$ cannot act on the type; hence
$I_L^{\rev(j-1)}=I_L^{\rev(j)}$.

Property~\ref{i:C} implies that
$i+1\in I_L^{(j-1)}=I_L^{(j)}$ and
$i\not\in I_L^{(j-1)}=I_L^{(j)}$ cannot occur, so all
possibilities are covered. Thus
\[
I_L^{\rev(j-1)}=I_L^{\rev(j)}=I_L^{(j)}=I_L^{(j-1)}.
\]

\smallskip
\noindent\textbf{Case 2:} $I_L^{(j)}=s_i\cdot I_L^{(j-1)}$.
By Property~\ref{i:B}, we have
\[
i+1\in I_L^{(j)} \qquad\text{and}\qquad i\not\in I_L^{(j)},
\]
and the same holds for $I_L^{\rev(j)}$.
When $\eLs_i$ is applied to $\mathbf{t}^{(j)}M_\delta$, the type changes
by $s^\leftarrow_i$ at the last application of $\eL_i$, so
\cref{lem:type si} (i) applies. Hence
\[
\type(\mathbf{t}^{(j-1)}\sbf\,M)
= \type(\eLs_i\mathbf{t}^{(j)}\sbf\,M)
= s^\leftarrow_i\cdot \type(\mathbf{t}^{(j)}\sbf\,M),
\]
and therefore
\[
I_L^{\rev(j-1)}=s_i\cdot I_L^{\rev(j)}=I_L^{(j-1)}.
\]

In both cases we have that $I_L^{\rev(j)}=I_L^{(j)}$ implies $I_L^{\rev(j-1)}=I_L^{(j-1)}$. Applying the property iteratively proves \eqref{eq:M alpha}. Since the same is true for $M'$, the statement is proved. 
\end{proof}

We complete the proof of \cref{theorem:weaktype}.
    \begin{proof}[Proof of Theorem \ref{theorem:weaktype}]
    Let $M$ and $M'$ satisfy
    \begin{equation}
        \label{eq:ass}
        \type(\rho(M))=\type(\rho(M')).
    \end{equation}
        We proceed by induction on $L$. The case where $L=1$ is trivial, so let $L>1$ and assume the statement holds for a multiline queue of height $L-1$. 
        Then from \cref{lem:step1} and the induction hypothesis, we have that 
        \begin{equation}\label{eqn:induct}
            \type(\pi^{(L-1)}(M)) = \type(\pi^{(L-1)}(M')).
        \end{equation}
        
        Now, for any multiline queue $N$, $I_L(N)\cup I_{L-1}(N)=I_{L-1}(\pi^{(L-1)}(N))$, and for all $k<L-1$ we have $I_k(N)=I_k(\pi^{(L-1)}(N))$ (this follows from \eqref{eq:R} and \eqref{eq:R2}; for a detailed explanation, see \cite[Section 4.1]{MS25}). Thus \eqref{eqn:induct} implies $I_k(M)=I_k(M')$ for all $k<L-1$ and $I_{L-1}(M)\cup I_L(M)=I_{L-1}(M')\cup I_L(M')$. 
       By \cref{lem:step 3} we have that $I_L(M)=I_L(M')$. Combined, this is equivalent to the desired $\type(M)=\type(M')$.
    \end{proof}

It would be interesting to study whether the crystal graph can be enhanced with additional edges to recover connectivity in the nonsymmetric and quasisymmetric components.
    \begin{question}
    Can one add additional edges to the crystal graph so that the nonsymmetric or quasisymmetric components become connected, and, if so, is there a local characterization of the resulting connected components?
    \end{question}
    \subsection{Positive expansions of $f_\alpha(X;q,0)$ and $G_\gamma(X;q,0)$}\label{sec:proof of main}

In this section, we state and prove our main results to obtain nonsymmetric and quasisymmetric analogues of the Kostka--Foulkes coefficients for the ASEP polynomials and quasisymmetric Macdonald polynomials at $t=0$.

\begin{theorem}\label{theorem:weakmain}
For a composition $\alpha$, the ASEP polynomial is given by
  \begin{equation}\label{eq:nsym}
f_{\alpha}(X;q,0) = \sum_{\beta:\sort(\beta)\leq \sort(\alpha)} K_{\alpha,\beta}(q) \cA_{\beta},
\end{equation}
where
\begin{equation}\label{eq:nsymK}
K_{\alpha,\beta}(q) = \sum_{\substack{Q\in\SSYT(\sort(\beta)',\sort(\alpha)')\\\type(\rho^{-1}(M_{\beta},Q))=\alpha}} q^{\charge(Q)}.
\end{equation}
\end{theorem}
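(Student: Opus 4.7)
The plan is to derive Theorem \ref{theorem:weakmain} as an application of the collapsing bijection from Theorem \ref{theorem:collthm} together with the type-preservation result of Theorem \ref{theorem:weaktype}. Starting from the multiline queue formula \eqref{eq:P and F} for the ASEP polynomial, I would write
\[
f_\alpha(X;q,0) \;=\; \sum_{M \in \MLQ[\alpha]} x^M\, q^{\maj(M)},
\]
and translate each summand using $x^M = x^{\rho_N(M)}$ and $\maj(M) = \charge(\rho_Q(M))$ from Theorem \ref{theorem:collthm}, then reorganize the sum by fibering over pairs $(\mu, Q)$ with $\mu \leq \sort(\alpha)$ and $Q \in \SSYT(\mu', \sort(\alpha)')$, and an inner sum over $N \in \NMLQ_\mu$ subject to the condition $\type(\rho^{-1}(N,Q)) = \alpha$.

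The essential step is to show that this condition refines cleanly along nonsymmetric components. Fix $Q$ and any composition $\beta$ with $\sort(\beta) = \mu$. By Remark \ref{rem:connected}, the nonwrapping crystal $\NMLQ_\mu$ is connected, so for any two $N, N' \in \NMLQ[\beta]$ there is a sequence $\sbf$ of column operators with $N' = \sbf\,N$. Since $\rho$ commutes with column operators by Proposition \ref{prop:majpreserver}, the preimages $M = \rho^{-1}(N,Q)$ and $M' = \rho^{-1}(N',Q) = \sbf\,M$ lie in the same connected component of $\MLQ_\lambda$, where $\lambda = \sort(\alpha)$. Since $\type(\rho(M)) = \beta = \type(\rho(M'))$, Theorem \ref{theorem:weaktype} applies and forces $\type(M) = \type(M')$. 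Hence the condition $\type(\rho^{-1}(N,Q)) = \alpha$ depends only on $(\beta, Q)$ and not on the choice of representative $N \in \NMLQ[\beta]$; one may therefore test it on the straight multiline queue $M_\beta$.

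With this in hand, the inner sum factors as a product of $q^{\charge(Q)}$ (summed over admissible $Q$) with $\sum_{N \in \NMLQ[\beta]} x^N = \cA_\beta(X)$, yielding
\[
f_\alpha(X;q,0) \;=\; \sum_{\beta:\sort(\beta)\leq\sort(\alpha)}\;\Bigg(\sum_{\substack{Q\in\SSYT(\sort(\beta)',\sort(\alpha)')\\ \type(\rho^{-1}(M_\beta,Q))=\alpha}} q^{\charge(Q)}\Bigg)\,\cA_\beta(X),
\]
which is exactly \eqref{eq:nsym} with coefficients \eqref{eq:nsymK}; the dominance restriction $\sort(\beta) \leq \sort(\alpha)$ is inherited directly from the bijection of Theorem \ref{theorem:collthm}.

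The principal difficulty is of course Theorem \ref{theorem:weaktype} itself, whose proof occupies Section \ref{sec:induction}; given that result, the present theorem is a relatively direct repackaging. I expect the only subtle point once \ref{theorem:weaktype} is available to be the verification that one may genuinely replace the ``fiber'' condition on $N$ by a condition on the single representative $M_\beta$, which hinges on the connectedness of $\NMLQ_\mu$ and the commutativity of $\rho$ with column operators — the same two ingredients that underlie the classical Lascoux--Sch\"utzenberger argument reviewed at the end of Section \ref{sec:background}.
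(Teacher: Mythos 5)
Your proposal is correct and follows essentially the same route as the paper: both arguments combine the collapsing bijection of Theorem \ref{theorem:collthm} with Theorem \ref{theorem:weaktype} to show that, for a fixed recording tableau $Q$, the fiber over $\NMLQ[\beta]$ lies entirely in a single $\MLQ[\alpha]$, so the membership condition can be tested on the canonical representative $M_\beta$ and the inner sum factors as $q^{\charge(Q)}\cA_\beta(X)$. The only difference is presentational — you phrase it as a reorganization of the generating-function sum, while the paper phrases it as a restriction of the bijection $\MLQ[\alpha]\simeq\bigcup_\beta\NMLQ[\beta]\times S_{\alpha,\beta}$ — and your justification of the representative-independence (connectedness of $\NMLQ_\mu$ plus commutativity of $\rho$ with the column operators) is exactly the one the paper relies on.
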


\begin{proof}
By \cref{theorem:weaktype}, for each $Q\in\SSYT(\sort(\beta)',\lambda')$ the preimage of $\NMLQ[\beta]$ under $\rho$ in the fiber of $Q$ is contained entirely in $\MLQ[\alpha]$ for some $\alpha$ that sorts to $\lambda$, and the weight of such a contribution is $q^{\charge(Q)}\cA_\beta(X)$. Thus to compute the coefficient of $\cA_\beta(X)$ in $f_\alpha(X;q,t)$, it suffices to identify which recording tableaux $Q$ satisfy $\rho^{-1}(M,Q)\in\MLQ[\alpha]$ for some $M\in\NMLQ[\beta]$, and this needs to be checked for only one canonical representative $M\in\NMLQ[\beta]$; 
we will use $M_\beta$ as the canonical representative. 
Define
        \begin{align*}
            S_{\alpha, \beta}&:=\{Q\in \SSYT(\sort(\beta)', \sort(\alpha)') \ :\,\strtype(\rho^{-1}(M_\beta,Q)) = \gamma\}\\
            S_\alpha&:=\{\beta\ :\ S_{\alpha,\beta}\not=\emptyset\}
        \end{align*}
        Now we can restrict the collapsing map further to $\MLQ[\alpha]$, and again using \cref{theorem:weaktype}, we obtain the following bijection:
    \begin{equation*}
        \MLQ[\alpha]\simeq \bigcup_{\beta\in S_\alpha}\NMLQ[\beta]\times S_{\alpha,\beta}
    \end{equation*}
    Now noting \[K_{\alpha,\beta}(q) = \sum_{Q\in S_{\alpha,\beta}}q^{\charge(Q)}\] we get \eqref{eq:nsymK}, as desired.
\end{proof}

Next we give the quasisymmetric analogue.

\begin{theorem}\label{theorem:main}
For a strong composition $\gamma$, the quasisymmetric Macdonald polynomial is given by
  \begin{equation}\label{eq:quasi}
G_{\gamma}(X;q,0) = \sum_{\tau:\sort(\tau)\leq \sort(\gamma)} K_{\gamma,\tau}(q) \QS_{\tau},
\end{equation}
where
\begin{equation}\label{eq:K}
K_{\gamma,\tau}(q) = \sum_{\substack{Q\in\SSYT(\sort(\tau)',\sort(\gamma)')\\\strtype(\rho^{-1}(M_{\tau},Q))=\gamma}} q^{\charge(Q)}\,.
\end{equation} 
\end{theorem}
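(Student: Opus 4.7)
The proof will closely parallel that of \cref{theorem:weakmain}, substituting \cref{prop:sameQ} for \cref{theorem:weaktype} to handle the quasisymmetric refinement. The strategy is to restrict the collapsing bijection to multiline queues of a fixed strong type and extract the generating function via the weight-preservation $x^M = x^{\rho_N(M)}$ and $\maj(M) = \charge(\rho_Q(M))$ from \cref{theorem:collthm}.

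First I would recall the multiline queue expansion $G_\gamma(X;q,0) = \sum_{M \in \SMLQ[\gamma]} x^M q^{\maj(M)}$ and apply the collapsing bijection $\MLQ_\lambda \simeq \bigcup_{\mu \leq \lambda} \NMLQ_\mu \times \SSYT(\mu',\lambda')$. The essential input is \cref{prop:sameQ}: for $M, M'$ in the same connected component of the graded crystal graph (equivalently, $\rho_Q(M) = \rho_Q(M')$), the equality $\strtype(\rho_N(M)) = \strtype(\rho_N(M'))$ forces $\strtype(M) = \strtype(M')$. Consequently, for each pair $(\tau, Q)$ with $Q \in \SSYT(\sort(\tau)', \sort(\gamma)')$, the fiber $\rho^{-1}(\SNMLQ[\tau] \times \{Q\})$ is either entirely contained in $\SMLQ[\gamma]$ or disjoint from it, and this dichotomy depends only on $(\tau, Q)$ rather than on the specific $N \in \SNMLQ[\tau]$ chosen. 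Taking $M_\tau$ as the straight representative of $\SNMLQ[\tau]$ allows one to characterize the contributing recording tableaux by the single condition $\strtype(\rho^{-1}(M_\tau, Q)) = \gamma$.

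Setting $S_{\gamma, \tau} = \{Q \in \SSYT(\sort(\tau)', \sort(\gamma)') : \strtype(\rho^{-1}(M_\tau, Q)) = \gamma\}$, the restricted collapsing map yields a bijection
\[
\SMLQ[\gamma] \simeq \bigsqcup_\tau \SNMLQ[\tau] \times S_{\gamma, \tau},
\]
and combining weight-preservation with the formula $\QS_\tau(X) = \sum_{N \in \SNMLQ[\tau]} x^N$ from \cref{sec:quasischur} gives
\begin{align*}
G_\gamma(X; q, 0) = \sum_\tau \Biggl(\sum_{Q \in S_{\gamma, \tau}} q^{\charge(Q)}\Biggr) \QS_\tau(X) = \sum_\tau K_{\gamma, \tau}(q) \QS_\tau(X).
\end{align*}
The dominance constraint $\sort(\tau) \leq \sort(\gamma)$ follows from the shape constraint $\mu \leq \lambda$ in the original collapsing bijection.

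The main obstacle, handled by \cref{prop:sameQ} prior to this argument, is that strong type is not preserved by column operators in general: when $g^\leftrightarrow_i$ acts nontrivially on a multiline queue $M$ with both $\type(M)_i$ and $\type(M)_{i+1}$ positive, the strong type of $M$ changes. Thus one cannot simply invoke invariance along the crystal path. Instead, \cref{prop:sameQ} must be derived from \cref{theorem:weaktype} by comparing weak types that share a common strong type and tracking ascents and descents along carefully chosen paths, in the spirit of the argument used for \cref{lem:step 3}. Once this input is in place, the proof of \cref{theorem:main} itself reduces to the bookkeeping described above.
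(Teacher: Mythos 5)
Your proposal is correct and follows essentially the same route as the paper: the paper's proof of \cref{theorem:main} is precisely to repeat the argument of \cref{theorem:weakmain} with $\type$ replaced by $\strtype$ and \cref{theorem:weaktype} replaced by \cref{prop:sameQ}, which is exactly the bookkeeping you carry out. (Your aside about how \cref{prop:sameQ} itself is established is slightly off -- the paper proves it via crystal reflection operators acting on empty columns of the straight multiline queues $M_\alpha$ with $\alpha^+=\gamma$, rather than by an ascent/descent-tracking argument in the style of \cref{lem:step 3} -- but since that proposition is available as an input, this does not affect the validity of your proof of the theorem.)
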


To prove \cref{theorem:main}, we first need \cref{prop:sameQ}. While it is possible to give a similar argument as in the proof of \cref{theorem:weaktype}, we will instead invoke the result directly. Specifically, we will show that if two nonwrapping multiline queues have the same strong type, then their preimages under $\rho$ (for a fixed recording tableau $Q$) have the same strong type as well. In fact, by \cref{theorem:weaktype}, the argument reduces to showing that the multiline queues in the set $\{\rho^{-1}(M_\alpha,Q):\alpha^+=\gamma\}$ all have the same strong type. 

  \begin{proof}[Proof of \cref{prop:sameQ}]
  Let $Q=\rho_Q(M)\in\SSYT(\sort(\gamma)',\mu')$. Let $N=\rho(M)$ and $N'=\rho(M')$ such that $\type(N) = \alpha$ and $\type(N') = \beta$ with $\gamma\coloneq \alpha^+ = \beta^+$.

    Define $r_i$ to be the \textbf{crystal reflection operator} on the columns of $\MLQ_{\sort(\gamma)}$. When $K$ is a multiline queue with either column $i$ or $i+1$ empty, then $r_i$ simply swaps the two columns, coinciding in this case with the action of $\eLs_i$ or $f^{\rightarrow\star}_i$, respectively\footnote{We omit the general definition of $r_i$, as it is only needed for this special case.}. Since the FM algorithm disregards empty columns, we have $\strtype(K)=\strtype(r_i(K))$. Moreover,  whenever column $i$ in $K$ is empty, it is also empty in $\rho^{-1}(K,Q)$; thus $\rho^{-1}(r_i(K),Q)=r_i(\rho^{-1}(K,Q))$, and so 
    \begin{equation}\label{eq:strtype}
    \strtype(\rho^{-1}(K,Q))=\strtype(\rho^{-1}(r_i(K),Q)).
    \end{equation}

    For a reduced word $\tau=s_{i_1}\cdots s_{i_k}$, define $r_\tau\coloneqq r_{i_1}\cdots r_{i_k}$ to be the composition of operators (applied successively from right to left). Then, let $\sigma = s_{i_1} \cdots s_{i_k}$ be a permutation of shortest length such that $\sigma \cdot \alpha=\beta$. Since $\alpha$ and $\beta$ are both shuffles of $\gamma$ with a word of zeros, each simple transposition in $\sigma$ swaps a non-zero entry with a zero. Consider the straight multiline queues $M_\alpha$ and $M_\beta$. For $M_\alpha$, column $i$ is empty exactly when $\alpha_i=0$. Thus every operator $r_i$ corresponding to a transposition in $\sigma$ satisfies \eqref{eq:strtype}, and thus preserves the strong type of the preimage under $\rho^{-1}$. Therefore, using $r_\sigma(M_\alpha)=M_\beta$ along with \cref{theorem:weaktype} giving the second and fourth equalities, we conclude that
    \begin{multline*}
    \strtype(M)=\strtype(\rho^{-1}(N,Q))=\strtype(\rho^{-1}(M_\alpha,Q))=\strtype(\rho^{-1}(M_\beta,Q))\\
    =\strtype(\rho^{-1}(N',Q))=\strtype(M'),
    \end{multline*}
    as desired.
  \end{proof}

Finally, we obtain \cref{theorem:main} following the same argument as the proof of \cref{theorem:weakmain} after replacing $\type$ by $\strtype$ and using \cref{prop:sameQ}.

\subsection{$E_\alpha^\sigma(X;q,0)$ is Demazure atom-positive}\label{sec:permuted}

As a corollary of our result that the ASEP polynomials are atom-positive at $t=0$, we deduce that any permuted basement Macdonald polynomial is also atom-positive at $t=0$. To see this, we use the straightening rule for $E_\alpha^\sigma$ derived in \cite{DM25+}, which expresses $E_\alpha^\sigma$ in the $\{f_\beta\}$ basis. We restate the result below, omitting technical details.

\begin{theorem}[{\cite{DM25+}}]
Let $\alpha$ be a length-$n$ composition that sorts to a partition $\lambda$.
    \[E_\alpha^\sigma(X;q,t)=\sum_{\beta}c_{\alpha\beta}^\sigma(q,t) f_\beta(X;q,t)\]
where the sum is over length-$n$ compositions $\beta$ that sort to $\lambda$, and $c_{\alpha\beta}^\sigma(q,t)\in\QQ(q,t)$ has the form
\[c_{\alpha\beta}^\sigma(q,t)=\sum_{(a,b,D)\in\mathcal{I}_{\alpha\beta}^\sigma}q^at^b\prod_{(i,j)\in D}\frac{1-t}{1-q^jt^j},
\]
where $\mathcal{I}_{\alpha\beta}^\sigma$ is a finite multiset of triples $(a,b,D)$ with $a,b\geq 0$ and $D$ is a finite multiset of pairs in $\ZZ^2_{\geq 1}$.
\end{theorem}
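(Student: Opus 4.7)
The plan is to prove the straightening rule by induction on the Coxeter length of $\sigma$, reducing the general permuted basement polynomial $E_\alpha^\sigma$ step by step to a permuted basement polynomial that already coincides with an ASEP polynomial. The base case of the induction is the observation (made in the paper) that $f_\beta(X;q,t) = E_{\inc(\beta)}^\tau(X;q,t)$ whenever $\tau$ is a permutation taking $\inc(\beta)$ to $\beta$. Thus, for the specific pair $(\alpha,\sigma)$ with $\alpha$ weakly increasing and $\sigma$ a sorting permutation for the desired $\beta$, the straightening is trivial: $c_{\alpha\beta}^\sigma(q,t)=1$ for exactly one $\beta$, and the associated $\mathcal{I}_{\alpha\beta}^\sigma$ is a singleton $(0,0,\emptyset)$.

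The inductive step would rely on the Demazure--Lusztig operators $T_i$, which relate $E_\alpha^\sigma$ to $E_{s_i\alpha}^\sigma$ or $E_\alpha^{s_i\sigma}$ via an identity of the form
\[
T_i \cdot E_\alpha^\sigma(X;q,t) \;=\; A_i(\alpha,\sigma;q,t)\, E_{\alpha'}^{\sigma'}(X;q,t) \;+\; B_i(\alpha,\sigma;q,t)\, E_\alpha^\sigma(X;q,t),
\]
where the rational coefficients $A_i, B_i$ come from (i) the quadratic Hecke relation $(T_i-1)(T_i+t)=0$ and (ii) the Cherednik operator eigenvalues on $E_\alpha^\sigma$, which are explicit monomials in $q,t$. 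The characteristic factor $\frac{1-t}{1-q^j t^j}$ arises precisely by simplifying the denominator $q^a t^b - q^{a+j}t^{b+j}$, where $j$ records the ``gap'' between Cherednik eigenvalues of adjacent basement positions. Iterating along a reduced word $\sigma = s_{i_1}\cdots s_{i_k}$ and unfolding each Hecke action produces a binary branching: at each step the recursion either applies the intertwiner (contributing $A_i$, yielding a $\frac{1-t}{1-q^jt^j}$ factor) or keeps the same term (contributing $B_i$, a monomial factor). The multiset $\mathcal{I}_{\alpha\beta}^\sigma$ indexes the full collection of root-to-leaf paths in this binary tree ending at the ASEP polynomial $f_\beta$, with the triple $(a,b,D)$ recording the accumulated monomial exponents and the multiset of denominators encountered.

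To conclude, one would verify that at the end of the recursion the remaining $E_{\alpha^{(k)}}^{\sigma^{(k)}}$ reaches a weakly increasing basement configuration, at which point it coincides with some $f_\beta$. Collecting the coefficients across all branches of the tree produces the claimed sum. The shape of $\mathcal{I}_{\alpha\beta}^\sigma$ (finite, with $a,b\geq 0$, and $D\subset \mathbb{Z}_{\geq 1}^2$) would follow from the fact that the Cherednik eigenvalues at every intermediate step are monomials $q^a t^b$ with nonnegative exponents bounded in terms of $\sigma$ and $\alpha$.

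The main obstacle is \emph{well-definedness}: the expansion a priori depends on the chosen reduced word for $\sigma$, and the equality of two such expansions requires verifying compatibility with the braid relations $T_iT_{i+1}T_i = T_{i+1}T_iT_{i+1}$ of the affine Hecke algebra. This reduces to a rank-two identity on the set of triples $(a,b,D)$, but checking it combinatorially is delicate because the rational factors $\frac{1-t}{1-q^jt^j}$ produced by different reduced words can cancel in nonobvious ways. A secondary difficulty is the bookkeeping of which $(i,j)$ actually appear in the multiset $D$ along a given path; one expects a clean description in terms of the sequence of inversions created by $\sigma$ relative to the sorting permutation of $\alpha$, but making this precise is the crux of the proof and is what distinguishes the permuted basement straightening rule from the classical Lascoux--Sch\"utzenberger style expansions.
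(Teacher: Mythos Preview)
This theorem is not proved in the paper; it is quoted from \cite{DM25+} and used as a black box. The paper explicitly says ``we use the straightening rule for $E_\alpha^\sigma$ derived in \cite{DM25+}\ldots\ We restate the result below, omitting technical details,'' and then applies only the $t=0$ specialization of the coefficients to deduce atom-positivity of $E_\alpha^\sigma(X;q,0)$. There is therefore no proof in the paper to compare your proposal against.

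That said, your outline is a reasonable sketch of the kind of argument one expects in \cite{DM25+}: the permuted basement polynomials are built from one another by Demazure--Lusztig operators, the Hecke relations produce the rational factors $\tfrac{1-t}{1-q^jt^j}$ via Cherednik eigenvalue differences, and iterating along a reduced word for $\sigma$ generates a branching that one collects into the multiset $\mathcal{I}_{\alpha\beta}^\sigma$. You correctly flag the two genuine issues: independence of the reduced word (braid compatibility) and the bookkeeping needed to certify $a,b\ge 0$ and $D\subset\ZZ_{\ge1}^2$ at every step. But note that for the application in this paper neither of those subtleties matters: the paper only needs that \emph{some} expansion of the stated form exists, since any such expansion specializes at $t=0$ to coefficients in $\NN[q]$. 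So even a reduced-word-dependent version of your argument would suffice for the corollary the paper actually draws.
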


In particular, at $t=0$, the given expression for the coefficients is sufficient to determine that $c_{\alpha\beta}^\sigma(q,0)\in\NN[q]$, regardless of the choice of the indexing set $\mathcal{I}_{\alpha\beta}^\sigma$. Thus the theorem above implies $E_\alpha^\sigma$ has a positive expansion in the ASEP polynomials:
\[
E_\alpha^\sigma(X;q,0)=\sum_{\beta}c_{\alpha\beta}^\sigma(q) f_\beta(X;q,0),\qquad c_{\alpha\beta}^\sigma(q)\in\NN[q],
\]
where the sum is over length-$n$ compositions $\beta$ that sort to $\lambda$. Since $f_\beta(X;q,0)$ is atom positive, this immediately implies the following corollary.

\begin{cor}
Let $\alpha$ be a length-$n$ composition that sorts to $\lambda$. Then
    \[E_\alpha^\sigma(X;q,0)=\sum_{\beta} d_{\alpha\beta}^\sigma(q) \cA_\beta(X),\qquad d_{\alpha\beta}^\sigma(q)\in\NN[q],\]
    where the sum is over length-$n$ compositions $\beta$ that sort to $\lambda$.
\end{cor}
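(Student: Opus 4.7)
The plan is to combine the straightening rule from \cite{DM25+} with \cref{theorem:weakmain} by substitution, so the work reduces to tracking positivity through one composition of expansions. First, I would invoke the straightening rule at $t=0$: the coefficients $c_{\alpha\beta}^\sigma(q,0)$ are obtained by setting $t=0$ in the closed-form expression $\sum_{(a,b,D)\in \mathcal{I}_{\alpha\beta}^\sigma} q^a t^b \prod_{(i,j)\in D}\frac{1-t}{1-q^jt^j}$. Any triple with $b>0$ vanishes at $t=0$, while for $b=0$ each factor $\frac{1-t}{1-q^jt^j}\big|_{t=0}=1$, so each surviving triple contributes a single monomial $q^a$ with $a\ge 0$. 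This shows $c_{\alpha\beta}^\sigma(q):=c_{\alpha\beta}^\sigma(q,0)\in\mathbb{N}[q]$, independently of the precise combinatorial description of $\mathcal{I}_{\alpha\beta}^\sigma$.

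Next, I would substitute the expansion from \cref{theorem:weakmain} into the straightened formula. Writing
\[
E_\alpha^\sigma(X;q,0)=\sum_{\substack{\gamma\\ \sort(\gamma)=\lambda}} c_{\alpha\gamma}^\sigma(q)\, f_\gamma(X;q,0)
=\sum_{\substack{\gamma\\ \sort(\gamma)=\lambda}} c_{\alpha\gamma}^\sigma(q)\sum_{\beta:\sort(\beta)\le\lambda} K_{\gamma,\beta}(q)\,\mathcal{A}_\beta(X),
\]
and interchanging the order of summation, I obtain
\[
E_\alpha^\sigma(X;q,0)=\sum_{\beta:\sort(\beta)\le\lambda} d_{\alpha\beta}^\sigma(q)\,\mathcal{A}_\beta(X),\qquad d_{\alpha\beta}^\sigma(q)=\sum_{\substack{\gamma\\ \sort(\gamma)=\lambda}} c_{\alpha\gamma}^\sigma(q)\,K_{\gamma,\beta}(q).
\]
Since each $c_{\alpha\gamma}^\sigma(q)\in\mathbb{N}[q]$ by the first step and each $K_{\gamma,\beta}(q)\in\mathbb{N}[q]$ by \eqref{eq:nsymK} (being a charge generating function over semistandard tableaux), we conclude $d_{\alpha\beta}^\sigma(q)\in\mathbb{N}[q]$.

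The only mild subtlety to check is that the resulting sum can be restricted to length-$n$ compositions $\beta$ with $\sort(\beta)=\lambda$ as claimed in the statement, rather than the broader range $\sort(\beta)\le\lambda$ appearing in \cref{theorem:weakmain}. This follows because the ASEP polynomials $f_\gamma(X;q,0)$ that actually appear in the straightening are all indexed by length-$n$ compositions with $\sort(\gamma)=\lambda$, and the proof of \cref{theorem:weakmain} produces, for each such $\gamma$, only Demazure atoms $\mathcal{A}_\beta$ with $\sort(\beta)=\lambda$ and $\beta$ of the same length: the multiline queues in $\MLQ[\gamma]$ have a fixed number of balls $|\lambda|$ and live on $n$ columns, and the collapsing map $\rho$ preserves both the number of balls and the number of columns. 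Thus the inner expansion only produces $\mathcal{A}_\beta$'s with $\sort(\beta)=\lambda$ and $\beta$ of length $n$, matching the corollary's indexing set. The hard work has all been done in \cref{theorem:weakmain}; here no further crystal-theoretic argument is needed, only bookkeeping and the positivity observation about $c_{\alpha\beta}^\sigma(q,0)$.
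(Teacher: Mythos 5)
Your core argument is correct and is essentially the paper's proof: specialize the straightening rule of \cite{DM25+} at $t=0$ (only triples with $b=0$ survive, each contributing $q^a$, so $c_{\alpha\gamma}^\sigma(q)\in\NN[q]$), then compose with the atom-positive expansion of $f_\gamma(X;q,0)$ from \cref{theorem:weakmain}, so that $d_{\alpha\beta}^\sigma(q)=\sum_\gamma c_{\alpha\gamma}^\sigma(q)K_{\gamma,\beta}(q)$ is a sum of products of polynomials in $\NN[q]$. The paper says exactly this, only more tersely.

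However, your final paragraph contains a false claim. The collapsing map preserves the number of balls and the number of columns, but it does \emph{not} preserve the shape: by \eqref{eqn:collapsebijection}, $\rho$ maps $\MLQ_\lambda$ onto $\bigcup_{\mu\leq\lambda}\NMLQ_\mu\times\SSYT(\mu',\lambda')$, and the shape $\mu$ of $\rho_N(M)$ can lie strictly below $\lambda$ in dominance order (in \cref{ex:collapsing}, $\lambda=(5,5,4,3)$ collapses to $\mu=(5,4,3,3,1,1)$). Consequently the atoms appearing in $f_\gamma(X;q,0)$ are indexed by $\beta$ with $\sort(\beta)\leq\lambda$, exactly as \cref{theorem:weakmain} states, and not only by $\beta$ with $\sort(\beta)=\lambda$. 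A minimal counterexample: $f_{(2,0)}(x_1,x_2;q,0)=x_1^2+qx_1x_2=\cA_{(2,0)}(X)+q\,\cA_{(1,1)}(X)$, and $(1,1)$ sorts to $(1,1)\lneq(2)$. So your argument that ``the inner expansion only produces $\cA_\beta$'s with $\sort(\beta)=\lambda$'' does not hold, and the indexing set in the corollary should be read as $\{\beta:\sort(\beta)\leq\lambda\}$ (the length-$n$ and fixed-degree conditions are the only things your ball-and-column count actually buys you). The positivity conclusion $d_{\alpha\beta}^\sigma(q)\in\NN[q]$, which is the substance of the corollary, is unaffected.
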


However, finding a (nice) combinatorial description for the coefficients $d_{\alpha\beta}^\sigma(q)$ remains an open question, since the explicit expression for the coefficients $c_{\alpha\beta}^\sigma(q)$ is complicated.

  \appendix
  
  \section{Comparison to tableau formulas}\label{sec:comparison}

In the literature, the polynomials studied in this paper are modeled by tableau formulas, which are closely related to multiline queues. We briefly describe the connections to these formulas. 

\subsection{Semistandard augmented fillings}

In \cite{Mas09}, Mason 
defined the Demazure atoms as generating functions for \textbf{semistandard augmented fillings (SSAF)}, proving that $\cA(X)=E_\alpha(X;0,0)$. The closely related \textbf{semistandard composition tableaux (SSCT)} were introduced in \cite{HLMvW11} to model the quasisymmetric Schur polynomials. 

Each composition $\alpha$ determines a \textbf{composition diagram} $\dg(\alpha)$, which consists of $\alpha_i$ cells in the $i$th column, with columns arranged left to right. The cell in row $i$, column $j$ of $\dg(\alpha)$ is given coordinates $(i,j)$, and the entry in the cell $(i,j)$ in a filling $T$ of $\dg(\alpha)$ is denoted $T(i,j)$. If $(i,j)\not\in\dg(\alpha)$, then $T(i,j)=0$ by convention. The content of the filling $T$ is, as with semistandard Young tableaux, 
\[x^T=\prod_{u\in\dg(\gamma)}x_{T(u)}\,.\] 
Note that this convention is a $90^{\circ}$ rotation of that in \cite{HLMvW11}.

 \begin{definition}\label{def:SSAF}
     For a composition $\alpha$, a \textbf{semistandard augmented filling} $T$ of shape $\alpha$ is a filling of $\dg(\alpha)$ with positive integers with no repeated entries within rows, such that:
         \begin{enumerate}[label= (\roman*),ref=Item~(\roman*)]
             \item\label{item:i} Entries weakly decrease from bottom to top within columns
            \item\label{item:ii} If $\alpha_j\neq 0$, then $T(1,j)=j$.
             \item\label{item:iii} Given columns $j<k$ and a row $r>1$, if $T(r,k) \neq 0$ and $T(r,j) \leq T(r,k)$, then we must have $T(r-1,j) < T(r,k)$.
         \end{enumerate}
         Let $\SSAF(\alpha)$ be the set of all such fillings. 
 \end{definition}
  \noindent\ref{item:iii} corresponds to a certain triple condition: for each triple of the form $(r,j), (r,k), (r-1,j)$ with entries $T(r,j)=b, T(r,k)=a, T(r-1,j)=c$ as in the configuration
    \begin{align*}
        \tableau{
        b &&&& a \\
        c
        }\,,
    \end{align*}
the triple must satisfy the property that if $b \leq a$ then $c < a$. In fact, as $b\leq c$ by \ref{item:i}, this corresponds to the classical Haglund--Haiman--Loehr coinversion-free condition in \cite{HHL08}.
  \begin{remark}
     In the original definition, a ``zeroth'' row called the \emph{basement} filled with the entries $1,\ldots,n$ from left to right was added to the bottom of $\dg(\alpha)$. A filling satisfying \ref{item:iii} with the basement condition will equivalently satisfy  \ref{item:ii}. Varying the filling of the basement gives additional flexibility, such as being able to model key polynomials, but in the Demazure atom case the full generality isn't needed, so we omit the basement from our description.
 \end{remark}
 
For a strong composition $\gamma$, a semistandard composition tableau of shape $\gamma$ is defined very similarly, with \ref{item:ii} replaced by requiring that the entries strictly increase from left to right in the bottom row. Removing all empty columns from an SSAF gives the bijection 
$\SSCT(\gamma)\leftrightharpoons\bigcup_{\alpha:\alpha^+=\gamma}\SSAF(\alpha)$.


Then from \cite{Mas09} and from \cite{HLMvW11}, we obtain
\begin{equation}
    \cA_\alpha(X)=\sum_{T\in\SSAF(\alpha)}x^T,\qquad\qquad
         \QS_\gamma(X) = \sum_{T \in \SSCT(\gamma)} x^T.
     \end{equation}

     \newcommand{\Tab}{Tab}

The connection with nonwrapping multiline queues is simple, though its proof requires showing that Mason's RSK for composition shapes in \cite{Mas09} behaves in the same way as the collapsing map (sending the same entries to the same rows, and keeping the same shape). See \cite[Proposition~4.1.10]{N25} for details and \cref{ex:SSAF} for an example.
\begin{definition}
    Denote by $\Tab$ the union of the sets of fillings of $\dg(\alpha)$ over all compositions $\alpha$. Define the row content map
    \[\eta:\Tab\to\MLQ\]
    to be the map that sends a filling $T\in\Tab$ with row contents $(R_1,\ldots,R_L)$ to the multiline queue $M=(R_1,\ldots,R_L)\in\MLQ_\lambda$, where $\lambda'_i=|R_i|$ for $1\leq i\leq L$.
\end{definition}
For examples of $\eta$, see \cref{ex:SSAF} or \cref{ex:quinv tab}.

\begin{lemma}
   Let $\alpha$ be a composition and $\gamma$ a strong composition. The map $\eta$ gives the content-preserving bijections:
    \[\eta:\SSAF(\alpha)\leftrightharpoons \NMLQ[\alpha],\qquad\text{and}\qquad
   \eta:\SSCT(\gamma) \leftrightharpoons\SNMLQ[\gamma].
    \] 
\end{lemma}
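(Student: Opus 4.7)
The plan is to construct the inverse map $\eta^{-1}: \NMLQ[\alpha] \to \SSAF(\alpha)$ directly from the strand structure. For $M \in \NMLQ[\alpha]$, denote by $v_r^{(j)}$ the column of the $r$-th ball from the bottom in the strand anchored at column $j$; by convention $v_1^{(j)}=j$ and $v_r^{(j)}$ is defined for $1\leq r\leq \alpha_j$. Define $\eta^{-1}(M)$ to be the filling of $\dg(\alpha)$ whose entry in cell $(r,j)$ is $v_r^{(j)}$. Since the set of column values appearing in row $r$ of $\eta^{-1}(M)$ equals the set of columns containing a ball in row $r$ of $M$, we have $\eta(\eta^{-1}(M))=M$ by construction.

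Conditions (i) and (ii) of \cref{def:SSAF} are immediate. (ii) holds because each strand is anchored at its own column: $v_1^{(j)}=j$. Condition (i), weakly decreasing entries up each column, follows from the non-wrapping hypothesis, since the FM pairing moves from a ball at row $r$ to an unpaired ball weakly to its right in row $r-1$ without wrap, giving $v_{r-1}^{(j)}\geq v_r^{(j)}$. Distinctness of entries within a row is clear, since distinct strands consist of disjoint sets of balls.

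The crux of the proof, and its main obstacle, is the triple condition (iii). I plan to show it is equivalent to the correctness of the FM pairing. For the forward direction, fix $j<k$ with $v_r^{(k)}\neq 0$ and suppose for contradiction that $v_r^{(j)}\leq v_r^{(k)}$ while $v_{r-1}^{(j)}\geq v_r^{(k)}$. Then the $(r-1)$-st ball of the strand at $j$ sits in the interval $[v_r^{(k)}, v_{r-1}^{(k)})$, which separates the $r$-th and $(r-1)$-st balls of the strand at $k$. Tracking the FM priority rule (largest label first, then left to right among ties, using that all balls of a strand anchored at $\ell$ carry label $\alpha_\ell$) and invoking \cref{lemma:intersectingstrands} to rule out extraneous crossings, a direct case analysis shows that the $(r,v_r^{(k)})$-ball would be redirected away from $(r-1,v_{r-1}^{(k)})$, or else the pairing would be forced to wrap, contradicting $M \in \NMLQ[\alpha]$. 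This establishes $\eta^{-1}(M)\in \SSAF(\alpha)$. For the reverse direction, given $T \in \SSAF(\alpha)$, an induction on the rows of $\eta(T)$ shows that condition (iii), together with (i) and (ii), forces the FM algorithm to reproduce exactly the strands recorded by the columns of $T$, yielding both $\eta(T)\in \NMLQ[\alpha]$ and $\eta^{-1}(\eta(T))=T$.

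The quasisymmetric statement then follows formally. Removing empty columns gives a bijection $\bigsqcup_{\alpha:\alpha^+=\gamma}\SSAF(\alpha) \leftrightharpoons \SSCT(\gamma)$, and we have the analogous disjoint union $\SNMLQ[\gamma]=\bigsqcup_{\alpha:\alpha^+=\gamma}\NMLQ[\alpha]$. Since $\eta$ commutes with deletion and insertion of empty columns, the two nonsymmetric bijections assemble to give $\eta: \SSCT(\gamma)\leftrightharpoons \SNMLQ[\gamma]$.
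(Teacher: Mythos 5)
Your plan of defining $\eta^{-1}$ by recording the FM strands as columns does not work: the resulting filling can violate the triple condition, so the map does not land in $\SSAF(\alpha)$. Concretely, take $n\geq 3$ and $M=(\{2,3\},\{1,2\})\in\NMLQ_{(2,2)}$. The FM algorithm pairs the ball at site $(2,1)$ to $(1,2)$ and the ball at $(2,2)$ to $(1,3)$, so $M$ is non-wrapping with $\type(M)=(0,2,2,0)$ and strands $\{(1,2),(2,1)\}$ and $\{(1,3),(2,2)\}$. Your filling therefore has $T(2,2)=1$, $T(2,3)=2$, $T(1,2)=2$, $T(1,3)=3$, and the triple with $j=2$, $k=3$, $r=2$ fails \cref{def:SSAF}\ref{item:iii}: $T(2,2)=1\leq 2=T(2,3)$ but $T(1,2)=2\not< 2=T(2,3)$. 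The unique SSAF of shape $(0,2,2,0)$ with these row contents instead has $T(2,2)=2$ and $T(2,3)=1$; its columns are \emph{not} the FM strands (the column $(3,1)$ would correspond to a wrapping pairing in the multiline queue). So both directions of your argument are attempting to prove false statements: the forward case analysis for condition (iii) cannot close (the case $\alpha_j\geq\alpha_k$ with $v_r^{(j)}<v_r^{(k)}\leq v_{r-1}^{(j)}<v_{r-1}^{(k)}$ is genuinely realizable), and the reverse claim that the FM algorithm reproduces the columns of an SSAF is refuted by the same example.

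The actual content of the lemma is more subtle than a strand-to-column dictionary. The paper defines $\eta$ purely as the row-content map and the substance is that (a) for a given collection of row contents coming from a non-wrapping multiline queue $M$ there is a \emph{unique} shape $\alpha$ admitting an SSAF with those row contents, (b) that shape equals $\type(M)$, and (c) the assignment of entries to columns is produced by Mason's RSK insertion applied to $\rev(\rw(M))$, whose insertion rule \emph{parallels} (but does not reproduce) the FM pairing. The paper does not prove this in-text; it cites \cite[Proposition~4.1.10]{N25} and \cite{Mas09}. If you want a self-contained proof, you would need to run Mason's insertion and show it preserves row contents and outputs shape $\type(M)$, rather than reading the columns off the strands. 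Your reduction of the $\SSCT(\gamma)\leftrightharpoons\SNMLQ[\gamma]$ statement to the nonsymmetric one by deleting empty columns is fine once the nonsymmetric bijection is established.
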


\begin{remark}
    For the inverse map of $\rho$, for a multiline queue $M=(B_1,\ldots,B_L)$, there is a unique composition $\alpha$ such that an SSAF of shape $\alpha$ with this row content exists, and this $\alpha$ coincides with $\type(M)$. The corresponding SSAF may be obtained via Mason's RSK procedure in \cite{Mas09} applied to $\rev(\rw(M))$. More precisely, the link is that the insertion rule of Mason's RSK parallels the requirement in the FM procedure that a particle from row $i$ must pair with the closest unpaired particle in row $i-1$ weakly to its right.
\end{remark}

\begin{remark}
    Our choice in this article to focus on the multiline queue model for these polynomials is partly driven by the fact that from the crystal perspective, they are a more natural object than tableaux. As we have seen in \cref{sec:crystals}, they carry a direct crystal structure as they are not constrained by shape, whereas defining crystal operators on SSAFs or SSCTs involves additional subtleties to account for the shapes of the fillings, which may be changed by some of these operators as we have established in \cref{cor:Harper's lemma}.
\end{remark}

\begin{example}\label{ex:SSAF}
    We show the set of fillings $\SSAF(\alpha)$ and their images under $\eta$ in $\NMLQ(\alpha)$ for $\alpha=(1,0,3,2)$:
    
    \resizebox{\linewidth}{!}{\begin{tikzpicture}
        \begin{scope}[shift={(0,0)}]
        \node at (0,-2) {\scalebox{0.5}{\begin{tikzpicture}\queue{0\\3&1\\2&2&2}\end{tikzpicture}}};
        \node at (0,0) {\tableau{&&3\\&&3&2\\1&&3&4}};
        \end{scope}
        \begin{scope}[shift={(2.5,0)}]
        \node at (0,-2) {\scalebox{0.5}{\begin{tikzpicture}\queue{0\\3&3\\2&2&2}\end{tikzpicture}}};
        \node at (0,-0) {\tableau{&&3\\&&3&4\\1&&3&4}};
        \end{scope}
         \begin{scope}[shift={(5,0)}]
        \node at (0,-2) {\scalebox{0.5}{\begin{tikzpicture}\queue{0\\3&2\\2&1&1}\end{tikzpicture}}};
        \node at (0,0) {\tableau{&&2\\&&3&2\\1&&3&4}};
        \end{scope}
         \begin{scope}[shift={(7.5,0)}]
        \node at (0,-2) {\scalebox{0.5}{\begin{tikzpicture}\queue{0\\3&2\\2&1&0}\end{tikzpicture}}};
        \node at (0,0) {\tableau{&&1\\&&3&2\\1&&3&4}};
        \end{scope}
         \begin{scope}[shift={(10,0)}]
        \node at (0,-2) {\scalebox{0.5}{\begin{tikzpicture}\queue{0\\3&3\\2&2&1}\end{tikzpicture}}};
        \node at (0,0) {\tableau{&&2\\&&3&4\\1&&3&4}};
        \end{scope}
         \begin{scope}[shift={(12.5,0)}]
        \node at (0,-2) {\scalebox{0.5}{\begin{tikzpicture}\queue{0\\3&3\\2&2&0}\end{tikzpicture}}};
        \node at (0,0) {\tableau{&&1\\&&3&4\\1&&3&4}};
        \end{scope}
         \begin{scope}[shift={(15,0)}]
        \node at (0,-2) {\scalebox{0.5}{\begin{tikzpicture}\queue{0\\3&3\\2&1&1}\end{tikzpicture}}};
        \node at (0,0) {\tableau{&&2\\&&2&4\\1&&3&4}};
        \end{scope}
         \begin{scope}[shift={(17.5,0)}]
        \node at (0,-2) {\scalebox{0.5}{\begin{tikzpicture}\queue{0\\3&3\\2&1&0}\end{tikzpicture}}};
        \node at (0,0) {\tableau{&&1\\&&2&4\\1&&3&4}};
        \end{scope}
    \end{tikzpicture}
    }

\end{example}

\subsection{Non-attacking quinv tableaux}
For the $q$-generalization, there are several related tableau formulas. The one most closely connected to multiline queues, obtained as the $t=0$ specialization from a tableau formula proved in \cite{Man24}, uses the statistic of \textbf{queue inversions} (quinv), which are a different form of inversions that are directly tied to the pairing procedure on multiline queues. We call these objects semistandard non-attacking quinv tableaux (SSQT). 
\begin{definition}
 For a partition $\lambda$, a \textbf{semistandard non-attacking quinv tableau} is a filling of $\dg(\lambda)$ with positive integers such that rows contain no repeated entries, and the filling satisfies the ``maximal-quinv'' condition: for any triple of cells $x=(r,i)$, $y=(r-1,i)$, and $z=(r-1,j)$ with $i<j$ with entries $T(x)=a$, $T(y)=b$, and $T(z)=c$ as in the configuration
    \begin{align*}
        \tableau{
        a \\b&&& c \\
        }\,,
    \end{align*}
the entries $a,b,c$ satisfy
\[a\leq b<c\qquad \text{or}\qquad b<c<a\qquad\text{or}\qquad c<a\leq b\,.\] 
\end{definition}

The key property is that for fixed row content, there is a unique maximal-quinv filling of $\dg(\lambda)$, and hence a unique SSQT with that row content.  The $\maj$ statistic on SSQT is the classical Haglund--Haiman--Loehr major index: 
\[\maj(T)=\sum_{T(u)> T(\South(u))}(\leg(u)+1),\]
where if $u=(r,i)$, then $\South(u)=(r-1,i)$ and $\leg(u)=\lambda_i-r$. The \emph{type} of an SSQT is computed from the content of the bottom row: construct $\type(T)=(w_1,\ldots,w_n)$ by setting
\[
w_j=\begin{cases}
\lambda_i&T(1,i)=j\\
    0&j\not\in \{T(1,i):1\leq i\leq \ell(\lambda)\}\,.
\end{cases}
\]
In particular, the map $\eta$ is a weight-preserving bijection on $\SSQT$ that preserves type; see \cite[Section~3.2]{MV24} and \cite[Section~5]{Man24} for details and \cref{ex:quinv tab} for an example.

\begin{prop}[{\cite[Theorem~3.23]{MV24}}]
   Let $\alpha$ be a composition with $\lambda=\sort(\alpha)$. The map $\eta$ gives the weight-preserving bijection
    \[\eta:\{T\in\SSQT(\lambda):\type(T)=\alpha\}\leftrightharpoons \MLQ[\alpha]\,,
    \] 
with $\maj(M)=\maj(\eta(M))$ for all $M\in\MLQ[\alpha]$.
\end{prop}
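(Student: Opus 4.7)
The plan is to establish the bijection in two directions and then verify that $\eta$ simultaneously preserves content, type, and the major index. The forward direction is essentially by definition: if $T$ is a semistandard non-attacking quinv tableau of shape $\lambda$ with row contents $(R_1,\ldots,R_L)$, then each $R_i$ is a subset of $[n]$ of size $\lambda_i' $ because rows of $T$ have no repeated entries and $|R_i|$ equals the length of the $i$th row. Hence $\eta(T)=(R_1,\ldots,R_L)$ lies in $\MLQ_\lambda$, and it only remains to confirm $\type(\eta(T))=\alpha$.

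For the reverse direction, the crucial input is the claim already stated in the excerpt: for any fixed tuple of row contents $(R_1,\ldots,R_L)$ there is a unique filling of $\dg(\lambda)$ satisfying the maximal-quinv condition. First I would take this uniqueness for granted (it is proved in \cite{Man24}) and use it to define the inverse map: given $M=(B_1,\ldots,B_L)\in\MLQ_\lambda$, let $\eta^{-1}(M)$ be the unique maximal-quinv filling of $\dg(\lambda)$ whose $i$th row has content $B_i$. By construction this filling has no repeated row entries, so it belongs to $\SSQT(\lambda)$, and $\eta\circ\eta^{-1}$ and $\eta^{-1}\circ\eta$ are manifestly the identity on the appropriate sets. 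Content preservation $x^T=x^{\eta(T)}$ is immediate since both sides equal $\prod_i\prod_{b\in B_i}x_b$.

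The genuinely substantive step is to match the two type statistics and the two major index statistics. For type, I would argue by induction on rows, comparing the local rule that builds the unique maximal-quinv filling from the top down with the FM pairing algorithm applied bottom up. The idea is that for two adjacent rows with contents $(B_{i-1},B_i)$, the pairing data used by FM -- which element of $B_i$ pairs with which element of $B_{i-1}$, taking account of cylindrical wrap -- matches exactly the placement forced by the triple condition in the two rows of the quinv filling. One way to phrase this is through the combinatorial $R$-matrix $R(B_{i-1},B_i)$ of \cref{def:comb R}, which governs both the FM pairing (see \eqref{eq:R}, \eqref{eq:R2}) and, by direct verification on two-row configurations, the maximal-quinv rule. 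Iterating, one obtains that the bottom-row columns occupied by strands of length exactly $k$ are precisely the columns $i$ with $T(1,i)$ sitting in a column of $\dg(\lambda)$ of height $k$, which is the equality $\type(\eta(T))=\type(T)$. For the major index, the same local analysis identifies the cells contributing $\leg(u)+1$ to $\maj(T)$ with the wrapping pairings in rows $r\leq \ell$ of a strand of label $\ell$, giving $\maj(T)=\maj(\eta(T))$ term by term.

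The main obstacle will be the two-row case of the type/$\maj$ matching: one must check carefully that the three allowed orderings $a\le b<c$, $b<c<a$, $c<a\le b$ in the quinv triple condition correspond precisely to (i) a non-wrapping pairing, (ii) a wrapping pairing of the smaller label, and (iii) the presence of a larger-label strand that preempts pairing, respectively, and that this matches the FM priority rule together with the convention that ties break left-to-right. Once this local dictionary is set up, everything else follows by induction and by assembling rows.
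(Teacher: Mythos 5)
First, note that the paper does not actually prove this proposition: it is imported verbatim as \cite[Theorem~3.23]{MV24}, with the reader directed to \cite[Section~3.2]{MV24} and \cite[Section~5]{Man24} for details. So there is no internal proof to measure your argument against; what follows assesses your plan on its own terms.

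Your bijection skeleton is sound and matches how the paper frames the statement: forward direction by reading off row contents, reverse direction via the uniqueness of the maximal-quinv filling of $\dg(\lambda)$ with prescribed row content, and content preservation for free. The genuine gap is in the ``local dictionary'' you propose for matching $\type$ and $\maj$. You assert that the FM pairing between adjacent rows ``matches exactly the placement forced by the triple condition,'' i.e.\ that vertical adjacency in the quinv tableau records which ball pairs with which. This is false in general, and the paper says so explicitly in \cref{ex:quinv tab}: the FM strands need not coincide with the columns of $\eta^{-1}(M)$. What actually holds (and what \cite{MV24} proves) is the weaker, multiset-level statement that for each label $k$ and each row $r$, the set of row-$r$ entries lying in height-$k$ columns of the tableau equals the set of positions of label-$k$ balls in row $r$ of $M$, and that the number of descents between rows $r-1$ and $r$ within height-$k$ columns equals the number of label-$k$ wrappings from row $r$ to row $r-1$. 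This aggregate correspondence suffices for both $\type$ (read off from row $1$) and $\maj$ (summed over labels and rows), but it cannot be obtained by the cell-by-cell matching you describe; it requires an argument in the spirit of \cref{rem:not unique}, namely that the labels and wrapping counts are invariants of the configuration even though the individual pairings are not canonical. A two-row verification of your three-case dictionary would therefore not assemble by induction in the way you suggest, because the inductive hypothesis you would need (that columns track strands) is not preserved. If you want to carry this out yourself rather than cite \cite{MV24}, the induction should be run on the label-$k$ position sets $I_k$ and the per-label wrapping counts, not on the pairings.
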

\begin{remark}
    A similar statement holds for \emph{non-attacking coinversion-free fillings}, obtained by specializing the Haglund--Haiman--Loehr tableaux for Macdonald polynomials at $t=0$ \cite{HHL08}. There, the coinversion-free condition plays the same role as the maximal-quinv condition, analogously resulting in a unique filling for any given row content. In that case as well, the map to $\MLQ(\lambda)$ that preserves row content also preserves $\maj$.
\end{remark}

\begin{example}\label{ex:quinv tab}
For the multiline queue $M\in\MLQ[(4,4,3,2)]$ below, we show its image under $\eta^{-1}$ in $\SSQT((4,4,3,2))$, which is the unique maximal-quinv filling of $\dg(\lambda)$ with row content equal to that of $M$. Note that for each $k$, the columns of height $k$ at row $r$ in $\eta^{-1}(M)$ correspond to the set of particles with label $k$ in row $r$ of $M$, but the FM strands need not coincide with these columns. This holds for row $r=1$ as well, so in particular, $\type(M)=\type(\eta^{-1}(M))=(2,0,0,4,4,3)$. Moreover, the numbers of descents at each row and for each column height of $\eta^{-1}(M)$ map to the numbers of wrappings at each row of $M$ for the corresponding particle label, so $\maj(M)=\maj(\eta^{-1}(M))=3$.
 \begin{center}
\resizebox{!}{3cm}{
\begin{tikzpicture}[scale=0.7]
\def \w{1};
\def \h{1};
\def \r{0.25};

\draw[gray!50,thin,step=\w] (0,1) grid (6*\w,5*\h);
\foreach \xx\yy\i\c in {2/4/4/white,3/4/4/white,
0/3/3/white,3/3/4/white,5/3/4/white,
1/2/4/white,2/2/3/white,3/2/4/white, 4/2/2/white,
0/1/2/white,3/1/4/white,4/1/4/white,5/1/3/white}
    {
    \draw[fill=\c](\w*.5+\w*\xx,\h*.5+\h*\yy) circle (\r cm);
    \node at (\w*.5+\w*\xx,\h*.5+\h*\yy) {\i};
    }
    
\draw[black!50!green](\w*2.5,\h*4.5-\r)--(\w*2.5,\h*3.9)--(\w*3.5,\h*3.9)--(\w*3.5,\h*3.5+\r);
\draw[black!50!green](\w*3.5,\h*4.5-\r)--(\w*3.5,\h*4.1)--(\w*5.5,\h*4.1)--(\w*5.5,\h*3.5+\r);

\draw[black!50!green](\w*3.5,\h*3.5-\r)--(\w*3.5,\h*2.5+\r);
\draw[black!50!green,-stealth](\w*5.5,\h*3.5-\r)--(\w*5.5,\h*3)--(\w*6.3,\h*3);
\draw[black!50!green](-0.2,\h*2.9)--(\w*1.5,\h*2.9)--(\w*1.5,\h*2.5+\r);
\draw[black!50!green](\w*1.5,\h*2.5-\r)--(\w*1.5,\h*1.9)--(\w*3.5,\h*1.9)--(\w*3.5,\h*1.5+\r);
\draw[blue](0.5*\w, \h*3.5-\r)--(0.5\w, \h*3.1)--(2.5*\w, \h*3.1)--(2.5*\w,\h*2.5+\r);

\draw[black!50!green](\w*3.5,\h*2.5-\r)--(\w*3.5,\h*2.1)--(\w*4.5,\h*2.1)--(\w*4.5,\h*1.5+\r);
\draw[blue](\w*2.5,\h*2.5-\r)--(\w*2.5,\h*2)--(\w*5.5,\h*2)--(\w*5.5,\h*1.5+\r);
\draw[red,-stealth](4.5*\w, \h*2.5-\r)--(4.5\w, \h*2.2)--(\w*6.3,\h*2.2);
\draw[red](-0.2,\h*2.2)--(0.5*\w, \h*2.2)--(0.5*\w, \h*1.5+\r);

\draw[thick,->] (8,3)--(10,3) node[midway,above] {\large $\eta^{-1}$};
\node at (13,3) {\tableau{3&4\\4&6&1\\4&2&3&4\\4&5&6&1}};
\end{tikzpicture}
}
\end{center}
\end{example}

Finally, we draw some comparisons to work of Assaf and Gonz\'alez in \cite{AG19}, in which they proved that the polynomials $E_\alpha(X;q,0)$ expand positively in the key polynomials. Their model uses \emph{semistandard key tabloids} (corresponding to fillings of composition diagrams similar to SSAFs with a $\maj$ statistic, but with  suitable modifications to the conditions in \cref{def:SSAF}) to define $E_\alpha(X;q,0)$, with crystal operators acting on them directly. At $q=0$, the relevant objects are the \emph{semistandard key tableaux}, which generate key polynomials. Instead of the collapsing map of \cite{MV24}, they use a \emph{rectification operator}, which commutes with the crystal operators, and sends an arbitrary semistandard key tabloid to a semistandard key tableau. The subcrystal corresponding to a given key polynomial is connected, and they show that its preimage lies inside a single subcrystal for some $E_\alpha$ in the original crystal, with the same grading.

We observe, in particular, that rectification, collapsing, and RSK can all be seen as bijections from a graded set (key tabloids, multiline queues, tableaux) to the corresponding 0-graded subset, retaining the data of the original grading. In this sense, the overarching strategy of our proof parallels that of \cite{AG19}.

It would be interesting to find a robust multiline queue model for key polynomials, and more generally, for nonsymmetric Macdonald polynomials $E_\alpha(X;q,0)$, to recover the results of \cite{AG19} using our methods.

\printbibliography

\end{document}